\title{Central limit theorems for mapping class groups and $\text{Out}(F_N)$}
\author{Camille Horbez}
\begin{document}
\maketitle
\newtheorem{de}{Definition} [section]
\newtheorem{theo}[de]{Theorem} 
\newtheorem{prop}[de]{Proposition}
\newtheorem{lemma}[de]{Lemma}
\newtheorem{cor}[de]{Corollary}
\newtheorem{propd}[de]{Proposition-Definition}

\theoremstyle{remark}
\newtheorem{rk}[de]{Remark}
\newtheorem{ex}[de]{Example}
\newtheorem{question}[de]{Question}

\normalsize

\newcommand{\coucou}[1]{\footnote{#1}\marginpar{$\leftarrow$}}

\begin{abstract}
We prove central limit theorems for the random walks on either the mapping class group of a closed, connected, orientable, hyperbolic surface, or on $\text{Out}(F_N)$, each time under a finite second moment condition on the measure (either with respect to the Teichmüller metric, or with respect to the Lipschitz metric on outer space). In the mapping class group case, this describes the spread of the hyperbolic length of a simple closed curve on the surface after applying a random product of mapping classes. In the case of $\text{Out}(F_N)$, this describes the spread of the length of primitive conjugacy classes in $F_N$ under random products of outer automorphisms. Both results are based on a general criterion for establishing a central limit theorem for the Busemann cocycle on the horoboundary of a metric space, applied to either the Teichmüller space of the surface, or to Culler--Vogtmann's outer space.
\end{abstract}

\setcounter{tocdepth}{3}
\tableofcontents

\section*{Introduction}

Central limit theorems in noncommutative settings have a long history, and have already been established in a lot of various different contexts. In the case of random products of matrices, a classical theorem of Furstenberg \cite{Fur63} asserts that if $(A_i)_{i\in\mathbb{N}}$ is a sequence of random matrices, all distributed with respect to some probability law $\mu$ on $GL(N,\mathbb{R})$ whose support generates a noncompact subgroup of $GL(N,\mathbb{R})$ that does not virtually preserve any proper linear subspace of $\mathbb{R}^{\mathbb{N}}$, then under a first moment assumption on $\mu$, there exists $\lambda>0$ such that for all $v\in\mathbb{R}^{N}\smallsetminus\{0\}$, almost surely, one has $$\lim_{n\to +\infty}\frac{1}{n}\log||A_n\dots A_1.v||=\lambda.$$ Central limit theorems in this context date back to the works of Furstenberg--Kesten \cite{FK60}, Le Page \cite{LP82}, Guivarc'h--Raugi \cite{GR85}, Goldsheid--Margulis \cite{GM89}. These assert, under some conditions on $\mu$, that the variables $$\frac{\log||A_n\dots A_1.v||-n\lambda}{\sqrt{n}}$$ converge in law toward a centered Gaussian law on $\mathbb{R}$, which does not depend on the  vector $v\in\mathbb{R}^N\smallsetminus\{0\}$. These central limit theorems were classically established under an exponential moment assumption on the measure $\mu$. Their proofs relied on establishing a spectral gap property for the transfer operator of the Markov chain corresponding to the random walk, in a well-chosen space of sufficiently regular functions. Recently, Benoist--Quint \cite{BQ13} gave a new approach to the central limit theorem on linear groups, which enabled them to relax the assumption made on the measure to a second moment condition.
\\
\\
\indent Central limit theorems have also been established for free groups (Sawyer--Steger \cite{SS87}) or more generally word-hyperbolic groups (Björklund \cite{Bjo10}), describing the spread of the word length of the element obtained at time $n$ of the random walk (again, the limiting law is Gaussian). Again, their proofs required exponential moment assumptions on the measure. Benoist--Quint's new approach also enabled them to similarly relax the moment assumption in this context \cite{BQ14}.
\\
\\
\indent The goal of the present paper is to prove central limit theorems on mapping class groups of closed, connected, orientable, hyperbolic surfaces, and on the group $\text{Out}(F_N)$ of outer automorphisms of a finitely generated free group. In the case of mapping class groups, we will establish a central limit theorem for the hyperbolic lengths of essential simple closed curves, under application of a random product of diffeomorphisms. In the case of $\text{Out}(F_N)$, we will establish a central limit theorem for the word lengths of primitive conjugacy classes of $F_N$, under application of a random product of outer automorphisms of $F_N$. 

\paragraph*{Central limit theorem on mapping class groups.} Let $S$ be a closed, connected, oriented, hyperbolic surface, and let $\rho$ be a hyperbolic metric on $S$. The mapping class group $\text{Mod}(S)$ is the group of isotopy classes of orientation-preserving diffeomorphisms of $S$. Karlsson established in \cite{Kar14} a version of the law of large numbers for the random walk on $\text{Mod}(S)$, estimating the typical growth of curves under random products of diffeomorphisms of the surface. Given a probability measure $\mu$ on $\text{Mod}(S)$, the \emph{(left) random walk} on $(\text{Mod}(S),\mu)$ is the Markov process whose position $\Phi_n$ at time $n$ is obtained by successive multiplications on the left of $n$ independent $\mu$-distributed increments $s_i$, i.e. $\Phi_n=s_n\dots s_1$. A probability measure on $\text{Mod}(S)$ is \emph{nonelementary} if the subsemigroup of $\text{Mod}(S)$ generated by the support of $\mu$ is a subgroup of $\text{Mod}(S)$ that contains two independent pseudo-Anosov mapping classes.

Karlsson proved that if $\mu$ is a nonelementary probability measure on $\text{Mod}(S)$ with finite first moment with respect to the Teichmüller metric, then there exists a (deterministic) real number $\lambda>0$ such that for all essential simple closed curves $c$ on $S$, and almost every sample path $(\Phi_n)_{n\in\mathbb{N}}$ of the random walk on $(\text{Mod}(S),\mu)$, one has $$\lim_{n\to +\infty}\frac{1}{n}\log l_{\rho}(\Phi_n(c))=\lambda.$$ Here $l_{\rho}(\Phi_n(c))$ is the smallest length of a curve in the isotopy class of $\Phi_n(c)$, measured by integration of the metric $\rho$. The growth rate $\lambda$ is also equal to the drift of the random walk on $(\text{Mod}(S),\mu)$ with respect to the Teichmüller metric. We will establish the following central limit theorem, under a second moment condition on $\mu$. In the statement, we denote by $\mu^{\ast n}$ the $n^{\text{th}}$ convolution of $\mu$.

\begin{theo}\label{intro}
Let $S$ be a closed, connected, oriented, hyperbolic surface, and let $\rho$ be a hyperbolic metric on $S$. Let $\mu$ be a nonelementary probability measure on $\text{Mod}(S)$ with finite second moment with respect to the Teichmüller metric. Let $\lambda>0$ be the drift of the random walk on $(\text{Mod}(S),\mu)$ with respect to the Teichmüller metric.\\ 
Then there exists a centered Gaussian law $N_{\mu}$ on $\mathbb{R}$ such that for every compactly supported continuous function $F$ on $\mathbb{R}$, and all essential simple closed curves $c$ on $S$, one has $$\lim_{n\to +\infty}\int_{\text{Mod}(S)} F \left(\frac{\log l_{\rho}(\Phi(c))-n\lambda}{\sqrt{n}}\right)d\mu^{\ast n}(\Phi) = \int_{\mathbb{R}}F(t)dN_{\mu}(t),$$ uniformly in $c$.
\end{theo}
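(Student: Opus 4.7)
The plan is to reduce Theorem~\ref{intro} to a central limit theorem for the Busemann cocycle of the $\text{Mod}(S)$-action on Teichmüller space $(\mathcal{T}(S),d_\mathcal{T})$, viewed through its horofunction compactification. More precisely, for a proper metric space $(X,d)$ on which a countable group $G$ acts by isometries, each point of the horoboundary $\partial_h X$ gives rise to a Busemann cocycle $\beta\colon G\times\partial_h X\to\mathbb{R}$, and Karlsson's theorem already identifies the drift of the walk with the spatial average $\lambda=\int\beta(g,\xi)\,d\mu(g)d\nu(\xi)$ against a suitable stationary measure $\nu$. I would first axiomatize the setting by proving an abstract CLT for $\beta(\Phi_n,\xi_0)-n\lambda$ following the strategy of Benoist--Quint~\cite{BQ13,BQ14}: assuming a unique $\mu$-stationary measure $\nu$ on $\partial_h X$, a second-moment integrability of the displacement, and enough regularity of $\beta$, one writes $\beta$ as the sum of a centered coboundary and a martingale with bounded conditional variance, and applies a CLT for martingale differences (Gordin-type).

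The next step would be to feed the action of $\text{Mod}(S)$ on $\mathcal{T}(S)$ into this abstract criterion. Walsh's description of the horofunction boundary of $(\mathcal{T}(S),d_\mathcal{T})$ as the Gardiner--Masur compactification, together with Kaimanovich--Masur's theorem on the unique $\mu$-stationary measure on $\mathcal{PMF}$ (concentrated on uniquely ergodic foliations), provides both the stationary measure and enough regularity of the horofunctions in its support, since above uniquely ergodic foliations the horofunction is essentially unique and depends continuously on the base point. The finite-second-moment assumption on $\mu$ with respect to the Teichmüller metric directly yields the $L^2$-integrability of the maximal displacement $d_\mathcal{T}(X_0,\phi X_0)$, hence of the Busemann cocycle $\beta(\phi,\xi)$, and allows us to check the centering condition $\int \beta(\phi,\xi)\,d\mu(\phi)d\nu(\xi)=\lambda$.

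The last step is to translate the resulting CLT for the Busemann cocycle into the statement about the hyperbolic lengths $\log l_\rho(\Phi_n(c))$. Writing $l_\rho(\Phi_n(c))=l_{X_0}(\Phi_n(c))$, Kerckhoff's formula and the Gardiner--Masur embedding imply that $\tfrac{1}{2}\log\operatorname{Ext}_{X_0}(\Phi_n(c))$ differs from the Busemann function $\beta(\Phi_n^{-1},\xi_c)$ (for an appropriate horofunction $\xi_c$ above the projective class of $c$) by an error which is uniformly bounded in $c$ and tracks a sample-path-dependent correction of lower order than $\sqrt{n}$. Combining this with Maskit's two-sided comparison $l_\rho(c)^2\lesssim\operatorname{Ext}_{X_0}(c)\lesssim l_\rho(c)e^{l_\rho(c)/2}$, which is effective in the regime $l_\rho(c)\to\infty$ relevant here, transfers the Gaussian limit from the Busemann cocycle to $\log l_\rho(\Phi_n(c))$, and the uniformity in $c$ follows from the fact that the error estimates do not depend on $c$.

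The main obstacle is that $\mathcal{T}(S)$ is not Gromov-hyperbolic, so the Busemann cocycle on its horoboundary is not nearly as well-behaved as in the linear-group or hyperbolic-group settings treated in~\cite{BQ13,BQ14}: horofunctions at non-uniquely-ergodic foliations can have large indeterminacy, and the cocycle need not be continuous on all of $\partial_h \mathcal{T}(S)$. Showing that the stationary measure is concentrated on a \emph{regular} part of the horoboundary where the martingale approximation goes through, and controlling the resulting remainder in $L^2$ to guarantee the $1/\sqrt{n}$ scaling, will be the core technical difficulty; the comparison between Busemann functions and hyperbolic lengths, while delicate, should be routine once the stationary measure sits on uniquely ergodic foliations.
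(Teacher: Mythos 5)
Your overall architecture — reduce the length CLT to a CLT for the Busemann cocycle on the horoboundary of $(\mathcal{T}(S),d_{\mathcal{T}})$, identify that boundary with the Gardiner--Masur compactification, use the unique stationary measure from Kaimanovich--Masur, and run a Benoist--Quint/Gordin martingale decomposition — is the same as the paper's, up to the last step of translating between the cocycle and hyperbolic lengths. But there are two genuine gaps.

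First, your transfer step from the Busemann cocycle to $\log l_\rho(\Phi_n(c))$ via Maskit's two-sided inequality does not work. Maskit's comparison reads roughly
\[
\tfrac{1}{\pi}\,l_\rho(c)\ \le\ \mathrm{Ext}_\rho(c)\ \le\ \tfrac{1}{2}\,l_\rho(c)\,e^{l_\rho(c)/2},
\]
so for a \emph{fixed} base surface and a curve $c$ whose hyperbolic length goes to infinity (which is exactly the regime here, since $l_\rho(\Phi_n(c))\approx e^{\lambda n}$), the discrepancy $\bigl|\tfrac12\log\mathrm{Ext}_\rho(\Phi_n(c))-\log l_\rho(\Phi_n(c))\bigr|$ can grow linearly in $l_\rho(\Phi_n(c))$, i.e.\ exponentially in $n$ — catastrophically worse than $\sqrt{n}$. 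The paper instead exploits the change of variable $l_o(\Phi(c))=l_{\Phi^{-1}o}(c)$: because $\Phi^{-1}o$ stays in a fixed $\epsilon$-thick part of $\mathcal{T}(S)$ (mapping classes preserve $\epsilon$-thickness of the orbit of $o$), Minsky's thick-part comparison applies \emph{at the moving basepoint}, giving the uniform bound $\bigl|\log l_z(c)-\tfrac12\log\mathrm{Ext}_z(c)\bigr|\le C_1$ for $z=\Phi^{-1}o$, and this, combined with Lenzhen--Rafi--Tao's finite-marking formula for Thurston's metric, yields a cocycle comparison with error \emph{uniformly bounded over all $\Phi$ and $c$} (Proposition~\ref{Busemann-thick} and Corollary~\ref{cor-key}). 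Your proposal does not identify this pivot.

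Second, you correctly flag that the core difficulty is that $\mathcal{T}(S)$ is not Gromov-hyperbolic, so the martingale/coboundary decomposition of the Busemann cocycle does not follow verbatim from the hyperbolic-group argument of~\cite{BQ14}; but you leave that difficulty unresolved. What the paper actually does is verify the two quantitative hypotheses of its abstract criterion — an average value of $\beta^+$ against the stationary measure (\textbf{(H1)}) and an $\ell^1$ tail estimate for the Gromov product against the stationary measure (\textbf{(H2)}) — by going through the curve graph $\mathcal{C}(S)$. The random walk is simultaneously realized in $\mathcal{T}(S)$ and in the hyperbolic space $\mathcal{C}(S)$; deviation estimates on $\mathcal{C}(S)$ (obtained from Maher--Tiozzo and Benoist--Quint) produce, with high probability, subsegments of the Teichmüller ray whose projections to $\mathcal{C}(S)$ make definite progress; the Dowdall--Duchin--Masur contraction theorem then upgrades such progressing segments into contracting segments in $\mathcal{T}(S)$; and the geometric estimates for the Gromov product on the horoboundary are lifted from $\mathcal{C}(S)$ via these contracting segments (Propositions~\ref{estimate}, \ref{contraction-typical}, \ref{product-estimate-mod}, and \ref{product-estimate-2-mod}). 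Your proposal never invokes the curve graph or any contraction mechanism, and without a concrete replacement there is no way to produce the $\ell^1$-summable tail for the Gromov product that makes the coboundary term in Gordin's decomposition bounded — which is precisely the point you identify as the ``core technical difficulty.'' As written, the proposal is a program for the reduction but is missing both the correct thick-part length comparison and the hyperbolic-space lifting argument that makes the martingale decomposition go through.
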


\paragraph*{Central limit theorem on $\text{Out}(F_N)$.}

Let $N\ge 2$, let $F_N$ be a free group of rank $N$, and let $\text{Out}(F_N)$ denote its outer automorphism group. Let $\mu$ be a probability measure on $\text{Out}(F_N)$. We established in \cite{Hor14} the following analogue of Karlsson's theorem for the random walk on $\text{Out}(F_N)$, estimating the growth of nontrivial conjugacy classes in $F_N$ under application of a random product of outer automorphisms. Assume that the probability measure $\mu$ on $\text{Out}(F_N)$ is \emph{nonelementary}, i.e. the subgroup of $\text{Out}(F_N)$ generated by its support is not virtually cyclic, and does not virtually preserve the conjugacy class of any proper free factor of $F_N$, and assume that $\mu$ has finite first moment with respect to the asymmetric Lipschitz metric $d_{CV_N}$ on Culler--Vogtmann's outer space $CV_N$. Then there exists a Lyapunov exponent $\lambda>0$ such that for all primitive elements $g\in F_N$ and almost every sample path $(\Phi_n)_{n\in\mathbb{N}}$ of the left random walk on $(\text{Out}(F_N),\mu)$, one has $$\lim_{n\to +\infty}\frac{1}{n}\log||\Phi_n(g)||=\lambda,$$ where $||\Phi_n(g)||$ denotes the smallest word length of a conjugate of $\Phi_n(g)$, written in some prescribed free basis of $F_N$. Here, we recall that an element $g\in F_N$ is \emph{primitive} if it belongs to some free basis of $F_N$, and we denote by $\mathcal{P}_N$ the collection of all primitive elements of $F_N$. Again, the Lyapunov exponent $\lambda$ is also equal to the \emph{drift} of the random walk on $(\text{Out}(F_N),\mu)$, i.e. $$\lambda=\lim_{n\to +\infty}\frac{1}{n}d_{CV_N}(\Phi_n.o,o)$$ for almost every sample path $(\Phi_n)_{n\in\mathbb{N}}$ of the random walk. We will establish a central limit theorem for the variables $\log||\Phi_n(g)||$, under a second moment assumption on $\mu$.

\begin{theo}\label{tcl-intro}
Let $\mu$ be a nonelementary probability measure on $\text{Out}(F_N)$ with finite second moment with respect to $d_{CV_N}$. Let $\lambda>0$ be the drift of the random walk on $(\text{Out}(F_N),\mu)$ with respect to $d_{CV_N}$. Then there exists a centered Gaussian law $N_{\mu}$ on $\mathbb{R}$ such that for every compactly supported continuous function $F$ on $\mathbb{R}$, and all primitive elements $g\in \mathcal{P}_N$, one has $$\lim_{n\to +\infty}\int_{\text{Out}(F_N)} F \left(\frac{\log ||\Phi(g)||-n\lambda}{\sqrt{n}}\right)d\mu^{\ast n}(\Phi) = \int_{\mathbb{R}}F(t)dN_{\mu}(t),$$ uniformly in $g$.
\end{theo}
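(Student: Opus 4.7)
The plan is to deduce Theorem~\ref{tcl-intro} from a general central limit theorem for the Busemann cocycle on the horoboundary of a (possibly asymmetric) metric space, applied to the $\text{Out}(F_N)$-action on Culler--Vogtmann outer space $CV_N$ with the Lipschitz metric $d_{CV_N}$, and then to transfer the conclusion from the cocycle to the primitive word lengths $\log\|\Phi_n(g)\|$. The general CLT criterion would follow the Benoist--Quint strategy of \cite{BQ13}: instead of running spectral theory for the transfer operator on a space of H\"older functions on the boundary (which demands an exponential moment), one solves a Poisson equation for the Markov operator on a space of continuous functions on the horoboundary and controls the remainder through an $L^2$ estimate that needs only a second moment.

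First I would set up the horofunction compactification $CV_N \hookrightarrow \overline{CV_N}^h$ of outer space with respect to $d_{CV_N}$, together with the Busemann cocycle
$$\sigma:\text{Out}(F_N)\times\overline{CV_N}^h\to\mathbb{R},\qquad \sigma(\Phi,\xi)=h_\xi(\Phi^{-1}.o)-h_\xi(o),$$
where $o\in CV_N$ is the rose corresponding to the chosen free basis of $F_N$ and $h_\xi$ is a horofunction normalized at $o$. Since $|\sigma(\Phi,\xi)|$ is bounded by the symmetrized Lipschitz distance between $o$ and $\Phi.o$, the finite second moment of $\mu$ with respect to $d_{CV_N}$ yields a uniform finite second moment for $\sigma(\cdot,\xi)$. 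Using nonelementarity of $\mu$ together with the boundary convergence theorem established in \cite{Hor14}, one constructs a unique $\mu$-stationary ergodic probability measure $\nu$ on the horoboundary, supported on horofunctions coming from uniquely ergodic, arational $F_N$-trees, with
$$\lambda=\int_{\text{Out}(F_N)}\int_{\overline{CV_N}^h}\sigma(\Phi,\xi)\,d\nu(\xi)\,d\mu(\Phi).$$

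The analytic heart of the argument is to show that $\sigma-\lambda$ is cohomologous modulo a bounded function to a martingale-difference cocycle on the horoboundary, via a solution $\psi\in L^2(\nu)$ of the Poisson equation $(I-P_\mu)\psi=\sigma_0$ for a centered version $\sigma_0$ of $\sigma$, where $P_\mu\psi(\xi)=\int\psi(\Phi\cdot\xi)\,d\mu(\Phi)$ is the Markov operator on the horoboundary; the second moment hypothesis intervenes essentially in the $L^2$ bound on the remainder. The martingale CLT then yields that $\tfrac{1}{\sqrt n}(\sigma(\Phi_n,\xi)-n\lambda)$ converges in law to a centered Gaussian $N_\mu$, uniformly for $\xi$ in the support of $\nu$. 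A sublinear tracking estimate refining Karlsson's work in \cite{Kar14}, upgraded from $o(n)$ to $o(\sqrt n)$ via a maximal inequality that again consumes the second moment, then transfers the CLT from $\sigma(\Phi_n,\xi)$ to the Lipschitz displacement $d_{CV_N}(o,\Phi_n^{-1}.o)$. Finally, the elementary identity $\log\|\Phi_n(g)\|=\log l_o(\Phi_n(g))=\log l_{\Phi_n^{-1}.o}(g)$, combined with the Francaviglia--Martino formula $d_{CV_N}(o,\Phi_n^{-1}.o)=\log\sup_{h\ne 1} l_{\Phi_n^{-1}.o}(h)/l_o(h)$ and the fact that this supremum is realized up to an $o(\sqrt n)$ error by any primitive $g$ whenever the sample path tracks a uniquely ergodic arational tree, transfers the CLT to $\log\|\Phi_n(g)\|$ uniformly in $g\in\mathcal{P}_N$.

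The hard part will be the interaction of two difficulties specific to $\text{Out}(F_N)$ and $CV_N$, neither of which arises in the mapping class group case. First, $d_{CV_N}$ is asymmetric and $\overline{CV_N}^h$ is a non-Hausdorff quotient of the space of very small $F_N$-trees; the general CLT criterion must therefore be formulated so as to tolerate a boundary that is only measurably well-behaved, and the asymmetry forces a parallel treatment of the reverse random walk together with its associated horofunction compactification in order to match the direction $o\to\Phi_n^{-1}.o$ relevant for primitive lengths. Second, the uniform-in-$g$ lower bound $\log\|\Phi_n(g)\|\ge d_{CV_N}(o,\Phi_n^{-1}.o)-o(\sqrt n)$ requires that length functions of primitives asymptotically realize the Lipschitz supremum on a large-probability event; this rests on the concentration of $\nu$ on uniquely ergodic trees, the density of rational currents (in particular those supported on primitives) in the space of projectivized currents, and a second-moment-powered large-deviation estimate that converts qualitative continuity on uniquely ergodic points into a quantitative $o(\sqrt n)$ control.
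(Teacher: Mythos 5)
Your high-level plan — deduce the CLT from a Benoist--Quint-type centerability criterion for the Busemann cocycle on the horoboundary of $(CV_N,d_{CV_N})$, treating forward and backward horoboundaries separately because the metric is asymmetric, and then transfer the conclusion to $\log\|\Phi(g)\|$ — matches the paper's. But there is a genuine gap at the technical core. The engine that actually verifies the hypotheses of the abstract criterion (the analogue of your ``second-moment-powered large-deviation estimate'') is the action of $\text{Out}(F_N)$ on the Gromov-hyperbolic \emph{free factor graph} $FF_N$: all the quantitative deviation estimates — constant mean of $\beta^+$ in {\bf (H1)}, the summable tail bound for the Gromov product $(x|y)_o$ in {\bf (H2)}, and the sublinear tracking — are first proved on $FF_N$ using Maher--Tiozzo's theory combined with Benoist--Quint's Proposition \ref{bq}, and then \emph{lifted} to outer space by an explicit contraction property of progressing folding-path segments (Proposition \ref{contraction}, which is one-sided here precisely because of the asymmetry). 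Your proposal never invokes a hyperbolic action, and the items you list (concentration of $\nu$ on $\mathcal{UE}$, density of rational primitive currents, qualitative continuity) do not by themselves produce a summable sequence $(C_n)$; without this mechanism it is not clear where your deviation estimates come from.

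Two further divergences are worth flagging. (1) To solve the cohomological equation the paper uses the explicit $\psi(x)=-2\int_{Y^+}(x|y)_o\,d\nu^{\ast}(y)$ over the forward horoboundary, and Benoist--Quint's centerability framework requires $\psi$ to be \emph{bounded}, not merely in $L^2(\nu)$ — boundedness is exactly what Hypothesis {\bf (H2)} delivers. Your proposal weakens this to $\psi\in L^2(\nu)$ without saying how such a $\psi$ could be produced or controlled. (2) To pass from $\beta^-$ to $\kappa_{CV_N}$ and then to $\log\|\Phi(g)\|$, you propose an $o(\sqrt n)$ tracking estimate obtained from a maximal inequality. The paper does something sharper and structurally different: Propositions \ref{dual} and \ref{norm-busemann} show that for any $\epsilon>0$, with probability at least $1-\epsilon$ the differences $\sup_n|\beta^-(\Phi_n,x)-\kappa_{CV_N}(\Phi_n)|$ and $\sup_n\bigl|\kappa_{CV_N}(\Phi_n)-\log(\|\Phi_n(g)\|_o/\|g\|_o)\bigr|$ are \emph{uniformly bounded}, via bicontraction of typical folding rays and Lemma \ref{bf}; in particular the uniformity in $g\in\mathcal{P}_N$ is secured there (through the finite candidate set and the Bestvina--Feighn projection), not through uniformity of the abstract CLT over the horoboundary, which in Theorem \ref{tcl-gen} holds only for $\nu$-a.e.\ $x\in Y^-$.
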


\paragraph*{Strategy of proofs.} The present paper was inspired by the new approach by Benoist--Quint to the central limit theorem for linear groups \cite{BQ13} and hyperbolic groups \cite{BQ14}. This relies on the study of various cocycles. Given a countable group $G$, a compact $G$-space $X$ and a continuous cocycle $\sigma:G\times X\to\mathbb{R}$, Benoist--Quint developed a method for proving a central limit theorem for the cocycle $\sigma$. This follows the so-called Gordin's method, and requires proving that $\sigma$ is \emph{centerable}, i.e. can be written as 
\begin{equation}\label{coho}
\sigma(g,x)=\sigma_0(g,x)+\psi(x)-\psi(gx)
\end{equation}
for all $(g,x)\in G\times X$, where $\psi$ is a bounded measurable function on $X$, and where there exists $\lambda\in\mathbb{R}$ such that $$\int_{G}\sigma_0(g,x)d\mu(g)=\lambda$$ for all $x\in X$. One is then left showing a central limit theorem for $\sigma_0$, which can be done by using a classical central limit theorem for martingales due to Brown \cite{Bro71}.

In order to prove a central limit theorem for the word length in a hyperbolic group $G$, Benoist--Quint applied in \cite{BQ14} the above strategy to the Busemann cocycle on the horofunction boundary of $G$. We recall that the \emph{horofunction compactification} of a proper geodesic metric $G$-space $(X,d_X)$ is defined as the closure of the image of the embedding 
\begin{displaymath}
\begin{array}{cccc}
\psi:& X&\to &\mathcal{C}(X)\\
& z & \mapsto &\{x\mapsto d_X(x,z)-d_X(o,z)\} 
\end{array}
\end{displaymath}
\noindent where $\mathcal{C}(X)$ is the space of continuous functions on $X$, equipped with the topology of uniform convergence on compact subsets of $X$, and $o\in X$ is a basepoint. The \emph{Busemann cocycle} is the continuous cocycle $\beta$ on $G\times\partial_hX$ (where $\partial_hX$ is the horoboundary of $X$) defined by letting $\beta(g,h):=h(g^{-1}.o)$ for all $(g,h)\in G\times\partial_hX$. 
\\
\\
\indent In the mapping class group context, we will establish a central limit theorem for the Busemann cocycle on the horoboundary of the Teichmüller space $\mathcal{T}(S)$ of the surface, which we equip with the Teichmüller metric. It turns out that this is enough for proving Theorem \ref{intro}, since the Busemann cocycle on $\text{Mod}(S)\times\partial_h\mathcal{T}(S)$ is closely related to lengths of simple closed curves on $S$ (the metric on $\mathcal{T}(S)$ can indeed be controlled using lengths of curves). Similarly, in the case of $\text{Out}(F_N)$, it will be enough to prove a central limit theorem for the Busemann cocycle on the horoboundary of Culler--Vogtmann's outer space, which is closely related to lengths of conjugacy classes in $F_N$. A new difficulty arises however in the latter context: since the natural metric on outer space fails to be symmetric, outer space has in fact two horoboundaries (forward and backward), which appear to be rather different in nature (see \cite{Hor14}, where the forward horoboundary of outer space is completely described, and the geometry of the backward horoboundary is also investigated). We will actually only prove a central limit theorem for the Busemann cocycle on the backward horoboundary, but our arguments will require working with both boundaries.
\\
\\
\indent As a consequence of the work of Benoist--Quint, one can give a general condition under which the Busemann cocycle on the backward horoboundary $\partial_h^-X$ of a $G$-metric space $X$ (where $G$ is a countable group) satisfies a central limit theorem (a \emph{dual} version holds for the Busemann cocycle on the forward horoboundary $\partial_h^+X$ by reversing the roles of the forward and backward metrics). We denote by $\check{\mu}$ the \emph{reflected measure} on $G$, defined by letting $\check{\mu}(g):=\mu(g^{-1})$ for all $g\in G$. We denote by $(.|.)$ the natural extension of the Gromov product on $X$ to $\partial_h^-X\times\partial_h^+X$, defined by letting $$(x|y)_o:=-\frac{1}{2}\inf_{z\in X}(h^-_x(z)+h^+_y(z))$$ for all $x,y\in\partial_h^-X\times\partial_h^+X$, where $h_x^-$ and $h_y^+$ denote the functions on $X$ associated to $x$ and $y$. We denote by $d_X^{sym}$ the symmetrized metric on $X$, defined as the maximum of the forward and backward metrics.

\begin{theo}\label{intro-gen}
Let $(X,d_X)$ be a (possibly asymmetric) geodesic metric space, let $o\in X$, and let $G$ be a countable group acting by isometries on $X$. Let $\mu$ be a probability measure on $G$ with finite second moment with respect to $d_X^{sym}$. Assume that there exists a $G$-invariant measurable subset $Y^-\subseteq\partial_h^-X$, on which there exists a $\mu$-ergodic $\mu$-stationary probability measure $\nu$, and a $G$-invariant subset $Y^+\subseteq\partial_h^+X$, on which there exists a $\check{\mu}$-stationary probability measure $\nu^{\ast}$. Further assume that 
\begin{itemize}
\item \textbf{(H1)} there exists $\lambda\in\mathbb{R}$ such that $$\int_{G\times Y^+}\beta^+(g,y)d\check{\mu}(g)d\nu^{\ast}(y)=\lambda,$$ and 
\item \textbf{(H2)} there exists $\alpha>0$ and a sequence $(C_n)_{n\in\mathbb{N}}\in l^1(\mathbb{N})$ such that $$\nu^{\ast}(\{y\in Y^+|(x|y)_o\ge\alpha n\})\le C_n.$$
\end{itemize}
\noindent Then $\beta^{-}_{|Y^-}$ is centerable. Letting $$V_{\nu}:=\int_{G\times Y^-}(\beta_0^-(g,x)-\lambda)^2d\mu(g)d\nu(x),$$ and let $N_{\nu}$ be the centered Gaussian law on $\mathbb{R}$ with variance $V_{\nu}$. Then  $$\lim_{n\to +\infty}\int_{G} F \left(\frac{\beta^-(g,x)-n\lambda}{\sqrt{n}}\right)d\mu^{\ast n}(g) = \int_{\mathbb{R}}F(t)dN_{\nu}(t)$$ for $\nu$-a.e. $x\in Y^-$ and every compactly supported continuous function $F$ on $\mathbb{R}$.
\qed
\end{theo}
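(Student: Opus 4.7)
}
The plan is to follow the Benoist--Quint/Gordin strategy: first establish centerability of $\beta^-$ on $Y^-$ by constructing a bounded measurable function $\psi$ out of the ``dual'' stationary measure $\nu^*$, then apply Brown's martingale central limit theorem \cite{Bro71} to the centered cocycle $\beta_0^-$.

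\textbf{Constructing $\psi$ and verifying centerability.} I would define $\psi : Y^- \to \mathbb{R}$ by
$$\psi(x) := -2 \int_{Y^+} (x \mid y)_o \, d\nu^*(y).$$
The Gromov product is nonnegative (evaluate the defining infimum at $z=o$), and hypothesis (H2) combined with the layer-cake formula $\int (x \mid y)_o \, d\nu^*(y) = \int_0^{\infty} \nu^*\{y : (x \mid y)_o \geq t\} \, dt$ bounds $\psi$ uniformly in $x$ by roughly $2\alpha \sum_n C_n$. The geometric key is the identity
$$\beta^-(g, x) + \beta^+(g, y) = 2(gx \mid gy)_o - 2(x \mid y)_o,$$
which follows from the transformation rule $h^-_{gx}(z) = h^-_x(g^{-1} z) - \beta^-(g, x)$ applied inside the definition of the Gromov product, using $G$-invariance of $\inf_z$. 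Replacing $y$ by $g^{-1} y$ and using the cocycle identity $\beta^+(g, g^{-1} y) = -\beta^+(g^{-1}, y)$ yields
$$\beta^-(g, x) = \beta^+(g^{-1}, y) + 2(gx \mid y)_o - 2(x \mid g^{-1} y)_o.$$
Integrating against $d\nu^*(y) \, d\mu(g)$: hypothesis (H1) collapses the first term to $\lambda$; the second term is exactly $-P\psi(x)$; and the third term, via $\int_G (g^{-1})_*\nu^* \, d\mu(g) = \check{\mu} * \nu^* = \nu^*$ (the $\check{\mu}$-stationarity of $\nu^*$), collapses to $\psi(x)$. The result is
$$\int_G \beta^-(g, x) \, d\mu(g) = \lambda - P\psi(x) + \psi(x),$$
which is precisely the statement that $\beta_0^-(g, x) := \beta^-(g, x) - \psi(x) + \psi(gx)$ has constant $\mu$-average $\lambda$; centerability of $\beta^-$ on $Y^-$ is established.

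\textbf{Applying Brown's martingale CLT.} Fix $x \in Y^-$, let $(s_n)_{n \geq 1}$ be i.i.d.\ of law $\mu$, and set $x_n := s_n \cdots s_1 \cdot x$. The cocycle property of $\beta^-$ and a telescoping of the $\psi$-terms give
$$\beta^-(s_n \cdots s_1, x) - n \lambda = \sum_{k=1}^n \bigl( \beta_0^-(s_k, x_{k-1}) - \lambda \bigr) + \psi(x_0) - \psi(x_n).$$
Boundedness of $\psi$ makes the right-hand boundary term $O(1) = o(\sqrt{n})$, so the asymptotics are controlled by the martingale $M_n := \sum_{k=1}^n Y_k$ with $Y_k := \beta_0^-(s_k, x_{k-1}) - \lambda$. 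These are martingale differences for the natural filtration since $E[Y_k \mid \mathcal{F}_{k-1}] = 0$ by centerability. The estimate $|\beta^-(g, x)| \leq d_X^{sym}(o, go)$, together with boundedness of $\psi$ and the finite second moment of $\mu$ with respect to $d_X^{sym}$, yields $V_\nu < \infty$. Because $\nu$ is $\mu$-ergodic, the Markov chain $(x_k)$ is ergodic with stationary measure $\nu$, so Birkhoff's theorem delivers the almost sure convergence of conditional variances
$$\frac{1}{n} \sum_{k=1}^n E[Y_k^2 \mid \mathcal{F}_{k-1}] = \frac{1}{n} \sum_{k=1}^n \int_G (\beta_0^-(g, x_{k-1}) - \lambda)^2 d\mu(g) \longrightarrow V_\nu.$$
The Lindeberg condition follows from uniform $L^2$ integrability of the $Y_k$'s (the $L^2$ norm of $Y_k$ is bounded by a constant depending only on the second moment of $\mu$). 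Brown's theorem \cite{Bro71} then yields convergence of $M_n/\sqrt{n}$, and hence of $(\beta^-(s_n \cdots s_1, x) - n\lambda)/\sqrt{n}$, in distribution to $N_\nu$, for $\nu$-a.e.\ $x$.

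\textbf{Main obstacle.} The heart of the argument is the interplay between the two boundaries: the potential $\psi$ absolutely requires the existence of the ``dual'' stationary measure $\nu^*$ on $\partial_h^+ X$, and the centerability computation works only because $\check{\mu}$-stationarity of $\nu^*$ exactly kills the ``twisted'' term $\int (x \mid g^{-1} y)_o \, d\nu^*(y) \, d\mu(g)$. A secondary technical point is confirming that (H2) really furnishes uniform boundedness of $\psi$ in $x$ (as opposed to just finiteness for $\nu$-a.e.\ $x$); if only $\nu$-a.e.\ finiteness is available, one must work with $\psi$ as a $\nu$-essentially bounded function and adjust the martingale argument, which is a more delicate measurable version of the same scheme.
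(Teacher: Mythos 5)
Your proposal follows essentially the same route as the paper: the same potential $\psi(x)=-2\int_{Y^+}(x|y)_o\,d\nu^{\ast}(y)$, the same cocycle identity $\beta^-(g,x)=\beta^+(g^{-1},y)+2(gx|y)_o-2(x|g^{-1}y)_o$ (the paper's Equation (4), derived exactly as you do via equivariance of the Gromov product and $\check{\mu}$-stationarity of $\nu^{\ast}$), and the same reduction to Brown's martingale CLT via Proposition \ref{centerable-gen} and the Benoist--Quint machinery of \cite[Theorem 4.7]{BQ14}. Your only additional contribution is spelling out the martingale decomposition and Lindeberg/Birkhoff steps that the paper delegates to \cite{BQ13,BQ14}; the concern you raise about uniformity in $x$ in Hypothesis (H2) is indeed what the paper intends (and states explicitly in Proposition \ref{centerable-gen}), so it is not a gap.
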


To derive Theorem \ref{intro-gen} from Benoist--Quint's work, one is essentially left solving the cohomological equation \eqref{coho} for the cocycle $\beta^-$: the solution is given as in \cite{BQ14} by 
\begin{equation}\label{sol}
\psi(x):=-2\int_{Y^+}(x|y)_{o}d\nu^{\ast}(y).
\end{equation}
Hypothesis \textbf{(H2)} ensures that $\psi$ is finite and bounded, and Hypothesis \textbf{(H1)} ensures that the cocycle $\beta_0^-$ defined as in \eqref{coho} has constant average $\lambda$. 
\\
\\
\indent The intuition behind Hypothesis \textbf{(H1)} is that if you pick an element $g\in G$ at random with respect to the probability measure $\mu$, and a random point $y$ in the forward horoboundary of $X$, then in average you will tend to move away from $y$ (along distance $\lambda$) when going from $o$ to $go$. This is a typical behaviour if $X$ is a hyperbolic space.

The intuition behind Hypothesis \textbf{(H2)} is that if you pick two points $x$ and $y$ in the backward and forward horoboundaries of $X$, then with high probability, geodesic rays from $o$ to $x$ (for the backward metric) and from $o$ to $y$ (for the forward metric) will rapidly diverge. This is again a typical behaviour if $X$ is a hyperbolic space. 
\\
\\
\indent In order to prove Theorem \ref{intro}, we will establish Hypotheses \textbf{(H1)} and \textbf{(H2)} for the Gromov product on the Teichmüller space $\mathcal{T}(S)$. The rough intuition is that though not hyperbolic, Teichmüller space is \emph{hyperbolic in average}, and typical rays in $\mathcal{T}(S)$ contain infinitely many subsegments with hyperbolic-like behaviour (see \cite{DDM14}). 

To make sense of this intuition, we will take advantage of the mapping class group action on the curve graph $\mathcal{C}(S)$ of the surface, which is known to be Gromov hyperbolic thanks to work by Masur--Minsky \cite{MM99}. Combining Benoist--Quint's arguments with recent work by Maher--Tiozzo \cite{MT15} extending results about random walks on hyperbolic spaces to non-proper settings, we will first establish Hypotheses \textbf{(H1)} and \textbf{(H2)} for the Gromov product on $\mathcal{C}(S)$. 

There is a well-behaved Lipschitz map from $\mathcal{T}(S)$ to $\mathcal{C}(S)$. In order to obtain the desired deviation estimates for the realization of the random walk on $\mathcal{T}(S)$, we will \emph{lift} to $\mathcal{T}(S)$ our estimates for the realization of the random walk on $\mathcal{C}(S)$. This is done by appealing to a contraction property of typical geodesics in $\mathcal{T}(S)$, following a strategy that was already used in \cite{DH15} for establishing spectral theorems for the random walks on $\text{Mod}(S)$ and $\text{Out}(F_N)$. Since the realizations of the random walk on $\mathcal{T}(S)$ and $\mathcal{C}(S)$ both escape the origin with positive speed, typical rays in $\mathcal{T}(S)$ must contain subsegments whose projections to $\mathcal{C}(S)$ make definite progress. Any subsegment $I$ that makes progress also satisfies the following contraction property: any other Teichmüller segment with the same projection to $\mathcal{C}(S)$ as $I$ passes uniformly close to $I$ in $\mathcal{T}(S)$. This will be the key observation for establishing Hypotheses \textbf{(H1)} and \textbf{(H2)} for the Gromov product on $\mathcal{T}(S)$. 
\\
\\
\indent We use a similar strategy for establishing Hypotheses \textbf{(H1)} and \textbf{(H2)} for Culler--Vogtmann's outer space. This time, we will take advantage of the action of $\text{Out}(F_N)$ on the so-called \emph{free factor graph}, which was proved to be Gromov hyperbolic by Bestvina--Feighn \cite{BF12}. However, new technical difficulties arise, mainly coming from the asymmetry of the metric on outer space, and the contraction property we establish in this context is slightly weaker than the one we use in the context of mapping class groups.

\paragraph*{Structure of the paper.} The paper is organized as follows. In Section \ref{sec-bus}, we establish a central limit theorem for Busemann cocycles, in the general case of metric spaces that may fail to be symmetric. In Section \ref{sec-hyp}, we build on the works of Benoist--Quint \cite{BQ14} and Maher--Tiozzo \cite{MT15} to establish quantitative deviation estimates for random walks on (possibly non-proper) hyperbolic spaces, under a second moment condition on the measure. The proof of the central limit theorem in the context of mapping class groups is carried in Section \ref{sec-mod}, and we deal with the $\text{Out}(F_N)$ case in Section \ref{sec-out-1}.

\paragraph*{Acknowledgments.} I would like to thank Yves Guivarc'h and Anders Karlsson for enlightening discussions and their interest in this project. I acknowledge support from ANR-11-BS01-013 and from the Lebesgue Center of Mathematics.

\section{A central limit theorem for Busemann cocycles}\label{sec-bus}

\subsection{Random walks on groups: general definitions and notations}

\paragraph{General notations.} Let $G$ be a group, and let $\mu$ be a probability measure on $G$. The \emph{(left) random walk} on $G$ with respect to the measure $\mu$ is the Markov chain on $G$ with initial distribution the Dirac measure at the identity element, and with transition probabilities $p(x,y):=\mu(yx^{-1})$. We warn the reader that we will always be considering left random walks on groups in the present paper, because it is more natural when applying a random product of diffeomorphisms to a curve, or a random product of outer automorphisms to a conjugacy class. However, many results concerning random walks on either mapping class groups or outer automorphism groups of free groups are stated for right random walks in the literature (which is more natural when considering the random walk as an actual walk at random on the Cayley graph of the group, or on any space it acts on). The product probability space $\Omega:=(G^{\mathbb{N}^{\ast}},\mu^{\otimes\mathbb{N}^{\ast}})$ is the space of increments of the random walk. The position of the random walk at time $n$ is given from its position $g_0=e$ at time $0$ by successive multiplications on the left of independent $\mu$-distributed increments $s_i$, i.e. $g_n=s_n\dots s_1$. The \emph{path space} $\mathcal{P}:=G^{\mathbb{N}}$ is equipped with the $\sigma$-algebra generated by the cylinders $\{\textbf{g}\in\mathcal{P}|g_i=g\}$ for all $i\in\mathbb{N}$ and all $g\in G$, and the probability measure $\mathbb{P}$ induced by the map 
\begin{displaymath}
\begin{array}{cccc}
\Omega &\to & \mathcal{P}\\
(s_1,s_2,\dots)&\mapsto &(g_0,g_1,g_2,\dots).
\end{array}
\end{displaymath}

\paragraph{Moment and drift.} Assume now that $G$ acts by isometries on a (possibly asymmetric) metric space $(X,d_X)$ (i.e. $d_X$ is assumed to satisfy the separation axiom and the triangle inequality, but it may fail to be symmetric). Let $o\in X$ be a basepoint. We say that $\mu$ has \emph{finite first moment} with respect to $d_X$ if $$\int_{G}d_X(go,o)d\mu(g)<+\infty.$$ It has \emph{finite second moment} with respect to $d_X$ if $$\int_{G}d_X(go,o)^2d\mu(g)<+\infty.$$ By Kingman's subadditive ergodic theorem \cite{Kin68}, if $\mu$ has finite first moment with respect to $d_X$, then for $\mathbb{P}$-a.e. sample path $(g_n)_{n\in\mathbb{N}}$ of the random walk on $(G,\mu)$, the limit $$\lim_{n\to +\infty}\frac{1}{n}d_X(g_no,o)$$ exists and is equal to $$\inf_{n\in\mathbb{N}}\frac{1}{n}\int_Gd_X(go,o)d\mu^{\ast n}(g),$$ where $\mu^{\ast n}$ denotes the $n^{\text{th}}$ convolution of $\mu$. This limit is called the \emph{drift} of the random walk on $(G,\mu)$ with respect to $d_X$. 

The \emph{reflected measure} $\check{\mu}$ is the probability measure on $G$ defined by letting $\check{\mu}(g):=\mu(g^{-1})$ for all $g\in G$. Notice that if $d_X$ is symmetric, and if $\mu$ has finite first (or second) moment with respect to $d_X$, then the same also holds for $\check{\mu}$. The symmetry of $d_X$, together with the fact that the $G$-action on $X$ is by isometries, also implies that the drifts of the random walks on $(G,\mu)$ and $(G,\check{\mu})$ with respect to $d_X$ are equal in this situation.

\paragraph{Stationary measures.} A probability measure $\nu$ on $X$ is said to be \emph{$\mu$-stationary} if $\nu=\mu\ast\nu$, where we recall that the convolution $\mu\ast\nu$ is the probability measure on $X$ given by $$\mu\ast\nu(S):=\int_{G}\nu(h^{-1}S)d\mu(h)$$ for all measurable subsets $S\subseteq X$. Any compact space admits a $\mu$-stationary probability measure, obtained as a weak limit of the Cesàro averages of the measures $\mu^{\ast n}\ast\delta_o$, where $\delta_o$ is the Dirac measure at $o$. 

\subsection{Horoboundaries and Busemann cocycles}\label{sec-horo}

\paragraph*{Horoboundaries.} Let $(X,d_X)$ be a (possibly asymmetric) geodesic metric space. We let $d_X^+:=d_X$, and $d^-_X$ be the (possibly asymmetric) metric on $X$ defined by letting $d^-_X(x,y):=d_X(y,x)$ for all $x,y\in X$. We also let $d^{sym}_X:=\max(d_X^+,d^-_X)$, which is a symmetric metric on $X$. Letting 
\begin{displaymath}
\begin{array}{cccc}
h_z^+:&X&\to&\mathbb{R}\\
&x&\mapsto &d_X^+(x,z)- d_X^+(o,z)
\end{array}
\end{displaymath}
\noindent for all $z\in X$, one obtains a continuous map
\begin{displaymath}
\begin{array}{cccc}
h^+ :& X&\to &\mathcal{C}(X)\\
& z&\mapsto& h_z^+
\end{array}
\end{displaymath}
\noindent where $\mathcal{C}(X)$ denotes the space of continuous real-valued functions on $X$, equipped with the topology of uniform convergence on compact sets of $(X,d_X^{sym})$. The closure $\text{cl}_h^+(X):=\overline{h^+(X)}$ in $\mathcal{C}(X)$ is compact. In the case where $(X,d_X^{sym})$ is a proper metric space, and that $d_X^+$ and $d_X^-$ determine the same topology on $X$, then the embedding of $X$ in $\overline{h^+(X)}$ is a homeomorphism onto its image \cite[Proposition 2.2]{Wal14}. In this situation, the \emph{horofunction boundary} $\partial_h^+X:=\overline{h^+(X)}\smallsetminus h^+(X)$ is compact. We will denote by $\partial_h^-X$ the horofunction boundary of $X$ for the metric $d^-_X$. 

\paragraph*{Extension of the Gromov product to the horoboundary.} For all $x,y\in X$, the \emph{Gromov product} of $x$ and $y$ with respect to $o$ is defined as $$(x|y)_{o}:=\frac{1}{2}(d_X(x,o)+d_X(o,y)-d_X(x,y)).$$ We extend it to $\text{cl}_h^-X\times\text{cl}_h^+X$ by letting $$(x|y)_o:=-\frac{1}{2}\inf_{z\in X}(h^-_x(z)+h^+_y(z))$$ for all $x\in\text{cl}_h^-X$ and all $y\in\text{cl}_h^+X$ (where we denote by $h^-_x$ and $h^+_y$ the functions on $X$ corresponding to $x$ and $y$). When $x,y\in X$, this indeed coincides with the Gromov product defined above: in this case, the infimum is achieved at any point $z\in X$ lying on a geodesic segment from $x$ to $y$. We note that $(x|y)_o$ may be infinite. We also note that this extension is not always continuous, see the example attributed to Walsh in \cite[Appendix]{Miy14}.

\paragraph*{Busemann cocycles.} Let now $G$ be a group acting by isometries on $X$. Then the $G$-action on $X$ extends continuously to an action by homeomorphisms on $\partial_h^+X$ by letting $$g.h^+_x(z):=h^+_x(g^{-1}.z)-h^+_x(g^{-1}.o)$$ for all $x\in\partial_hX$ and all $z\in X$. The \emph{Busemann cocycle} $\beta^+_X:G\times \text{cl}_h^+X\to\mathbb{R}$ is the continuous cocycle defined by $$\beta^+_X(g,x):=h^+_x(g^{-1}.o)$$ for all $(g,x)\in G\times \text{cl}_h^+X$ (recall that if $Y$ is a $G$-space, a map $\sigma:G\times Y\to\mathbb{R}$ is a \emph{cocycle} if $\sigma(gh,y)=\sigma(g,hy)+\sigma(h,y)$ for all $g,h\in G$ and all $y\in Y$). We similarly define a cocycle $\beta_X^-$ on $G\times\text{cl}_h^-X$. Notice that 
\begin{equation}\label{bus}
|\beta^+_X(g,x)|\le d_X^{sym}(go,o)
\end{equation}
for all $x\in \text{cl}_h^+ X$ and all $g\in G$.

\subsection{Deviation estimates for cocycles: review of Benoist--Quint's work}

Let $G$ be a countable group acting continuously on a compact metrizable space $X$. Given a continuous cocycle $\sigma:G\times X\to\mathbb{R}$, we let $\sigma_{sup}:G\to\mathbb{R}$ be the function defined by $$\sigma_{sup}(g)=\sup_{x\in X}|\sigma(g,x)|$$ for all $g\in G$.

\begin{prop}(Benoist--Quint \cite[Proposition 3.2]{BQ13})\label{bq}
Let $G$ be a discrete group, let $X$ be a compact metrizable $G$-space, let $\mu$ be a probability measure on $G$. Let $\sigma:G\times X\to\mathbb{R}$ be a continuous cocycle such that $\sigma_{sup}\in L^2(G,\mu)$. Assume that there exists $\lambda\in\mathbb{R}$ such that for all $\mu$-stationary probability measures $\nu$ on $X$, one has $$\int_{G\times X}\sigma(g,x)d\mu(g)d\nu(x)=\lambda.$$ Then for all $\epsilon>0$, there exists a sequence $(C_n)_{n\in\mathbb{N}}\in l^1(\mathbb{N})$ such that for all $n\in\mathbb{N}$ and all $x\in X$, one has $$\mu^{\ast n}(\{g\in G||\sigma(g,x)-n\lambda|\ge\epsilon n\})\le C_n.$$ 
\end{prop}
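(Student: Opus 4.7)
The plan is to combine a uniform ergodic statement for the first moment of $\sigma$ with a block-martingale decomposition along the path, and finally a concentration inequality that exploits the $L^2$ hypothesis. Set $\varphi_n(x):=\int_G \sigma(g,x)\,d\mu^{*n}(g)$. The crucial preliminary step is the uniform convergence $\tfrac{1}{n}\varphi_n(x)\to \lambda$ in $x\in X$. The cocycle identity telescopes to
\[
\tfrac{1}{n}\varphi_n(x)\;=\;\int_X \varphi_1\,d\nu_n^x, \qquad \nu_n^x\;:=\;\tfrac{1}{n}\sum_{k=0}^{n-1} \mu^{*k}*\delta_x.
\]
Since $\sigma_{sup}\in L^1(\mu)$, the function $\varphi_1$ is continuous and bounded on $X$; and any weak-$*$ limit of $(\nu_n^x)$ is $\mu$-stationary, on which the hypothesis forces $\int \varphi_1\,d\nu=\lambda$. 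A compactness-and-contradiction argument---failure of uniform convergence would produce a sequence $x_n\in X$ along which $\tfrac{1}{n}\varphi_n(x_n)$ stays bounded away from $\lambda$, whereas any weak-$*$ accumulation point of $(\nu_n^{x_n})$ would be a stationary measure violating the hypothesis---yields the claim.

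Fix $\varepsilon>0$, and by the previous step choose $n_0$ such that $|\varphi_{n_0}(x)-n_0\lambda|\le n_0\varepsilon/4$ for every $x\in X$. Write $n=qn_0+r$ and cut the path $g=g_n\cdots g_1$ into $q$ independent $\mu^{*n_0}$-distributed blocks $h_1,\dots,h_q$ preceded by a tail block $\tilde h$ of length $r<n_0$. Setting $X_0:=\tilde h\cdot x$ and $X_j:=h_j X_{j-1}$ for $j\ge 1$, the cocycle identity yields
\[
\sigma(g,x)-n\lambda \;=\; R \;+\; \sum_{j=1}^{q} Z_j \;+\; \sum_{j=1}^{q} E_j,
\]
where $Z_j:=\sigma(h_j,X_{j-1})-\varphi_{n_0}(X_{j-1})$ is a martingale difference relative to the filtration $\mathcal F_j:=\sigma(\tilde h,h_1,\dots,h_j)$, $E_j:=\varphi_{n_0}(X_{j-1})-n_0\lambda$ is pointwise bounded by $n_0\varepsilon/4$ (so $\bigl|\sum E_j\bigr|\le n\varepsilon/4$), and the remainder $R:=\sigma(\tilde h,x)-r\lambda$ is controlled by $\sigma_{sup}(\tilde h)+n_0|\lambda|$.

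The main obstacle, and the heart of the argument, is to extract an $\ell^1$-summable tail bound for $\sum_{j\le q} Z_j$. A naive Chebyshev estimate, using the sub-additivity $\sigma_{sup}(gh)\le \sigma_{sup}(g)+\sigma_{sup}(h)$ to control the conditional variances of the $Z_j$, only yields decay of order $1/n$, which is not in $\ell^1$. To upgrade this to summability, as in the strategy of \cite{BQ13}, one employs a layered truncation of the blocks $h_j$ according to the size of $\sigma_{sup}(h_j)$: the $L^2$ hypothesis delivers exactly the summability needed to control the measure of each layer via the identity $\sum_m m\,\mu^{*n_0}(\sigma_{sup}>cm)\lesssim c^{-2}\|\sigma_{sup}\|_{L^2(\mu^{*n_0})}^2$, and on each layer a Bernstein- or Freedman-type concentration inequality applied to the truncated martingale delivers a sharp bound. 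A direct tail estimate on $\sigma_{sup}(\tilde h)$ handles the remainder $R$, and combining all contributions produces the required $\ell^1$-summable sequence $(C_n)$, uniform in $x\in X$.
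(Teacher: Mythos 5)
The paper does not prove this proposition; it is cited verbatim from Benoist--Quint \cite[Proposition 3.2]{BQ13}, so there is no proof in the text to compare against, and the evaluation must be against Benoist--Quint's own argument. Your sketch correctly reconstructs the two structural steps of that argument: (i) uniform convergence of $\tfrac1n\varphi_n$ to $\lambda$, obtained exactly as you describe by telescoping the cocycle, passing to the Ces\`aro averages $\nu_n^x$, and extracting a stationary weak-$*$ limit by compactness of $X$ and continuity of $\varphi_1$ (which follows from continuity of $\sigma$ and dominated convergence with dominating function $\sigma_{sup}\in L^1(\mu)$); and (ii) a block-martingale decomposition of $\sigma(g,x)-n\lambda$ into a martingale sum $\sum Z_j$ with conditionally centered, $L^2$-dominated increments, a deterministic error $\sum E_j$ bounded by $n\varepsilon/4$ by step (i), and a short remainder $R$ handled directly by Chebyshev. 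All of this is uniform in $x$, as required.

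The one place you wave your hands is where the real work is, and it is worth being precise about why. Your truncation calculus as phrased does not close: a single truncation at level $L_n$ pits the Freedman/Bernstein bound $\exp\bigl(-\tfrac{(n\varepsilon)^2}{c(V_n+L_n\,n\varepsilon)}\bigr)$ against the truncation tail $q_n\,\mathbb{P}[Y>L_n]\lesssim q_n L_n^{-2}\mathbb{E}[Y^2]$, and there is no choice of $L_n$ that makes both simultaneously $\ell^1$-summable: if $L_n\gtrsim n/\log^2 n$ the exponential bound degenerates, while if $L_n\lesssim n/\log^2 n$ the truncation tails sum to infinity. The device that actually delivers summability under a bare second-moment hypothesis is not a sub-Gaussian concentration inequality at a fixed truncation level, but a martingale analogue of the Hsu--Robbins--Erd\H{o}s / Baum--Katz complete convergence theorem: if the increments $Z_j$ are conditionally dominated in distribution by a fixed $Y\in L^2$, then $\sum_{q}\mathbb{P}\bigl[\,\bigl|\sum_{j\le q}Z_j\bigr|>q\varepsilon\,\bigr]<\infty$. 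Equivalently, the inequality one should invoke is of Fuk--Nagaev type, $\mathbb{P}[|S_q|>t]\le C\exp(-ct^2/V_q)+C\sum_j\mathbb{P}[|Z_j|>\beta t]$, whose second term, summed over $n$ with $t=n\varepsilon$ and $q_n\asymp n$, is finite precisely because $\sum_n n\,\mathbb{P}[Y>\beta n\varepsilon]\lesssim \mathbb{E}[Y^2]<\infty$. Your displayed identity $\sum_m m\,\mu^{*n_0}(\sigma_{sup}>cm)\lesssim c^{-2}\|\sigma_{sup}\|_{L^2}^2$ is exactly this estimate, so you clearly have the right moment bookkeeping in mind; just replace the appeal to ``Bernstein or Freedman'' by the appropriate martingale complete-convergence statement so that the $\ell^1$-summability genuinely follows.
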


\subsection{Central limit theorem for Busemann cocycles}

We will specify Benoist--Quint's central limit theorem for centerable cocycles \cite[Theorem 3.4]{BQ13} to the specific case of the Busemann cocycle on the horoboundary of a metric space (Theorem \ref{tcl-gen} below). In particular, we give a criterion, coming from \cite{BQ14}, ensuring centerability of the Busemann cocycle (see below for a definition). 

We start by recalling some terminology. Let $G$ be a discrete group acting continuously on a metrizable $G$-space $X$, let $\mu$ be a probability measure on $G$, and let $\sigma:G\times X\to\mathbb{R}$ be a cocycle. Given $\lambda\in\mathbb{R}$, we say that $\sigma$ \emph{has constant drift $\lambda$ with respect to $\mu$} if for all $x\in X$, one has $$\int_{G}\sigma(g,x)d\mu(g)=\lambda.$$ We say that $\sigma$ is \emph{$\mu$-centerable} if there exist a measurable cocycle $\sigma_0:G\times X\to\mathbb{R}$ with constant drift with respect to $\mu$, and a bounded measurable function $\psi:X\to\mathbb{R}$, such that for all $g\in G$ and all $x\in X$, one has $$\sigma(g,x)=\sigma_0(g,x)+\psi(x)-\psi(gx).$$ The \emph{average} of $\sigma$ with respect to $\mu$ is defined as the drift of $\sigma_0$ with respect to $\mu$. The following proposition gives a criterion for ensuring $\mu$-centerability of the Busemann cocycle $\beta^-$ on the backward horoboundary of $X$ (by reversing the roles of $d_X$ and $d_X^-$, a dual criterion can also be given for the cocycle $\beta^+$).

\begin{prop}\label{centerable-gen}
Let $(X,d_X)$ be a (possibly asymmetric) geodesic metric space, let $o\in X$, and let $G$ be a countable group acting by isometries on $X$. Let $\mu$ be a probability measure on $G$. Assume that there exist $G$-invariant measurable subsets $Y^-\subseteq\partial_h^-X$ and $Y^+\subseteq\partial_h^+X$, and a $\check{\mu}$-stationary probability measure $\nu^{\ast}$ on $Y^+$. Further assume that 
\begin{itemize}
\item \textbf{(H1)} there exists $\lambda\in\mathbb{R}$ such that $$\int_{G\times Y^+}\beta^+(g,y)d\check{\mu}(g)d\nu^{\ast}(y)=\lambda,$$ and 
\item \textbf{(H2)} there exists $\alpha>0$ and a sequence $(C_n)_{n\in\mathbb{N}}\in l^1(\mathbb{N})$ such that for all $x\in Y^-$, one has $$\nu^{\ast}(\{y\in Y^+|(x|y)_o\ge\alpha n\})\le C_n.$$
\end{itemize}
\noindent Then $\beta^-_{|Y^-}$ is $\mu$-centerable with average $\lambda$.
\end{prop}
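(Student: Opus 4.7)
My plan is to make explicit the solution of the cohomological equation sketched in \eqref{sol}. I will set
$$\psi(x) \;:=\; -2\int_{Y^+} (x|y)_o\, d\nu^{\ast}(y) \qquad (x \in Y^-),$$
and define $\beta_0^-(g,x) := \beta^-(g,x) + \psi(gx) - \psi(x)$. Since $\beta_0^-$ is automatically a cocycle, the two remaining tasks are to verify that $\psi$ is bounded and measurable, and that $\beta_0^-$ has constant drift $\lambda$ with respect to $\mu$.

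For boundedness, I would first note that the extended Gromov product is non-negative: evaluating the infimum in its definition at $z = o$ gives $h_x^-(o) + h_y^+(o) = 0$, so $(x|y)_o \geq 0$ and hence $\psi \leq 0$. For an upper bound on the integral defining $-\tfrac{1}{2}\psi$, I would apply the layer cake formula together with Hypothesis \textbf{(H2)}:
$$\int_{Y^+}(x|y)_o\, d\nu^{\ast}(y) \;=\; \int_0^{+\infty} \nu^{\ast}\{y \in Y^+ \mid (x|y)_o \geq t\}\, dt \;\leq\; \alpha + \alpha \sum_{n \geq 1} C_n,$$
which is finite, independently of $x \in Y^-$. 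Measurability of $\psi$ will follow from lower semicontinuity of $(x,y) \mapsto (x|y)_o$ (the infimum of a family of continuous functions of $z$ being upper semicontinuous, its negative is lower semicontinuous), hence Borel, and Fubini.

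The heart of the argument will be an equivariance formula for the extended Gromov product. Using $h_{gx}^-(z) = h_x^-(g^{-1}z) - \beta^-(g,x)$ and the analogous identity on the $+$ side, the substitution $z \mapsto gz$ inside the infimum yields
$$(gx|y)_o \;=\; (x|g^{-1}y)_o + \tfrac{1}{2}\bigl(\beta^-(g,x) - \beta^+(g^{-1},y)\bigr).$$
Plugging this into $\psi(gx) = -2\int (gx|y)_o\, d\nu^{\ast}(y)$, the $\beta^-(g,x)$ term cancels against the corresponding term in $\beta_0^-$, producing the rewriting
$$\beta_0^-(g,x) \;=\; -2\int_{Y^+}(x|g^{-1}y)_o\, d\nu^{\ast}(y) + \int_{Y^+}\beta^+(g^{-1},y)\, d\nu^{\ast}(y) - \psi(x),$$
which is appealing because $\beta^-$ no longer appears on the right-hand side; this shows $\beta_0^-$ is automatically $\mu$-integrable, without any a priori moment hypothesis on $\mu$.

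The final step is to integrate this identity against $d\mu(g)$ and substitute $h = g^{-1}$ (so that $\mu$ becomes $\check{\mu}$). By Fubini and $\check{\mu}$-stationarity of $\nu^{\ast}$, the first double integral collapses to $-2\int_{Y^+}(x|y)_o\, d\nu^{\ast}(y) = \psi(x)$, while the second equals $\lambda$ by Hypothesis \textbf{(H1)}. Adding $-\psi(x)$ gives $\int_G \beta_0^-(g,x)\, d\mu(g) = \lambda$ for every $x \in Y^-$, as required. The main technical obstacle I expect is the clean derivation of the equivariance formula for the extended Gromov product (carefully tracking how the infimum transforms under the $G$-action) and justifying Fubini, for which the uniform bound $|\beta^\pm(g,\cdot)| \leq d_X^{sym}(go,o)$ combined with the finiteness built into Hypothesis \textbf{(H1)} is precisely what is needed.
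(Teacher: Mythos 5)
Your proposal is correct and follows essentially the same route as the paper: you define $\psi(x) = -2\int_{Y^+}(x|y)_o\,d\nu^{\ast}(y)$, use \textbf{(H2)} to show $\psi$ is bounded, invoke the same equivariance identity $\beta^-(g,x)=\beta^+(g^{-1},y)+2(gx|y)_o-2(x|g^{-1}y)_o$ (which the paper writes as equation~\eqref{bq1}, credited to \cite[Lemma 1.2]{BQ14}), and integrate over $G\times Y^+$ using $\check{\mu}$-stationarity of $\nu^{\ast}$ and \textbf{(H1)} to get constant drift $\lambda$. The extra remarks you add (non-negativity of the extended Gromov product, lower semicontinuity giving measurability, and the observation that $\beta_0^-$ is integrable without a moment assumption) are fine elaborations but not a different method.
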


\begin{proof}
The argument follows the proofs of \cite[Propositions 4.2 and 4.6]{BQ14}. For all $x\in Y^-$, we let $$\psi(x):=-2\int_{Y^+}(x|y)_od\nu^{\ast}(y).$$ Hypothesis \textbf{(H2)} implies that $\psi$ is finite and bounded: indeed, one bounds the integral by cutting $Y^+$ into subsets of the form $\{y\in Y^+|\alpha n\le (x|y)_o\le\alpha(n+1)\}$ with $n$ varying over $\mathbb{N}$, and then using summability of the sequence $(C_n)_{n\in\mathbb{N}}$. We then note that 
\begin{equation}\label{bq1}
\beta^-(g,x)=\beta^+(g^{-1},y)+2(gx|y)_o-2(x|g^{-1}y)_o
\end{equation}
for all $g\in G$, all $x\in Y^-$ and all $y\in Y^+$ (this relation is a consequence of the definitions; it was already noticed by Benoist--Quint \cite[Lemma 1.2]{BQ14} in the case of a symmetric metric space). Since $\nu^{\ast}$ is $\check{\mu}$-stationary, we have $$-2\int_{G\times Y^+}(x|g^{-1} y)_od\mu(g)d\nu^{\ast}(y)=-2\int_{Y^+}(x|y)_od\nu^{\ast}(y)=\psi(x).$$ Using in addition Hypothesis \textbf{(H1)}, we obtain, by integrating \eqref{bq1} on $G\times Y^+$ with respect to $d\mu(g)d\nu^{\ast}(y)$, that $$\int_{G}\beta^-(g,x)d\mu(g)=\lambda-\int_{G}\psi(g.x)d\mu(g)+\psi(x)$$ for all $x\in Y^-$. The cocycle $\beta^-_0$ defined by letting $$\beta_0^-(g,x):=\beta^-(g,x)+\psi(g.x)-\psi(x)$$ for all $g\in G$ and all $x\in Y^-$ has constant drift $\lambda$ with respect to $\mu$. Hence $\beta^-_{|Y^-}$ is $\mu$-centerable with average $\lambda$. 
\end{proof}

Once centerability of $\beta^-_{|Y^-}$ is established, the proof of the following central limit theorem is the same as the proofs of \cite[Theorem 3.4]{BQ13} or \cite[Theorem 4.7]{BQ14}, which rely on a central limit theorem for martingales due to Brown \cite{Bro71}. The cocycle $\beta_0^-$ appearing in the definition of $V_{\nu}$ is a cocycle provided by $\mu$-centerability of $\beta^-_{|Y^-}$. As noticed in \cite[Remark 3.3]{BQ13}, the value of $V_{\nu}$ does not depend on the choice of $\beta_0^-$.

\begin{theo}\label{tcl-gen}
Let $(X,d_X)$ be a (possibly asymmetric) geodesic metric space, let $o\in X$, and let $G$ be a countable group acting by isometries on $X$. Let $\mu$ be a probability measure on $G$ with finite second moment with respect to $d_X^{sym}$. Assume that there exists a $G$-invariant measurable subset $Y^-\subseteq\partial_h^-X$, on which there exists a $\mu$-ergodic $\mu$-stationary probability measure $\nu$, and a $G$-invariant subset $Y^+\subseteq\partial_h^+X$, on which there exists a $\check{\mu}$-stationary probability measure $\nu^{\ast}$. Further assume that 
\begin{itemize}
\item \textbf{(H1)} there exists $\lambda\in\mathbb{R}$ such that $$\int_{G\times Y^+}\beta^+(g,y)d\check{\mu}(g)d\nu^{\ast}(y)=\lambda,$$ and 
\item \textbf{(H2)} there exists $\alpha>0$ and a sequence $(C_n)_{n\in\mathbb{N}}\in l^1(\mathbb{N})$ such that $$\nu^{\ast}(\{y\in Y^+|(x|y)_o\ge\alpha n\})\le C_n.$$
\end{itemize}
\noindent Let $$V_{\nu}:=\int_{G\times Y^-}(\beta_0^-(g,x)-\lambda)^2d\mu(g)d\nu(x),$$ and let $N_{\nu}$ be the centered Gaussian law on $\mathbb{R}$ with variance $V_{\nu}$. Then  $$\lim_{n\to +\infty}\int_{G} F \left(\frac{\beta^-(g,x)-n\lambda}{\sqrt{n}}\right)d\mu^{\ast n}(g) = \int_{\mathbb{R}}F(t)dN_{\nu}(t)$$ for $\nu$-a.e. $x\in Y^-$ and every compactly supported continuous function $F$ on $\mathbb{R}$.
\qed
\end{theo}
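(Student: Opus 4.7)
The plan is to reduce the stated central limit theorem to Brown's martingale CLT \cite{Bro71}, by first passing from $\beta^-$ to the centered cocycle $\beta_0^-$ supplied by Proposition \ref{centerable-gen}. The proof then follows the exact template of \cite[Theorem 3.4]{BQ13} and \cite[Theorem 4.7]{BQ14}; the bulk of the preparatory work has already been carried out in the previous proposition.

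Concretely, Proposition \ref{centerable-gen} applied under the present hypotheses (H1) and (H2) produces a measurable cocycle $\beta_0^- \colon G \times Y^- \to \mathbb{R}$ of constant drift $\lambda$ and a bounded measurable function $\psi \colon Y^- \to \mathbb{R}$ with
\[\beta^-(g,x) = \beta_0^-(g,x) + \psi(x) - \psi(gx).\]
Writing $g_n := s_n \cdots s_1$ for the random walk driven by i.i.d.\ $\mu$-distributed increments $s_k$, the cocycle relation telescopes into
\[\beta^-(g_n, x) - n\lambda \;=\; \sum_{k=1}^n \bigl(\beta_0^-(s_k, g_{k-1} x) - \lambda\bigr) \;+\; \psi(x) - \psi(g_n x).\]
Since $\psi$ is bounded, the boundary term is $O(1)$ and hence negligible after dividing by $\sqrt{n}$. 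It therefore suffices to establish a CLT for $S_n/\sqrt{n}$, where $S_n$ denotes the sum on the right-hand side.

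Next, I would check that $S_n$ is a square-integrable martingale. Integrability of each summand follows from the bound $|\beta^-(g,x)| \le d_X^{\mathrm{sym}}(go,o)$ given by \eqref{bus}, combined with boundedness of $\psi$ and the finite second moment of $\mu$ with respect to $d_X^{\mathrm{sym}}$. Conditioning on $\mathcal{F}_{k-1} := \sigma(s_1, \ldots, s_{k-1})$, the increment $s_k$ is $\mu$-distributed and independent of $g_{k-1}x$; the constant drift property of $\beta_0^-$ then gives $\mathbb{E}[\beta_0^-(s_k, g_{k-1} x) \mid \mathcal{F}_{k-1}] = \lambda$, so the summands are martingale differences. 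Ergodicity of the $\mu$-stationary measure $\nu$ on $Y^-$, applied to the stationary Markov chain $(g_n x)$, yields the almost sure convergence
\[\frac{1}{n}\sum_{k=1}^n \mathbb{E}\bigl[(\beta_0^-(s_k, g_{k-1} x) - \lambda)^2 \,\big|\, \mathcal{F}_{k-1}\bigr] \;\longrightarrow\; V_\nu,\]
which is precisely the hypothesis needed for Brown's theorem. Brown's CLT then delivers $S_n/\sqrt{n} \to N_\nu$ in law, and the reduction above finishes the proof.

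The principal obstacle has in essence already been resolved: once the cohomological equation \eqref{coho} has been solved in Proposition \ref{centerable-gen} by the boundary-pairing formula \eqref{sol}, the remainder is a routine martingale argument. The one subtlety still to address is that $V_\nu$ does not depend on the particular centering $\beta_0^-$; this is the content of \cite[Remark 3.3]{BQ13}, and is established by noting that any two valid centerings differ by an $L^2$ coboundary, whose contribution to $S_n$ is bounded and therefore vanishes in the quadratic variation after dividing by $n$.
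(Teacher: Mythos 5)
Your proposal is correct and follows exactly the route the paper takes: the paper's proof consists precisely of invoking Proposition \ref{centerable-gen} to solve the cohomological equation and then, as explicitly stated after that proposition, observing that the arguments of \cite[Theorem 3.4]{BQ13} and \cite[Theorem 4.7]{BQ14}, which run the martingale decomposition through Brown's CLT, apply verbatim with $Y^-$ in place of the full boundary. Your telescoping of $\beta^-(g_n,x)-n\lambda$ into a martingale plus a bounded coboundary, the use of $\mu$-ergodicity of $\nu$ to get the limiting conditional variance $V_\nu$, and the remark on independence of $V_\nu$ from the choice of $\beta_0^-$ via \cite[Remark 3.3]{BQ13} are all the same steps Benoist--Quint carry out and the author implicitly delegates to them.
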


\begin{rk}
If we assumed in addition that $Y^-=\partial_h^-X$, and that $\beta^-$ has unique covariance in the sense of \cite[Section 3.3]{BQ13} (which happens in particular if $\partial_h^-X$ carries a unique $\mu$-stationary probability measure), then Theorem \ref{tcl-gen} would be a specification of Benoist--Quint's central limit theorem for centerable cocycles on compact spaces \cite[Theorem 3.4]{BQ13}, in which case the convergence would be uniform in $x\in\partial_h^-X$ (notice that square-integrability of $\beta^-$ follows from the second moment assumption on $\mu$ together with Equation \eqref{bus}). Nevertheless, Benoist--Quint's proof, as it appears in \cite[Theorem 4.7]{BQ14} applies as such (with $Y^-$ in place of Benoist--Quint's space $X$) to prove Theorem \ref{tcl-gen}.     
\end{rk}

\section{Deviation results for random walks on hyperbolic spaces}\label{sec-hyp}

\subsection{Review on Gromov hyperbolic spaces}\label{sec-11}

We briefly review basic facts about Gromov hyperbolic spaces, and refer the reader to \cite{GdlH90} for details. A symmetric metric space $X$ is \emph{Gromov hyperbolic} if there exists $\delta\ge 0$ such that for all $x,y,z,o\in X$, one has $$(x|y)_{o}\ge\min ((x|z)_{o},(y|z)_{o})-\delta.$$ The smallest such $\delta$ is then called the \emph{hyperbolicity constant} of $X$. 

From now on, we let $X$ be a geodesic Gromov hyperbolic metric space. A sequence $(x_n)_{n\in\mathbb{N}}\in X^{\mathbb{N}}$ \emph{converges to infinity} if $(x_n|x_m)_{o}$ goes to $+\infty$ as $n$ and $m$ go to $+\infty$. Two sequences $(x_n)_{n\in\mathbb{N}},(y_n)_{n\in\mathbb{N}}\in X^{\mathbb{N}}$ that converge to infinity are said to be \emph{equivalent} if $(x_n|y_m)_{o}$ goes to $+\infty$ as $n$ and $m$ go to $+\infty$. The \emph{Gromov boundary} $\partial_{\infty} X$ of $X$ is the set of equivalence classes of sequences that converge to infinity. The Gromov product on $X$ extends to $X\cup\partial_{\infty}X$ by letting $$\langle a|b\rangle_{o}:=\inf\liminf_{i,j\to +\infty}\langle x_i|y_j\rangle_{o}$$ for all $a,b\in X\cup\partial_{\infty}X$, the infimum being taken over all sequences $(x_i)_{i\in\mathbb{N}}\in X^{\mathbb{N}}$ converging to $a$ and all sequences $(y_j)_{j\in\mathbb{N}}\in X^{\mathbb{N}}$ converging to $b$. 

In the case of Gromov hyperbolic geodesic metric spaces, we have defined two extensions of the Gromov product: the extension to the Gromov boundary (denoted with brackets), and the extension to $\text{cl}_hX$ from the previous section (denoted with parentheses). Notice that since the metric on $X$ is assumed to be symmetric in this section, we will just write $\text{cl}_hX$ without mentioning any superscript. The two extensions of the Gromov product are related in the following way. Following Maher--Tiozzo \cite[Section 3.2]{MT15}, we define $\text{cl}_h^{\infty}X$ as the subspace of $\text{cl}_hX$ made of those horofunctions $h$ such that $\inf_{x\in X}h(x)=-\infty$. Then there is a projection map $\pi:X\cup\text{cl}_h^{\infty}X\to X\cup\partial_{\infty}X$ which restricts to the identity on $X$, and such that for all $h\in\text{cl}_h^{\infty}X$, all sequences $(x_n)_{n\in\mathbb{N}}\in X^{\mathbb{N}}$ converging to $h$ for the topology on $\text{cl}_hX$, also converge to $\pi(h)$ for the topology on $X\cup\partial_{\infty}X$. All results below are adaptations of observations made by Benoist--Quint \cite[Section 2]{BQ14} to the case of a possibly non-proper Gromov hyperbolic geodesic metric space $X$.

\begin{lemma}\label{lemma-ccc-1}
There exists $C>0$, only depending on the hyperbolicity constant of $X$, such that $$\langle\pi(x)|\pi(y)\rangle_{o}-C\le (x|y)_{o}\le \langle\pi(x)|\pi(y)\rangle_{o}+C$$ for all $x,y\in X\cup \text{cl}_h^{\infty}X$ satisfying $\pi(x)\neq\pi(y)$.
\end{lemma}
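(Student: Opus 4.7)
The plan is to reduce both sides to the same quantity, namely the extended Gromov product $\langle\pi(x)|\pi(y)\rangle_o$, by rewriting horofunctions in terms of Gromov products. Set $\xi := \pi(x)$ and $\eta := \pi(y)$. The bridge is the exact identity
$$h_x(z) \;=\; d_X(z,x) - d_X(o,x) \;=\; d_X(o,z) - 2(x|z)_o,$$
valid for $x \in X$, which I would then upgrade to horofunctions in $\mathrm{cl}_h^\infty X$ up to a bounded error.

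The upgrade goes as follows. If $x_n \in X$ satisfies $h_{x_n} \to h_x$, then $h_x(z) = \lim_n [d_X(o,z) - 2(x_n|z)_o]$ pointwise in $z$. The four-point condition $(x_n|z)_o \ge \min((x_n|x_m)_o,(x_m|z)_o) - \delta$, combined with $(x_n|x_m)_o \to +\infty$ (which forces $x \in \mathrm{cl}_h^{\infty}X$), forces $(x_n|z)_o$ to be eventually approximately constant in $n$, and equal to $\langle\xi|z\rangle_o$ up to an additive error $O(\delta)$ that is \emph{uniform} in $z \in X$. Passing to the limit yields
$$h_x(z) = d_X(o,z) - 2\langle\xi|z\rangle_o + O(\delta),$$
and similarly for $h_y$. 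Summing and taking the infimum in $z$ then gives
$$(x|y)_o \;=\; \sup_{z \in X}\bigl[\langle\xi|z\rangle_o + \langle\eta|z\rangle_o - d_X(o,z)\bigr] + O(\delta).$$

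It remains to prove $\sup_{z\in X}\bigl[\langle\xi|z\rangle_o + \langle\eta|z\rangle_o - d_X(o,z)\bigr] = \langle\xi|\eta\rangle_o + O(\delta)$. For the upper bound, fix $z$ and assume by symmetry $\langle\xi|z\rangle_o \le \langle\eta|z\rangle_o$; the four-point condition applied to $\xi, \eta, z$ gives $\langle\xi|z\rangle_o \le \langle\xi|\eta\rangle_o + \delta$, while $\langle\eta|z\rangle_o \le d_X(o,z)$ is automatic. For the lower bound, the hypothesis $\pi(x) \ne \pi(y)$ is essential as it guarantees $t := \langle\xi|\eta\rangle_o < +\infty$; one then takes $z$ on a geodesic segment $[o,x_n]$ (for $n$ large) at distance $t$ from $o$, so that $\langle\xi|z\rangle_o = t + O(\delta)$ by construction, while $\langle\eta|z\rangle_o \ge t - O(\delta)$ follows from the four-point condition applied to arbitrary sequences converging to $\eta$.

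The main obstacle is the first step: horofunction convergence is only pointwise a priori, and $\langle\cdot|\cdot\rangle_o$ is itself defined via an $\inf\liminf$ over sequences, so neither notion automatically controls the other. What makes the argument work is precisely $\delta$-hyperbolicity, which upgrades pointwise convergence of $h_{x_n}$ to a uniform-in-$z$ estimate on the Gromov products $(x_n|z)_o$. Granting this, the rest is a standard four-point manipulation and the constant $C$ can be taken to be a small multiple of the hyperbolicity constant.
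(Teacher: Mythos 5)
Your proof is correct, and it takes a genuinely different route from the paper's. The paper's argument is geometric: it chooses a $(1,K)$-quasigeodesic $\gamma$ joining $\pi(x)$ to $\pi(y)$ (interval, half-line, or line according to whether the endpoints are interior or ideal), observes that the infimum defining $(x|y)_o$ is, up to a bounded additive error, achieved on the image of $\gamma$, and then reads off the estimate by evaluating $h_x + h_y$ along $\gamma$. Your argument is algebraic: you substitute the approximation $h_x(z) = d_X(o,z) - 2\langle\pi(x)|z\rangle_o + O(\delta)$, uniform in $z$ (which is the correct upgrade of the exact identity for interior points and is itself a consequence of the $\delta$-inequality applied to the approximating sequence and the fact that $\liminf(y_i|z)_o$ varies by at most $O(\delta)$ across sequences converging to the same ideal point), thereby reducing $(x|y)_o$ to $\sup_{z}\bigl[\langle\pi(x)|z\rangle_o + \langle\pi(y)|z\rangle_o - d_X(o,z)\bigr] + O(\delta)$, and then settle this variational problem by two applications of the four-point condition, using $\pi(x)\neq\pi(y)$ to make the supremum finite and to locate an approximate maximizer near a coarse midpoint on $[o,x_n]$. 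The underlying hyperbolic input (quasistability of Gromov products along sequences converging to a boundary point) is the same, but you drive it through the $\delta$-inequalities directly rather than through quasigeodesic stability, so you avoid constructing a bi-infinite quasigeodesic between two potentially ideal points; the paper's route is more compact to state, yours is somewhat more self-contained.
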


Given $C,K>0$, a \emph{$(C,K)$-quasigeodesic} in $X$ is a map $\gamma:\mathbb{R}\to X$ such that $$\frac{1}{C}|s-t|-K\le d_X(\gamma(s),\gamma(t))\le C|s-t|+K$$ for all $s,t\in\mathbb{R}$.

\begin{proof}[Proof of Lemma \ref{lemma-ccc-1}]
The key observation is that there exists a constant $K>0$, only depending on the hyperbolicity constant of $X$, such that for all $x,y\in X\cup\text{cl}_h^{\infty}X$ satisfying $\pi(x)\neq\pi(y)$, there exists a $(1,K)$-quasigeodesic $\gamma:I\to X$ (where $I\subseteq\mathbb{R}$ is either an interval, a half-line, or $I=\mathbb{R}$) such that $\gamma(t)$ converges to $\pi(x)$ (resp. $\pi(y)$) as $t$ goes to $-\infty$ (resp. $+\infty$). One then notices that up to a bounded additive error, the infimum in the formula defining $(x|y)_o$ can be taken over all points $z$ lying on the image of $\gamma$. 
\end{proof}

\begin{lemma}\label{lemma-hyp-0}
For all $x,y\in\text{cl}_h^{\infty}X$ such that $\pi(x)\neq\pi(y)$, there exists $C_{x,y}>0$ such that $$\max(h_x(m),h_y(m))\ge d_X(o,m)-C_{x,y}$$ for all $m\in X$. 
\end{lemma}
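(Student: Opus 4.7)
The plan is to rewrite horofunction values in terms of Gromov products, thereby reducing the conclusion to an upper bound on a minimum of two Gromov products at infinity, and then invoke Gromov's four-point condition together with the hypothesis $\pi(x)\ne\pi(y)$ to obtain that bound.

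More concretely, I would choose sequences $(x_n)_{n\in\mathbb{N}},(y_n)_{n\in\mathbb{N}}\in X^{\mathbb{N}}$ such that $h_{x_n}\to h_x$ and $h_{y_n}\to h_y$ uniformly on compact sets. The direct identity
$$h_{x_n}(m)=d_X(m,x_n)-d_X(o,x_n)=d_X(o,m)-2(x_n|m)_o$$
yields, by passage to the limit,
$$h_x(m)=d_X(o,m)-2L_x(m),\qquad L_x(m):=\tfrac{1}{2}(d_X(o,m)-h_x(m)),$$
and similarly for $y$. Since $\max(h_x(m),h_y(m))=d_X(o,m)-2\min(L_x(m),L_y(m))$, the desired estimate is equivalent to bounding $\min(L_x(m),L_y(m))$ by a constant depending only on $h_x$ and $h_y$, independent of $m$.

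To produce that bound I would apply Gromov's four-point condition to the triple $x_n,y_k,m$ based at $o$:
$$\min\bigl((x_n|m)_o,(y_k|m)_o\bigr)\le (x_n|y_k)_o+\delta.$$
Because $h_x,h_y\in\text{cl}_h^{\infty}X$, the projection map recalled just before the statement (Maher--Tiozzo \cite[Section~3.2]{MT15}) guarantees that $x_n\to\pi(x)$ and $y_k\to\pi(y)$ in $X\cup\partial_{\infty}X$. The hypothesis $\pi(x)\ne\pi(y)$ then forces $\langle\pi(x)|\pi(y)\rangle_o<+\infty$, and the classical comparison between the Gromov product on $X$ and its extension to $\partial_{\infty}X$ gives $\limsup_{n,k\to+\infty}(x_n|y_k)_o<+\infty$. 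Taking the joint limit in the four-point inequality therefore produces a finite constant $C_{x,y}$ (depending on $h_x$, $h_y$, and $\delta$) with $\min(L_x(m),L_y(m))\le C_{x,y}/2$, which is exactly what is needed.

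The only delicate point I expect is the compatibility between horoboundary convergence of $(x_n)$ and its convergence in $X\cup\partial_{\infty}X$, which is precisely what the map $\pi$ supplies in the possibly non-proper hyperbolic setting treated in this section; once that is in place, the rest is bookkeeping with Gromov products, completely parallel to the argument used in the proof of Lemma \ref{lemma-ccc-1}.
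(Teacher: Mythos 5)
Your proof is correct and takes essentially the same approach as the paper's (which is only a one-line sketch): the paper says to take $C_{x,y}$ roughly twice the distance from $o$ to a quasigeodesic line joining $\pi(x)$ to $\pi(y)$, which is the same quantity, up to universal additive constants, as your $2\langle\pi(x)|\pi(y)\rangle_o$; your route of rewriting $h_x(m)=d_X(o,m)-2(x_n|m)_o$ in the limit, applying the $\delta$-inequality $\min\bigl((x_n|m)_o,(y_k|m)_o\bigr)\le(x_n|y_k)_o+\delta$, and then using $\pi(x)\neq\pi(y)$ to get $\limsup_{n,k}(x_n|y_k)_o<\infty$ is precisely the standard way to formalize that sketch.
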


\begin{proof}
One has to choose $C_{x,y}$ to be sufficiently large compared to twice the distance from $o$ to a quasigeodesic line joining $x$ to $y$. Details of the proof are an exercise in hyperbolic metric spaces, and left to the reader.
\end{proof}

\noindent Let now $G$ be a group acting by isometries on $X$. We let $$\kappa_X(g):=d_X(go,o)$$ for all $g\in G$. As a consequence of Equation \eqref{bus} from Section \ref{sec-horo} and Lemma \ref{lemma-hyp-0} applied to $m=g^{-1}o$, we obtain the following fact.

\begin{cor}\label{lemma-hyp}
For all $x,y\in\text{cl}_h^{\infty}X$ such that $\pi(x)\neq\pi(y)$, there exists $C_{x,y}>0$ such that $$\kappa_X(g)-C_{x,y}\le\max(\beta_X(g,x),\beta_X(g,y))\le\kappa_X(g)$$ for all $g\in G$.
\qed
\end{cor}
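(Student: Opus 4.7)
The plan is to obtain both inequalities by a direct substitution into the two cited results; the corollary is essentially a reformulation of Lemma \ref{lemma-hyp-0} in the language of the Busemann cocycle.

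For the upper bound, I would invoke Equation \eqref{bus}, which gives $|\beta_X(g,z)|\le d_X^{sym}(go,o)$ for every $z\in\text{cl}_hX$. Since in this section the metric $d_X$ is symmetric and $G$ acts by isometries, $d_X^{sym}(go,o)=d_X(go,o)=\kappa_X(g)$. Thus $\beta_X(g,x)\le\kappa_X(g)$ and $\beta_X(g,y)\le\kappa_X(g)$, so the same bound holds for their maximum.

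For the lower bound, I would use that by the very definition of the Busemann cocycle, $\beta_X(g,z)=h_z(g^{-1}o)$ for every $z\in\text{cl}_hX$. Applying Lemma \ref{lemma-hyp-0} to the point $m=g^{-1}o\in X$ (using the hypothesis $\pi(x)\neq\pi(y)$), one obtains
$$\max\bigl(h_x(g^{-1}o),\,h_y(g^{-1}o)\bigr)\ge d_X(o,g^{-1}o)-C_{x,y},$$
with the same constant $C_{x,y}$ furnished by that lemma. The left-hand side is exactly $\max(\beta_X(g,x),\beta_X(g,y))$, and the right-hand side equals $\kappa_X(g)-C_{x,y}$ since the $G$-action on $X$ is isometric.

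I do not anticipate any real obstacle: the substantive geometric work has already been absorbed into Lemma \ref{lemma-hyp-0}, whose proof chooses $C_{x,y}$ large compared to the distance from $o$ to a quasigeodesic line joining $\pi(x)$ to $\pi(y)$. The only remaining task is bookkeeping, namely the identification $h_z(g^{-1}o)=\beta_X(g,z)$ and the equality $d_X(o,g^{-1}o)=d_X(go,o)=\kappa_X(g)$, which together assemble the two bounds into the advertised inequality.
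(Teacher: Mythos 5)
Your argument is exactly the paper's intended derivation: the paper introduces this as a corollary obtained "as a consequence of Equation \eqref{bus} from Section \ref{sec-horo} and Lemma \ref{lemma-hyp-0} applied to $m=g^{-1}o$," which is precisely the two ingredients you combine, with the same bookkeeping identifications $\beta_X(g,z)=h_z(g^{-1}o)$ and $d_X(o,g^{-1}o)=\kappa_X(g)$. Your proof is correct and follows the same route.
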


\begin{lemma}\label{lemma-hyp-2}
There exists $C>0$, only depending on the hyperbolicity constant of $X$, such that for all $g\in G$ and all $x\in\text{cl}_h^{\infty}X$, one has $$\left|(go|gx)_o-\frac{1}{2}(\kappa_X(g)+\beta_X(g,x))\right|\le C$$ and $$\left|(go|x)_o-\frac{1}{2}(\kappa_X(g)-\beta_X(g^{-1},x))\right|\le C.$$
\end{lemma}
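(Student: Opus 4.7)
The plan is to derive both bounds by direct expansion of the definitions; in fact the argument will show that each is an equality, so the conclusion holds with $C=0$ (the hyperbolicity assumption does not actually enter). The two elementary facts I will use are that every horofunction $h_x\in\mathrm{cl}_hX$ is $1$-Lipschitz for $d_X$ and satisfies $h_x(o)=0$: both properties are enjoyed by the functions $z\mapsto d(z,y)-d(o,y)$ and are preserved under the locally uniform limit defining $\mathrm{cl}_hX$. Consequently $h_x(w)\geq -d(w,o)$ for every $w\in X$.

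For the first inequality, I start from
$$(go|gx)_o \;=\; -\tfrac{1}{2}\inf_{z\in X}\bigl[h_{go}(z)+h_{gx}(z)\bigr],$$
plug in $h_{go}(z)=d(z,go)-\kappa_X(g)$, and use the cocycle form of the $G$-action on $\mathrm{cl}_hX$ to write $h_{gx}(z)=h_x(g^{-1}z)-\beta_X(g,x)$. After the substitution $w=g^{-1}z$, the isometry property of $g$ gives $d(z,go)=d(w,o)$, and the infimum reduces to $\inf_{w\in X}[d(w,o)+h_x(w)]$. This is $\geq 0$ by the Lipschitz bound and is attained at $w=o$, hence equals $0$, giving $(go|gx)_o=\tfrac{1}{2}(\kappa_X(g)+\beta_X(g,x))$.

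For the second inequality, the analogous expansion gives
$$(go|x)_o \;=\; \tfrac{1}{2}\kappa_X(g) \;-\; \tfrac{1}{2}\inf_{z\in X}\bigl[d(z,go)+h_x(z)\bigr].$$
The $1$-Lipschitz property yields $h_x(z)\geq h_x(go)-d(z,go)$, so $d(z,go)+h_x(z)\geq h_x(go)$; this bound is attained at $z=go$, so the infimum equals $h_x(go)$. Since $\beta_X(g^{-1},x)=h_x((g^{-1})^{-1}o)=h_x(go)$, I conclude $(go|x)_o=\tfrac{1}{2}(\kappa_X(g)-\beta_X(g^{-1},x))$.

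The only genuine obstacle is bookkeeping: correctly unwinding the conventions for the $G$-action on horofunctions, the direction in the cocycle identity $\beta_X(g,x)=h_x(g^{-1}o)$, and the change of variables. If one preferred a more "hyperbolic" proof, one could instead pass through Lemma \ref{lemma-ccc-1} and the Gromov boundary extension $\langle\cdot|\cdot\rangle_o$ (for which $\langle\xi|z\rangle_o=\tfrac{1}{2}(d(o,z)-h_\xi(z))$ up to a hyperbolicity-dependent constant), recovering the two identities with an additive error of the form quoted in the statement.
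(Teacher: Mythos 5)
Your computation is correct, and it actually gives a strictly sharper result than the lemma states: both estimates hold as exact identities (so $C=0$ works), with no appeal to hyperbolicity at all. The two ingredients you use—that every horofunction in $\mathrm{cl}_hX$ is $1$-Lipschitz (for the symmetric metric, which is the setting of Section~\ref{sec-hyp}) and satisfies $h_x(o)=0$, and that the $G$-action on $\mathrm{cl}_hX$ satisfies $h_{gx}(z)=h_x(g^{-1}z)-\beta_X(g,x)$—are exactly right, and the change of variables $w=g^{-1}z$ together with the isometry property of $g$ reduces each infimum to one attained at the obvious point ($w=o$ in the first case, $z=go$ in the second). The verification that $\beta_X(g^{-1},x)=h_x(go)$ follows directly from the definition $\beta_X(h,x)=h_x(h^{-1}o)$.

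This is a genuinely different route from the paper. The paper's proof is a brief sketch that invokes hyperbolicity of $X$: it argues that the infimum defining $(m|x)_o$ can be restricted, up to a bounded additive error controlled by the hyperbolicity constant, to points on a $(1,K)$-quasigeodesic ray from $m$ to $\pi(x)$, and then extracts the two estimates from this. Your argument bypasses this entirely by observing that each infimum is already achieved at a single explicit point, so no restriction to a ray and no bounded-error bookkeeping is needed. What the direct route buys is clarity and a stronger conclusion (equality rather than a hyperbolicity-dependent bound); what it gives up is nothing in this instance, though the paper's style of proof is more robust in situations where the infimum is not attained at an obvious point (the neighboring lemmas \ref{lemma-ccc-1} and \ref{lemma-hyp-0} genuinely do need quasigeodesics and hyperbolicity). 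Your closing paragraph correctly notes how one could recover the stated bound via Lemma~\ref{lemma-ccc-1} and the Gromov-boundary product, which is close in spirit to what the paper does.
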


\begin{proof}
This follows from the definitions and the hyperbolicity of $X$ by noticing that for all $m\in X$ and all $x\in\text{cl}_h^{\infty}X$, the infimum in the formula defining $(m|x)_o$ can be taken, up to a bounded additive error, over the points $z$ lying on a $(1,K)$-quasigeodesic ray from $m$ to $\pi(x)$.
\end{proof}

\subsection{A large deviation principle for the Busemann cocycle}

Building on work by Benoist--Quint \cite{BQ14} and Maher--Tiozzo \cite{MT15}, we will establish quantitative deviation results for random walks on groups acting on (possibly non-proper) hyperbolic spaces, under a second moment assumption on the measure. Deviation estimates were also obtained by Mathieu--Sisto \cite{MS14} under an exponential moment assumption on the measure. Throughout the section, we let $X$ be a separable Gromov hyperbolic geodesic metric space, and $G$ be a countable group acting by isometries on $X$. We fix a basepoint $o\in X$. A subgroup $H\subseteq G$ is \emph{nonelementary} if it contains two loxodromic isometries of $X$ with disjoint fixed point sets in $\partial_{\infty}X$. A probability measure $\mu$ on $G$ is \emph{nonelementary} if the subsemigroup of $G$ generated by the support of $\mu$ is a nonelementary subgroup of $G$. In this case, the reflected measure $\check{\mu}$ is also nonelementary. We start by recalling the following result of Maher--Tiozzo.

\begin{prop}(Maher--Tiozzo \cite[Theorem 1.1]{MT15})\label{MT}
Let $G$ be a countable group acting by isometries on a separable Gromov hyperbolic geodesic metric space $(X,d_X)$, let $o\in X$, and let $\mu$ be a nonelementary probability measure on $G$. Then for $\mathbb{P}$-a.e. every sample path $\mathbf{g}:=(g_n)_{n\in\mathbb{N}}$ of the random walk on $(G,\mu)$, the sequence $(g_n^{-1}.o)_{n\in\mathbb{N}}$ converges to a point $\text{bnd}(\mathbf{g})\in\partial_{\infty}X$. The hitting measure $\nu^{\ast}$ on $\partial_{\infty}X$ defined by letting $$\nu^{\ast}(S)=\mathbb{P}[\text{bnd}(\mathbf{g})\in S]$$ for all measurable subsets $S\subseteq\partial_{\infty}X$, is nonatomic, and it is the unique $\check{\mu}$-stationary probability measure on $\partial_{\infty}X$. 
\end{prop}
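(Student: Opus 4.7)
The plan is to use the horofunction compactification as a substitute for the (possibly non-compact) Gromov boundary, and then transfer $\check{\mu}$-stationary measures to $\partial_\infty X$ via the projection $\pi$ from Section~\ref{sec-11}. Since $X$ is separable, $\mathcal{C}(X)$ (with the topology of uniform convergence on compact sets) is metrizable and $\mathrm{cl}_hX$ is a compact metrizable $G$-space. Weak-$\ast$ compactness of probability measures on $\mathrm{cl}_hX$ together with a standard Cesàro averaging of $\check{\mu}^{\ast n}\ast\delta_{h_o}$ produces a $\check{\mu}$-stationary probability measure $\bar{\nu}$ on $\mathrm{cl}_hX$. One then shows that nonelementarity forces $\bar{\nu}(X\cup(\mathrm{cl}_hX\setminus\mathrm{cl}_h^\infty X))=0$: otherwise a positive fraction of mass would concentrate on a $G$-invariant set of horofunctions bounded below, which one rules out using that two independent loxodromics generated by the support drive any point out to infinity. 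Pushing $\bar{\nu}$ forward under $\pi$ yields a $\check{\mu}$-stationary probability measure $\nu^{\ast}$ on $\partial_\infty X$.

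Next I would establish almost sure convergence of $(g_n^{-1}\cdot o)$. Because $\mathrm{cl}_hX$ is a compact metrizable $G$-space carrying the stationary measure $\bar{\nu}$, the martingale convergence theorem gives for $\mathbb{P}$-a.e.\ sample path $\mathbf{g}$ a limit probability measure $\omega(\mathbf{g})$ on $\mathrm{cl}_hX$, obtained as the weak-$\ast$ limit of $(g_n)_{\ast}\bar{\nu}$ along the $\mu$-walk. The main point is to show that $\omega(\mathbf{g})$ is almost surely a Dirac mass supported in $\mathrm{cl}_h^\infty X$. Lemma~\ref{lemma-hyp-0} (or its Gromov-product reformulation Lemma~\ref{lemma-ccc-1}) implies that as soon as $g_n^{-1}o$ has two distinct accumulation points $\xi,\eta\in\partial_\infty X$, the Busemann functions $h_\xi,h_\eta$ remain bounded below along the trajectory, forcing $\kappa_X(g_n^{-1})$ to stay bounded, contradicting nonelementarity (which yields escape to infinity). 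Hence there is a single accumulation point $\mathrm{bnd}(\mathbf{g})\in\partial_\infty X$, and $\omega(\mathbf{g})=\delta_{\mathrm{bnd}(\mathbf{g})}$. The hitting measure is then the distribution of $\mathrm{bnd}(\mathbf{g})$, which by construction coincides with $\nu^{\ast}$.

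For nonatomicity of $\nu^{\ast}$: suppose $\nu^{\ast}$ had an atom. Then the set of atoms of maximal mass is finite, nonempty and $\check{\mu}$-almost surely invariant; iterating $\check{\mu}$ on it would exhibit a finite $G$-invariant subset of $\partial_\infty X$ (after intersecting with stationarity), contradicting the existence of two independent loxodromics with disjoint fixed-point sets. Uniqueness of $\nu^{\ast}$ follows from the fact that, by the convergence in the previous paragraph, every $\check{\mu}$-stationary probability measure $\nu'$ on $\partial_\infty X$ satisfies $(g_n)_{\ast}\nu'\to\delta_{\mathrm{bnd}(\mathbf{g})}$ for $\mathbb{P}$-a.e.\ $\mathbf{g}$; averaging over $\mathbf{g}$ recovers $\nu'$ as the distribution of $\mathrm{bnd}(\mathbf{g})$, which is independent of $\nu'$.

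The main obstacle is exactly the non-properness of $X$: one cannot identify $\partial_\infty X$ with a natural compactification on which to produce a stationary measure directly. The horofunction boundary $\mathrm{cl}_hX$ sidesteps this, but one must carefully control the set $\mathrm{cl}_hX\setminus\mathrm{cl}_h^\infty X$ of horofunctions that are bounded below, and show that the projection $\pi$ is well-defined and measurable with respect to the relevant stationary measure. Lemmas~\ref{lemma-ccc-1} and~\ref{lemma-hyp-0} are the workhorses translating hyperbolicity into quantitative control; the rest of the argument follows the Furstenberg--Kaimanovich--Kifer template.
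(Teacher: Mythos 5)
You should note at the outset that the paper itself does not prove Proposition~\ref{MT}: it is quoted directly from Maher--Tiozzo \cite[Theorem 1.1]{MT15} and used as a black box throughout, so there is no ``paper's proof'' to compare against --- your blind attempt is really a reconstruction of Maher--Tiozzo's argument.

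Your overall template (compactify via the horofunction compactification, produce a $\check{\mu}$-stationary measure by Ces\`aro averaging, argue it is supported on $\text{cl}_h^{\infty}X$, push forward by $\pi$ to $\partial_{\infty}X$, and invoke martingale convergence) is the right one and is broadly what Maher--Tiozzo do. However, the pivotal step --- that the martingale limit $\omega(\mathbf{g})$ is almost surely a Dirac mass, so that $g_n^{-1}o$ converges to a single boundary point --- is argued incorrectly. You claim that if $(g_n^{-1}o)$ had two distinct accumulation points $\xi,\eta\in\partial_{\infty}X$, then $h_\xi,h_\eta$ would remain bounded below along the orbit, forcing $\kappa_X(g_n^{-1})$ to stay bounded. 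Both implications fail. If a subsequence $g_{n_k}^{-1}o\to\xi$, then $h_\xi(g_{n_k}^{-1}o)\to-\infty$, so $h_\xi$ is certainly not bounded below along the orbit; and Lemma~\ref{lemma-hyp-0} gives the estimate $\max(h_\xi(m),h_\eta(m))\ge d_X(o,m)-C_{\xi,\eta}$, so to bound $\kappa_X$ from it you would need both Busemann functions bounded \emph{above}, which again fails: along a subsequence converging to $\xi$, one has $h_\eta(g_{n_k}^{-1}o)\to+\infty$. A sequence in a hyperbolic space can perfectly well escape to infinity while having several distinct accumulation points on $\partial_{\infty}X$; two accumulation points obstruct \emph{convergence}, not \emph{escape}, so your contradiction with nonelementarity never materializes. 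The actual Maher--Tiozzo argument shows that $(g_n)_*\bar{\nu}$ concentrates almost surely via shadow estimates and positivity of the drift, not by the bounded-orbit dichotomy you propose. Separately, you assert without argument that $\check{\mu}$-stationary measures on $\text{cl}_hX$ give no mass to horofunctions that are bounded below; this is precisely \cite[Proposition 4.4]{MT15} and needs its own (maximum-principle-type) argument, not merely the presence of independent loxodromics.
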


We will first prove a deviation principle for the Busemann cocycle $\beta_X:G\times\text{cl}_hX\to\mathbb{R}$, under a second moment assumption on $\mu$ (Proposition \ref{ccc} below). We recall that $\kappa_X(g):=d_X(go,o)$ for all $g\in G$. The following lemma is an extension of \cite[Proposition 3.2]{BQ14} to the case where $X$ is no longer assumed to be proper.

\begin{lemma}(Benoist--Quint \cite[Proposition 3.2]{BQ14})\label{lemma-ccc}
Let $G$ be a countable group acting by isometries on a separable Gromov hyperbolic metric space $X$, and let $\mu$ be a nonelementary probability measure on $G$. Then for all $\epsilon>0$, there exists $T>0$ such that for all $x\in\text{cl}_h^{\infty}X$, one has $$\mathbb{P}\left[\sup_{n\in\mathbb{N}}|\kappa_X(g_n)-\beta_X(g_n,x)|\le T\right]\le 1-\epsilon.$$ 
\end{lemma}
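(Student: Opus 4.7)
My plan is to first use Lemma \ref{lemma-hyp-2} to rephrase the quantity $\kappa_X(g_n)-\beta_X(g_n,x)$ in terms of the Gromov product $(g_n^{-1}o\mid\pi(x))_o$. Specifically, there is a constant $C_0>0$ depending only on the hyperbolicity constant of $X$ such that
$$\bigl|\kappa_X(g)-\beta_X(g,x)-2(g^{-1}o\mid\pi(x))_o\bigr|\le 2C_0$$
for every $g\in G$ and every $x\in\mathrm{cl}_h^\infty X$. Since the Gromov product is nonnegative, it is therefore enough to prove that for every $\epsilon>0$ there exists $T'>0$ such that for every $\xi\in\partial_\infty X$,
$$\mathbb{P}\Bigl[\sup_{n\ge 0}(g_n^{-1}o\mid\xi)_o> T'\Bigr]\ge\epsilon.$$

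For this I would follow the strategy of Benoist--Quint in \cite[Proposition 3.2]{BQ14}, replacing the properness arguments used there by Maher--Tiozzo's convergence result (Proposition \ref{MT}). That proposition produces a unique, nonatomic $\check\mu$-stationary probability measure $\nu^\ast$ on $\partial_\infty X$ such that $g_n^{-1}o$ converges $\mathbb{P}$-a.s.\ to a random point $\mathrm{bnd}(\mathbf g)$ with law $\nu^\ast$. On any event where $\mathrm{bnd}(\mathbf g)=\xi$ the Gromov product $(g_n^{-1}o\mid\xi)_o$ diverges; more generally, if $\mathrm{bnd}(\mathbf g)$ lies in a small enough neighborhood of $\xi$ in the Gromov compactification, the Gromov product must exceed $T'$ at some finite time, by Lemma \ref{lemma-ccc-1} relating the two extensions of the Gromov product to the Gromov boundary and to $\mathrm{cl}_h^\infty X$. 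Hence it is enough to exhibit $\epsilon>0$ (independent of $\xi$) such that with probability at least $\epsilon$ the limit $\mathrm{bnd}(\mathbf g)$ lies in a prescribed neighborhood of $\xi$.

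To obtain such a uniform lower bound I would use the decomposition $g_n^{-1}=\check g_k\cdot(s_{k+1}^{-1}\cdots s_n^{-1})$. Conditionally on the initial segment $\check g_k:=s_1^{-1}\cdots s_k^{-1}$, the independent tail converges to a point $\eta$ distributed according to $\nu^\ast$, so that the conditional law of $\mathrm{bnd}(\mathbf g)$ given $\check g_k$ is $(\check g_k)_\ast\nu^\ast$. The nonelementarity of $\mu$ ensures that the $G$-orbit of $\mathrm{supp}(\nu^\ast)$ is minimal in the limit set of the group generated by $\mathrm{supp}(\mu)$, so that after finitely many steps one can with uniformly positive probability place $\check g_k^{-1}\xi$ close to a point where $\nu^\ast$ gives positive mass to every neighborhood; combined with the $\check\mu$-stationarity of $\nu^\ast$ (which lets us average the conditional probabilities) this gives the required uniform estimate.

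The main obstacle, compared with Benoist--Quint's original setting, is precisely the lack of properness of $X$: the Gromov compactification need not be compact, and neighborhoods in the horofunction and Gromov boundaries need not be metrizably related in a uniform way. The substitute is the a.s.\ convergence provided by Proposition \ref{MT} together with the comparison of the two extended Gromov products in Lemma \ref{lemma-ccc-1}, which together make Benoist--Quint's bootstrapping argument go through verbatim in our non-proper setting.
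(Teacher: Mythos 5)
The printed statement carries a sign typo: for the conclusion to be useful --- in particular to give, as in Corollary \ref{H1-hyp}, that $\sup_n|\kappa_X(g_n)-\beta_X(g_n,x)|<\infty$ $\mathbb{P}$-almost surely for each $x$ --- the inequality must read $\ge 1-\epsilon$, as in Benoist--Quint's \cite[Proposition 3.2]{BQ14}. Indeed the paper's own proof sketch asserts precisely that the supremum is a.s.\ finite, which already forces $\mathbb{P}[\sup\le T]\to 1$ as $T\to\infty$ and thereby contradicts the printed $\le 1-\epsilon$ for, say, $\epsilon=1/2$. Your opening reduction via Lemma \ref{lemma-hyp-2} and Lemma \ref{lemma-ccc-1}, giving $\kappa_X(g)-\beta_X(g,x)=2\langle g^{-1}o\mid\pi(x)\rangle_o+O(1)$, is correct and matches the paper. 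But you then take the printed $\le$ at face value and reduce to proving the uniform \emph{lower} bound $\mathbb{P}[\sup_n\langle g_n^{-1}o\mid\xi\rangle_o>T']\ge\epsilon$, whereas the intended (and usable) statement is the uniform \emph{upper} bound $\mathbb{P}[\sup_n\langle g_n^{-1}o\mid\xi\rangle_o>T']\le\epsilon$: with high probability, uniformly in $x$, the realization of the walk stays at bounded Gromov product from $\pi(x)$.

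Because of that sign flip, your third paragraph argues in exactly the wrong direction. You try to place $\mathrm{bnd}(\mathbf g)$ \emph{near} $\xi$ with uniformly positive probability; but since $\nu^\ast$ is nonatomic and $\mathrm{cl}_hX$ is compact metrizable, the $\nu^*$-mass of shrinking neighborhoods tends to $0$ uniformly, and if $\xi$ lies off $\mathrm{supp}(\nu^\ast)$ the relevant probability is simply $0$ once $T'$ is large. So the estimate you aim for cannot hold for all $\epsilon>0$ uniformly in $\xi$, and the projective/minimality argument in your final paragraph cannot rescue it. The correct mechanism (which the paper sketches) is the reverse of yours: nonatomicity of $\nu^\ast$ ensures that $\mathrm{bnd}(\mathbf g)$ is, with probability $\ge 1-\epsilon$ uniformly in $x$, \emph{far} from $\pi(x)$ in the sense that $\langle\mathrm{bnd}(\mathbf g)\mid\pi(x)\rangle_o$ is controlled; on that event a coarse center $z$ of the ideal triangle $(o,\pi(x),\mathrm{bnd}(\mathbf g))$ lies within controlled distance of $o$, and $|\kappa_X(g_n)-\beta_X(g_n,x)|$ agrees with $2d_X(o,z)$ up to a bounded additive error for all large $n$, which yields the required tail bound uniformly in $x$.
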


\begin{proof}
The proof is similar to the proof of \cite[Proposition 3.2]{BQ14}, by using the convergence statement recalled in Proposition \ref{MT} and the adaptation of Benoist--Quint's estimates given in Section \ref{sec-11}. The rough idea is to show that if $x\in\text{cl}_h^{\infty}X$, then $\mathbb{P}$-a.s., one has $$\sup_{n\in\mathbb{N}}|\kappa_X(g_n)-\beta_X(g_n,x)|<+\infty,$$ from which the lemma follows. This fact is shown by noticing that $\mathbb{P}$-a.s. the sequence $(g_n^{-1}.o)_{n\in\mathbb{N}}$ converges to a boundary point $\text{bnd}(\mathbf{g})$ distinct from $x$ (because the hitting measure is nonatomic), and denoting by $z$ a coarse center for the triangle made of $o, x$ and $\text{bnd}(\mathbf{g})$, then $|\kappa_X(g_n)-\beta_X(g_n,x)|$ is equal, up to a bounded additive error,  to $2d_X(o,z)$, for all sufficiently large $n\in\mathbb{N}$.  
\end{proof}

\begin{cor}\label{H1-hyp}
Let $G$ be a countable group acting by isometries on a separable Gromov hyperbolic metric space $X$, and let $\mu$ be a nonelementary probability measure on $G$ with finite first moment with respect to $d_X$. Let $\lambda_X$ be the drift of the random walk on $(G,\mu)$ with respect to $d_X$. Then $$\int_{G\times\text{cl}_hX}\beta_X(g,x)d\mu(g)d\theta(x)=\lambda_X$$ for all $\mu$-stationary probability measures $\theta$ on $\text{cl}_hX$.
\end{cor}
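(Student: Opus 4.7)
The proof has three main steps: reduction to the $\mu$-ergodic case, an application of Birkhoff's ergodic theorem to a skew product to express the integral as an almost sure limit, and identification of this limit with $\lambda_X$ using Lemma \ref{lemma-ccc}. By $\mu$-ergodic decomposition one may assume $\theta$ is $\mu$-ergodic. A preliminary observation, which is the main technical point, is that any such $\theta$ gives full mass to $\text{cl}_h^{\infty}X$: the complementary subset of $\text{cl}_hX$ is $G$-invariant, and its horofunctions (having bounded infimum) can be associated with bounded ``sets of centers'' in $X$, so a positive-mass restriction to it would yield a $\mu$-stationary behavior of the random walk localized to a bounded region of $X$, contradicting positivity of the drift.

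Now consider the skew product $T(\mathbf{s},x):=(\sigma\mathbf{s},s_1.x)$ on $G^{\mathbb{N}^{\ast}}\times\text{cl}_hX$, where $\sigma$ denotes the shift on sequences. The measure $\mu^{\otimes\mathbb{N}^{\ast}}\otimes\theta$ is $T$-invariant and $T$-ergodic. Applying Birkhoff's theorem to $f(\mathbf{s},x):=\beta_X(s_1,x)$ (which lies in $L^1$ since $|f|\le\kappa_X(s_1)$ and $\mu$ has finite first moment), the cocycle relation $\sum_{k=0}^{n-1}f\circ T^k(\mathbf{s},x)=\beta_X(g_n,x)$ gives
\[
\frac{1}{n}\beta_X(g_n,x)\to \ell:=\int_{G\times\text{cl}_hX}\beta_X(g,x)\,d\mu(g)\,d\theta(x)
\]
for $\theta\otimes\mathbb{P}$-a.e.\ $(x,\mathbf{g})$. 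On the other hand, the observation at the heart of the proof of Lemma \ref{lemma-ccc} shows that for every fixed $x\in\text{cl}_h^{\infty}X$, one has $\sup_n|\kappa_X(g_n)-\beta_X(g_n,x)|<+\infty$ $\mathbb{P}$-a.s.\ (using that $\text{bnd}(\mathbf{g})\ne\pi(x)$ a.s., since the hitting measure $\nu^{\ast}$ of Proposition \ref{MT} is nonatomic). Combined with Kingman's theorem ($\kappa_X(g_n)/n\to\lambda_X$ a.s.), this forces $\beta_X(g_n,x)/n\to\lambda_X$ $\mathbb{P}$-a.s.\ for every such $x$. By Fubini and $\theta(\text{cl}_h^{\infty}X)=1$, both limits hold for $\theta\otimes\mathbb{P}$-a.e.\ $(x,\mathbf{g})$, which forces $\ell=\lambda_X$.

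The principal difficulty is the support statement $\theta(\text{cl}_h^{\infty}X)=1$ in the non-proper setting, where $\text{cl}_hX\setminus\text{cl}_h^{\infty}X$ need not coincide with $h(X)$; all other ingredients (the skew-product ergodic theorem and the pointwise asymptotic from the proof of Lemma \ref{lemma-ccc}) are standard tools.
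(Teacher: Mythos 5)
Your proof follows essentially the same route as the paper's: establish that $\theta$ is supported on $\text{cl}_h^\infty X$, use the observation underlying Lemma \ref{lemma-ccc} to get the pointwise a.s.\ convergence $\beta_X(g_n,x)/n\to\lambda_X$ for $x\in\text{cl}_h^\infty X$, and then invoke Birkhoff on the skew product to identify the spatial average with this a.s.\ limit. The skew-product setup and ergodic-decomposition reduction are exactly what "applying Birkhoff's ergodic theorem" unpacks to in the paper.

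The one place you diverge is the support statement $\theta(\text{cl}_h^\infty X)=1$. The paper simply cites Maher--Tiozzo \cite[Proposition 4.4]{MT15} for this; you attempt to re-derive it via the heuristic that finite-infimum horofunctions have bounded center sets and a stationary measure concentrated on them would "localize the random walk." As you yourself flag, this is the sketchiest part: in the non-proper setting, one needs the hyperbolicity of $X$ to guarantee that the approximate minimizer set $\{z: h(z)\le\inf h+1\}$ really has uniformly bounded diameter, and one then needs to make precise how stationarity on these horofunctions contradicts nonelementarity (not just positive drift — the cleanest contradiction comes from unboundedness of orbits / recurrence arguments, as in \cite{MT15}). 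Since this point is genuinely the technical content of \cite[Proposition 4.4]{MT15}, you should simply cite it rather than rederive it; as stated, your sketch does not constitute a proof of the support claim, though the overall architecture of your argument is sound and matches the paper.
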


\begin{proof}
Lemma \ref{lemma-ccc} implies that for all $x\in\text{cl}_h^{\infty}X$ and $\mathbb{P}$-a.e. sample path $(g_n)_{n\in\mathbb{N}}$ of the random walk on $(G,\mu)$, one has $$\lim_{n\to +\infty}\frac{1}{n}\beta_X(g_n,x)=\lambda_X.$$ By \cite[Proposition 4.4]{MT15}, all $\mu$-stationary probability measures on $\text{cl}_hX$ are supported on $\text{cl}_h^{\infty}X$. As $\beta_X$ is a cocycle, Corollary \ref{H1-hyp} follows by applying Birkhoff's ergodic theorem.
\end{proof}

The following proposition can be viewed both as an extension of \cite[Proposition 4.1]{BQ14} to the case of a random walk on a group acting by isometries on a (not necessarily proper) hyperbolic space, and as an extension of \cite[Theorem 1.2]{MT15} to the case of a measure with finite second moment with respect to $d_X$. 

\begin{prop}(Benoist--Quint \cite[Proposition 4.1]{BQ14})\label{ccc}
Let $G$ be a countable group acting by isometries on a separable Gromov hyperbolic geodesic metric space $X$, and let $\mu$ be a nonelementary probability measure on $G$ with finite second moment with respect to $d_X$. Let $\lambda_X$ be the drift of the random walk on $(G,\mu)$ with respect to $d_X$. Then for all $\epsilon>0$, there exists a sequence $(C_n)_{n\in\mathbb{N}}\in l^1(\mathbb{N})$ such that for all $x\in\text{cl}_hX$ and all $n\in\mathbb{N}$, one has $$\mu^{\ast n}(\{g\in G||\beta_X(g,x)-n\lambda_X|\ge\epsilon n\})\le C_n,$$ and $$\mu^{\ast n}(\{g\in G||\kappa_X(g)-n\lambda_X|\ge\epsilon n\})\le C_n.$$
\end{prop}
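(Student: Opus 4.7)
The plan is to apply Benoist--Quint's Proposition \ref{bq} to the Busemann cocycle $\beta_X$ on the compact metrizable space $\text{cl}_hX$, which directly yields the deviation estimate for $\beta_X$, and then to deduce the estimate for $\kappa_X$ from it using the geometric input of Corollary \ref{lemma-hyp}.

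First I would verify the hypotheses of Proposition \ref{bq} for the cocycle $\beta_X : G \times \text{cl}_hX \to \mathbb{R}$. Continuity holds by construction, and Equation \eqref{bus}, together with the fact that $d_X^{sym}=d_X$ in this symmetric setting, gives $\sup_{x \in \text{cl}_hX}|\beta_X(g,x)| \leq \kappa_X(g)$, which lies in $L^2(G,\mu)$ by the finite second moment assumption. Corollary \ref{H1-hyp} further ensures that every $\mu$-stationary probability measure $\theta$ on $\text{cl}_hX$ satisfies
$$\int_{G \times \text{cl}_hX} \beta_X(g,x)\,d\mu(g)\,d\theta(x) = \lambda_X.$$
Proposition \ref{bq} thus directly yields the first estimate of the proposition.

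To derive the estimate on $\kappa_X$, I would invoke nonelementarity of $\mu$ to produce two loxodromic isometries in the subsemigroup generated by $\text{supp}(\mu)$ with disjoint pairs of fixed points in $\partial_\infty X$, and thereby pick two distinct boundary points $\xi_1,\xi_2 \in \partial_\infty X$. Taking sequences in $X$ converging respectively to $\xi_1$ and $\xi_2$ and passing to subsequential limits of their horofunctions in the compact space $\text{cl}_hX$, I obtain $x,y \in \text{cl}_h^\infty X$ with $\pi(x)=\xi_1$ and $\pi(y)=\xi_2$. Corollary \ref{lemma-hyp} then provides a constant $C_{x,y}$ with
$$\kappa_X(g)-C_{x,y} \;\leq\; \max(\beta_X(g,x),\beta_X(g,y)) \;\leq\; \kappa_X(g)$$
for all $g \in G$. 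Applying the $\beta_X$ estimate just obtained, with $\epsilon/2$ in place of $\epsilon$, to both $x$ and $y$, I get an $\ell^1$ sequence $(C_n')$ such that outside a set of $\mu^{\ast n}$-measure at most $2C_n'$ one has $\max(\beta_X(g,x),\beta_X(g,y)) \in [n\lambda_X - \epsilon n/2,\, n\lambda_X + \epsilon n/2]$, which together with the display forces $\kappa_X(g) \in [n\lambda_X - \epsilon n/2,\, n\lambda_X + \epsilon n/2 + C_{x,y}]$. For $n > 2C_{x,y}/\epsilon$ this yields $|\kappa_X(g) - n\lambda_X| < \epsilon n$, and absorbing the finitely many initial values of $n$ into a larger constant provides the desired $\ell^1$ sequence $(C_n)$.

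The only real subtlety is the choice of two horofunctions with distinct images under $\pi$; this rests on nonelementarity of $\mu$ together with the compactness of $\text{cl}_hX$. Beyond this, the proof is a direct combination of Proposition \ref{bq} with the hyperbolic input of Corollary \ref{lemma-hyp}.
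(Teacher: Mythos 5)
Your proposal follows the same route as the paper: apply Benoist--Quint's Proposition \ref{bq} to $\beta_X$ on $\text{cl}_hX$ using Corollary \ref{H1-hyp} for the constant-mean hypothesis and Equation \eqref{bus} for square-integrability of $\sigma_{sup}$, then transfer the estimate to $\kappa_X$ via Corollary \ref{lemma-hyp}. Your deduction for $\kappa_X$ — picking two horofunctions over distinct boundary points, applying the $\beta_X$ bound with $\epsilon/2$ to each, using a union bound, and absorbing the finitely many small $n$ where the additive constant $C_{x,y}$ matters — is carried out correctly and in more detail than the paper, which simply says the $\kappa_X$ estimate ``follows from'' the $\beta_X$ estimate and Corollary \ref{lemma-hyp}.

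One hypothesis of Proposition \ref{bq} you did not check: it requires the $G$-space to be compact and \emph{metrizable}. Compactness of $\text{cl}_hX$ is immediate, but since $X$ is not assumed proper here, metrizability of the horofunction compactification is not automatic and must be supplied; the paper notes this explicitly and cites Maher--Tiozzo \cite[Proposition 3.1]{MT15} (who establish it using separability of $X$). Adding that citation closes the only gap in your verification of the hypotheses.
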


\begin{proof}
The deviation principle for the Busemann cocycle $\beta_X$ follows from Corollary \ref{H1-hyp} and Benoist--Quint's large deviation principle for cocycles (Proposition \ref{bq}). Notice that metrizability of $\text{cl}_hX$ was established in \cite[Proposition 3.1]{MT15}. The deviation principle for the function $\kappa_X$ then follows from the deviation principle for the Busemann cocycle by using Corollary \ref{lemma-hyp}.
\end{proof}

\subsection{Sublinear tracking}\label{sec-tracking}

Using the fact that $\lambda_X>0$ by \cite[Theorem 1.2]{MT15}, and arguing as in \cite[Lemma 4.5]{BQ14}, one can deduce the following estimate from Proposition \ref{ccc}.

\begin{lemma}(Benoist--Quint \cite[Lemma 4.5]{BQ14})\label{lemma-shadow}
Let $G$ be a countable group acting by isometries on a separable Gromov hyperbolic geodesic metric space $X$, and let $\mu$ be a nonelementary probability measure on $G$, with finite second moment with respect to $d_X$. Let $\nu$ be the unique $\mu$-stationary probability measure on $\partial_{\infty}X$. Then for all $\alpha>0$, there exists a sequence $(C_n)_{n\in\mathbb{N}}\in l^1(\mathbb{N})$ such that for all $n\in\mathbb{N}$ and all $x,y\in X\cup\partial_{\infty}X$, one has $$\mu^{\ast n}(\{g\in G|\langle go|y\rangle_{o}\ge\alpha n\})\le C_n$$ and $$\mu^{\ast n}(\{g\in G|\langle gx|y\rangle_{o}\ge\alpha n\}\le C_n,$$ and hence $$\nu(\{x\in \partial_{\infty}X|\langle x|y\rangle_{o}\ge\alpha n\})\le C_n.$$
\end{lemma}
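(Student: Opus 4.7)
The plan is to prove the three estimates in sequence: the first from Proposition \ref{ccc}, the second from the first by hyperbolicity, and the third from the second by Fubini. The crucial input is that Proposition \ref{ccc} applies both to $\mu$ and to the reflected measure $\check\mu$: the latter is also nonelementary, has finite second moment with respect to $d_X$ (since the metric is symmetric), and shares the same drift $\lambda_X$, which is moreover positive by \cite[Theorem 1.2]{MT15}. Since the distribution of $g^{-1}$ under $\mu^{\ast n}$ is exactly $\check\mu^{\ast n}$, deviation bounds obtained from $\check\mu$ will translate into bounds on quantities of the form $\beta_X(g^{-1},\cdot)$ under $\mu^{\ast n}$.

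For the first estimate, I would fix $y\in X\cup\partial_\infty X$ and lift it to a horofunction $\tilde y\in\text{cl}_h^{\infty}X$ through $\pi$ (taking $\tilde y=y$ when $y\in X$). Combining Lemmas \ref{lemma-ccc-1} and \ref{lemma-hyp-2} will give
\[
\langle go\mid y\rangle_o \le \tfrac{1}{2}\bigl(\kappa_X(g)-\beta_X(g^{-1},\tilde y)\bigr)+O(1),
\]
with constant depending only on the hyperbolicity of $X$. Choose $\epsilon>0$ with $\epsilon<\alpha$. Proposition \ref{ccc} applied to $\mu$ will provide an $l^1$-sequence bounding the $\mu^{\ast n}$-measure of $\{\kappa_X(g)\ge(\lambda_X+\epsilon)n\}$, and applied to $\check\mu$ an $l^1$-sequence bounding the $\mu^{\ast n}$-measure of $\{\beta_X(g^{-1},\tilde y)\le(\lambda_X-\epsilon)n\}$, uniformly in $\tilde y$. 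Off the union of these exceptional sets, the right-hand side will be at most $\epsilon n+O(1)<\alpha n$ for $n$ large, which yields the first bound.

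For the second estimate, the hyperbolic ultrametric inequality $\langle go\mid y\rangle_o \ge \min\bigl(\langle gx\mid y\rangle_o,\langle go\mid gx\rangle_o\bigr)-\delta$ gives the inclusion $\{\langle gx\mid y\rangle_o\ge\alpha n\}\subseteq\{\langle go\mid y\rangle_o\ge\alpha n-\delta\}\cup\{\langle go\mid gx\rangle_o<\alpha n\}$. The first set is handled by the first estimate; for the second, Lemma \ref{lemma-hyp-2} gives $\langle go\mid gx\rangle_o=\tfrac{1}{2}(\kappa_X(g)+\beta_X(g,\tilde x))+O(1)$, a quantity concentrating around $n\lambda_X$ by Proposition \ref{ccc}, which handles the regime $\alpha<\lambda_X$. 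For the remaining range $\alpha\ge\lambda_X$, I would appeal instead to the cocycle identity
\[
(g\tilde x\mid\tilde y)_o=(\tilde x\mid g^{-1}\tilde y)_o+\tfrac{1}{2}\bigl(\beta_X(g,\tilde x)-\beta_X(g^{-1},\tilde y)\bigr),
\]
the symmetric-metric analogue of the identity displayed in the proof of Proposition \ref{centerable-gen}: concentration of both Busemann cocycles around $n\lambda_X$ will absorb the difference term and reduce the problem to an estimate on $(\tilde x\mid g^{-1}\tilde y)_o$ analogous to the first estimate with $\mu$ and $\check\mu$ swapped. The third estimate will then follow by Fubini, using that $\nu$ is $\mu$-stationary ($\nu=\mu^{\ast n}\ast\nu$):
\[
\nu\bigl(\{x:\langle x\mid y\rangle_o\ge\alpha n\}\bigr)=\int_{\partial_\infty X}\mu^{\ast n}\bigl(\{g:\langle gx\mid y\rangle_o\ge\alpha n\}\bigr)\,d\nu(x)\le C_n,
\]
uniformly in $y$. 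The main technical obstacle, compared with Benoist--Quint's proper setting, will be to work throughout with the extension of the Gromov product to $\text{cl}_h^{\infty}X$ and to rely on Maher--Tiozzo's convergence statement (Proposition \ref{MT}) in place of classical boundary convergence --- in particular for the regime $\alpha\ge\lambda_X$ of the second estimate, where the naive ultrametric argument no longer suffices.
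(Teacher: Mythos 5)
Your overall route for the first and third estimates is correct and matches what the paper has in mind (which simply defers to the argument of \cite[Lemma 4.5]{BQ14}): the first estimate follows from Lemma \ref{lemma-ccc-1}, Lemma \ref{lemma-hyp-2}, and Proposition \ref{ccc} applied to both $\mu$ and $\check\mu$ (using that $g\mapsto g^{-1}$ pushes $\mu^{\ast n}$ to $\check\mu^{\ast n}$, that $\check\mu$ is still nonelementary with finite second moment because $d_X$ is symmetric, and that the two drifts coincide), and the third follows from the second by the stationarity identity $\nu=\mu^{\ast n}\ast\nu$.

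The problem is your treatment of the second estimate. Your case split at $\alpha=\lambda_X$ is both unnecessary and, as written, circular. It is unnecessary because the quantity $\mu^{\ast n}(\{g:\langle gx\mid y\rangle_o\ge\alpha n\})$ is nonincreasing in $\alpha$, so the statement for any fixed $\alpha_0>0$ implies it for every $\alpha\ge\alpha_0$; since $\lambda_X>0$ (this is exactly why the paper flags the positivity of the drift via \cite[Theorem 1.2]{MT15} before stating the lemma), it is enough to prove the bound for a single $\alpha_0<\lambda_X$, where your ultrametric argument combined with the concentration of $\langle go\mid gx\rangle_o\approx\tfrac12(\kappa_X(g)+\beta_X(g,\tilde x))$ around $n\lambda_X$ already works. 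It is also circular because the identity $(g\tilde x\mid\tilde y)_o=(\tilde x\mid g^{-1}\tilde y)_o+\tfrac12\bigl(\beta_X(g,\tilde x)-\beta_X(g^{-1},\tilde y)\bigr)$ reduces the estimate of $(g\tilde x\mid\tilde y)_o$ under $\mu^{\ast n}$ to the estimate of $(\tilde x\mid h\tilde y)_o$ under $\check\mu^{\ast n}$, which is not ``the first estimate with $\mu$ and $\check\mu$ swapped'' (that would be a Gromov product with $o$ in one slot) but the \emph{second} estimate for $\check\mu$ --- i.e., a statement of exactly the same type as the one being proved; iterating it just returns you to the starting point. Deleting the $\alpha\ge\lambda_X$ branch and appealing to monotonicity in $\alpha$ closes the gap.
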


\begin{rk}
Corollary \ref{H1-hyp} guarantees that Hypothesis \textbf{(H1)} from Theorem \ref{tcl-gen} is satisfied, and Lemma \ref{lemma-shadow} implies that Hypothesis \textbf{(H2)} is also satisfies. One can deduce that the Busemann cocycle $\beta_X$ and the function $\kappa_X$ both satisfy a central limit theorem, extending Benoist--Quint's central limit theorem for hyperbolic groups \cite{BQ14} to non-proper settings.
\end{rk}

We recall from Proposition \ref{MT} that for $\mathbb{P}$-a.e. sample path $\mathbf{g}=(g_n)_{n\in\mathbb{N}}$ of the random walk on $(G,\mu)$, the sequence $(g_n^{-1}.o)_{n\in\mathbb{N}}$ converges to a boundary point $\text{bnd}(\mathbf{g})\in\partial_{\infty}X$. We will denote by $\tau_{\mathbf{g},X}$ a $(1,K)$-quasigeodesic ray from $o$ to $\text{bnd}(\mathbf{g})$ (where $K$ only depends on the hyperbolicity constant of $X$). We now obtain the following quantitative version of sublinear tracking under a second moment assumption on the measure $\mu$.

\begin{prop}\label{deviation}
Let $G$ be a countable group acting by isometries on a separable Gromov hyperbolic geodesic metric space $X$, and let $\mu$ be a nonelementary probability measure on $G$ with finite second moment with respect to $d_X$.\\ 
Then for all $\epsilon>0$, there exists a sequence $(C_n)_{n\in\mathbb{N}}\in l^1(\mathbb{N})$ such that $$\mathbb{P}\left[d_X(g_n^{-1}.o,\tau_{\mathbf{g},X}(\mathbb{R}_+))\ge\epsilon n\right]\le C_n.$$
\end{prop}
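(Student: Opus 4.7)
The plan is to reduce the sublinear tracking estimate to the shadow estimate of Lemma~\ref{lemma-shadow} by means of a Markov/shift trick. The key structural observation is that the limit point $\text{bnd}(\mathbf{g})$ can be expressed in terms of a random variable which is independent from $g_n$.

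First, I would exploit the Markov property of the random walk. For each $m>n$, writing $g_m=(s_m\cdots s_{n+1})\,g_n$, one has $g_m^{-1}.o = g_n^{-1}\cdot h_m^{-1}.o$, where $h_m:=s_m\cdots s_{n+1}$. The sequence $(h_m^{-1}.o)_{m>n}$ depends only on the future increments $s_{n+1},s_{n+2},\dots$, so it is independent of $g_n$, and has the same joint law as $(g_{m-n}^{-1}.o)_{m>n}$. Applying Proposition~\ref{MT} to this shifted walk, $h_m^{-1}.o$ converges $\mathbb{P}$-almost surely to a boundary point $\xi\in\partial_{\infty}X$ distributed according to the hitting measure $\nu^{\ast}$, with $\xi$ independent from $g_n$. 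Since $g_m^{-1}.o\to\text{bnd}(\mathbf{g})$ and $g_n$ acts continuously on $X\cup\partial_{\infty}X$, one deduces $\text{bnd}(\mathbf{g})=g_n^{-1}.\xi$.

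Second, I would push the distance by the isometry $g_n$:
$$d_X(g_n^{-1}.o,\tau_{\mathbf{g},X}(\mathbb{R}_+)) = d_X(o,g_n\cdot\tau_{\mathbf{g},X}(\mathbb{R}_+)).$$
The image $g_n\cdot\tau_{\mathbf{g},X}(\mathbb{R}_+)$ is the image of a $(1,K)$-quasigeodesic ray from $g_n.o$ to $\xi$. Combining the stability of quasigeodesics in the Gromov hyperbolic space $X$ with the standard fact that in a hyperbolic space the distance from $o$ to a geodesic ray from $a$ to a boundary point $b$ equals $\langle a\mid b\rangle_o$ up to a bounded additive error, one obtains a constant $C>0$, depending only on the hyperbolicity constant of $X$ and on $K$, such that
$$d_X(o,g_n\cdot\tau_{\mathbf{g},X}(\mathbb{R}_+))\le \langle g_n.o\mid\xi\rangle_o + C.$$

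Finally, given $\epsilon>0$, for every $n\ge 2C/\epsilon$ the event $\{d_X(g_n^{-1}.o,\tau_{\mathbf{g},X}(\mathbb{R}_+))\ge\epsilon n\}$ is contained in $\{\langle g_n.o\mid\xi\rangle_o\ge\epsilon n/2\}$. Since $g_n$ and $\xi$ are independent, with laws $\mu^{\ast n}$ and $\nu^{\ast}$ respectively, Fubini yields
$$\mathbb{P}\left[\langle g_n.o\mid\xi\rangle_o\ge\tfrac{\epsilon n}{2}\right] = \int_{\partial_{\infty}X} \mu^{\ast n}\left(\left\{g\in G:\ \langle g.o\mid y\rangle_o\ge\tfrac{\epsilon n}{2}\right\}\right)\,d\nu^{\ast}(y),$$
and Lemma~\ref{lemma-shadow} (applied with $\alpha=\epsilon/2$) bounds the inner quantity by a sequence $(C_n)_{n\in\mathbb{N}}\in\ell^1(\mathbb{N})$ uniformly in $y\in X\cup\partial_{\infty}X$. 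This yields the desired conclusion. The argument is essentially a short computation; there is no real obstacle, the one point needing care being the hyperbolic distance estimate of step two, which must be extended from geodesic segments to $(1,K)$-quasigeodesic rays ending at a point of the Gromov boundary, but this is routine.
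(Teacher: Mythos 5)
Your proof is correct and takes essentially the same route as the paper's: reduce the distance-to-ray estimate to the Gromov product $\langle g_n.o\mid g_n.\text{bnd}(\mathbf g)\rangle_o$ via hyperbolicity, observe that $g_n.\text{bnd}(\mathbf g)$ is independent of $g_n$ with law $\nu^{\ast}$, and conclude by conditioning and invoking Lemma~\ref{lemma-shadow}. The only difference is cosmetic: you apply the isometry $g_n$ before invoking the Gromov-product estimate and spell out the Markov-shift justification of independence, whereas the paper uses the estimate at the basepoint $g_n^{-1}.o$ and states the independence as a fact.
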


\begin{proof}
The proof follows the argument from the proof of \cite[Proposition 5.7]{MT15}, using the quantitative estimate from Lemma \ref{lemma-shadow}; it goes as follows. There exists $K>0$, only depending on the hyperbolicity constant of $X$, such that $$|d_X(g_n^{-1}.o,\tau_{\mathbf{g},X}(\mathbb{R}_+))- \langle o|\text{bnd}(\mathbf{g})\rangle_{g_n^{-1}.o}|\le K,$$ and hence $$|d_X(g_n^{-1}.o,\tau_{\mathbf{g},X}(\mathbb{R}_+))- \langle g_n.o|g_n.\text{bnd}(\mathbf{g})\rangle_{o}|\le K.$$ Therefore, the condition $$d_X(g_n^{-1}.o,\tau_{\mathbf{g},X}(\mathbb{R}_+))\ge\epsilon n$$ implies that $$\langle g_n.o|g_n.\text{bnd}(\mathbf{g})\rangle_o\ge\epsilon n-K.$$ In addition, the boundary point $g_n.\text{bnd}(\mathbf{g})$ is independent from $g_n$, and its distribution is given by the hitting measure $\nu^{\ast}$. Conditioning over the value of $g_n.\text{bnd}(\mathbf{g})$, we get $$\mathbb{P}\left[d_X(g_n^{-1}.o,\tau_{\mathbf{g},X}(\mathbb{R}))\ge\epsilon n\right]\le\int_{\partial_{\infty}X}\mu^{\ast n}(\{g\in G|\langle go|y\rangle_o\ge\epsilon n-K\})d\nu^{\ast}(y).$$ Proposition \ref{deviation} therefore follows from Lemma \ref{lemma-shadow}. 
\end{proof}

Given $\kappa>0$, two quasigeodesic segments $\gamma:[a,b]\to X$ and $\gamma':[a',b']\to X$ are said to \emph{fellow travel up to distance $\kappa$} if there exists an increasing homeomorphism $\theta:[a,b]\to [a',b']$ such that $d_X(\gamma(t),\gamma'\circ\theta(t))\le\kappa$ for all $t\in [a,b]$. 

\begin{prop}\label{crossing}
Let $G$ be a countable group acting by isometries on a separable Gromov hyperbolic geodesic metric space $X$, and let $\mu$ be a nonelementary probability measure on $G$ with finite second moment with respect to $d_X$. Let $\lambda_X$ be the drift of the random walk on $(G,\mu)$ with respect to $d_X$. Then for all $K>0$, there exists a constant $\kappa>0$, only depending on the hyperbolicity constant of $X$, such that for all $0<\beta_1<\beta_2$, there exists a sequence $(C_n)_{n\in\mathbb{N}}\in l^1(\mathbb{N})$, such that for all $x\in X\cup\partial_{\infty} X$, with probability at least $1-C_n$, any $(K,K)$-quasigeodesic from $x$ to $\text{bnd}(\mathbf{g})$ contains a subsegment that fellow travels $\tau_{\mathbf{g},X}([\beta_1 n,\beta_2n])$ up to distance $\kappa$.
\end{prop}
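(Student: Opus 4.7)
The plan is to combine a deterministic fellow-traveling statement for quasigeodesics in $\delta$-hyperbolic spaces with the summable deviation estimate of Lemma \ref{lemma-shadow}. The geometric input, which I would state as a separate auxiliary lemma, is the following standard consequence of the Morse lemma and the thin triangle property for ideal triangles: given $K>0$, there exist $\kappa=\kappa(\delta,K)>0$ and $C=C(\delta,K)>0$ (where $\delta$ denotes the hyperbolicity constant of $X$) such that for every $\xi\in\partial_{\infty}X$, every $(1,K_0)$-quasigeodesic $\tau$ from $o$ to $\xi$ (with $K_0$ depending only on $\delta$), every $x\in X\cup\partial_{\infty}X$ with $x\neq\xi$, and every $(K,K)$-quasigeodesic $\gamma$ from $x$ to $\xi$, every compact subsegment of $\tau$ whose image lies in $\tau([\langle x|\xi\rangle_{o}+C,+\infty))$ fellow-travels a subsegment of $\gamma$ up to distance $\kappa$.

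Granting this lemma, the proof reduces to estimating the Gromov product $\langle x|\text{bnd}(\mathbf{g})\rangle_{o}$. By Proposition \ref{MT}, the law of $\text{bnd}(\mathbf{g})$ under $\mathbb{P}$ is the unique $\check{\mu}$-stationary probability measure $\nu^{\ast}$ on $\partial_{\infty}X$. Since $d_X$ is symmetric, $G$ acts by isometries, and $\mu$ is nonelementary with finite second moment, the reflected measure $\check{\mu}$ enjoys the same hypotheses. Applying Lemma \ref{lemma-shadow} to $\check{\mu}$ therefore yields, for every $\alpha>0$, a sequence $(D_n)\in l^1(\mathbb{N})$ (depending on $\alpha$) such that
$$\nu^{\ast}(\{y\in\partial_{\infty}X:\langle y|x\rangle_{o}\geq\alpha n\})\leq D_n$$
uniformly in $x\in X\cup\partial_{\infty}X$; equivalently, $\mathbb{P}[\langle x|\text{bnd}(\mathbf{g})\rangle_{o}\geq\alpha n]\leq D_n$ for every such $x$.

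To conclude, fix $0<\beta_1<\beta_2$ and set $\alpha:=\beta_1/2$. For $n$ large enough that $\alpha n+C\leq\beta_1 n$, on the event $\{\langle x|\text{bnd}(\mathbf{g})\rangle_{o}<\alpha n\}$, which has probability at least $1-D_n$, the auxiliary lemma supplies, for any $(K,K)$-quasigeodesic from $x$ to $\text{bnd}(\mathbf{g})$, a subsegment that $\kappa$-fellow-travels $\tau_{\mathbf{g},X}([\beta_1 n,\beta_2 n])$, since $[\beta_1 n,\beta_2 n]\subseteq[\alpha n+C,+\infty)$. The finitely many exceptional small values of $n$ are absorbed into the summable tail by enlarging constants, yielding the sequence $(C_n)$ claimed in the statement. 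The step I expect to require the most care is arranging the fellow-travel in the strong sense of the definition preceding the proposition, namely with a monotone parameterization correspondence rather than merely bounded Hausdorff distance between images; the customary remedy is to post-compose with closest-point projections onto the image of $\gamma$, using $\delta$-hyperbolicity to ensure coarse monotonicity of the resulting correspondence.
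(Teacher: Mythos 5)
Your proof is correct and follows essentially the same route as the paper's: apply Lemma \ref{lemma-shadow} to bound $\mathbb{P}[\langle x|\text{bnd}(\mathbf{g})\rangle_o\ge\tfrac{\beta_1 n}{2}]$ by a summable sequence, then invoke the standard hyperbolic-geometry fact that a quasigeodesic from $x$ to $\text{bnd}(\mathbf{g})$ must fellow-travel $\tau_{\mathbf{g},X}$ past the Gromov-product scale. You are merely more explicit than the paper in two places that deserve the care you give them: isolating the deterministic fellow-traveling lemma (the paper compresses this to ``together with hyperbolicity of $X$''), and noting that Lemma \ref{lemma-shadow} must be applied to $\check{\mu}$ rather than $\mu$ since the law of $\text{bnd}(\mathbf{g})$ is the $\check{\mu}$-stationary hitting measure $\nu^{\ast}$.
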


\begin{proof}
This follows from Lemma \ref{lemma-shadow}, giving the existence of a sequence $(C_n)_{n\in\mathbb{N}}\in l^1(\mathbb{N})$ such that for all $x\in X\cup\partial_{\infty}X$, one has $$\mathbb{P}\left[\langle x|\text{bnd}(\mathbf{g})\rangle_o\le\frac{\beta_1 n}{2}\right]\ge 1-C_n,$$ together with hyperbolicity of $X$.
\end{proof}

\section{Central limit theorem on mapping class groups}\label{sec-mod}

The goal of this section is to establish a central limit theorem in the context of mapping class groups of surfaces (Theorem \ref{intro}).

\subsection{Background on mapping class groups}

Let $S$ be a closed, connected, oriented, hyperbolic surface. The \emph{mapping class group} $\text{Mod}(S)$ is defined as the group of all isotopy classes of orientation-preserving diffeomorphisms of $S$. We start by reviewing classical material on mapping class groups. 

\paragraph*{Teichmüller space and two of its metrics.} 

The \emph{Teichmüller space} $\mathcal{T}(S)$ is the space of isotopy classes of conformal structures on $S$. Up to isotopy, there is a unique hyperbolic metric on $S$ in a given conformal class, so $\mathcal{T}(S)$ can alternatively be defined as the space of isotopy classes of hyperbolic metrics on $S$. We review the definition of two metrics on $\mathcal{T}(S)$.

The \emph{Teichmüller metric} is defined by letting $$d_{\mathcal{T}}(x,y):=\frac{1}{2}\inf_f\log K(f)$$ for all $x,y\in\mathcal{T}(S)$, where the infimum is taken over the collection of all quasiconformal maps $f$ from $(S,x)$ to $(S,y)$, and $K(f)$ denotes the quasiconformal dilatation of $f$. The Teichmüller metric is uniquely geodesic: any two points in $\mathcal{T}(S)$ are joined by a unique geodesic segment. This metric can alternatively be described in terms of ratios of extremal lengths of curves, as follows. A simple closed curve $c$ on $S$ is \emph{essential} if it does not bound a disk on $S$. Given an essential simple closed curve $c$ on $S$ and $x\in\mathcal{T}(S)$, the \emph{extremal length} of $c$ in the conformal structure $x$ is $$\text{Ext}_{x}(c)=\sup_{\rho}\frac{l_{\rho}(c)^2}{\text{Area}(\rho)},$$ where the supremum is taken over all metrics $\rho$ in the conformal class $x$, where $l_{\rho}(c)$ denotes the infimal $\rho$-length of a curve isotopic to $c$, and $\text{Area}(\rho)$ is the area of $S$ equipped with the metric $\rho$. We denote by $\mathcal{S}$ the collection of all isotopy classes of essential simple closed curves on $S$. Kerckhoff proved in \cite[Theorem 4]{Ker80} that $$d_{\mathcal{T}}(x,y)=\frac{1}{2}\log\sup_{c\in\mathcal{S}}\frac{\text{Ext}_y(c)}{\text{Ext}_x(c)}$$ for all $x,y\in\mathcal{T}(S)$. 

\emph{Thurston's asymmetric metric} is defined by letting $$d_{Th}(x,y):=\inf_f\log \text{Lip}(f)$$ for all $x,y\in\mathcal{T}(S)$, where the infimum is taken over the collection of all Lipschitz maps $f$ from $(S,x)$ to $(S,y)$, and $\text{Lip}(f)$ denotes the Lipschitz constant of $f$. This metric can also be described using hyperbolic lengths of curves: Thurston established that $$d_{Th}(x,y)=\log\sup_{c\in\mathcal{S}}\frac{l_y(c)}{l_x(c)}$$ for all $x,y\in\mathcal{T}(S)$, where $l_x$ denotes the length measured in the unique hyperbolic metric in the conformal class $x$. The next proposition, due to Lenzhen--Rafi--Tao, states that, up to a bounded additive error, one can actually take the supremum over a finite collection of curves in the above formula. Given $x\in\mathcal{T}(S)$, a \emph{marking} $\mu_x$ on $(S,x)$ is a collection of curves of $S$, made of both a finite set $\mathcal{P}$ of pairwise disjoint curves on $S$ that cut $S$ into a finite collection of pairs of pants, and a set of transverse curves $\mathcal{Q}$ that satisfy the following property: each curve $\alpha\in\mathcal{P}$ intersects exactly one curve $\beta\in\mathcal{Q}$, and $\beta$ intersects $\alpha$ minimally, and does not intersect any other curve in $\mathcal{P}$. The marking $\mu_x$ is a \emph{short marking} if $\mathcal{P}$ is constructed by first picking a shortest curve on $S$, then a second shortest curve, and so on, and curves in $\mathcal{Q}$ are then chosen to be as short as possible.

\begin{prop}(Lenzhen--Rafi--Tao \cite[Theorem E]{LRT12})\label{metric-lengths}
There exists $K>0$ (only depending on the topological type of $S$) such that for all $x,y\in\mathcal{T}(S)$, one has $$\left|d_{Th}(x,y)-\log\max_{c\in\mu_x}\frac{l_y(c)}{l_x(c)}\right|\le K,$$ where the maximum is taken over the collection of all curves in a short marking $\mu_x$ on $(S,x)$. 
\end{prop}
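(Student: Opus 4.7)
The easy inequality $\log\max_{c\in\mu_x}\frac{l_y(c)}{l_x(c)}\le d_{Th}(x,y)$ is immediate from Thurston's length formula recalled above, since $\mu_x$ is a finite subset of $\mathcal{S}$. The content of the proposition is the reverse inequality: setting $L:=\max_{\alpha\in\mu_x}\frac{l_y(\alpha)}{l_x(\alpha)}$, the goal is to show that $l_y(c)/l_x(c)\le e^K L$ for every essential simple closed curve $c$, with $K=K(S)$.

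The plan is to establish two length comparison estimates, both with multiplicative constants depending only on the topology of $S$, relating the length of an arbitrary simple closed curve at a point of Teichmüller space to its intersection pattern with $\mu_x$ and the lengths of curves in $\mu_x$: namely (i) a lower bound $l_x(c)\ge C_0\sum_{\alpha\in\mu_x}i(c,\alpha)\,l_x(\alpha)$, and (ii) an upper bound $l_y(c)\le C_1\sum_{\alpha\in\mu_x}i(c,\alpha)\,l_y(\alpha)$. Since $l_y(\alpha)\le L\,l_x(\alpha)$ for every $\alpha\in\mu_x$, (i) and (ii) combine immediately to give $l_y(c)\le (C_1/C_0)\,L\,l_x(c)$, which yields the proposition with $K=\log(C_1/C_0)$. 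Estimate (i) is to be deduced from the collar lemma at $x$: each intersection of a geodesic representative of $c$ in $(S,x)$ with a curve $\alpha\in\mu_x$ forces $c$ to traverse an embedded standard collar whose geometry is a definite function of $l_x(\alpha)$. Estimate (ii) is to be obtained by constructing an explicit (generally non-geodesic) representative of $c$ in $(S,y)$ as a concatenation of arcs supported in the pairs of pants and collars of the decomposition determined by $\mu_x$, each arc contributing a length controlled by $l_y$ of the corresponding boundary curve of $\mu_x$.

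The main obstacle is the uniformity of the constants $C_0,C_1$ in the thin part of Teichmüller space. When a pants curve $\alpha\in\mu_x$ has $l_x(\alpha)$ very small, its collar at $x$ is long (of width of order $\log(1/l_x(\alpha))$), and arcs of $c$ transverse to $\alpha$ can twist around $\alpha$ an unbounded number of times. This is precisely the reason the statement requires a short marking rather than an arbitrary pants decomposition: the greedy choice of transverse curves $\beta\in\mu_x$ ensures that the Fenchel--Nielsen twist of a general simple closed curve is correctly encoded by its intersection number with the transverse curves of $\mu_x$, so that twist contributions are absorbed into the intersection numbers appearing in (i) and (ii). Making this precise requires a case analysis using the thick/thin decomposition at both $x$ and $y$, together with the standard asymptotics of hyperbolic length in Fenchel--Nielsen coordinates.
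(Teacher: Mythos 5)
This proposition is not proved in the paper; it is cited directly from Lenzhen--Rafi--Tao \cite[Theorem E]{LRT12}, so there is no in-paper argument to compare your approach against. Evaluating your proposal on its own terms, there is a genuine gap: your key estimate (i) is false.

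You claim $l_x(c)\ge C_0\sum_{\alpha\in\mu_x}i(c,\alpha)\,l_x(\alpha)$ with $C_0$ depending only on $S$. Take $c=\alpha_1$ to be a very short pants curve in the decomposition $\mathcal{P}$ underlying $\mu_x$, and let $\overline{\alpha_1}\in\mathcal{Q}$ be its transversal. The only curve of $\mu_x$ that $\alpha_1$ meets is $\overline{\alpha_1}$, so the right-hand side is $i(\alpha_1,\overline{\alpha_1})\,l_x(\overline{\alpha_1})$, which by the Collar Lemma is comparable to $\log(1/l_x(\alpha_1))$ and blows up as $l_x(\alpha_1)\to 0$, while the left-hand side $l_x(\alpha_1)\to 0$. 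So no uniform $C_0$ exists. The source of the error is a transposition of roles in the marking: the Collar Lemma argument you sketch (each intersection with $\alpha$ forces a traversal of the collar of $\alpha$) actually gives a lower bound on $l_x(c)$ in terms of the collar widths, which are comparable to $l_x(\overline{\alpha})$ for short marking curves, not to $l_x(\alpha)$. The correct length estimate -- which is the one the present paper actually invokes from \cite{LRT12} in the proof of Proposition \ref{Busemann-thick} -- is $l_z(c)\asymp\sum_{\alpha\in\mu_z}i(c,\alpha)\,l_z(\overline{\alpha})$, with $\overline{\alpha}$ the curve of $\mu_z$ transverse to $\alpha$, and with the crucial restriction that $\mu_z$ is a short marking \emph{at the same point} $z$ where lengths are measured.

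Once you substitute the correct form, your plan no longer closes in the simple way you describe. Applying the LRT estimate at $x$ (with marking $\mu_x$) and at $y$ (with marking $\mu_y$) gives expansions over two different markings, and chaining them into $l_y(c)\le e^K\,L\,l_x(c)$ requires controlling the change of marking, i.e.\ the relative twisting of $\mu_x$ and $\mu_y$ around short pants curves of $\mu_x$. Your last paragraph gestures at exactly this difficulty (unbounded twisting in the thin part) but does not supply the estimate needed to handle it; in fact the failure of your (i) shows that the twisting is not "absorbed into the intersection numbers" in the way you hope. The actual proof in \cite{LRT12} works with a more refined length formula that tracks twisting coordinates explicitly, and it is not a two-line consequence of the Collar Lemma.
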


Given $\epsilon>0$, the \emph{$\epsilon$-thick part} $\mathcal{T}(S)^{\epsilon}$ is the subspace of $\mathcal{T}(S)$ made of those hyperbolic metrics for which no essential simple closed curve on $S$ has length smaller than $\epsilon$. The two metrics defined above are comparable in restriction to the thick part of $\mathcal{T}(S)$.

\begin{prop}(Choi--Rafi \cite[Theorem B]{CR07})\label{metrics}
For all $\epsilon>0$, there exists $C=C(\epsilon)>0$ such that for all $x,y\in\mathcal{T}(S)^{\epsilon}$, one has $|d_{\mathcal{T}}(x,y)-d_{Th}(x,y)|\le C$.
\end{prop}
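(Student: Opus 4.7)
The plan is to compare the two metrics through their length-ratio descriptions. By Kerckhoff's theorem, $d_{\mathcal{T}}(x,y) = \frac{1}{2}\log\sup_{c \in \mathcal{S}} \text{Ext}_y(c)/\text{Ext}_x(c)$, and by Thurston's formula $d_{Th}(x,y) = \log\sup_{c \in \mathcal{S}} l_y(c)/l_x(c)$. It therefore suffices to compare, for $x,y \in \mathcal{T}(S)^{\epsilon}$ and any essential simple closed curve $c$, the ratios $\text{Ext}_y(c)/\text{Ext}_x(c)$ and $(l_y(c)/l_x(c))^2$, up to a multiplicative constant depending only on $\epsilon$.

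The main step I would establish is the following key lemma: for every $\epsilon > 0$, there exists $K = K(\epsilon) > 0$ such that for all $z \in \mathcal{T}(S)^{\epsilon}$ and every $c \in \mathcal{S}$,
$$\frac{1}{K} l_z(c)^2 \le \text{Ext}_z(c) \le K l_z(c)^2.$$
The lower bound is immediate from Gauss--Bonnet: the hyperbolic metric in the conformal class $z$ has total area $2\pi|\chi(S)|$, so it serves as a test metric in the supremum defining $\text{Ext}_z(c)$, giving $\text{Ext}_z(c) \ge l_z(c)^2/(2\pi|\chi(S)|)$.

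For the upper bound, I would argue through a short marking $\mu_z$ of $z$. The thickness hypothesis ensures that each curve in $\mu_z$ has hyperbolic length in a fixed compact interval (bounded above by the Bers constant and below by $\epsilon$), and in this regime one can show that both $l_z(c)$ and $\sqrt{\text{Ext}_z(c)}$ are comparable, up to constants depending only on $\epsilon$ and the topology of $S$, to the geometric intersection number $i(c, \mu_z)$. The comparison $l_z(c) \asymp_\epsilon i(c, \mu_z)$ is standard and follows from a direct analysis of how arcs of $c$ traverse the pairs of pants cut out by $\mu_z$; the analogous comparison for extremal length can be obtained by building an explicit singular flat metric on $S$ in which the curves of $\mu_z$ are ``straight,'' $c$ has length of order $i(c, \mu_z)$, and the total area is bounded.

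Granting the key lemma, the conclusion is immediate: applying it at both $x$ and $y$ yields $\text{Ext}_y(c)/\text{Ext}_x(c) \asymp_{\epsilon} (l_y(c)/l_x(c))^2$ uniformly in $c$, and taking logarithms and suprema gives $|d_{\mathcal{T}}(x,y) - d_{Th}(x,y)| \le C(\epsilon)$. The main obstacle I expect is the upper bound in the key lemma: crude estimates such as Maskit's $\text{Ext}_z(c) \le \frac{1}{2} l_z(c) e^{l_z(c)/2}$ are useless for long curves, and the thickness hypothesis must really be exploited through combinatorial control of how $c$ traverses a short marking. The thickness assumption is moreover essential, since near the boundary of Teichmüller space the extremal and hyperbolic lengths of a pinching curve degenerate at different rates, and the two metrics are known to differ by unbounded amounts outside the thick part.
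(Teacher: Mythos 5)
The paper does not prove this proposition: it is cited directly from Choi--Rafi \cite[Theorem B]{CR07}, so there is no ``paper's own proof'' to compare against. Your plan is nonetheless a correct and well-known derivation of the thick-part comparison. The reduction via Kerckhoff's extremal-length formula for $d_{\mathcal{T}}$ and Thurston's length-ratio formula for $d_{Th}$ is sound: if $K^{-1} l_z(c)^2 \le \text{Ext}_z(c) \le K\, l_z(c)^2$ holds uniformly for $z\in\mathcal{T}(S)^{\epsilon}$ and $c\in\mathcal{S}$, then each ratio $\text{Ext}_y(c)/\text{Ext}_x(c)$ is within a multiplicative factor $K^2$ of $(l_y(c)/l_x(c))^2$, and taking $\tfrac12\log\sup_c$ gives $|d_{\mathcal{T}}(x,y)-d_{Th}(x,y)|\le\log K$. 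Your lower bound by testing against the hyperbolic metric itself (Gauss--Bonnet giving fixed area $2\pi|\chi(S)|$) is exactly right, and the upper bound via comparing both $l_z(c)$ and $\sqrt{\text{Ext}_z(c)}$ to $i(c,\mu_z)$ for a short marking $\mu_z$ is the standard route.

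It is worth noting that your ``key lemma'' is precisely Minsky's estimate \cite[Lemma 4.3]{Min96}, which this paper itself invokes a few lines later in the proof of Proposition~\ref{Busemann-thick}, in exactly the form $|\log l_z(c)-\tfrac12\log\text{Ext}_z(c)|\le C_1$ for $z$ in the thick part. So the ingredient you propose to establish is already available off the shelf, and plugging it into Kerckhoff + Thurston gives a clean alternative proof of the thick-part comparison without going through the full machinery of \cite{CR07}. The Choi--Rafi paper proves more (a combinatorial formula for both metrics that also governs behavior in the thin part), which is why it is the standard citation; but for the statement as given, your shorter route suffices. Your closing remark that the thickness hypothesis is essential — because extremal and hyperbolic lengths of a pinching curve degenerate at different rates — is also correct and well placed.

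One small caution: when you set up the upper bound by constructing a singular flat metric adapted to $\mu_z$, you should make sure the area bound is uniform over the thick part. This is where the hypothesis $z\in\mathcal{T}(S)^{\epsilon}$ enters: it keeps the lengths of the marking curves, and hence the geometry of the flat model, pinned in a compact range depending only on $\epsilon$ and the topology of $S$. Without that, the multiplicative constant in the comparison $\sqrt{\text{Ext}_z(c)}\asymp i(c,\mu_z)$ would not be uniform. As long as you track this dependence, the argument closes.
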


\paragraph*{Thurston's and Gardiner--Masur's boundaries.} Thurston defined a compactification of $\mathcal{T}(S)$, as the closure of the image of the embedding 
\begin{displaymath}
\begin{array}{cccc}
\mathcal{T}(S)&\to &\mathbb{PR}^{\mathcal{S}}\\
x&\mapsto & \mathbb{R}^{\ast}(l_x(c))_{c\in\mathcal{S}}
\end{array}
\end{displaymath}
\noindent and he identified the boundary $\overline{\mathcal{T}(S)}\smallsetminus\mathcal{T}(S)$ with the space $\mathcal{PMF}$ of projective Whitehead equivalence classes of measured foliations on $S$, see \cite{FLP79}. We denote by $\mathcal{MF}$ the space of (unprojectivized) Whitehead equivalence classes of measured foliations on $S$. The length pairing between curves in $\mathcal{S}$ and points in $\mathcal{T}(S)$ extends to continuous intersection pairings (denoted by $i$) from $\mathcal{MF}\times\mathcal{T}(S)$ to $\mathbb{R}$ and from $\mathcal{MF}\times\mathcal{MF}$ to $\mathbb{R}$.

A measured foliation $F$ on $S$ is \emph{arational} if no leaf of $F$ contains a simple closed curve on $S$. It is \emph{uniquely ergodic} if in addition, every measured foliation $F'\in\mathcal{MF}$ with the same topological support as $F$ is homothetic to $F$. We will denote by $\mathcal{UE}\subseteq\mathcal{PMF}$ the space of uniquely ergodic arational foliations, and let $\mathcal{PMF}_0:=\mathcal{UE}\cup\mathcal{S}$. Given any two transverse measured foliations $x,y\in\mathcal{PMF}_0$, there exists a Teichmüller geodesic $\gamma:\mathbb{R}\to\mathcal{T}(S)$ such that $\gamma(t)$ converges to $x$ (resp. to $y$) as $t$ goes to $-\infty$ (resp. $+\infty$).

Gardiner and Masur have constructed \cite{GM91} another compactification $\text{cl}_{GM}\mathcal{T}(S)$ of $\mathcal{T}(S)$, using extremal lengths instead of hyperbolic lengths, by taking the closure of the image of the embedding
\begin{displaymath}
\begin{array}{cccc}
\mathcal{T}(S)&\to &\mathbb{PR}^{\mathcal{S}}\\
x&\mapsto & \mathbb{R}^{\ast}(\text{Ext}_x(c)^{\frac{1}{2}})_{c\in\mathcal{S}}
\end{array}
\end{displaymath}

\noindent in the projective space $\mathbb{PR}^{\mathcal{S}}$. We will denote by $\partial_{GM}\mathcal{T}(S):=\text{cl}_{GM}\mathcal{T}(S)\smallsetminus\mathcal{T}(S)$ the \emph{Gardiner--Masur boundary}. Liu--Su have identified the horoboundary of $(\mathcal{T}(S),d_{\mathcal{T}})$ with the Gardiner--Masur boundary \cite{LS14}. There exists an injective map from $\mathcal{PMF}$ to $\partial_{GM}\mathcal{T}(S)$, whose restriction to $\mathcal{PMF}_0$ is a homeomorphism onto its image \cite[Theorem 2]{Miy13}. In particular, the Busemann cocycle $\beta$ on $\partial_{GM}\mathcal{T}(S)$ restricts to a continuous cocycle (again denoted by $\beta$) on $\mathcal{PMF}_0$. Miyachi also proved \cite[Corollary 1]{Miy13} that for all $F\in\mathcal{PMF}_0$, all Teichmüller rays with vertical foliation equal to $F$ converge to $F$ in $\partial_{GM}\mathcal{T}(S)$. For all $F\in\mathcal{PMF}_0$, the horofunction $h_F$ associated to $F$ is given by $$h_F(z)=\log\sup_{\alpha\in\mathcal{S}}\frac{i(F,\alpha)}{\text{Ext}_z(\alpha)^{\frac{1}{2}}}-\log\sup_{\alpha\in\mathcal{S}}\frac{i(F,\alpha)}{\text{Ext}_o(\alpha)^{\frac{1}{2}}}$$ for all $z\in\mathcal{T}(S)$. 

\paragraph*{The curve graph.}\label{sec-cg}

The \emph{curve graph} $\mathcal{C}(S)$ is the simplicial graph whose vertices are the isotopy classes of essential simple closed curves on $S$, in which two vertices are joined by an edge whenever there are disjoint representatives in the isotopy classes of the corresponding curves. We denote by $d_{\mathcal{C}}$ the simplicial metric on $\mathcal{C}(S)$. Masur--Minsky proved in \cite{MM99} that $(\mathcal{C}(S),d_{\mathcal{C}})$ is Gromov hyperbolic, and that an element $\Phi\in\text{Mod}(S)$ acts loxodromically on $\mathcal{C}(S)$ if and only if $\Phi$ is a pseudo-Anosov mapping class. There is a coarsely $\text{Mod}(S)$-equivariant, coarsely Lipschitz map $\pi:\mathcal{T}(S)\to\mathcal{C}(S)$, which sends every point $x\in\mathcal{T}(S)$ to the isotopy class of one of the essential simple closed curves with minimal hyperbolic length in $(S,x)$. Masur--Minsky also proved in \cite{MM99} that $\pi$-images of Teichmüller geodesics are uniform unparameterized quasi-geodesics in $\mathcal{C}(S)$.

The Gromov boundary of the curve graph was identified by Klarreich \cite{Kla99} with the space of equivalence classes of arational foliations, two arational foliations being equivalent if they have the same topological support, and only differ by their transverse measure. Klarreich also proved that there is a well-defined $\text{Mod}(S)$-equivariant map $$\psi:\mathcal{PMF}_0\to\mathcal{C}(S)\cup\partial_{\infty}\mathcal{C}(S),$$ sending any element in $\mathcal{S}$ to the corresponding vertex of $\mathcal{C}(S)$, and such that for all $x\in\mathcal{UE}$ and all sequences $(x_n)_{n\in\mathbb{N}}\in\mathcal{T}(S)^{\mathbb{N}}$ converging to $x$, the sequence $(\pi(x_n))_{n\in\mathbb{N}}$ converges to $\psi(x)\in\partial_{\infty}\mathcal{C}(S)$.

\paragraph*{Random walks on mapping class groups.}

We finish this section by reviewing a result of Kaimanovich--Masur \cite{KM96} about random walks on $\text{Mod}(S)$. A subgroup $H\subseteq\text{Mod}(S)$ is \emph{nonelementary} if it contains two pseudo-Anosov mapping classes that generate a free subgroup of $H$. A probability measure on $\text{Mod}(S)$ is \emph{nonelementary} if the subsemigroup of $\text{Mod}(S)$ generated by the support of $\mu$ is a nonelementary subgroup of $\text{Mod}(S)$. In view of the definition of nonelementarity in Section \ref{sec-hyp}, this is equivalent to nonelementarity with respect to the action on the curve graph $\mathcal{C}(S)$.

\begin{theo}(Kaimanovich--Masur \cite[Theorem 2.2.4]{KM96})\label{unique-stationary}
Let $\mu$ be a nonelementary probability measure on $\text{Mod}(S)$. Then for $\mathbb{P}$-a.e. sample path $\mathbf{\Phi}:=(\Phi_n)_{n\in\mathbb{N}}$ of the random walk on $(\text{Mod}(S),\mu)$, the sequence $(\Phi_n^{-1}.o)_{n\in\mathbb{N}}$ converges to a point $\text{bnd}(\mathbf{\Phi})\in\mathcal{UE}$. The hitting measure $\nu^{\ast}$ on $\mathcal{PMF}$, defined by letting $$\nu^{\ast}(S)=\mathbb{P}[\text{bnd}(\mathbf{\Phi})\in S]$$ for all measurable subsets $S\subseteq\mathcal{PMF}$, is nonatomic, and it is the unique $\check{\mu}$-stationary measure on $\mathcal{PMF}$.
\end{theo}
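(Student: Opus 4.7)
The plan is to follow Furstenberg's boundary-theoretic strategy, combined with the contracting action of pseudo-Anosov elements on $\mathcal{PMF}$. First, since $\mathcal{PMF}$ is compact, any weak-$*$ limit of the Cesàro averages $\frac{1}{n}\sum_{k=0}^{n-1}\check{\mu}^{*k}\ast\delta_{F_0}$ produces at least one $\check{\mu}$-stationary probability measure $\nu^{*}$ on $\mathcal{PMF}$. The remainder of the proof consists in locating the support of $\nu^{*}$, establishing convergence of sample paths, and deriving uniqueness and nonatomicity from the convergence.

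Next I would show $\nu^{*}(\mathcal{UE})=1$. The key input is nonelementarity: the subsemigroup generated by the support of $\mu$ contains two independent pseudo-Anosov mapping classes $\phi$ and $\psi$, each with north–south dynamics on $\mathcal{PMF}$ and with disjoint fixed-point pairs. If $\nu^{*}$ charged the set of non-arational foliations, then stratifying by the (finite) multicurve of simple closed leaves one would extract a $\check{\mu}$-invariant finite collection of isotopy classes of curves, preserved in particular by $\phi$ and $\psi$, contradicting that they are independent pseudo-Anosovs. A parallel argument on the finite-dimensional simplex of transverse measures with a fixed arational topological support rules out $\nu^{*}$-mass on arational but non-uniquely-ergodic foliations.

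I would then prove the convergence statement via a martingale argument. Set $\nu^{*}_{n}:=\Phi_{n}\cdot\nu^{*}$; by stationarity this is a $\mathcal{P}(\mathcal{PMF})$-valued martingale with respect to the natural filtration, hence converges $\mathbb{P}$-a.s.\ in the weak-$*$ topology to a random limit $\nu^{*}_{\infty}$. To identify $\nu^{*}_{\infty}$ as a Dirac mass at a point of $\mathcal{UE}$, I would use the contracting dynamics: on $\mathbb{P}$-almost every sample path one can extract along a subsequence an element acting with strong source–sink behaviour on $\mathcal{PMF}$, forcing $\nu^{*}_{\infty}$ to be concentrated at the attracting foliation. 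Calling this point $\text{bnd}(\mathbf{\Phi})$ and using that $\nu^{*}$ was supported on $\mathcal{UE}$ yields $\text{bnd}(\mathbf{\Phi})\in\mathcal{UE}$. Uniqueness of the stationary measure is then automatic: for any $\check{\mu}$-stationary measure $\nu^{*}$, the identity $\nu^{*}=\mathbb{E}[\nu^{*}_{\infty}]$ exhibits $\nu^{*}$ as the distribution of $\text{bnd}(\mathbf{\Phi})$, which only depends on $\mathbf{\Phi}$. Nonatomicity follows by the same pigeonhole principle used in the support argument: a nontrivial atomic part of $\nu^{*}$ would give a $\check{\mu}$-invariant finite subset of $\mathcal{UE}$, incompatible with the presence of two independent pseudo-Anosovs in the support.

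The main obstacle is the support step: ruling out $\nu^{*}$-mass on non-uniquely-ergodic arational foliations is the delicate point, since such foliations are genuinely present in $\mathcal{PMF}$ and the action of $\text{Mod}(S)$ on them is not as transparently contracting as on $\mathcal{UE}$. Overcoming it requires exploiting the finite-dimensional simplex structure of transverse measures on a fixed topological support together with the minimality of the $\text{Mod}(S)$-action on $\mathcal{UE}$, so as to reduce any $\check{\mu}$-invariant positive mass to a finite $\check{\mu}$-invariant configuration that nonelementarity forbids.
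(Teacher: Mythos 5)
The paper does not prove this result; it is quoted verbatim from Kaimanovich--Masur \cite{KM96} as their Theorem~2.2.4, so there is no in-paper proof to compare against.

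Your overall plan does follow the Furstenberg-style strategy that Kaimanovich--Masur use: produce a $\check{\mu}$-stationary measure by compactness on $\mathcal{PMF}$, locate its support, establish a.s.\ convergence via the martingale of translated measures $\Phi_n\cdot\nu^{\ast}$, and derive uniqueness and nonatomicity from convergence. However, the step you correctly flag as delicate --- ruling out $\nu^{\ast}$-mass on arational, non-uniquely-ergodic foliations --- is exactly where your sketch has a genuine gap. For the \emph{non-arational} case a pigeonhole argument really is available: a measured foliation that is not filling determines a proper subsurface or multicurve drawn from a countable set of isotopy classes, and the stationarity equation then produces a finite $\check{\mu}$-invariant subcollection of heaviest weight, contradicting nonelementarity. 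For \emph{non-uniquely-ergodic arational} foliations there is no analogous countable combinatorial marker: the family of ergodic-decomposition simplices (one per topological support) is uncountable, so positive $\nu^{\ast}$-measure on that locus does not concentrate on any finite $\check{\mu}$-invariant configuration, and the pigeonhole argument simply does not apply. Your appeal to ``minimality of the $\mathrm{Mod}(S)$-action on $\mathcal{UE}$'' does not close this either, since minimality of a closed invariant subset does not force a stationary probability measure to give that subset full mass. Kaimanovich--Masur handle this step by substantially different and harder means, exploiting the interplay between harmonic measure, recurrence properties of Teichm\"uller geodesic rays, and Masur's unique-ergodicity criterion rather than a combinatorial pigeonhole. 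Without this step in place, your subsequent martingale/contraction argument only identifies a random boundary point in $\mathcal{PMF}$, not one in $\mathcal{UE}$, and the convergence and uniqueness statements as claimed do not follow.
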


\subsection{Relating the length cocycle to the Busemann cocycle}

Let $\sigma:\text{Mod}(S)\times\mathcal{S}\to\mathbb{R}$ be the \emph{length cocycle}, defined by letting $$\sigma(\Phi,c):=\log\frac{l_o(\Phi(c))}{l_o(c)}$$ for all $\Phi\in\text{Mod}(S)$ and all $c\in\mathcal{S}$, where $o$ is a fixed basepoint in $\mathcal{C}(S)$. The following proposition will enable us to get control over the length cocycle in terms of the Busemann cocycle. We recall that $h_c$ denotes the horofunction in $\partial_h\mathcal{T}(S)$ associated to $c$.

\begin{prop}\label{Busemann-thick}
For all $\epsilon>0$, there exists $C>0$ such that for all $c\in\mathcal{S}$ and all $z\in\mathcal{T}(S)^{\epsilon}$, one has $$\left|h_c(z)-\log\frac{l_z(c)}{l_o(c)}\right|\le C.$$
\end{prop}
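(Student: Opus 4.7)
The plan is to reduce the estimate to two classical ingredients: an explicit description of the horofunction $h_c$ in terms of extremal length, and a comparison between extremal and hyperbolic length on the thick part of $\mathcal{T}(S)$.

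First, I would invoke the explicit formula for $h_c$ recalled just before the statement of the proposition, namely
$$h_c(z) = \log \sup_{\alpha \in \mathcal{S}} \frac{i(c,\alpha)}{\operatorname{Ext}_z(\alpha)^{1/2}} - \log \sup_{\alpha \in \mathcal{S}} \frac{i(c,\alpha)}{\operatorname{Ext}_o(\alpha)^{1/2}},$$
together with the classical identity of Kerckhoff, reformulated by Gardiner--Masur,
$$\operatorname{Ext}_z(c)^{1/2} = \sup_{\alpha \in \mathcal{S}} \frac{i(c,\alpha)}{\operatorname{Ext}_z(\alpha)^{1/2}},$$
valid for every $c \in \mathcal{S}$ and every $z \in \mathcal{T}(S)$. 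Plugging the identity into the formula collapses it to
$$h_c(z) = \frac{1}{2}\log \frac{\operatorname{Ext}_z(c)}{\operatorname{Ext}_o(c)}.$$

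Second, I would establish the following uniform comparison on the thick part: for every $\epsilon > 0$ there is a constant $C_1 = C_1(\epsilon,S) > 0$ such that for every $z \in \mathcal{T}(S)^{\epsilon}$ and every $c \in \mathcal{S}$,
$$\frac{1}{C_1}\, l_z(c)^2 \le \operatorname{Ext}_z(c) \le C_1\, l_z(c)^2.$$
The lower bound is immediate from the definition of extremal length applied to the hyperbolic metric itself, whose total area is $2\pi|\chi(S)|$. For the upper bound one constructs a conformal embedding of a Euclidean annulus around $c$ whose modulus is of order at least $1/l_z(c)^2$; this uses the collar lemma together with the fact that the injectivity radius of $z$ is bounded below by $\epsilon$, so that annular neighbourhoods of curves have uniformly controlled width.

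Assuming the basepoint $o$ has been chosen in the $\epsilon$-thick part (which costs nothing), combining the two steps gives
$$\left| h_c(z) - \log \frac{l_z(c)}{l_o(c)} \right| \le \log C_1 + C_o,$$
where $C_o$ absorbs the comparison at $o$. This is the desired estimate.

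The main obstacle is the uniform upper bound $\operatorname{Ext}_z(c) \le C_1\, l_z(c)^2$ in the second step: the lower bound is trivial and the Kerckhoff--Gardiner--Masur identity in the first step is a standard tool. The upper bound demands a geometric construction of an embedded annulus whose modulus grows at the correct rate, and this must be done uniformly in $c \in \mathcal{S}$, which may be arbitrarily long even when $z$ lies in the thick part.
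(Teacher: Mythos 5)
Your approach is correct but genuinely different from the paper's. You collapse the horofunction formula using the Gardiner--Masur equality $\operatorname{Ext}_z(c)^{1/2}=\sup_{\alpha}i(c,\alpha)/\operatorname{Ext}_z(\alpha)^{1/2}$ (with the supremum over $\mathcal{S}$ agreeing with the one over $\mathcal{MF}$ by density and homogeneity), obtaining $h_c(z)=\tfrac12\log\bigl(\operatorname{Ext}_z(c)/\operatorname{Ext}_o(c)\bigr)$, and then convert extremal length to hyperbolic length on the thick part in a single step. The paper instead keeps the supremum, converts each $\operatorname{Ext}_z(\alpha)^{1/2}$ to $l_z(\alpha)$ via Minsky, reduces the supremum to a finite short marking $\mu_z$ via Lenzhen--Rafi--Tao's Theorem E, and then invokes a second Lenzhen--Rafi--Tao estimate ($l_z(c)\asymp\sum_{\alpha\in\mu_z}i(c,\alpha)\,l_z(\overline\alpha)$) to close the loop. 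So you replace the two marking-based estimates by the Gardiner--Masur identity; your route is shorter and avoids the machinery of short markings entirely, at the cost of invoking the extremal-length duality, which the paper builds its argument without.

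One caution: your sketch of the upper bound $\operatorname{Ext}_z(c)\le C_1\,l_z(c)^2$ via an embedded annulus of modulus $\gtrsim 1/l_z(c)^2$ built from the collar is not a correct proof. On an $\epsilon$-thick surface a simple closed geodesic of large length $l$ has a collar whose width is exponentially small in $l$, so the collar annulus has modulus far smaller than $1/l^2$; the intuition that ``annular neighbourhoods of curves have uniformly controlled width'' fails for long curves. The comparison $\operatorname{Ext}_z(c)\asymp_\epsilon l_z(c)^2$ on the thick part is exactly Minsky's Lemma 4.3 (cited as \cite[Lemma 4.3]{Min96} in the paper), and you should simply quote that rather than attempt to rederive it with a collar argument. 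With that citation in place, your two-step argument is complete.
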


\begin{proof}
It follows from work by Minsky \cite[Lemma 4.3]{Min96} that there exists $C_1>0$ such that for all $z\in\mathcal{T}(S)^{\epsilon}$ and all $c\in\mathcal{S}$, one has $$\left|\log(l_z(c))-\frac{1}{2}\log (\text{Ext}_z(c))\right|\le C_1.$$ Hence for all $z\in\mathcal{T}(S)^{\epsilon}$ and all $c\in\mathcal{S}$, one has $$\left|h_c(z)-\left(\log\sup_{\alpha\in\mathcal{S}}\frac{i(c,\alpha)}{l_z(\alpha)}-\log\sup_{\alpha\in\mathcal{S}}\frac{i(c,\alpha)}{l_o(\alpha)}\right)\right|\le 2C_1.$$ Using Proposition \ref{metric-lengths} and continuity of the extension of the intersection form to the boundary, one can then establish the existence of $C_2>0$ such that for all $z\in\mathcal{T}(S)^{\epsilon}$ and all $c\in\mathcal{S}$, one has $$\left|h_c(z)-\left(\log\sup_{\alpha\in\mu_z}\frac{i(c,\alpha)}{l_z(\alpha)}-\log\sup_{\alpha\in\mu_o}\frac{i(c,\alpha)}{l_o(\alpha)}\right)\right|\le C_2.$$ Notice that for all $x\in\mathcal{T}(S)^{\epsilon}$, all curves in $\mu_x$ coarsely have hyperbolic length $1$, up to a bounded multiplicative error. Therefore, there exists $C_3>0$ such that for all $z\in\mathcal{T}(S)^{\epsilon}$ and all $c\in\mathcal{S}$, one has 
\begin{equation}\label{1eq}
\left|h_c(z)-\log\frac{\sup_{\alpha\in\mu_z}i(c,\alpha)}{\sup_{\alpha'\in\mu_o}i(c,\alpha')}\right|\le C_3.
\end{equation}
 Finally, in view of \cite[Proposition 3.1]{LRT12}, there exists $C_4>0$ such that for all $z\in\mathcal{T}(S)$ and all $c\in\mathcal{S}$, one has $$\frac{1}{C_4}\sum_{\alpha\in\mu_z}i(c,\alpha)l_z(\overline{\alpha})\le l_z(c)\le C_4\sum_{\alpha\in\mu_z}i(c,\alpha)l_z(\overline{\alpha}),$$ where the curve $\overline{\alpha}$ is the transverse curve to the curve $\alpha$ to the short marking $\mu_z$. One derives that $$\frac{1}{C_4}\sup_{\alpha\in\mu_z}(i(c,\alpha)l_z(\overline{\alpha}))\le l_z(c)\le C_4\left(\sum_{\alpha\in\mu_z}l_z(\overline{\alpha})\right)\sup_{\alpha\in\mu_z}i(c,\alpha).$$ It follows from the Collar Lemma that when $z\in\mathcal{T}(S)^{\epsilon}$, all curves in a short marking $\mu_x$ coarsely have hyperbolic length $1$, up to a bounded multiplicative error. One can therefore deduce that there exists $C_5>0$ such that for all $z\in\mathcal{T}(S)^{\epsilon}$ and all $c\in\mathcal{S}$, we have 
\begin{equation}\label{2eq}
\left|\log(l_z(c))-\log\left(\sup_{\alpha\in\mu_z}i(c,\alpha)\right)\right|\le C_5.
\end{equation}
 The claim follows from the estimates \eqref{1eq} and \eqref{2eq}.
\end{proof}

As a consequence of Proposition \ref{Busemann-thick} applied to $z=\Phi^{-1}o$, we obtain the following result.

\begin{cor}\label{cor-key}
There exists $C>0$ such that $$|\beta(\Phi,c)-\sigma(\Phi,c)|\le C$$ for all $\Phi\in\text{Mod}(S)$ and all $c\in\mathcal{S}$.
\qed
\end{cor}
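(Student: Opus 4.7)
The plan is to apply Proposition \ref{Busemann-thick} directly with $z=\Phi^{-1}.o$. The only thing to verify is that $\Phi^{-1}.o$ stays in some fixed thick part $\mathcal{T}(S)^{\epsilon_0}$ as $\Phi$ ranges over $\text{Mod}(S)$, so that the constant $C$ furnished by Proposition \ref{Busemann-thick} can be taken uniform in $\Phi$.

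First, I would fix once and for all a basepoint $o\in\mathcal{T}(S)$. Since $o$ corresponds to a hyperbolic metric, there exists $\epsilon_0>0$ with $o\in\mathcal{T}(S)^{\epsilon_0}$. The key observation is that $\text{Mod}(S)$ permutes the set $\mathcal{S}$ of isotopy classes of essential simple closed curves, and acts on $\mathcal{T}(S)$ by $l_{\Phi.x}(c)=l_x(\Phi^{-1}(c))$ for every $c\in\mathcal{S}$. In particular,
\[
\inf_{c\in\mathcal{S}}l_{\Phi^{-1}.o}(c)=\inf_{c\in\mathcal{S}}l_o(\Phi(c))=\inf_{c'\in\mathcal{S}}l_o(c'),
\]
so the thick part $\mathcal{T}(S)^{\epsilon_0}$ is $\text{Mod}(S)$-invariant and $\Phi^{-1}.o\in\mathcal{T}(S)^{\epsilon_0}$ for every $\Phi\in\text{Mod}(S)$.

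Now I would apply Proposition \ref{Busemann-thick} with $\epsilon=\epsilon_0$ and $z=\Phi^{-1}.o$: there exists $C>0$ (independent of $\Phi$ and $c$) such that
\[
\left|h_c(\Phi^{-1}.o)-\log\frac{l_{\Phi^{-1}.o}(c)}{l_o(c)}\right|\le C.
\]
Using again the formula $l_{\Phi^{-1}.o}(c)=l_o(\Phi(c))$, the left-hand side is exactly $|\beta(\Phi,c)-\sigma(\Phi,c)|$ by definition of the Busemann cocycle $\beta(\Phi,c):=h_c(\Phi^{-1}.o)$ and of the length cocycle $\sigma(\Phi,c):=\log(l_o(\Phi(c))/l_o(c))$. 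This gives the desired uniform bound.

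There is essentially no obstacle here: the proposition is a one-line specialization of Proposition \ref{Busemann-thick}. The only point requiring any care is the invariance of the thick part under the mapping class group action, which follows immediately from the fact that $\text{Mod}(S)$ permutes curves and acts on $\mathcal{T}(S)$ by isometries of length functions.
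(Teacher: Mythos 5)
Your proof is correct and takes the same approach as the paper, which simply states that the corollary follows from Proposition \ref{Busemann-thick} applied to $z=\Phi^{-1}.o$. You usefully spell out the implicit detail that the paper leaves to the reader, namely that the thick part $\mathcal{T}(S)^{\epsilon_0}$ containing $o$ is $\text{Mod}(S)$-invariant, so the constant from Proposition \ref{Busemann-thick} is uniform over all $\Phi$.
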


\subsection{A deviation principle in Teichmüller space}\label{sec-rw-teich}

In \cite{Kar14}, Karlsson established a law of large numbers for $\text{Mod}(S)$. This is stated in \cite{Kar14} in the case where $x\in\mathcal{S}$ is a simple closed curve on $S$, however Karlsson's proof extends to the case where $x\in\mathcal{MF}$. We fix once and for all a basepoint $o\in\mathcal{T}(S)$. We recall that $i$ denotes the intersection form on $\mathcal{MF}\times\mathcal{T}(S)$.

\begin{theo}(Karlsson \cite[Corollary 4]{Kar14}) \label{Karlsson}
Let $\mu$ be a nonelementary probability measure on $\text{Mod}(S)$ with finite first moment with respect to $d_{\mathcal{T}}$, and let $\lambda$ be the drift of the random walk on $(\text{Mod}(S),\mu)$ with respect to $d_{\mathcal{T}}$. Then for all $x\in\mathcal{MF}$ and $\mathbb{P}$-a.e. sample path of the random walk on $(\text{Mod}(S),\mu)$, one has $$\lambda=\lim_{n\to +\infty}\frac{1}{n}\log i(\Phi_n.x,o).$$ 
\end{theo}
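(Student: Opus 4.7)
The plan is to rewrite $i(\Phi_n\cdot x, o) = i(x, \Phi_n^{-1}o) = l_{\Phi_n^{-1}o}(x)$ via $\text{Mod}(S)$-invariance of the intersection/length pairing on $\mathcal{MF}\times\mathcal{T}(S)$, and to establish the two inequalities $\limsup \tfrac{1}{n}\log l_{\Phi_n^{-1}o}(x) \le \lambda$ and $\liminf \tfrac{1}{n}\log l_{\Phi_n^{-1}o}(x) \ge \lambda$ separately. This reduces everything to controlling the hyperbolic length of a fixed foliation at the moving basepoint $\Phi_n^{-1}o\in\mathcal{T}(S)$.

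For the upper bound, which is the easy direction, I would combine the classical estimate $l_z(F)\le\sqrt{\text{Ext}_z(F)}$ (valid on all of $\mathcal{T}(S)$ after area normalization) with Kerckhoff's formula $\text{Ext}_z(F)\le e^{2d_\mathcal{T}(o,z)}\text{Ext}_o(F)$. Using symmetry and $\text{Mod}(S)$-invariance of $d_\mathcal{T}$, this yields $\log i(\Phi_n\cdot x,o)\le d_\mathcal{T}(\Phi_n o,o)+O_x(1)$. Dividing by $n$ and applying Kingman's subadditive ergodic theorem, which gives $\tfrac{1}{n}d_\mathcal{T}(\Phi_n o,o)\to\lambda$ almost surely, yields the desired upper bound.

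For the lower bound I would apply Karlsson's multiplicative ergodic theorem for isometric actions on metric spaces, which produces, for $\mathbb{P}$-almost every sample path, a horofunction $h_\mathbf{g}\in\partial_h\mathcal{T}(S)$ satisfying $-\tfrac{1}{n}h_\mathbf{g}(\Phi_n o)\to\lambda$. Combining Liu--Su's identification $\partial_h\mathcal{T}(S)\cong\partial_{GM}\mathcal{T}(S)$, Kaimanovich--Masur (Theorem \ref{unique-stationary}) giving $\Phi_n^{-1}o\to\xi_\mathbf{g}\in\mathcal{UE}$ almost surely with non-atomic law $\nu^{\ast}$, and Miyachi's injection $\mathcal{PMF}_0\hookrightarrow\partial_{GM}\mathcal{T}(S)$ with explicit horofunction $h_\xi(z)=\log\sup_{\alpha\in\mathcal{S}} i(\xi,\alpha)/\sqrt{\text{Ext}_z(\alpha)} + C_\xi$, the Karlsson--Ledrappier horofunction is identified almost surely as $h_{\xi_\mathbf{g}}$. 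Since $\nu^{\ast}$ is non-atomic and $x$ is fixed, $[x]\ne\xi_\mathbf{g}$ almost surely, so $i(x,\xi_\mathbf{g})>0$ by unique ergodicity of $\xi_\mathbf{g}$. The convergence $h_{\xi_\mathbf{g}}(\Phi_n o)/n\to -\lambda$ is then transferred into the required lower bound on $\log l_{\Phi_n^{-1}o}(x)/n$ by choosing test curves $\alpha$ from a short marking at $\Phi_n^{-1}o$ (Proposition \ref{metric-lengths}) and invoking the comparison between hyperbolic length, extremal length, and intersection numbers from Proposition \ref{Busemann-thick}.

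The main obstacle is this final transfer: the horofunction $h_{\xi_\mathbf{g}}$ pairs the sample-path-dependent boundary point $\xi_\mathbf{g}$ against test curves to measure progress, whereas the target quantity $l_{\Phi_n^{-1}o}(x)$ pairs the fixed $x$ against the trajectory point. Bridging the two requires enough thick-part recurrence of $\Phi_n^{-1}o$ that the comparison $l_z(x)\asymp\sqrt{\text{Ext}_z(x)}$ and the formula of Proposition \ref{Busemann-thick} apply on a positive-density set of times along the trajectory. This is supplied by Masur's criterion (Teichmüller rays with uniquely ergodic vertical foliations recur to the thick part with positive frequency), together with sublinear tracking of the sample path by such a ray, ensuring that the exponential growth of $\sqrt{\text{Ext}_{\Phi_n^{-1}o}(x)}$ implied by transversality of $x$ to $\xi_\mathbf{g}$ is in fact realized by the hyperbolic length.
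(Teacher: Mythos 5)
The paper does not prove this theorem: it is cited directly from Karlsson \cite[Corollary 4]{Kar14}, and the author's only remark is that Karlsson's proof, stated there for simple closed curves, extends verbatim to $x\in\mathcal{MF}$. So there is no ``paper's own proof'' to compare you against; your proposal is a reconstruction of Karlsson's argument, and it should be assessed on its own terms.

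Your upper bound (Kerckhoff's formula plus Kingman) is fine. On the lower bound, the overall apparatus --- Karlsson's multiplicative ergodic theorem, the Liu--Su identification of $\partial_h\mathcal{T}(S)$ with the Gardiner--Masur boundary, Kaimanovich--Masur and nonatomicity to get $\xi_{\mathbf{g}}\in\mathcal{UE}$ transverse to $x$ --- is the right machinery. But the step you flag as ``the main obstacle'' is in fact not an obstacle, and the bridge you propose to build (thick-part recurrence of $\Phi_n^{-1}o$ via Masur's criterion and sublinear tracking) is both unnecessary and, as designed, insufficient: a positive-density set of thick times does not control a $\liminf$, and turning ``positive density'' into ``gaps of size $o(n)$'' would require further work. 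The clean move is to use $\text{Mod}(S)$-equivariance of \emph{both} pairings: $l_{\Phi_n^{-1}o}(x)=l_o(\Phi_n\cdot x)$ and $\text{Ext}_{\Phi_n^{-1}o}(x)=\text{Ext}_o(\Phi_n\cdot x)$, so the passage from extremal length to hyperbolic length should be done at the \emph{fixed} thick basepoint $o$, where Minsky's comparison $l_o(F)\asymp\sqrt{\text{Ext}_o(F)}$ holds with constants depending only on $o$ and on nothing else. Then the Gardiner--Masur horofunction inequality $$\tfrac{1}{2}\log\text{Ext}_z(x)\ \ge\ \log i(\xi_{\mathbf{g}},x)-h_{\xi_{\mathbf{g}}}(z)-C_{\xi_{\mathbf{g}}}$$ (obtained by plugging $\alpha=x$ into the defining supremum for $h_{\xi_{\mathbf{g}}}$, using density and homogeneity) applied at $z=\Phi_n^{-1}o$ immediately yields $\liminf\frac{1}{2n}\log\text{Ext}_{\Phi_n^{-1}o}(x)\ge\lambda$, with no recurrence hypothesis at all, provided $i(\xi_{\mathbf{g}},x)>0$. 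You should also justify more carefully the identification of the Karlsson--Ledrappier tracking horofunction with $h_{\xi_{\mathbf{g}}}$: the multiplicative ergodic theorem only produces \emph{some} linearly decaying horofunction in $\partial_{GM}\mathcal{T}(S)$, and equating it a.s.\ with the image of the Kaimanovich--Masur boundary point requires Miyachi's ray-convergence result and a small argument which your sketch elides.
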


We deduce the following deviation estimate for the realization in $\mathcal{T}(S)$ of the random walk on $(\text{Mod}(S),\mu)$.

\begin{prop}\label{ccl-mod}
Let $\mu$ be a nonelementary probability measure on $\text{Mod}(S)$ with finite second moment with respect to $d_{\mathcal{T}}$, and let $\lambda$ be the drift of the random walk on $(\text{Mod}(S),\mu)$ with respect to $d_{\mathcal{T}}$. Then for every $\epsilon>0$, there exists a sequence $(C_n)_{n\in\mathbb{N}}\in l^1(\mathbb{N})$ such that for all $n\in\mathbb{N}$ and all $c\in\mathcal{S}$, one has $$\mu^{\ast n}\left(\left\{\Phi\in\text{Mod}(S)|\left|\log\frac{l_{o}(\Phi(c))}{l_{o}(c)}-n\lambda\right|\ge\epsilon n\right\}\right)\le C_n.$$
\end{prop}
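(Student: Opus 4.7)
The strategy is to combine Corollary \ref{cor-key} with a Benoist--Quint style deviation principle (Proposition \ref{bq}) applied to the Busemann cocycle $\beta$ on the Gardiner--Masur compactification $\text{cl}_{GM}\mathcal{T}(S)$. By Corollary \ref{cor-key}, there is a constant $C>0$ with $|\sigma(\Phi,c)-\beta(\Phi,c)|\le C$ uniformly in $\Phi\in\text{Mod}(S)$ and $c\in\mathcal{S}$, so that under the embedding $\mathcal{S}\hookrightarrow\mathcal{PMF}_0\subseteq\partial_{GM}\mathcal{T}(S)$, and for any $\epsilon>0$ and $n\ge 2C/\epsilon$, the event $\{|\sigma(\Phi,c)-n\lambda|\ge\epsilon n\}$ is contained in $\{|\beta(\Phi,c)-n\lambda|\ge\epsilon n/2\}$. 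It therefore suffices to produce, for every $\epsilon'>0$, a summable sequence $(C_n)_{n\in\mathbb{N}}$ with
$$\mu^{*n}\bigl(\{\Phi:|\beta(\Phi,x)-n\lambda|\ge\epsilon' n\}\bigr)\le C_n$$
uniformly in $x\in\text{cl}_{GM}\mathcal{T}(S)$.

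The Gardiner--Masur compactification is compact and metrizable (by Liu--Su it coincides with the horocompactification of $(\mathcal{T}(S),d_{\mathcal{T}})$), and $\beta$ is a continuous cocycle on it. The domination $\beta_{\sup}(\Phi)\le d_{\mathcal{T}}(\Phi.o,o)$ coming from Equation \eqref{bus}, together with the finite second moment assumption, yields $\beta_{\sup}\in L^2(\text{Mod}(S),\mu)$. The remaining hypothesis of Proposition \ref{bq} is the constant integral condition: for every $\mu$-stationary probability measure $\nu$ on $\text{cl}_{GM}\mathcal{T}(S)$, the equality $\int\beta(g,x)\,d\mu(g)\,d\nu(x)=\lambda$ must hold. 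This is the Teichmüller analogue of Corollary \ref{H1-hyp}.

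To verify this equality, we first argue that any $\mu$-stationary measure on $\text{cl}_{GM}\mathcal{T}(S)$ is concentrated on $\mathcal{PMF}_0$, by applying Kaimanovich--Masur (Theorem \ref{unique-stationary}) to $\check\mu$ (which is also nonelementary) and transporting the resulting description through Miyachi's embedding of $\mathcal{PMF}_0$ into $\partial_{GM}\mathcal{T}(S)$. For $x\in\mathcal{PMF}_0$, Karlsson's law of large numbers (Theorem \ref{Karlsson}) gives $\frac{1}{n}\log i(\Phi_n.x,o)\to\lambda$ almost surely; the horofunction formula for the Gardiner--Masur boundary combined with Proposition \ref{Busemann-thick} applied to $z=\Phi_n^{-1}.o$ (valid whenever this point lies in the thick part of Teichmüller space, which happens with positive density for typical sample paths) converts this into $\frac{1}{n}\beta(\Phi_n,x)\to\lambda$ almost surely. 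Integrating against $\nu$, the cocycle identity $\int\beta\,d\mu^{*n}d\nu=n\int\beta\,d\mu\,d\nu$ together with dominated convergence (using the $L^2$ bound from \eqref{bus}) then yields $\int\beta\,d\mu\,d\nu=\lambda$. With this identity in hand, Proposition \ref{bq} produces the desired summable deviation estimate for $\beta$, which by the first paragraph implies Proposition \ref{ccl-mod}. The main technical obstacle is the last transfer step: Karlsson's theorem is stated in terms of the intersection form (hyperbolic lengths), while the Gardiner--Masur horofunctions are naturally expressed via extremal lengths, and Proposition \ref{Busemann-thick} only identifies the two on the thick part, so one must exploit sufficient thick-part recurrence of the random walk on $\mathcal{T}(S)$.
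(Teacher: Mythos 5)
The paper's own proof is more direct: it applies Proposition \ref{bq} not to the Busemann cocycle on the Gardiner--Masur compactification, but to the length cocycle $\sigma(\Phi,x)=\log\frac{i(\Phi.x,o)}{i(x,o)}$ viewed as a continuous cocycle on Thurston's compact space $\mathcal{PMF}$. On $\mathcal{PMF}$, Kaimanovich--Masur (Theorem \ref{unique-stationary}) gives uniqueness of the $\mu$-stationary measure outright, Karlsson's law of large numbers (Theorem \ref{Karlsson}) plus Birkhoff verifies the constant-integral hypothesis, $\sigma_{\sup}$ is controlled by $d_{Th}(\Phi.o,o)$ hence is in $L^2$, and the conclusion follows without any reference to horofunctions. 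Corollary \ref{cor-key} plays no role.

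Your route through the Busemann cocycle $\beta$ on $\text{cl}_{GM}\mathcal{T}(S)$ has a genuine gap at the step where you verify the constant-integral hypothesis of Proposition \ref{bq}. That proposition requires $\int\beta\,d\mu\,d\nu=\lambda$ for \emph{every} $\mu$-stationary probability measure $\nu$ on the ambient compact space, here $\text{cl}_{GM}\mathcal{T}(S)$. You assert that every such measure is concentrated on $\mathcal{PMF}_0$ ``by applying Kaimanovich--Masur and transporting through Miyachi's embedding,'' but this does not follow: Theorem \ref{unique-stationary} classifies stationary measures on $\mathcal{PMF}$, not on the Gardiner--Masur boundary, and Miyachi's map is a homeomorphism only in restriction to $\mathcal{PMF}_0$. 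The Gardiner--Masur boundary is strictly larger than $\mathcal{PMF}$ and could a priori contain closed invariant sets disjoint from $\mathcal{PMF}_0$ that carry stationary measures; nothing in the paper (nor in the references you invoke) rules this out, and the paper carefully avoids having to prove such a statement by never posing the deviation problem on $\text{cl}_{GM}\mathcal{T}(S)$. There is a secondary issue at the same step: Proposition \ref{Busemann-thick}, which you use to pass from Karlsson's intersection-number statement to a statement about $\beta$, is proved only for $c\in\mathcal{S}$, whereas a nonatomic stationary measure on $\mathcal{PMF}_0$ would be supported on $\mathcal{UE}$, where the Gardiner--Masur horofunctions are expressed through extremal lengths and the comparison with hyperbolic lengths has not been established. (The ``thick-part recurrence'' point you flag is actually not the issue: $\Phi_n^{-1}.o$ lies in a fixed thick part for all $n$ automatically, since it is in the $\text{Mod}(S)$-orbit of $o$.) In short, the obstruction is not a transfer between length functionals, but the classification of stationary measures on a compactification the paper never works with at this stage.
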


\begin{proof}
Let $\sigma:\text{Mod}(S)\times\mathcal{PMF}\to\mathbb{R}$ be the continuous cocycle defined by $$\sigma(\Phi,x):=\log\frac{i(\Phi.x,o)}{i(x,o)}$$ for all $\Phi\in\text{Mod}(S)$ and all $x\in\mathcal{PMF}$. Theorem \ref{Karlsson} states that for all $x\in\mathcal{PMF}$ and $\mathbb{P}$-a.e. sample path of the random walk on $(\text{Mod}(S),\mu)$, one has $$\lim_{n\to +\infty}\frac{1}{n}\sigma(\Phi_n,x)=\lambda.$$ Birkhoff's ergodic theorem then implies that $$\int_{\text{Mod}(S)\times\mathcal{PMF}}\sigma(\Phi,x)d\mu(\Phi)d\nu(x)=\lambda,$$ where $\nu$ denotes the unique $\mu$-stationary probability measure on $\mathcal{PMF}$. Proposition \ref{ccl-mod} follows by applying Benoist--Quint's deviation estimate (Proposition \ref{bq}) to the cocycle $\sigma$ (square-integrability of $\sigma_{sup}$ follows from the fact that $\sigma_{sup}(\Phi,x)= d_{Th}(\Phi.o,o)$ for all $\Phi\in\text{Mod}(S)$).   
\end{proof}

We now establish an analogue of Proposition \ref{ccc} for the realization in $\mathcal{T}(S)$ of the random walk on $(\text{Mod}(S),\mu)$. In the following statement, we let $$\kappa_{\mathcal{T}}(\Phi):=d_{\mathcal{T}}(\Phi.o,o)$$ for all $\Phi\in\text{Mod}(S)$.

\begin{prop}\label{cct-mod}
Let $\mu$ be a nonelementary probability measure on $\text{Mod}(S)$ with finite second moment with respect to $d_{\mathcal{T}}$. Let $\lambda$ be the drift of the random walk on $(\text{Mod}(S),\mu)$ with respect to $d_{\mathcal{T}}$. Then for every $\epsilon>0$, there exists a sequence $(C_n)_{n\in\mathbb{N}}\in l^1(\mathbb{N})$ such that $$\mu^{\ast n}(\{\Phi\in \text{Mod}(S)||\kappa_{\mathcal{T}}(\Phi)-n\lambda|\ge\epsilon n\})\le C_n$$ for all $n\in\mathbb{N}$.
\end{prop}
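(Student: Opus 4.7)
The plan is to deduce the deviation estimate for $\kappa_{\mathcal{T}}$ from Proposition~\ref{ccl-mod} by bounding $d_{\mathcal{T}}(o,\Phi.o)$ above and below in terms of hyperbolic lengths of curves, using the Lenzhen--Rafi--Tao formula (Proposition~\ref{metric-lengths}) expressing $d_{Th}$ through a short marking, and the Choi--Rafi comparison (Proposition~\ref{metrics}) between $d_{\mathcal{T}}$ and $d_{Th}$ on the thick part. Since $d_{\mathcal{T}}$ is symmetric and $\text{Mod}(S)$ acts by isometries, the reflected measure $\check\mu$ is nonelementary with finite second moment and the same drift $\lambda$, so Proposition~\ref{ccl-mod} applies equally well to $\check\mu$.

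I would fix $o$ in some thick part $\mathcal{T}(S)^{\epsilon_0}$ and a short marking $\mu_o=\{c_1,\dots,c_k\}$ at $o$. Applying Proposition~\ref{ccl-mod} to each $c_i\in\mu_o$ (to $\mu$ or $\check\mu$ according to how the action on curves is read off from the action on $\mathcal{T}(S)$) and taking a union bound over this finite set yields a summable sequence $(C_n)\in l^1(\mathbb{N})$ such that, with $\mu^{\ast n}$-probability at least $1-C_n$, the inequality $|\log l_{\Phi.o}(c_i)/l_o(c_i)-n\lambda|\le\epsilon n$ holds simultaneously for all $c_i\in\mu_o$. By Proposition~\ref{metric-lengths} this controls the Thurston asymmetric distance up to a bounded additive error, giving $|d_{Th}(o,\Phi.o)-n\lambda|\le\epsilon n+O(1)$ on the good event. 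The lower bound on $\kappa_{\mathcal{T}}$ now follows from any single $c_i$: by Kerckhoff's formula, the universal inequality $\text{Ext}_y(c)\ge l_y(c)^2/\text{Area}(y)$, and Minsky's comparison $\text{Ext}_o\asymp l_o^2$ at the thick basepoint (uniformly in $c$),
\[
\kappa_{\mathcal{T}}(\Phi)\ge \tfrac{1}{2}\log\frac{\text{Ext}_{\Phi.o}(c_i)}{\text{Ext}_o(c_i)}\ge\log\frac{l_{\Phi.o}(c_i)}{l_o(c_i)}-O(1)\ge(1-\epsilon)n\lambda-O(1),
\]
which yields the desired lower deviation bound.

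The matching upper bound is where I expect the main obstacle. The estimate for $d_{Th}$ above would immediately give $\kappa_{\mathcal{T}}(\Phi)\le(1+\epsilon)n\lambda+O(1)$ if one could invoke Choi--Rafi to compare $d_{\mathcal{T}}$ and $d_{Th}$, but Proposition~\ref{metrics} requires both endpoints in a common thick part, and $\Phi.o$ may well lie in the thin part. To circumvent this, I would pass to the curve graph: by Proposition~\ref{ccc} applied to the hyperbolic action of $\text{Mod}(S)$ on $\mathcal{C}(S)$, the curve-graph displacement $d_{\mathcal{C}}(\pi(o),\pi(\Phi.o))$ grows linearly with summable exception. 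Since $\pi$-images of Teichmüller geodesics are unparameterized quasigeodesics (Masur--Minsky) and, by Rafi's thick/thin analysis, the thick subsegments of a Teichmüller geodesic are precisely those making definite progress in $\mathcal{C}(S)$, one deduces that with summable exception all but a sublinear portion of the Teichmüller geodesic from $o$ to $\Phi.o$ lies in a fixed thick part. Applying Choi--Rafi along this bulk then yields $d_{\mathcal{T}}(o,\Phi.o)\le d_{Th}(o,\Phi.o)+o(n)\le(1+\epsilon)n\lambda+O(1)$ on the good event, completing the upper bound and hence the proof.
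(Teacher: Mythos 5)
Your overall direction is the same as the paper's---reduce to $d_{Th}$ via Choi--Rafi, control $d_{Th}$ via the Lenzhen--Rafi--Tao formula over the finitely many curves in a short marking, and feed Proposition~\ref{ccl-mod} into that reduction---but the central obstacle you flag is not a real one, and the workaround you build around it is both unnecessary and considerably harder to make rigorous than the result it is meant to prove.

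The point you miss is that $\Phi.o$ can never lie in the thin part when $o$ does not. The systole is a $\text{Mod}(S)$-invariant function on $\mathcal{T}(S)$: the hyperbolic surfaces $(S,\Phi.o)$ and $(S,o)$ are isometric (only the marking changes), so they have exactly the same set of lengths of essential simple closed curves, and in particular the same shortest length. Thus $\mathcal{T}(S)^{\epsilon_0}$ is $\text{Mod}(S)$-invariant, and both endpoints $o$ and $\Phi.o$ always lie in a common thick part. Proposition~\ref{metrics} then applies directly to the pair $(o,\Phi.o)$; note also that Choi--Rafi is a condition on the two \emph{endpoints}, not on the Teichm\"uller geodesic between them, so even the scenario you were guarding against (the geodesic dipping into the thin part) is irrelevant to invoking it. With this, the entire passage through the curve graph, Rafi's thick/thin decomposition, and the ``bulk'' argument becomes superfluous, and the separate Kerckhoff/Minsky lower bound is redundant as well, since Choi--Rafi already gives a two-sided additive comparison.

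One further bookkeeping remark: when you apply Lenzhen--Rafi--Tao, take the short marking at $\Phi.o$, namely $\mu_{\Phi.o}=\Phi.\mu_o$, and use equivariance $l_{\Phi.o}(\Phi(c))=l_o(c)$; then
\[
d_{Th}(\Phi.o,o)=\log\max_{c\in\mu_o}\frac{l_o(\Phi(c))}{l_o(c)}+O(1),
\]
which is exactly the quantity Proposition~\ref{ccl-mod} controls, for $\mu$ itself and with no need to pass to $\check\mu$. Your proposed detour via the curve graph also has genuine gaps as written (the claim that the thin portion of the Teichm\"uller geodesic has sublinear $d_{\mathcal{T}}$-length with summable exception would need its own quantitative argument, and ``applying Choi--Rafi along the bulk'' would then require piecing together triangle-inequality estimates that you do not carry out), but the cleaner diagnosis is that none of this is needed once the invariance of the thick part is observed.
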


\begin{proof}
In view of Proposition \ref{metrics}, it is enough to prove the analogous statement where $d_{\mathcal{T}}$ is replaced by $d_{Th}$ in the definition of $\kappa_{\mathcal{T}}$. Proposition \ref{cct-mod} therefore follows from Proposition \ref{ccl-mod} applied to each of the finitely many curves in $\mu_{o}$ given by Proposition \ref{metric-lengths}.
\end{proof}

\subsection{Lifting estimates from $\mathcal{C}(S)$ to $\mathcal{T}(S)$}\label{sec-lift}

\subsubsection{Deviation estimates for the Gromov product: Hypothesis \textbf{(H2)}}

We will now check Hypothesis \textbf{(H2)} from Theorem \ref{tcl-gen} for the Gromov product on the horoboundary of $\mathcal{T}(S)$.

\begin{prop}\label{product-estimate-mod}
Let $\mu$ be a nonelementary probability measure on $\text{Mod}(S)$, and let $\nu^{\ast}$ be the unique $\check{\mu}$-stationary probability measure on $\mathcal{PMF}_0$. Then there exist $\alpha>0$ and a sequence $(C_n)_{n\in\mathbb{N}}\in l^1(\mathbb{N})$ such that for all $x\in{\mathcal{PMF}_0}$, one has $$\nu^{\ast}(\{y\in\mathcal{PMF}_0|(x|y)_{o}\ge \alpha n\})\le C_n.$$
\end{prop}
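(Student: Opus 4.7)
The plan is to reduce the desired estimate on $\mathcal{T}(S)$ to the analogous estimate on the Gromov hyperbolic curve graph $\mathcal{C}(S)$, in two stages: first transfer the problem via Klarreich's projection to an estimate for the $\mathcal{C}(S)$-Gromov product, then lift the resulting bound back to $\mathcal{T}(S)$ using the contraction property of typical Teichmüller geodesics.

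For the transfer step, I would observe that Klarreich's map $\psi:\mathcal{PMF}_0\to\mathcal{C}(S)\cup\partial_\infty\mathcal{C}(S)$ is $\text{Mod}(S)$-equivariant, so $\psi_\ast\nu^\ast$ is a $\check{\mu}$-stationary probability measure on $\partial_\infty\mathcal{C}(S)$. Since $\mu$ is nonelementary on $\text{Mod}(S)$, its action on the Gromov hyperbolic space $\mathcal{C}(S)$ is nonelementary as noted in the discussion preceding Theorem~\ref{unique-stationary}, and Proposition~\ref{MT} identifies $\psi_\ast\nu^\ast$ with the unique $\check{\mu}$-stationary probability measure $\nu_{\mathcal{C}}^\ast$ on $\partial_\infty\mathcal{C}(S)$. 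Applying Lemma~\ref{lemma-shadow} to the $\text{Mod}(S)$-action on $\mathcal{C}(S)$ (the second moment assumption transfers to $d_\mathcal{C}$ via the coarsely Lipschitz Masur--Minsky projection $\pi$), I obtain $\alpha_\mathcal{C}>0$ and a summable sequence $(C_n^\mathcal{C})$ such that for every $x\in\mathcal{PMF}_0$,
$$\nu^\ast\bigl(\{y\in\mathcal{PMF}_0 : \langle\psi(x)|\psi(y)\rangle_{\pi(o)}\ge\alpha_\mathcal{C} n\}\bigr)\le C_n^\mathcal{C}.$$

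For the lifting step, I would show that, up to excluding a set of $y$ of summable $\nu^\ast$-measure, a small $\mathcal{C}(S)$-Gromov product forces a comparably small $\mathcal{T}(S)$-Gromov product. Concretely, if $(x|y)_o\ge\alpha n$ in $\mathcal{T}(S)$, there exists $z\in\mathcal{T}(S)$ with $h^-_x(z)+h^+_y(z)\le -2\alpha n+O(1)$; using Miyachi's explicit formula for the horofunctions associated to foliations in $\mathcal{PMF}_0$ together with Proposition~\ref{Busemann-thick}, such a $z$ must lie within bounded $\mathcal{T}(S)$-distance of a Teichmüller geodesic $\gamma_{xy}$ joining $x$ to $y$, while $d_\mathcal{T}(o,z)$ is itself bounded. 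The Masur--Minsky projection of $\gamma_{xy}$ is an unparameterized quasi-geodesic in $\mathcal{C}(S)$ joining $\psi(x)$ to $\psi(y)$. The contraction property from \cite{DH15} then gives: any subsegment of $\gamma_{xy}$ that makes definite $\mathcal{C}(S)$-progress is close to every Teichmüller segment with the same $\mathcal{C}(S)$-projection, so a subsegment of $\gamma_{xy}$ of $\mathcal{T}(S)$-length proportional to $\alpha n$ passing near $o$ must project under $\pi$ to a $\mathcal{C}(S)$-segment passing within bounded distance of $\pi(o)$. This forces $\langle\psi(x)|\psi(y)\rangle_{\pi(o)}\gtrsim\alpha n$, contradicting the transfer estimate unless $y$ lies in the exceptional summable set.

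The main obstacle will be executing the lifting step rigorously, since $\mathcal{T}(S)$ is not Gromov hyperbolic and geodesics can linger in the thin part without making $\mathcal{C}(S)$-progress, breaking the naive analog of Lemma~\ref{lemma-ccc-1}. What saves the day is that $y$ is $\nu^\ast$-distributed and $\nu^\ast$ is supported on $\mathcal{UE}$: typical rays escape to infinity in $\mathcal{C}(S)$ at positive speed, so a $\nu^\ast$-typical geodesic from $o$ to $y$ contains long subsegments making $\mathcal{C}(S)$-progress at a quantitative rate, and Proposition~\ref{deviation} applied on $\mathcal{C}(S)$ upgrades this to a summable failure probability. The contraction argument from \cite{DH15}, together with Proposition~\ref{metrics} to control the thick/thin decomposition, then converts the $\mathcal{C}(S)$-estimate into the desired $\mathcal{T}(S)$-estimate, completing the proof with $\alpha$ a constant multiple of $\alpha_\mathcal{C}$ and $(C_n)$ the sum of the summable sequences produced at each step.
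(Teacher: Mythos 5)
Your two-stage plan---transfer to the curve graph via Klarreich's map and lift back using the contraction machinery of \cite{DH15}---is precisely the architecture of the paper's proof, and the first stage and the final paragraph are sound. The difficulty is that the ``concrete'' lifting argument in the middle paragraph has the geometry reversed at two key junctures, so the deduction chain there does not work.

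First, the assertion that a point $z$ near $\gamma_{xy}$ achieving the infimum in $(x|y)_o=-\tfrac{1}{2}\inf_z(h^-_x(z)+h^+_y(z))$ has $d_{\mathcal T}(o,z)$ \emph{bounded} is inconsistent with the hypothesis. The triangle inequality gives $h^-_x(z)\ge -d_{\mathcal T}(o,z)$ and $h^+_y(z)\ge -d_{\mathcal T}(o,z)$ universally, hence $(x|y)_o\le d_{\mathcal T}(o,z)$ for \emph{every} $z\in\mathcal T(S)$. So $(x|y)_o\ge\alpha n$ forces $d_{\mathcal T}(o,z)\ge\alpha n$: the geodesic $\gamma_{xy}$ is \emph{far} from $o$, not passing near it. Second, the inference ``$\pi(\gamma_{xy})$ passes within bounded distance of $\pi(o)$, hence $\langle\psi(x)|\psi(y)\rangle_{\pi(o)}\gtrsim\alpha n$'' has the implication backwards: in the hyperbolic space $\mathcal C(S)$, the Gromov product is (up to an additive constant) the distance from the basepoint to the geodesic, so a quasigeodesic passing near $\pi(o)$ makes $\langle\psi(x)|\psi(y)\rangle_{\pi(o)}$ \emph{small}, not large. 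These two errors cancel in the student's text in the sense that the two wrong steps yield the conclusion you want, but neither step is valid.

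The paper sidesteps this by running everything through the path space rather than trying to prove an implication of the form ``large $(x|y)_o$ forces large $\langle\psi(x)|\psi(y)\rangle_{\pi(o)}$'' for all $y$, which is false (quasifuchsian-style thin-part excursions break it). Concretely (Propositions~\ref{estimate} and \ref{contraction-typical}): with probability $\ge 1-C_n$ the ray $\tau_{\mathbf\Phi,\mathcal T}$ contains a $(D,\tau)$-contracting subsegment $J$ within $\mathcal T(S)$-parameter $\alpha n$ of $o$ whose $\pi$-image lies at $\mathcal C(S)$-distance $\ge\beta n$ from $\pi(o)$. Proposition~\ref{crossing} (applied in $\mathcal C(S)$, using Lemma~\ref{lemma-shadow}) guarantees that $\pi(\gamma_{x,\mathrm{bnd}(\mathbf\Phi)})$ fellow-travels $\pi(J)$ up to distance $\kappa$; the $D$-supercontracting property then yields a point $y$ on $\gamma_{x,\mathrm{bnd}(\mathbf\Phi)}$ and a point $y'$ on $\tau_{\mathbf\Phi,\mathcal T}([0,\alpha n])$ with $d_{\mathcal T}(y,y')\le D$. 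Plugging $y$ into the horofunction formula for $(x|\mathrm{bnd}(\mathbf\Phi))_o$ and estimating via the triangle inequality bounds the Gromov product by $\alpha n + D$. So the deduction runs in the opposite direction from what you wrote: the \emph{small} curve-graph Gromov product (which is what Lemma~\ref{lemma-shadow} actually supplies) is used to locate a fellow-travelling window, and the contraction property converts that into an explicit point on $\gamma_{x,\mathrm{bnd}(\mathbf\Phi)}$ close to $o$ in $\mathcal T(S)$, which by the displayed inequality $(x|y)_o\le d_{\mathcal T}(o,z)$ directly bounds the Teichm\"uller Gromov product.

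One further small point: the statement of the proposition as printed has no moment hypothesis, but the proof (both the paper's and yours) needs $\mu$ to have finite second moment with respect to $d_{\mathcal T}$ in order to invoke Propositions~\ref{ccc} and \ref{deviation} on $\mathcal C(S)$; you correctly flag that the second moment transfers to $d_{\mathcal C}$ via the coarsely Lipschitz map $\pi$, so this is a documentation matter rather than a gap in the argument.
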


The strategy of our proof of Proposition \ref{product-estimate-mod} will consist in \emph{lifting} to $\mathcal{T}(S)$ the analogous estimate for the Gromov product on $\mathcal{C}(S)$. In order to make the lifting argument possible, we will appeal to a contraction property for typical Teichmüller geodesics. A similar strategy was already used in \cite{DH15} for proving a spectral theorem for the random walk on $\text{Mod}(S)$. 

Let $K>0$ be a constant such that all $\pi$-images of Teichmüller geodesics are $(K,K)$-unparameterized quasi-geodesics in $\mathcal{C}(S)$. We fix once and for all a large enough constant $\kappa>0$ such that all triangles in $\mathcal{C}(S)$ whose sides are $(K,K)$-quasigeodesics, are $\kappa$-thin (in particular $\kappa$ is assumed to satisfy the conclusion of Proposition \ref{crossing}). We equip $\mathcal{T}(S)$ with the Teichmüller metric $d_{\mathcal{T}}$. Let $I$ be a Teichmüller geodesic segment. Given $B,C>0$, we say that $I$ is \emph{$(B,C)$-progressing} if $\text{diam}_{\mathcal{T}(S)}(I)\le B$ and $\text{diam}_{\mathcal{C}(S)}(\pi(I))\ge C$. Given $D,\tau>0$, we say that $I$ is \emph{$(D,\tau)$-contracting} if for all geodesic segments $J$ in $\mathcal{T}(S)$, if $\pi(I)$ and $\pi(J)$ fellow travel up to distance $\kappa$ in $\mathcal{C}(S)$ (with a slight abuse of terminology, as we are identifying $\pi(I)$ and $\pi(J)$ with their parameterizations), then there exists $J_1\subseteq J$ at $d_{\mathcal{T}}$-Hausdorff distance at most $D$ from $I$, such that $\pi(J_1)$ has diameter at least $\text{diam}(\pi(I))-\tau$ in $\mathcal{C}(S)$. The following proposition, established in \cite{DH15}, essentially follows from work by Dowdall--Duchin--Masur. 

\begin{prop}(Dowdall--Duchin--Masur \cite[Theorem A]{DDM14}, Dahmani--Horbez \cite[Proposition 3.6]{DH15})\label{contraction0}
There exist $C_0,\tau>0$ such that for all $B>0$, there exists $D>0$ such that for all $C>C_0$, all $(B,C)$-progressing Teichmüller geodesic segments are $(D,\tau)$-contracting.
\end{prop}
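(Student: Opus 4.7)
The plan is to combine the thickness property of Teichmüller geodesics that progress in the curve graph with the contraction theorem of Dowdall--Duchin--Masur for thick geodesics. The statement reverses the usual direction of contraction: instead of saying that $J$ comes close to $I$ in Teichmüller space, it asserts that a large subsegment of $J$ fellow-travels $I$ in Teichmüller space, with control on how much is lost in $\mathcal{C}(S)$.

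First I would establish a quantitative thickness statement: there exist universal $\epsilon_0>0$ and $C_0>0$ such that any $(B,C)$-progressing segment $I$ with $C>C_0$ has both endpoints in $\mathcal{T}(S)^{\epsilon_0}$, and in fact $I$ itself stays in $\mathcal{T}(S)^{\epsilon_0}$ up to bounded initial and terminal excursions. The idea is that whenever a Teichmüller geodesic enters the thin part where some curve $\alpha$ has hyperbolic length $<\epsilon_0$, the curve $\alpha$ itself (or a curve at uniformly bounded distance in $\mathcal{C}(S)$) is a minimal-length curve, so the projection $\pi$ moves only a bounded amount during that excursion. Since $\text{diam}_{\mathcal{T}(S)}(I)\le B$ bounds how many such excursions $I$ can contain, and each contributes only a bounded amount of $\mathcal{C}(S)$-progress, requiring $\text{diam}_{\mathcal{C}(S)}(\pi(I))\ge C_0$ with $C_0$ large enough forces $I$ to lie in the thick part on a long subinterval. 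This uses standard facts about thin parts and subsurface projections in the spirit of Rafi.

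Next I would invoke Theorem A of \cite{DDM14}: a Teichmüller geodesic segment lying in a thick part of $\mathcal{T}(S)$ is $D$-strongly contracting, where $D$ depends only on the thickness constant and the Teichmüller diameter. Concretely, any other Teichmüller geodesic segment $J$ whose curve graph projection $\pi(J)$ $\kappa$-fellow-travels $\pi(I)$ must contain a subsegment passing within Teichmüller distance $D$ of every point of the thick core of $I$. Together with Step 1, this provides the required subsegment $J_1\subseteq J$ at $d_{\mathcal{T}}$-Hausdorff distance at most $D$ from $I$.

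The final step is to bound the loss $\tau$ in the $\mathcal{C}(S)$-diameter of $\pi(J_1)$. Here the loss comes only from the two bounded Teichmüller excursions near the endpoints of $I$ identified in Step 1: the curve graph projections of these excursions are uniformly bounded, and so $\tau$ can be chosen as a universal constant (depending only on the topology and on $\kappa$), matching the order of quantifiers in the statement. I expect the main obstacle to be Step 1, namely ruling out the pathological scenario where $I$ spends a significant portion of its Teichmüller length in the thin part while still making curve graph progress: one must carefully apply Rafi's description of the combinatorics of Teichmüller geodesics through the thin part, using that such excursions contribute to subsurface projections rather than to $\pi(I)$ itself. Once the quantitative thickness is in hand, the contraction step is a direct appeal to \cite{DDM14}, and the subsegment/Hausdorff refinement follows from unwinding the fellow-traveling given there.
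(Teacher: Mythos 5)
Your overall plan (thickness of progressing segments plus the strong-contraction theorem for thick Teichm\"uller geodesics) is the right one, and it matches what \cite{DH15} and \cite{DDM14} do; the paper itself gives no proof and only cites these references. However, two steps in your sketch are not quite right as written.

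First, the thickness constant in Step~1 cannot be chosen universally. As in the $\text{Out}(F_N)$ analogue proved in this paper (Lemma~\ref{thick}), the argument is: if a curve $\alpha$ has length $<\epsilon$ at $\gamma(a)$ and the curve graph progress is at least a universal $C_0\approx 11$, then $\alpha$ has definite length ($\ge 1$, say) later along $I$, which forces $d_{\mathcal{T}}(\gamma(a),\gamma(t))\gtrsim\log(1/\epsilon)$; this contradicts $\mathrm{diam}_{\mathcal{T}}(I)\le B$ only if $\epsilon\lesssim e^{-B}$. So $C_0$ is universal but the thickness constant must degenerate with $B$ (which is fine, since $D$ is permitted to depend on $B$). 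Moreover, once the initial point is $e^{-B}$-thick, the whole segment of diameter $\le B$ stays (uniformly in $B$, say $e^{-3B}$) thick by Wolpert/Kerckhoff length comparison, so there are no thin ``excursions'' at all. The excursion picture you set up in Step~1 is therefore not what happens.

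Second, because there are no excursions, your Step~3 rationale for the universality of $\tau$ does not apply. The reason $\tau$ can be chosen before $B$ is purely a $\mathcal{C}(S)$-level fact: $\pi(I)$ and $\pi(J)$ are both reparameterized $(K,K)$-quasigeodesics that $\kappa$-fellow travel, so by Gromov hyperbolicity one can trim $J$ to $J_1$ so that $\pi(J_1)$ stays uniformly close to $\pi(I)$ and has diameter at least $\mathrm{diam}(\pi(I))-\tau$, with $\tau$ depending only on $\kappa$, $K$, and the hyperbolicity constant of $\mathcal{C}(S)$. The contraction theorem of \cite{DDM14} (applied with the $B$-dependent thickness) then converts this $\mathcal{C}(S)$-proximity into $d_{\mathcal{T}}$-Hausdorff distance $\le D$ between $J_1$ and $I$, with $D$ depending only on $B$. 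This fixes the order of quantifiers correctly; your attribution of the loss $\tau$ to thin excursions near the endpoints of $I$ is incorrect.
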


We fix once and for all the constants $C_0,\tau>0$ given by Proposition \ref{contraction0}. We say that $I$ is \emph{$D$-supercontracting} if for all geodesic segments $J$ in $\mathcal{T}(S)$, if $\pi(I)$ and $\pi(J)$ fellow travel up to distance $\kappa$ in $\mathcal{C}(S)$, then $J$ contains a $(D,\tau)$-contracting subsegment. 

\begin{cor}\label{contraction-mod}
There exists $C>0$ such that for all $B>0$, there exists $D>0$ such that all $(B,C)$-progressing Teichmüller geodesic segments are $D$-supercontracting.
\end{cor}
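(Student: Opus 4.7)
The plan is to iterate Proposition \ref{contraction0} twice: once to produce a subsegment $J_1 \subseteq J$ shadowing $I$, and a second time to upgrade $J_1$ itself to being $(D,\tau)$-contracting. Begin by choosing the constant $C$ large enough that $C > C_0 + \tau$ (for instance, $C := C_0 + \tau + 1$); this is the only constraint on $C$, and notably it does not depend on $B$.

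Given $B > 0$, first invoke Proposition \ref{contraction0} with parameter $B$ to produce $D_0 = D_0(B)$ such that every $(B, C)$-progressing Teichmüller geodesic segment is $(D_0, \tau)$-contracting. Let $I$ be any such segment, and let $J$ be a Teichmüller geodesic with $\pi(I)$ and $\pi(J)$ fellow-traveling up to distance $\kappa$ in $\mathcal{C}(S)$. By $(D_0, \tau)$-contraction of $I$, there exists $J_1 \subseteq J$ at $d_{\mathcal{T}}$-Hausdorff distance at most $D_0$ from $I$ with $\text{diam}_{\mathcal{C}(S)}(\pi(J_1)) \geq \text{diam}_{\mathcal{C}(S)}(\pi(I)) - \tau \geq C - \tau$. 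The key observation is that $J_1$ itself qualifies as a progressing segment: Hausdorff closeness to $I$ gives $\text{diam}_{\mathcal{T}(S)}(J_1) \leq B + 2D_0$, while our choice of $C$ guarantees $C - \tau > C_0$, so $J_1$ is $(B + 2D_0,\, C - \tau)$-progressing.

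A second application of Proposition \ref{contraction0}, now with parameter $B' := B + 2D_0(B)$, then furnishes $D = D(B' )$ such that every $(B', C')$-progressing segment with $C' > C_0$ is $(D, \tau)$-contracting. In particular $J_1$ is $(D, \tau)$-contracting, so $J$ contains a $(D,\tau)$-contracting subsegment, which shows $I$ is $D$-supercontracting. The resulting $D$ depends only on $B$ (through $B + 2D_0(B)$), as required. The only delicate point is ensuring $C - \tau$ remains above $C_0$ after losing $\tau$ worth of curve-graph diameter in the passage from $I$ to $J_1$; once $C$ is fixed accordingly, the statement follows by a direct iteration of the contraction proposition with no further obstacle to anticipate.
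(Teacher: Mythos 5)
Your proposal is correct and is essentially identical to the paper's own proof: both fix $C>C_0+\tau$, apply Proposition \ref{contraction0} once to extract a subsegment $J_1\subseteq J$ at bounded Hausdorff distance from $I$ (so $J_1$ is $(B+2D_0,C-\tau)$-progressing), observe that $C-\tau>C_0$ allows a second application of Proposition \ref{contraction0} to $J_1$, and conclude that $J_1$ is $(D,\tau)$-contracting with $D$ depending only on $B$.
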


\begin{proof}
Let $C>C_0+\tau$. If $I$ is $(B,C)$-progressing and $J$ is such that $\pi(I)$ and $\pi(J)$ fellow travel up to distance $\kappa$, then in view of Proposition \ref{contraction0}, there exists a subsegment $J_1\subseteq J$ at $d_{\mathcal{T}}$-Hausdorff distance at most $D$ from $I$, whose $\pi$-image has diameter at least $C-\tau$. In particular, the segment $J_1$ is $(B+2D,C-\tau)$-progressing. Since $C-\tau>C_0$, Proposition \ref{contraction0} applies to $J_1$, showing that $I$ is $D'$-supercontracting for some $D'>0$ only depending on $B$ and $C$.
\end{proof}

From now on, we fix $C>0$ provided by Corollary \ref{contraction-mod}. We now assume that the basepoint in $\mathcal{C}(S)$ is the $\pi$-image of the basepoint in $\mathcal{T}(S)$ (we will denote both of them by $o$). We recall from Section \ref{sec-tracking} that $\tau_{\mathbf{\Phi},\mathcal{C}(S)}$ denotes a $(1,K)$-quasigeodesic ray (where $K$ is a universal constant) from $o$ to the limit point in $\partial_{\infty}\mathcal{C}(S)$ of the sequence $(\Phi_n^{-1}.o)_{n\in\mathbb{N}}$.

\begin{prop}\label{estimate}
Let $\mu$ be a nonelementary probability measure on $\text{Mod}(S)$ with finite second moment with respect to $d_{\mathcal{T}}$.\\
Then there exist constants $B,\beta>0$ and a sequence $(C_n)_{n\in\mathbb{N}}\in l^1(\mathbb{N})$ such that the probability that the Teichmüller segment $[o,\Phi_{n}^{-1}.o]$ contains a $(B,C)$-progressing subsegment whose $\pi$-image in $\mathcal{C}(S)$ fellow travels a subsegment of $\tau_{\mathbf{\Phi},\mathcal{C}(S)}([\beta n,+\infty))$ up to distance $\kappa$, is at least $1-C_n$.
\end{prop}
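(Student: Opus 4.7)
The plan is to transfer the fellow-traveling statement of Proposition \ref{crossing} from the curve graph $\mathcal{C}(S)$ up to the Teichmüller geodesic $[o,\Phi_n^{-1}.o]$ via a pigeonhole argument on a Teichmüller subdivision. Two ingredients are required, each holding with $l^1$ complementary probability thanks to the second moment hypothesis on $\mu$. First, Proposition \ref{cct-mod} applied with $\epsilon=\lambda$ provides a summable sequence $(C_n^{(1)})_{n\in\mathbb{N}}$ such that $d_{\mathcal{T}}(o,\Phi_n^{-1}.o)\le 2\lambda n$ with probability at least $1-C_n^{(1)}$; this bounds the number of pieces of the subdivision.

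Second, I exploit the $\text{Mod}(S)$-action on the separable Gromov hyperbolic space $\mathcal{C}(S)$, where $\mu$ is nonelementary; let $\lambda_{\mathcal{C}}>0$ denote the curve-graph drift (Maher--Tiozzo). By Masur--Minsky, $\pi([o,\Phi_n^{-1}.o])$ is, up to reparameterization, a $(K,K)$-quasigeodesic from $o$ to $\pi(\Phi_n^{-1}.o)$ for some universal $K$. Fix $0<\beta_1<\beta_2<\lambda_{\mathcal{C}}$ and a small $\epsilon>0$ to be chosen. By Proposition \ref{ccc} applied to $\mathcal{C}(S)$, one has $d_{\mathcal{C}}(o,\pi(\Phi_n^{-1}.o))\ge\beta_2 n$ with summable complement, and by Proposition \ref{deviation}, $\pi(\Phi_n^{-1}.o)$ lies within $\epsilon n$ of $\tau_{\mathbf{\Phi},\mathcal{C}(S)}$ with summable complement. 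Combining these via the Morse lemma in hyperbolic spaces, $\pi([o,\Phi_n^{-1}.o])$ stays within uniformly bounded Hausdorff distance of the initial portion of $\tau_{\mathbf{\Phi},\mathcal{C}(S)}$ up to parameter $(\beta_2-\epsilon)n$. Applying Proposition \ref{crossing} to any $(K',K')$-quasigeodesic ray from $o$ to $\text{bnd}(\mathbf{\Phi})$, with the interval $[\beta_1 n,(\beta_2-2\epsilon)n]$ in place of $[\beta_1 n,\beta_2 n]$, then yields (with summable complementary error $C_n^{(2)}$) a fellow-traveling subsegment up to a bounded constant; pulling this back through the bounded Hausdorff distance, and absorbing the bounded error into $\kappa$ (which we may assume has been chosen large enough at the outset), produces a subsegment $I_{\mathcal{C}}\subseteq\pi([o,\Phi_n^{-1}.o])$ that fellow-travels $\tau_{\mathbf{\Phi},\mathcal{C}(S)}([\beta_1 n,(\beta_2-2\epsilon)n])$ up to distance $\kappa$.

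Now condition on the conjunction of all good events. Let $I_{\mathcal{T}}\subseteq[o,\Phi_n^{-1}.o]$ be a Teichmüller subsegment with $I_{\mathcal{C}}\subseteq\pi(I_{\mathcal{T}})$; its Teichmüller length is at most $2\lambda n$, while $\operatorname{diam}_{\mathcal{C}(S)}(\pi(I_{\mathcal{T}}))\ge(\beta_2-2\epsilon-\beta_1)n-O(1)$. Subdivide $I_{\mathcal{T}}$ into $N\le\lceil 2\lambda n/B\rceil$ consecutive pieces $I_1,\dots,I_N$ of Teichmüller length at most $B$. Since consecutive pieces share endpoints, $\sum_{i=1}^N\operatorname{diam}_{\mathcal{C}(S)}(\pi(I_i))\ge\operatorname{diam}_{\mathcal{C}(S)}(\pi(I_{\mathcal{T}}))\ge(\beta_2-2\epsilon-\beta_1)n-O(1)$, so by pigeonhole some piece $J$ satisfies $\operatorname{diam}_{\mathcal{C}(S)}(\pi(J))\ge B(\beta_2-2\epsilon-\beta_1)/(2\lambda)-O(1/n)$. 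Choose $B$ once and for all so that $B(\beta_2-2\epsilon-\beta_1)/(2\lambda)>C+1$: then for all sufficiently large $n$ (the finitely many remaining values being absorbed into $C_n$), $J$ is $(B,C)$-progressing, and $\pi(J)\subseteq I_{\mathcal{C}}$ fellow-travels a subsegment of $\tau_{\mathbf{\Phi},\mathcal{C}(S)}([\beta n,+\infty))$ up to distance $\kappa$ with $\beta:=\beta_1$. Setting $C_n:=C_n^{(1)}+C_n^{(2)}$ yields the summable bound.

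The main technical obstacle is the Morse-lemma step: showing that $\pi([o,\Phi_n^{-1}.o])$ stays uniformly close to $\tau_{\mathbf{\Phi},\mathcal{C}(S)}$ on its initial segment of length $\approx\beta_2 n$, so that the fellow-traveling subsegment produced by Proposition \ref{crossing} for an abstract quasigeodesic ray can be brought back onto $\pi([o,\Phi_n^{-1}.o])$ itself. Handling this cleanly under only a second-moment assumption is where one crucially uses both Proposition \ref{ccc} (to push $\pi(\Phi_n^{-1}.o)$ past parameter $\beta_2 n$) and Proposition \ref{deviation} (to keep it within sublinear distance of $\tau_{\mathbf{\Phi},\mathcal{C}(S)}$), combined with the hyperbolicity of $\mathcal{C}(S)$.
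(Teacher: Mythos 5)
Your proposal follows essentially the same route as the paper's proof: control the Teichmüller length from above and the curve-graph diameter from below via the second-moment deviation estimates (Propositions \ref{cct-mod}, \ref{ccc}, \ref{deviation}), use hyperbolicity of $\mathcal{C}(S)$ to get fellow-traveling of $\pi([o,\Phi_n^{-1}.o])$ with an initial portion of $\tau_{\mathbf{\Phi},\mathcal{C}(S)}$, then pigeonhole over a subdivision into pieces of Teichmüller length at most $B$. The only wrinkle is that $I_{\mathcal{T}}$ should be taken minimal (the paper does this by defining $t_n^1,t_n^2$ directly in terms of the curve-graph diameter of $\pi\circ\gamma_{n,\mathbf{\Phi}}$) so that $\pi(I_{\mathcal{T}})$ stays in a bounded neighborhood of $I_{\mathcal{C}}$; as written, the inclusion $I_{\mathcal{C}}\subseteq\pi(I_{\mathcal{T}})$ does not by itself imply that the pigeonhole-selected piece $J$ has $\pi(J)$ landing inside $I_{\mathcal{C}}$.
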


\begin{proof}
We denote by $\lambda_{\mathcal{C}}$ (resp. $\lambda_{\mathcal{T}}$) the drift of the random walk on $(\text{Mod}(S),\mu)$ with respect to $d_{\mathcal{C}}$ (resp. $d_{\mathcal{T}}$). We let $B$ be a positive real number greater than $5C\lambda_{\mathcal{T}}/\lambda_{\mathcal{C}}$. We denote by $\gamma_{n,\mathbf{\Phi}}:[0,\kappa_{\mathcal{T}}(\Phi_n)]\to\mathcal{T}(S)$ the parameterization of the Teichmüller segment from $o$ to $\Phi_n^{-1}.o$. In view of Propositions \ref{ccc}, \ref{deviation} and \ref{cct-mod}, there exists a sequence $(C_n)_{n\in\mathbb{N}}\in l^1(\mathbb{N})$ such that for all $n\in\mathbb{N}$, with probability at least $1-C_n$, one has $$\kappa_{\mathcal{T}}(\Phi_n)\le \lambda_{\mathcal{T}}n+\frac{\lambda_{\mathcal{C}}Bn}{5C}-B$$ and $$\kappa_{\mathcal{C}}(\Phi_n)>\frac{4\lambda_{\mathcal{C}}n}{5},$$ and, denoting by $t_n^2(\mathbf{\Phi})>0$ the infimum of all real numbers such that $\pi\circ\gamma_{n,\mathbf{\Phi}}([0,t_n^2(\mathbf{\Phi})])$ has $d_{\mathcal{C}}$-diameter at least $\frac{4\lambda_{\mathcal{C}}n}{5}$, then $\pi\circ\gamma_{n,\mathbf{\Phi}}([0,t_n^2(\mathbf{\Phi})])$ fellow travels a subsegment of $\tau_{\mathbf{\Phi},\mathcal{C}(S)}$ up to distance $\kappa$. We claim that in this situation, denoting by $t_n^1(\mathbf{\Phi})>0$ the infimum of all real numbers such that $\pi\circ\gamma_{n,\mathbf{\Phi}}([0,t_n^1(\mathbf{\Phi})])$ has $d_{\mathcal{C}}$-diameter at least $\frac{\lambda_{\mathcal{C}}n}{5}$, the segment $\gamma_{n,\mathbf{\Phi}}([t_n^1(\mathbf{\Phi}),t_n^2(\mathbf{\Phi})])$ contains a $(B,C)$-progressing subsegment whose $\pi$-image fellow travels a subsegment of $\tau_{\mathbf{\Phi},\mathcal{C}(S)}$ up to distance $\kappa$. Proposition \ref{estimate} will follow from this claim (with $\beta:=\frac{\lambda_{\mathcal{C}}}{5}$).

To prove the claim, we subdivide the Teichmüller segment $\gamma_{n,\mathbf{\Phi}}([t_n^1(\mathbf{\Phi}),t_n^2(\mathbf{\Phi})])$ into $\lceil \frac{\kappa_{\mathcal{T}}(\Phi_n)}{B}\rceil$ subsegments of $d_{\mathcal{T}}$-length at most $B$, whose $\pi$-images all fellow travel some subsegment of $\tau_{\mathbf{\Phi},\mathcal{C}(S)}$ up to distance $\kappa$. If none of these segments had a $\pi$-image of diameter at least $C$, then the image $\pi\circ\gamma_{n,\mathbf{\Phi}}([t_n^1(\mathbf{\Phi}),t_n^2(\mathbf{\Phi})])$ would have $d_{\mathcal{C}}$-diameter at most $\left(\frac{\kappa_{\tau}(\Phi_n)}{B}+1\right)C\le\frac{\lambda_{\mathcal{T}}Cn}{B}+\frac{\lambda_{\mathcal{C}}n}{5}\le\frac{2\lambda_{\mathcal{C}}n}{5},$ a contradiction. The claim follows.
\end{proof}

We recall from Theorem \ref{unique-stationary} that for $\mathbb{P}$-a.e. sample path $\mathbf{\Phi}=(\Phi_n)_{n\in\mathbb{N}}$ of the random walk on $(\text{Mod}(S),\mu)$, the sequence $(\Phi_n^{-1}.o)_{n\in\mathbb{N}}$ converges to a point $\text{bnd}_{\mathcal{T}}(\mathbf{\Phi})\in\mathcal{UE}$. We denote by $\tau_{\mathbf{\Phi},\mathcal{T}}$ the Teichmüller ray from $o$ to $\text{bnd}_{\mathcal{T}}(\mathbf{\Phi})$. Notice that the $\pi$-image of $\tau_{\mathbf{\Phi},\mathcal{T}}$ fellow travels $\tau_{\mathbf{\Phi},\mathcal{C}(S)}$ up to distance $\kappa$.

\begin{prop}\label{contraction-typical}
Let $\mu$ be a nonelementary probability measure on $\text{Mod}(S)$ with finite second moment with respect to $d_{\mathcal{T}}$.\\
Then there exist constants $D,\alpha,\beta>0$, and a sequence $(C_n)_{n\in\mathbb{N}}\in l^1(\mathbb{N})$, such that the probability that $\tau_{\mathbf{\Phi},\mathcal{T}}([0,\alpha n])$ contains a $(D,\tau)$-contracting subsegment whose $\pi$-image lies at $d_{\mathcal{C}}$-distance at least $\beta n$ from $o$, is at least $1-C_n$.
\end{prop}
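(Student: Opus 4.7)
The plan is to transport the $(B,C)$-progressing subsegment from $[o,\Phi_n^{-1}.o]$ provided by Proposition \ref{estimate} onto the Teichmüller ray $\tau_{\mathbf{\Phi},\mathcal{T}}$ via the supercontraction property (Corollary \ref{contraction-mod}), and then read off the quantitative bounds using Proposition \ref{cct-mod} together with the Lipschitz property of $\pi$. First, outside an $l^1$-summable event, Proposition \ref{estimate} yields a $(B,C)$-progressing subsegment $I\subseteq [o,\Phi_n^{-1}.o]$ whose $\pi$-image $\kappa$-fellow-travels a subsegment of $\tau_{\mathbf{\Phi},\mathcal{C}(S)}([\beta_0 n,+\infty))$ for some $\beta_0>0$, and Corollary \ref{contraction-mod} ensures that $I$ is $D$-supercontracting for some $D>0$ depending only on $B$ and $C$.

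Second, since $\pi(\tau_{\mathbf{\Phi},\mathcal{T}})$ is a $(K,K)$-unparameterized quasi-geodesic in $\mathcal{C}(S)$ that $\kappa$-fellow-travels $\tau_{\mathbf{\Phi},\mathcal{C}(S)}$, the hyperbolicity of $\mathcal{C}(S)$ (with $\kappa$ already chosen in Section \ref{sec-lift} to be large enough that triangles of $(K,K)$-quasi-geodesics are $\kappa$-thin) produces a subsegment $J\subseteq \tau_{\mathbf{\Phi},\mathcal{T}}$ whose $\pi$-image $\kappa$-fellow-travels $\pi(I)$. Applying the $D$-supercontracting property of $I$ to $J$, and inspecting the construction in the proof of Corollary \ref{contraction-mod}, we extract a $(D,\tau)$-contracting subsegment $J_1\subseteq J\subseteq \tau_{\mathbf{\Phi},\mathcal{T}}$ that moreover lies at $d_{\mathcal{T}}$-Hausdorff distance at most $D$ from $I$.

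To finish, Proposition \ref{cct-mod} gives $\kappa_{\mathcal{T}}(\Phi_n)\le(\lambda_{\mathcal{T}}+1)n$ outside an $l^1$-summable event, so every point of $J_1$ lies at $d_{\mathcal{T}}$-distance at most $(\lambda_{\mathcal{T}}+1)n+D$ from $o$; since $\tau_{\mathbf{\Phi},\mathcal{T}}$ is a Teichmüller geodesic ray based at $o$, this forces $J_1\subseteq \tau_{\mathbf{\Phi},\mathcal{T}}([0,\alpha n])$ with $\alpha:=\lambda_{\mathcal{T}}+2$ for $n$ large. For the bound in $\mathcal{C}(S)$, $\pi(I)$ lies within $\kappa$ of $\tau_{\mathbf{\Phi},\mathcal{C}(S)}([\beta_0 n,+\infty))$, which is itself at $d_{\mathcal{C}}$-distance at least $\beta_0 n-K$ from $o$ by the quasi-geodesic constants; combining with the Lipschitz constant $L$ of $\pi$ and the Hausdorff bound yields $d_{\mathcal{C}}(\pi(J_1),o)\ge \beta_0 n-K-\kappa-LD\ge \beta n$ for any fixed $\beta<\beta_0$ and $n$ large. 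Merging the two exceptional events into a single $l^1$-summable sequence completes the argument. The main subtlety is the transfer step: naive transitivity would produce a $2\kappa$-fellow-traveling rather than a $\kappa$-one, so one must rely on the slack built into $\kappa$ at the beginning of Section \ref{sec-lift} (via stability of $(K,K)$-quasi-geodesics in the $\delta$-hyperbolic space $\mathcal{C}(S)$) in order to apply the supercontraction hypothesis exactly as stated.
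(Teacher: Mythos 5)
Your proof follows the same route as the paper's, combining Propositions \ref{estimate} and \ref{cct-mod} with Corollary \ref{contraction-mod}, and is if anything more careful than the paper: you make explicit both the $\alpha n$ bound (via the length bound on $[o,\Phi_n^{-1}.o]$ and the Hausdorff control on $J_1$) and the $\beta n$ bound (via the coarse Lipschitz property of $\pi$), which the paper leaves implicit. The one caveat concerns your final remark: mere ``slack'' in $\kappa$ cannot reduce a $2\kappa$-fellow-travel to a $\kappa$-fellow-travel, since Morse stability of $(K,K)$-quasigeodesics with endpoints within $2\kappa$ of each other yields a fellow-travelling constant at least of order $2\kappa$ plus a Morse constant, never at most $\kappa$; the correct reading is that the $\kappa$ entering the definition of $(D,\tau)$-contracting must be fixed \emph{after}, and strictly larger than, the (smaller) fellow-travelling constants produced by Proposition \ref{crossing} and by the comparison of $\pi(\tau_{\mathbf{\Phi},\mathcal{T}})$ with $\tau_{\mathbf{\Phi},\mathcal{C}(S)}$ --- a routine constant chase that the paper's own proof also elides when it writes directly ``$\pi(I)$ fellow travels the $\pi$-image of a subsegment $J$ of $\tau_{\mathbf{\Phi},\mathcal{T}}$ up to distance $\kappa$.''
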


\begin{proof}
Let $B,\beta>0$ be the constants given by Proposition \ref{estimate}. Let $D>0$ be the constant corresponding to $B$ provided by Corollary \ref{contraction-mod}. Let $\lambda$ be the drift of the random walk on $(\text{Mod}(S),\mu)$ with respect to $d_{\mathcal{T}}$. Propositions \ref{cct-mod} and \ref{estimate} imply that there exists a sequence $(C_n)_{n\in\mathbb{N}}\in l^1(\mathbb{N})$ such that with probability at least $1-C_n$, the Teichmüller segment $[o,\Phi_n^{-1}.o]$ has length at most $2\lambda n$, and contains a subsegment $I$ whose $\pi$-image lies at $d_{\mathcal{C}}$-distance at least $\beta n$ from $o$, which is $(B,C)$-progressing, and such that $\pi(I)$ fellow travels the $\pi$-image of a subsegment $J$ of $\tau_{\mathbf{\Phi},\mathcal{T}}$ up to distance $\kappa$. In view of Corollary \ref{contraction-mod}, the segment $I$ is $D$-supercontracting. This implies that $J$ contains a $(D,\tau)$-contracting subsegment.
\end{proof}

\begin{proof}[Proof of Proposition \ref{product-estimate-mod}]
Let $D,\alpha,\beta>0$ and $(C_n)_{n\in\mathbb{N}}\in l^1(\mathbb{N})$ be as in Proposition \ref{contraction-typical}. Let $x\in\mathcal{PMF}_0$. We will show that $$\mathbb{P}\left[(x|\text{bnd}_{\mathcal{T}}(\mathbf{\Phi}))_{o}\le 2\alpha n\right]\ge 1-C_n$$ for all $n\in\mathbb{N}$. 

Using nonatomicity of the hitting measure $\nu^{\ast}$ and Proposition \ref{contraction-typical}, we get the existence a measurable subset $X$ of the path space, of measure at least $1-C_n$, such that for all $\mathbf{\Phi}\in X$, we have $\text{bnd}_{\mathcal{T}}(\mathbf{\Phi})\in\mathcal{PMF}_0\smallsetminus\{x\}$, and the segment $\tau_{\mathbf{\Phi},\mathcal{T}}([0,\alpha n])$ contains a $(D,\tau)$-contracting subsegment $J$ such that $\pi(J)$ lies at $d_{\mathcal{C}}$-distance at least $\beta n$ from $o$. In view of Proposition \ref{crossing}, we can also assume that for all $\mathbf{\Phi}\in X$, the Teichmüller geodesic line (or ray) from $x$ to $\text{bnd}_{\mathcal{T}}(\mathbf{\Phi})$ contains a subsegment whose $\pi$-image fellow travels $\pi(J)$ up to distance $\kappa$. One can thus find a point $y\in [x,\text{bnd}_{\mathcal{T}}(\mathbf{\Phi})]$ and a point $y'\in\tau_{\mathbf{\Phi},\mathcal{T}}([0,\alpha n])$, such that $d_{\mathcal{T}}(y,y')\le D$. From now on, we let $\mathbf{\Phi}\in X$.

Let $\gamma:\mathbb{R}\to\mathcal{T}(S)$ be a parameterization of the Teichmüller line from $x$ to $\text{bnd}_{\mathcal{T}}(\mathbf{\Phi})$. Then $$(x|\text{bnd}_{\mathcal{T}}(\mathbf{\Phi}))_o=-\frac{1}{2}\inf_{z\in\mathcal{T}(S)}\lim_{n\to +\infty}(h_{\gamma(-n)}(z)+h_{\gamma(n)}(z)),$$ with the notations from Section \ref{sec-horo}. It then follows from the triangle inequality that the infimum in the above formula is achieved at any point lying on the image of $\gamma$. In particular, one has $$(x|\text{bnd}_{\mathcal{T}}(\mathbf{\Phi}))_o=-\frac{1}{2}(h_x(y)+h_{\text{bnd}_{\mathcal{T}}(\mathbf{\Phi})}(y)).$$ Since $\tau_{\mathbf{\Phi}}(k)$ also converges to $\text{bnd}_{\mathcal{T}}(\mathbf{\Phi})$, using Miyachi's result \cite[Corollary 1]{Miy13} and the identification between the horofunction boundary and the Gardiner--Masur boundary, we have (all limits are taken as $k$ goes to $+\infty$, and we write $d$ instead of $d_{\mathcal{T}}$ for ease of notation):
\begin{displaymath}
\begin{array}{rl}
-2(x|\text{bnd}_{\mathcal{T}}(\mathbf{\Phi}))_{o}&=\lim [d(y,\gamma(-k))+d(y,\tau_{\mathbf{\Phi},\mathcal{T}}(k))-d(o,\gamma(-k))-d(o,\tau_{\mathbf{\Phi},\mathcal{T}}(k))]\\
&\ge \lim [d(y,\gamma(-k))+(d(y',\tau_{\mathbf{\Phi},\mathcal{T}}(k))-D)-d(o,\gamma(-k))-d(o,\tau_{\mathbf{\Phi},\mathcal{T}}(k))]\\
&=\lim [d(y,\gamma(-k))-d(o,\gamma(-k))]-d(o,y')-D\\
&\ge -d(o,y)-d(o,y')-D\\
&\ge -2d(o,y')-2D\\
&\ge -2\alpha n-2D.
\end{array}
\end{displaymath}
\noindent This implies that $$(x|\text{bnd}_{\mathcal{T}}(\mathbf{\Phi}))_{o}\le \alpha n+D$$ and concludes the proof of Proposition \ref{product-estimate-mod}.
\end{proof}

\subsubsection{Mean value of the Busemann cocycle: Hypothesis \textbf{(H1)}}

Using similar arguments as in the proof of Proposition \ref{product-estimate-mod}, we will now establish Hypothesis \textbf{(H1)} from Theorem \ref{tcl-gen} for the Gromov product on the horoboundary of $\mathcal{T}(S)$: this is the content of Corollary \ref{h1-mod} below.

\begin{prop}\label{product-estimate-2-mod}
Let $\mu$ be a nonelementary probability measure on $\text{Mod}(S)$. For all $\epsilon>0$, there exists $M>0$ such that for all $x\in{\mathcal{PMF}_0}$, one has $$\mathbb{P}\left[\sup_{n\in\mathbb{N}}|\beta(\Phi_n,x)-\kappa(\Phi_n)|\le M\right]\ge 1-\epsilon.$$
\end{prop}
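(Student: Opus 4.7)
The plan is to adapt Benoist--Quint's argument from the proof of Lemma~\ref{lemma-ccc} to the Teichmüller setting, using the contraction tools of Section~\ref{sec-lift} in place of thin triangles. A standard argument (letting $M\to +\infty$) reduces the statement to showing that for each $x\in\mathcal{PMF}_0$, $\sup_{n\in\mathbb{N}}|\beta(\Phi_n,x)-\kappa(\Phi_n)|<+\infty$ $\mathbb{P}$-almost surely. Since every horofunction on $(\mathcal{T}(S),d_{\mathcal{T}})$ is $1$-Lipschitz and vanishes at $o$, $\beta(\Phi_n,x)=h_x(\Phi_n^{-1}.o)\le\kappa(\Phi_n)$, so $|\beta-\kappa|=\kappa-\beta\ge 0$.

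The first key step is a Busemann-function identity on a Teichmüller ray towards $x$. By Theorem~\ref{unique-stationary} and non-atomicity of $\nu^\ast$, almost surely $\Phi_n^{-1}.o$ converges in $\text{cl}_{GM}\mathcal{T}(S)$ to $y:=\text{bnd}_{\mathcal{T}}(\mathbf{\Phi})\in\mathcal{UE}$ with $y\ne x$, and Klarreich's identification then gives $\psi(x)\ne\psi(y)$ in $\mathcal{C}(S)\cup\partial_\infty\mathcal{C}(S)$. Let $\gamma_n:[0,+\infty)\to\mathcal{T}(S)$ be a Teichmüller ray with $\gamma_n(0)=\Phi_n^{-1}.o$ and $\gamma_n(t)\to x$ as $t\to+\infty$. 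Miyachi's identification of horofunctions with the Gardiner--Masur boundary yields $h_x(\gamma_n(t))=\beta(\Phi_n,x)-t$, while $h_{\Phi_n^{-1}.o}(\gamma_n(t))=t-\kappa(\Phi_n)$ by definition. Thus for every $p\in\gamma_n([0,+\infty))$, $h_x(p)+h_{\Phi_n^{-1}.o}(p)=\beta-\kappa$. Combined with the Lipschitz bounds $|h_x(p)|,\,|h_{\Phi_n^{-1}.o}(p)|\le d_{\mathcal{T}}(o,p)$, this yields
\[
\kappa(\Phi_n)-\beta(\Phi_n,x)\le 2\, d_{\mathcal{T}}\bigl(o,\gamma_n([0,+\infty))\bigr),
\]
reducing the problem to bounding $d_{\mathcal{T}}(o,\gamma_n)$ uniformly in (large) $n$.

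For this, I would combine Proposition~\ref{contraction-typical} with Borel--Cantelli: almost surely there exists $n_0=n_0(\mathbf{\Phi})<+\infty$ such that $\tau_{\mathbf{\Phi},\mathcal{T}}([0,\alpha n_0])$ contains a $(D,\tau)$-contracting subsegment $J_0$ whose $\pi$-image lies at $d_{\mathcal{C}}$-distance at least $\beta n_0$ from $o$. Enlarging $n_0$ along the sample path, we may additionally assume that $\beta n_0$ exceeds the Gromov product $\langle\psi(x)|\psi(y)\rangle_o$ in the hyperbolic graph $\mathcal{C}(S)$ plus its hyperbolicity constant (finite since $\psi(x)\ne\psi(y)$). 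For $n$ large enough that $\pi(\Phi_n^{-1}.o)$ is close to $\psi(y)$, hyperbolicity of $\mathcal{C}(S)$ together with the uniform quasi-geodesicity of $\pi$-images of Teichmüller segments force $\pi(\gamma_n)$ to fellow-travel $\pi(J_0)$ up to distance $\kappa$; the $(D,\tau)$-contracting property of $J_0$ then produces a point $p_n\in\gamma_n$ at $d_{\mathcal{T}}$-distance at most $D$ from some point of $J_0$, whence
\[
d_{\mathcal{T}}(o,\gamma_n)\le d_{\mathcal{T}}(o,p_n)\le\alpha n_0+D.
\]
For the finitely many remaining small $n$, the bound $d_{\mathcal{T}}(o,\gamma_n)\le\kappa(\Phi_n)<+\infty$ (attained at $p=\Phi_n^{-1}.o$) suffices. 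The uniform $M$ in $x$ is then extracted from a priori estimates on shadows in the curve graph, as in the hyperbolic case treated by Lemma~\ref{lemma-ccc}.

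The main obstacle will be the lifting step from $\mathcal{C}(S)$ to $\mathcal{T}(S)$: turning the probabilistic contraction estimate of Proposition~\ref{contraction-typical} into a pathwise statement in which one and the same subsegment $J_0$ of $\tau_{\mathbf{\Phi},\mathcal{T}}$ serves as a coarse center for every triangle $\{o,x,\Phi_n^{-1}.o\}$ as $n$ grows (uniformly in $x\in\mathcal{PMF}_0$), and carrying out the Busemann-function computation on $\gamma_n$ with the interior endpoint $\Phi_n^{-1}.o$ in place of the boundary point $\text{bnd}_{\mathcal{T}}(\mathbf{\Phi})$ used at the end of the proof of Proposition~\ref{product-estimate-mod}.
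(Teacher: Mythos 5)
Your overall approach is the same as the paper's: reduce to showing that $\sup_n|\beta(\Phi_n,x)-\kappa(\Phi_n)|$ is $\mathbb{P}$-a.s.\ finite, and deduce this from the fact that, for large $n$, the Teichm\"uller ray from $\Phi_n^{-1}.o$ to $x$ must pass uniformly close to $o$, thanks to a $(D,\tau)$-contracting subsegment of $\tau_{\mathbf{\Phi},\mathcal{T}}$ lying beyond the Gromov product $\langle\psi(x)|\psi(\text{bnd}_{\mathcal{T}}(\mathbf{\Phi}))\rangle_o$ in the hyperbolic curve graph. Your Busemann manipulation $\kappa(\Phi_n)-\beta(\Phi_n,x)\le 2\,d_{\mathcal{T}}(o,\gamma_n)$ along the ray $\gamma_n$ from $\Phi_n^{-1}.o$ to $x$ is a mildly tidier packaging of what the paper does: the paper instead produces a coarse center $z$ and requires \emph{both} the segment $[o,\Phi_n^{-1}.o]$ and the ray $\gamma_n$ to pass near $z$, concluding $|\beta-\kappa|\approx|h_x(z)-d_{\mathcal{T}}(o,z)|$. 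Your version needs only the ray, which is a small simplification; the identity $h_x(\gamma_n(t))=\beta(\Phi_n,x)-t$ is justified by Miyachi's result identifying Teichm\"uller rays with vertical foliation $x\in\mathcal{PMF}_0$ with the corresponding Gardiner--Masur/horoboundary point.

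The one real defect is the invocation of Proposition~\ref{contraction-typical} together with Borel--Cantelli to produce the contracting subsegment $J_0$. Proposition~\ref{contraction-typical} is proved under a \emph{finite second moment} hypothesis, whereas the statement you are proving has \emph{no} moment assumption on $\mu$ (and it feeds into Corollary~\ref{h1-mod}, which assumes only a finite first moment). The paper sidesteps this by quoting \cite[Proposition~3.11]{DH15}, which asserts directly that $\mathbb{P}$-almost surely the ray $\tau_{\mathbf{\Phi},\mathcal{T}}$ contains infinitely many $(D,\tau)$-contracting subsegments, with no integrability hypothesis on $\mu$. That is exactly what your ``enlarging $n_0$ along the sample path'' step needs (you need contracting subsegments beyond every scale, since the threshold $\beta n_0>\langle\psi(x)|\psi(\text{bnd}_{\mathcal{T}}(\mathbf{\Phi}))\rangle_o$ depends on $x$), and the quantitative route through Proposition~\ref{contraction-typical} only buys a weaker version of the proposition. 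Replacing that citation by the DH15 input restores the statement at the correct level of generality; everything else in your argument then goes through as in the paper.
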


\begin{proof}
It suffices to show that for all $x\in\mathcal{PMF}_0$ and $\mathbb{P}$-a.e. sample path $(\Phi_n)_{n\in\mathbb{N}}$ of the random walk on $(\text{Mod}(S),\mu)$, one has $$\sup_{n\in\mathbb{N}}|\beta(\Phi_n,x)-\kappa(\Phi_n)|<+\infty.$$ We observe that there exists $D>0$ such that for all $x\in\mathcal{PMF}_0$ and $\mathbb{P}$-a.e. sample path $\mathbf{\Phi}:=(\Phi_n)_{n\in\mathbb{N}}$ of the random walk, the $\pi$-image of the Teichmüller geodesic from $x$ to $\text{bnd}_{\mathcal{T}}(\mathbf{\Phi})$ crosses the $\pi$-image of a $(D,\tau)$-contracting subsegment $I$ of $\tau_{\mathbf{\Phi},\mathcal{T}}$ up to distance $\kappa$. This observation relies on the fact that $\mathbb{P}$-almost surely, the Teichmüller ray from $o$ to $\text{bnd}_{\mathcal{T}}(\mathbf{\Phi})$ contains infinitely many $(D,\tau)$-contracting subsegments: this fact was established in \cite[Proposition 3.11]{DH15}. In particular, there exists a point $z\in I$ such that for all $n\in\mathbb{N}$ sufficiently large, both the Teichmüller segment $[o,\Phi_n^{-1}.o]$ and the Teichmüller ray from $\Phi_n^{-1}.o$ to $x$ pass at bounded distance from $z$. So for all $n\in\mathbb{N}$ sufficiently large, the difference $|\beta(\Phi_n,x)-\kappa(\Phi_n)|$ is equal to $|h_x(z)-d_{\mathcal{T}}(o,z)|$ up to a bounded error (by a similar computation as in the proof of Proposition \ref{product-estimate-mod}), from which the claim follows.
\end{proof}

\begin{cor}\label{h1-mod}
Let $\mu$ be a nonelementary probability measure on $\text{Mod}(S)$ with finite first moment with respect to $d_{\mathcal{T}}$. Let $\lambda$ be the drift of the random walk on $(\text{Mod}(S),\mu)$ with respect to $d_{\mathcal{T}}$. Let $\nu$ be the $\mu$-stationary probability measure on $\mathcal{PMF}_0$. Then $$\int_{\text{Mod}(S)\times\mathcal{PMF}_0}\beta(\Phi,y)d\mu(\Phi)d\nu(y)=\lambda.$$
\end{cor}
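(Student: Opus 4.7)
The plan is to derive the integral identity from a pointwise law of large numbers for the Busemann cocycle, combined with the $\mu$-stationarity of $\nu$ and a uniform integrability argument.

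The first step is to show that for every $x\in\mathcal{PMF}_0$ and $\mathbb{P}$-a.e.\ sample path, one has $\frac{1}{n}\beta(\Phi_n,x)\to\lambda$. Applying Proposition \ref{product-estimate-2-mod} with $\epsilon=1/k$ and letting $k\to\infty$ yields $\mathbb{P}$-a.s.\ finiteness of $\sup_n|\beta(\Phi_n,x)-\kappa_{\mathcal{T}}(\Phi_n)|$ for each fixed $x\in\mathcal{PMF}_0$. Dividing by $n$ gives $\frac{1}{n}|\beta(\Phi_n,x)-\kappa_{\mathcal{T}}(\Phi_n)|\to 0$, and Kingman's subadditive ergodic theorem provides $\frac{1}{n}\kappa_{\mathcal{T}}(\Phi_n)\to\lambda$ almost surely, whence the desired pointwise limit.

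Next I would use the cocycle relation $\beta(\Phi_n,x)=\sum_{k=1}^n\beta(s_k,\Phi_{k-1}.x)$. Integrating against $\mathbb{P}\otimes\nu$ and exploiting the fact that $\Phi_{k-1}.x$ is distributed according to $\mu^{\ast(k-1)}\ast\nu=\nu$ by $\mu$-stationarity, and independent of $s_k$, each of the $n$ summands evaluates to $\int_{\text{Mod}(S)\times\mathcal{PMF}_0}\beta(\Phi,y)d\mu(\Phi)d\nu(y)$. This yields
\[
\frac{1}{n}\int_{\mathcal{P}\times\mathcal{PMF}_0}\beta(\Phi_n,x)\,d\mathbb{P}d\nu(x)=\int_{\text{Mod}(S)\times\mathcal{PMF}_0}\beta(\Phi,y)d\mu(\Phi)d\nu(y)
\]
for every $n\in\mathbb{N}$ (here the integral over $\mathcal{P}$ is well-defined since the first moment assumption ensures $\beta(\cdot,x)\in L^1(\mu)$ uniformly in $x$, via the bound \eqref{bus}).

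Finally, I would pass the almost sure limit from the first step to the expectations. The estimate $|\beta(\Phi_n,x)|\le\kappa_{\mathcal{T}}(\Phi_n)$ coming from Equation \eqref{bus} dominates $\frac{1}{n}|\beta(\Phi_n,x)|$ by $\frac{1}{n}\kappa_{\mathcal{T}}(\Phi_n)$, a sequence which converges in $L^1(\mathbb{P})$ to $\lambda$ by Kingman's theorem under the first moment assumption, and is therefore uniformly integrable. Vitali's convergence theorem then upgrades the $\mathbb{P}\otimes\nu$-a.e.\ convergence $\frac{1}{n}\beta(\Phi_n,x)\to\lambda$ into convergence of the means, and combining with the identity above yields the statement. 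The main technical point is the uniform integrability of $\frac{1}{n}\kappa_{\mathcal{T}}(\Phi_n)$; this is a standard but nontrivial consequence of Kingman's theorem under the finite first moment hypothesis, and is the only subtle step of the argument.
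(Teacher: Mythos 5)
Your proof is correct and takes essentially the same route as the paper. The pointwise law of large numbers $\frac{1}{n}\beta(\Phi_n,y)\to\lambda$ is derived from Proposition~\ref{product-estimate-2-mod} exactly as in the paper, and your explicit cocycle-decomposition, $\mu$-stationarity, and Vitali/uniform-integrability argument is simply an unwinding of the paper's terse invocation of Birkhoff's ergodic theorem, namely the time-average/space-average identity for the skew product over $\mathbb{P}\otimes\nu$ applied to the $L^1$ observable $(\omega,x)\mapsto\beta(s_1,x)$.
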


\begin{proof}
Proposition \ref{product-estimate-2-mod} implies that for all $y\in\mathcal{PMF}_0$ and $\mathbb{P}$-a.e. sample path $(\Phi_n)_{n\in\mathbb{N}}$ of the random walk on $(\text{Mod}(S),\mu)$, one has $$\lim_{n\to +\infty}\frac{1}{n}\beta(\Phi_n,y)=\lambda.$$ Corollary \ref{h1-mod} then follows by applying Birkhoff's ergodic theorem.
\end{proof}

\subsection{Central limit theorem}\label{sec-tcl}

We now complete the proof of the central limit theorem for mapping class groups.

\begin{theo}\label{tcl-mod}
Let $S$ be a closed, connected, oriented, hyperbolic surface, and let $\rho$ be a hyperbolic metric on $S$. Let $\mu$ be a nonelementary probability measure on $\text{Mod}(S)$ with finite second moment with respect to the Teichmüller metric. Let $\lambda$ be the drift of the random walk on $(\text{Mod}(S),\mu)$ with respect to $d_{\mathcal{T}}$.\\ 
Then there exists a centered Gaussian law $N_{\mu}$ on $\mathbb{R}$ such that for every compactly supported continuous function $F$ on $\mathbb{R}$, and all essential simple closed curves $c$ on $S$, one has $$\lim_{n\to +\infty}\int_{\text{Mod}(S)} F \left(\frac{\log l_{\rho}(\Phi(c))-n\lambda}{\sqrt{n}}\right)d\mu^{\ast n}(\Phi) = \int_{\mathbb{R}}F(t)dN_{\mu}(t),$$ uniformly in $c$.
\end{theo}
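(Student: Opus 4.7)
The plan is to derive the theorem from the general central limit theorem for Busemann cocycles (Theorem \ref{tcl-gen}), applied to $\mathcal{T}(S)$ equipped with the (symmetric) Teichmüller metric, and then to translate the statement for the Busemann cocycle into one for hyperbolic lengths of simple closed curves by means of Corollary \ref{cor-key}.

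First, I would fix the basepoint $o\in\mathcal{T}(S)$ to be the conformal class of $\rho$, so that $l_{\rho}=l_o$, and apply Theorem \ref{tcl-gen} with $X=\mathcal{T}(S)$, $G=\text{Mod}(S)$ and $Y^-=Y^+=\mathcal{PMF}_0$, viewing $\mathcal{PMF}_0$ as a $\text{Mod}(S)$-invariant measurable subset of the horoboundary $\partial_h\mathcal{T}(S)$ through the Liu--Su identification \cite{LS14} with the Gardiner--Masur boundary and Miyachi's embedding \cite{Miy13}. Since $d_{\mathcal{T}}$ is symmetric, $\beta^-=\beta^+=\beta$. Kaimanovich--Masur's Theorem \ref{unique-stationary}, applied to both $\mu$ and $\check{\mu}$, produces the unique $\mu$-stationary probability measure $\nu$ (necessarily $\mu$-ergodic) and the unique $\check{\mu}$-stationary probability measure $\nu^{\ast}$, both concentrated on $\mathcal{UE}\subseteq\mathcal{PMF}_0$. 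Corollary \ref{h1-mod} applied to $\check{\mu}$ (whose drift with respect to $d_{\mathcal{T}}$ equals $\lambda$ by symmetry of the metric) provides Hypothesis \textbf{(H1)} with constant $\lambda$, and Proposition \ref{product-estimate-mod} applied to $\check{\mu}$ provides Hypothesis \textbf{(H2)}. Theorem \ref{tcl-gen} thus furnishes a centered Gaussian law $N_{\nu}$ on $\mathbb{R}$ such that
\[
\lim_{n\to+\infty}\int_{\text{Mod}(S)}F\!\left(\frac{\beta(\Phi,x)-n\lambda}{\sqrt{n}}\right)d\mu^{\ast n}(\Phi)=\int_{\mathbb{R}}F(t)\,dN_{\nu}(t)
\]
for every compactly supported continuous $F$ and $\nu$-almost every $x\in\mathcal{PMF}_0$.

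In order to obtain the conclusion at every $c\in\mathcal{S}$ with convergence uniform in $c$, I would invoke the stronger form of this statement alluded to in the remark following Theorem \ref{tcl-gen}. The horoboundary $\partial_h\mathcal{T}(S)$ is compact metrizable, the Busemann cocycle $\beta$ is continuous on it, and $\beta_{\sup}\in L^2(\text{Mod}(S),\mu)$ in view of inequality \eqref{bus} and the second moment assumption on $\mu$. The unique covariance hypothesis of Benoist--Quint holds, since any $\mu$-stationary probability measure on $\partial_h\mathcal{T}(S)$ must be supported on $\mathcal{UE}$ and $\nu$ is the unique such measure by Theorem \ref{unique-stationary}. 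Once centerability of $\beta$ on $\partial_h\mathcal{T}(S)$ with average $\lambda$ is established (by extending the argument of Proposition \ref{centerable-gen} to the whole horoboundary), Benoist--Quint's central limit theorem for centerable cocycles on compact metrizable spaces \cite[Theorem 3.4]{BQ13} applies and delivers uniform convergence in $x\in\partial_h\mathcal{T}(S)$, hence in particular at every $c\in\mathcal{S}\subseteq\mathcal{PMF}_0$.

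Finally, I would translate this Busemann statement into the desired length statement using Corollary \ref{cor-key}: there exists $C>0$, independent of $\Phi$ and $c$, such that $|\beta(\Phi,c)-\log l_{\rho}(\Phi(c))+\log l_{\rho}(c)|\le C$. For each $c\in\mathcal{S}$, the difference
\[
\frac{\log l_{\rho}(\Phi(c))-n\lambda}{\sqrt{n}}-\frac{\beta(\Phi,c)-n\lambda}{\sqrt{n}}=\frac{\log l_{\rho}(c)+O(1)}{\sqrt{n}}
\]
tends to zero as $n\to+\infty$, and uniform continuity of $F$ on its compact support yields the announced limit with $N_{\mu}:=N_{\nu}$. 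The main obstacle is the passage from the $\nu$-almost-every convergence produced directly by Theorem \ref{tcl-gen} to pointwise convergence at every $c\in\mathcal{S}$: since $\nu$ is nonatomic and supported on $\mathcal{UE}$, it assigns zero mass to $\mathcal{S}$, and overcoming this requires exploiting the compactness of $\partial_h\mathcal{T}(S)$ and the uniqueness of the stationary measure to fall back on Benoist--Quint's uniform central limit theorem on compact spaces, together with the extension of the centerability construction from $\mathcal{PMF}_0$ to the entire horoboundary.
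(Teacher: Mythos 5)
Your overall framing is right — reduce via Corollary \ref{cor-key} to a CLT for the Busemann cocycle $\beta$ on $\mathcal{PMF}_0$, and obtain the $\nu$-almost-everywhere version of that CLT from Theorem \ref{tcl-gen} with $Y^-=Y^+=\mathcal{PMF}_0$, Corollary \ref{h1-mod}, and Proposition \ref{product-estimate-mod}. You also correctly identify the real difficulty: $\nu$ is nonatomic and concentrated on $\mathcal{UE}$, hence charges no point of $\mathcal{S}$, so the $\nu$-a.e.\ conclusion says nothing directly about simple closed curves.

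The gap is in how you resolve that difficulty. You propose to pass to the uniform CLT on the entire compact horoboundary $\partial_h\mathcal{T}(S)$ via the Remark after Theorem \ref{tcl-gen}, which would require centerability of $\beta$ on the \emph{whole} horoboundary, i.e.\ Hypothesis \textbf{(H2)} for all $x\in\partial_h\mathcal{T}(S)$, not merely $x\in\mathcal{PMF}_0$. You acknowledge this needs ``extending the argument of Proposition \ref{centerable-gen} to the whole horoboundary'' but do not carry it out, and this is exactly where the argument breaks. Proposition \ref{product-estimate-mod} is proved only for $x\in\mathcal{PMF}_0$, and its proof essentially uses that $x$ and $\text{bnd}_{\mathcal{T}}(\mathbf{\Phi})$ are joined by a Teichmüller geodesic line whose $\pi$-image to the curve graph one can control via the contraction machinery of Propositions \ref{contraction0}--\ref{contraction-typical}. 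For an arbitrary Gardiner--Masur boundary point outside $\mathcal{PMF}_0$ there is no such geodesic, and no control on the Gromov product $(x|y)_o$ is established; so boundedness of $\psi$ on all of $\partial_h\mathcal{T}(S)$, and hence centerability there, is not available. The route through compactness and unique covariance therefore does not close.

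The paper uses a different and more economical mechanism to go from ``CLT at one point of $\mathcal{PMF}_0$'' to ``CLT at every point of $\mathcal{PMF}_0$, uniformly'': Proposition \ref{product-estimate-2-mod}. That proposition gives, for every $\epsilon>0$, a single constant $M$ such that for \emph{every} $x\in\mathcal{PMF}_0$ one has $\mathbb{P}\left[\sup_{n}\left|\beta(\Phi_n,x)-\kappa_{\mathcal{T}}(\Phi_n)\right|\le M\right]\ge 1-\epsilon$. Consequently, for any two $x,x'\in\mathcal{PMF}_0$ the random variables $\beta(\Phi_n,x)$ and $\beta(\Phi_n,x')$ are within $2M$ of each other off an event of probability $\le 2\epsilon$, uniformly in $n$, $x$, $x'$; after dividing by $\sqrt{n}$ this discrepancy vanishes, so the normalized cocycle has the same distributional limit at every point of $\mathcal{PMF}_0$ once it has one at a single point, with uniformity coming for free from the uniform choice of $M$. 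This is the step you are missing; once it is in place there is no need to extend (H2) beyond $\mathcal{PMF}_0$ or to appeal to the remark about unique covariance on the whole boundary.
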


\begin{proof}
In view of Corollary \ref{cor-key}, it is enough to prove a central limit theorem for the Busemann cocycle $\beta$, i.e. show that there exists a gaussian law $N_{\mu}$ on $\mathbb{R}$ such that for every compactly supported continuous function $F$ on $\mathbb{R}$, and all $x\in\mathcal{PMF}_0$, one has $$\lim_{n\to +\infty}\int_{\text{Mod}(S)} F \left(\frac{\beta(\Phi,x)-n\lambda}{\sqrt{n}}\right)d\mu^{\ast n}(\Phi) = \int_{\mathbb{R}}F(t)dN_{\mu}(t),$$ uniformly in $x\in\mathcal{PMF}_0$. In view of Proposition \ref{product-estimate-2-mod}, it is enough to prove that there exists $x\in{\mathcal{PMF}_0}$ for which the limit holds. Since the Busemann cocycle $\beta$ satisfies Hypotheses \textbf{(H1)} (Corollary \ref{h1-mod}, applied to the nonelementary probability measure $\check{\mu}$) and \textbf{(H2)} (Proposition \ref{product-estimate-mod}) from Theorem \ref{tcl-gen} (with $Y^-=Y^+=\mathcal{PMF}_0$), the result follows from Theorem \ref{tcl-gen}.
\end{proof}

\section{Central limit theorem on $\text{Out}(F_N)$}\label{sec-out-1}

Let $N\ge 2$. The goal of this section is to establish a central limit theorem on $\text{Out}(F_N)$ (Theorem \ref{tcl-intro}). The proof will follow the same outline as in the mapping class group case -- the main novelties coming from the need to take care of asymmetry of the metric on $CV_N$, which implies in particular that we will only have a \emph{one-sided} version of Proposition \ref{contraction-mod} in the context of $\text{Out}(F_N)$.

\subsection{Background on $\text{Out}(F_N)$}\label{sec-out}

\paragraph*{Outer space and its metric.}

\noindent \emph{Outer space} $CV_N$ was introduced by Culler--Vogtmann in \cite{CV86}, and can be defined as the space of equivalence classes of free, minimal, simplicial, isometric $F_N$-actions on simplicial metric trees, two trees being equivalent whenever there exists an $F_N$-equivariant homothety between them. \emph{Unprojectivized outer space} $cv_N$ is defined in a similar way, by considering trees up to $F_N$-equivariant isometry, instead of homothety. The group $\text{Out}(F_N)$ acts on both $CV_N$ and $cv_N$ on the right by precomposing the $F_N$-actions. These $\text{Out}(F_N)$-actions can be turned into left actions by letting $\Phi.T:=T.\Phi^{-1}$ for all $\Phi\in\text{Out}(F_N)$ and all $T\in CV_N$. Given $\epsilon>0$, the \emph{$\epsilon$-thick part} $CV_N^{\epsilon}$ is the subspace of $CV_N$ made of those trees $T$ such that the volume one representative of the quotient graph $T/F_N$ does not contain any embedded loop of length smaller than $\epsilon$.

Outer space comes equipped with a natural asymmetric metric $d_{CV_N}$ \cite{FM11}, the distance between two trees $T,T'\in CV_N$ being equal to the logarithm of the infimal Lipschitz constant of an $F_N$-equivariant map from the covolume one representative of $T$, to the covolume one representative of $T'$. The $\text{Out}(F_N)$-action on $CV_N$ is by isometries for this metric. White has proved (see \cite[Proposition 3.15]{FM11} or \cite[Proposition 2.3]{AK11}) that $$d_{CV_N}(T,T')=\log\sup_{g\in F_N\smallsetminus\{e\}}\frac{||g||_{T'}}{||g||_T}$$ for all $T,T'\in CV_N$, identified with their covolume one representatives in the above formula. In addition, the supremum in the above formula can be taken over a finite set $\text{Cand}(T)$ that only depends on $T$. Elements in $\text{Cand}(T)$ are called \emph{candidates} for $T$, they are primitive elements of $F_N$ (recall that an element of $F_N$ is \emph{primitive} if it belongs to some free basis of $F_N$).

\paragraph*{Currents on free groups.}

Let $\partial^2F_N:=\partial F_N\times\partial F_N\smallsetminus\Delta$, where $\partial F_N$ is identified with the Gromov boundary of a Cayley tree $R_0$ of $F_N$, and $\Delta$ denotes the diagonal subset. A \emph{current} on $F_N$ is an $F_N$-invariant Borel measure on $\partial^2F_N$ that is finite on compact subsets of $\partial^2F_N$. We denote by $\text{Curr}_N$ the space of currents on $F_N$, which is topologized as in \cite{}. Every $g\in F_N$ which is not of the form $h^k$ for any $h\in F_N$ and $k>1$ determines a \emph{rational current} $\eta_g$, where for all closed-open subsets $S\subseteq\partial^2F_N$, the number $\eta_g(S)$ is the number of $F_N$-translates of the axis of $g$ in $R_0$, whose pairs of endpoints belong to $S$. The group $\text{Out}(F_N)$ acts on the set of currents on the left in the following way: given $\Phi\in\text{Out}(F_N)$, a current $\eta$, and a compact set $K\subseteq\partial^2F_N$, we let $\Phi(\eta)(K):=\eta(\phi^{-1}(K))$, where $\phi\in\text{Aut}(F_N)$ is any representative of $\Phi$. The length pairing between trees in $cv_N$ and elements of $F_N$ extends continuously \cite{KL09} to an intersection pairing $\langle .,.\rangle : \overline{cv_N}\times Curr_N\to\mathbb{R}_+$.

\paragraph*{Forward and backward horoboundaries of outer space.}

We denote by $\mathcal{P}_N$ the collection of all primitive elements of $F_N$. The \emph{primitive compactification} $\overline{CV_N}^{prim}$ was introduced in \cite[Section 2.4]{Hor14-1} by taking the closure of the image of the embedding
\begin{displaymath}
\begin{array}{cccc}
i:&CV_N &\to & \mathbb{PR}^{\mathcal{P}_N}\\
& g&\mapsto & \mathbb{R}^{\ast}(||g||_T)_{g\in \mathcal{P}_N}
\end{array}
\end{displaymath}
in the projective space $\mathbb{PR}^{\mathcal{P}_N}$. The forward horofunction compactification of $CV_N$ was identified in \cite[Theorem 2.2]{Hor14} with the primitive compactification $\overline{CV_N}^{prim}$. This is a quotient of Culler--Morgan's compactification $\overline{CV_N}$, which was introduced in \cite{CM87}, and identified by Cohen--Lustig \cite{CL95} and Bestvina--Feighn \cite{BF94} (see also \cite{Hor14-5}) with the space of equivariant homothety classes of \emph{very small} minimal $F_N$-trees, i.e. trees whose arc stabilizers are either trivial, or maximally cyclic, and whose tripod stabilizers are trivial. The fibers of the quotient map from $\overline{CV_N}$ to $\overline{CV_N}^{prim}$ were described in \cite{Hor14-1}.

Among trees in $\partial CV_N:=\overline{CV_N}\smallsetminus CV_N$, \emph{arational trees} will be of particular interest to us. These are defined in the following way. A subgroup $A\subseteq F_N$ is a \emph{free factor} if there exists $B\subseteq F_N$ such that $F_N=A\ast B$. A tree $T\in\partial{CV_N}$ is \emph{arational} if no proper free factor of $F_N$ has a global fixed point in $T$, and the action of every proper free factor of $F_N$ on its minimal subtree in $T$ is free and simplicial. We denote by $\mathcal{UE}$ the subspace of $\partial{CV_N}$ made of those arational trees $T$ which are both \emph{uniquely ergometric} and \emph{dually uniquely ergodic}, i.e. those that admit, up to homothety, a unique length measure (see the definition in \cite[Section 5.1]{Gui00}, attributed to Paulin) and a unique geodesic current $\eta\in M_N$ satisfying $\langle T,\eta\rangle=0$ (we say that $\eta$ is \emph{dual} to $T$). We note that the quotient map from $\overline{CV_N}$ to $\overline{CV_N}^{prim}$ is one-to-one in restriction to the set of trees with dense orbits \cite{Hor14-1}, and in particular in restriction to $\mathcal{UE}$.

Properties of the backward horoboundary $\partial_h^-CV_N$ were also investigated in \cite[Section 4]{Hor14}. Backward horofunctions are described in terms of geodesic currents on $F_N$. We denote by $M_N$ the minimal set of currents, defined in \cite{Mar97} as the closure of the set of rational currents associated to primitive conjugacy classes. Given a finite set $S\subseteq M_N$, we define a function $f_S$ on $CV_N$ by setting $$f_S(T):=\log\frac{\sup_S\langle T,\eta\rangle}{\sup_S\langle o,\eta\rangle}$$ for all $T\in CV_N$ (here again trees are identified with their covolume $1$ representatives). By \cite[Proposition 4.5]{Hor14}, for all $\xi\in\partial_h^-CV_N$, there exists a finite set $S\subseteq M_N$ such that $\xi=f_S$. For all trees $T\in\mathcal{UE}$ with dual current $\eta$, and all geodesic lines $\gamma:\mathbb{R}\to CV_N$ such that $\lim_{t\to -\infty}\gamma(t)=T$ in Culler--Morgan's compactification $\overline{CV_N}$, one has $\lim_{t\to -\infty}\gamma(t)=f_{[\eta]}$ in $CV_N\cup\partial_h^-CV_N$, see \cite[Remark 4.6]{Hor14}.

\paragraph*{Folding lines in outer space.}

A nice collection of paths in outer space is the collection of so-called \emph{folding lines}, whose definition we now review. A \emph{morphism} between two $\mathbb{R}$-trees $T$ and $T'$ is a map $f:T\to T'$, such that every segment in $T$ can be subdivided into finitely many subsegments, in restriction to which $f$ is an isometry. Every morphism defines a partition of the set of connected components of $T\smallsetminus\{x\}$ (called \emph{directions}), at each point $x\in T$: two directions belong to the same class of the partition if and only if their $f$-images overlap in $T'$. The data of all these partitions is called a \emph{train-track structure} on $T$. A morphism $f:T\to T'$ is \emph{optimal} if there are at least two distinct equivalence classes of directions at every point in $T$, and $f$ realizes the infimal Lipschitz constant of an $F_N$-equivariant map from $T$ to $T'$.

An \emph{optimal folding path} in $cv_N$ is a continuous map $\gamma:I\to cv_N$, where $I\subseteq\mathbb{R}$ is an interval, together with a collection of $F_N$-equivariant optimal morphisms $f_{t,t'}:\gamma(t)\to \gamma(t')$ for all $t<t'$, such that $f_{t,t''}=f_{t',t''}\circ f_{t,t'}$ for all $t<t'<t''$. It is a \emph{greedy folding path} if for all $t_0\in\mathbb{R}$, there exists $\epsilon>0$ such that for all $t\in [t_0,t_0+\epsilon]$, the tree $\gamma(t)$ is obtained from $\gamma(t_0)$ by identifying two segments of length $\epsilon$ in $\gamma(t_0)$ whenever they have a common endpoint, and have the same $f_{t_0,t}$-image. The projection to $CV_N$ of a (greedy) folding path in $cv_N$ will again be called a (greedy) folding path. 

Any two trees $T,T'\in CV_N$ are joined by a (non-unique) geodesic segment, which is the concatenation of a segment contained in a \emph{simplex} of $CV_N$ (i.e. the subspace of $CV_N$ made of all trees obtained by only varying the edge lengths of $T$), and an optimal greedy folding path. A geodesic segment obtained in this way will be called a \emph{standard geodesic segment}. If $T'\in\mathcal{UE}$, then one can similarly find a (non-unique) \emph{standard geodesic ray} in $CV_N$ starting at $o$ and limiting at $T'$, consisting of the concatenation of an initial segment contained in a simplex, and an optimal greedy folding ray, see \cite[Lemma 6.11]{BR13}. Given any two distinct trees $T,T'\in\mathcal{UE}$, there exists a (non-unique) optimal greedy folding line $\gamma:\mathbb{R}\to CV_N$ such that $\gamma(t)$ converges to $T$ (resp. to $T'$) as $t$ goes to $-\infty$ (resp. $+\infty$), as follows from the work of Bestvina--Reynolds \cite[Theorem 6.6 and Lemma 6.11]{BR13}.

\paragraph*{The free factor graph.}

The \emph{free factor graph} $FF_N$ is the simplicial graph whose vertices are the conjugacy classes of proper free factors of $F_N$, in which two vertices $[A]$ and $[B]$ are joined by an edge whenever there exist representatives $A,B$ in the corresponding conjugacy classes, such that either $A\varsubsetneq B$ or $B\varsubsetneq A$. The graph $FF_N$ is Gromov hyperbolic (Bestvina--Feighn \cite{BF12}). The group $\text{Out}(F_N)$ has a natural left action on $FF_N$. There is a natural coarsely Lipschitz, coarsely $\text{Out}(F_N)$-equivariant map $\pi:CV_N\to FF_N$, which sends any tree $T\in CV_N$ to a proper free factor $A$ such that $T$ collapses to a tree $T'$ in which $A$ fixes a point. Bestvina--Feighn also established in \cite{BF12} that $\pi$-images of standard geodesic lines in $CV_N$ are uniform unparameterized quasi-geodesics in $FF_N$.

The Gromov boundary $\partial_{\infty}FF_N$ was described independently by Bestvina--Reynolds \cite{BR13} and Hamenstädt \cite{Ham12} as the space of equivalence classes of arational trees in $\partial CV_N$, two trees being equivalent whenever they have the same underlying topological tree (and only differ by the metric). In particular, there is a continuous $\text{Out}(F_N)$-equivariant map $\psi:\mathcal{UE}\to\partial_{\infty}FF_N$, such that for all $T\in\mathcal{UE}$, and all sequences $(S_n)_{n\in\mathbb{N}}\in CV_N^{\mathbb{N}}$ converging to $T$ (for the topology of $\overline{CV_N}$), the sequence $(\pi(S_n))_{n\in\mathbb{N}}$ converges to $\psi(T)$ (for the topology of $FF_N\cup\partial_{\infty}FF_N$).

\paragraph*{Review on projections to folding paths.}

Let $\gamma:I\to CV_N$ (where $I\subseteq\mathbb{R}$ is an interval) be an optimal greedy folding path determined by a morphism $f$. The morphism $f$ determines train-track structures on all trees $\gamma(t)$ with $t\in I$. A segment $[a,b]\subseteq\gamma(t)$ is \emph{legal} if for every $x\in J$, the intervals $[a,x)$ and $(x,b]$ belong to components of $\gamma(t)\smallsetminus\{x\}$ in distinct equivalence classes of the train-track structure. Following Bestvina--Feighn \cite{BF12}, for all $g\in\mathcal{P}_N$, we define $\text{right}_{\gamma}(g)$ as the infimal $t\in I$ such that every segment of length $M_{BF}$ in the axis of $g$ in $\gamma(t)$, contains a legal subsegment of length $3$ (here $M_{BF}$ is the constant defined in \cite[Section 6]{BF12}, which only depends on $N$). We then let $$\text{Pr}_{\gamma}(S):=\gamma\left(\sup_{g\in\text{Cand}(S)}\text{right}_{\gamma}(g)\right)$$ for all $S\in CV_N$. The following lemma relies on Bestvina--Feighn's work \cite[Section 4]{BF12}, it was established as such in \cite[Lemma 4.7]{DH15}.

\begin{lemma}(Bestvina--Feighn \cite{BF12})\label{bf}
There exists $K_0>0$ (only depending on $N$) such that for all greedy folding paths $\gamma:I\to CV_N$, all $g\in \mathcal{P}_N$, and all $t,t'\ge\text{right}_{\gamma}(g)$ satisfying $t\le t'$, one has $$\left|d_{CV_N}(\gamma(t),\gamma(t'))-\log\frac{||g||_{\gamma(t')}}{||g||_{\gamma(t)}}\right|\le K_0.$$
\end{lemma}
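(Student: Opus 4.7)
The plan starts by splitting the two-sided bound into its two directions. The upper bound $\log(||g||_{\gamma(t')}/||g||_{\gamma(t)})\le d_{CV_N}(\gamma(t),\gamma(t'))$ is automatic for every $g\in F_N\setminus\{e\}$ and every $t\le t'$, since White's formula asserts that $d_{CV_N}(T,T')$ dominates $\log(||g||_{T'}/||g||_T)$ for every primitive conjugacy class. The actual content of the lemma is therefore the reverse inequality up to a bounded additive error, and this is where the hypothesis $t,t'\ge\text{right}_\gamma(g)$ will enter.

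I would first reparameterize $\gamma$ so that $d_{CV_N}(\gamma(s),\gamma(u))=u-s$ for $s\le u$ in the range of interest (after rescaling trees to covolume one), and then compare the growth of $||g||_{\gamma(\cdot)}$ to the maximal stretching rate $e^{u-s}$. A fundamental property of optimal greedy folding, to be exploited here, is that a legal subsegment of $\gamma(t)$ is mapped into $\gamma(t')$ by the folding map with its length multiplied by exactly $e^{t'-t}$, while length can only be dissipated at illegal turns, where folding identifies two overlapping germs lying in the same gate. I would then translate the hypothesis $t\ge\text{right}_\gamma(g)$ into the following quantitative ``mostly legal'' statement for the axis of $g$ in $\gamma(t)$: the axis decomposes as an alternation of legal pieces of length at least $3$ and illegal pieces of length at most $M_{BF}$, so a definite proportion of the total axis length (depending only on $N$) is legal.

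The next step is to estimate $||g||_{\gamma(t')}$ from below by summing the stretched legal contributions and subtracting the cancellations occurring at each illegal turn during the folding from time $t$ to time $t'$. The decisive input, which I would import from \cite[Section 4]{BF12}, is the uniform bounded cancellation lemma: the value $M_{BF}$ is engineered so that at each illegal turn of the axis, the total length absorbed by folding up to time $t'$ is bounded by a constant $C(N)$ depending only on $N$ --- in particular independently of $t'$, of $g$, and of the rest of $\gamma$. Summing over at most $||g||_{\gamma(t)}/3$ illegal turns would yield
$$||g||_{\gamma(t')}\ \ge\ c(N)\, e^{t'-t}\, ||g||_{\gamma(t)}\ -\ C'(N)\, ||g||_{\gamma(t)},$$
after which taking logarithms gives the desired bound $d_{CV_N}(\gamma(t),\gamma(t'))-\log(||g||_{\gamma(t')}/||g||_{\gamma(t)})\le K_0(N)$, with small values of $t'-t$ handled trivially since both quantities are then already $O(1)$.

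The main obstacle is exactly this uniform bounded cancellation estimate. Without the $\text{right}_\gamma$ hypothesis, cancellations could accumulate linearly with $t'-t$, and the comparison would fail outright; and without the specific choice of the constants $3$ and $M_{BF}$ made by Bestvina--Feighn, cancellations at consecutive illegal turns could chain together along the folding path and overwhelm the stretched legal contributions. Extracting a constant $C(N)$ that works uniformly over all primitive $g$ and all greedy folding paths is the heart of \cite[Section 4]{BF12}, and once that is in hand the combinatorial bookkeeping above closes the argument.
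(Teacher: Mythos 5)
The paper does not give a proof of this lemma: it simply records it as Lemma \ref{bf}, crediting Bestvina--Feighn \cite[Section~4]{BF12} and pointing to \cite[Lemma~4.7]{DH15} for the statement in this exact form. So your task was really to reconstruct the Bestvina--Feighn argument, and while you identify the right ingredients (White's formula for the easy direction, the ``mostly legal'' content of $t\ge\text{right}_\gamma(g)$ for the hard direction), there is a genuine gap in the step you flag as ``the decisive input.''

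You invoke a ``uniform bounded cancellation lemma'' asserting that the total length absorbed at each illegal turn over the whole interval $[t,t']$ is bounded by a constant $C(N)$ independent of $t'-t$, of $g$, and of $\gamma$. No such statement is available. Bounded cancellation constants for morphisms between $F_N$-trees scale with the Lipschitz constant of the morphism, hence with $e^{t'-t}$ along a folding path; a single additive budget $C(N)$ per illegal turn that is uniform in $t'$ is false. Consequently the inequality $||g||_{\gamma(t')}\ge c(N)e^{t'-t}||g||_{\gamma(t)}-C'(N)||g||_{\gamma(t)}$ does not follow from the mechanism you describe. What Bestvina--Feighn actually prove in Section 4 is iterative: over each time increment of fixed length (about $\log 3$), a legal subsegment of length $\ge 3$ stretches to length $\ge 9$, while the illegal regions (of length $\le M_{BF}$) lose a bounded amount per step; this shows that the legality threshold persists and that the translation length grows by a definite multiplicative factor per step. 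Iterating over $\lfloor(t'-t)/\log 3\rfloor$ steps yields the multiplicative lower bound $||g||_{\gamma(t')}\ge \tilde c(N)\,e^{t'-t}\,||g||_{\gamma(t)}$ without any subtractive correction, and then taking logarithms gives the claimed $K_0$. The heart of the matter is the propagation of legality along the fold, not a one-shot global cancellation estimate, and your write-up as it stands does not supply that propagation argument.
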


The Bestvina--Feighn projection satisfies the following contraction property.

\begin{lemma}(Bestvina--Feighn \cite[Proposition 7.2]{BF12})\label{BF-contr}
There exists $D_1>0$ such that for every standard geodesic line $\gamma:I\to CV_N$, and all $H,H'\in CV_N$, if $d_{CV_N}(H,H')\le d_{CV_N}(H,\text{Im}(\gamma))$, then $d_{FF_N}(\pi(\text{Pr}_{\gamma}(H)),\pi(\text{Pr}_{\gamma}(H')))\le D_1$.
\end{lemma}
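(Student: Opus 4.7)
The statement is the strongly contracting property of Bestvina--Feighn's projection $\text{Pr}_\gamma$, which is a free factor graph analogue of the Morse lemma for projections onto quasi-geodesics in a Gromov hyperbolic space. My plan is to follow the strategy of \cite{BF12} and exploit three ingredients: the Gromov hyperbolicity of $FF_N$, the fact that the $\pi$-image of any standard geodesic line in $CV_N$ is a uniform unparameterized quasi-geodesic in $FF_N$, and the length-control provided by Lemma \ref{bf} once candidates have become legal along $\gamma$.

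First, I would join $H$ to $H'$ by a standard geodesic segment $\delta:[0,L]\to CV_N$ with $L=d_{CV_N}(H,H')$, so that by hypothesis $L\le d_{CV_N}(H,\text{Im}(\gamma))$. I would then analyze the function $t\mapsto \pi(\text{Pr}_\gamma(\delta(t)))$ and show that, under the standing hypothesis, it moves only a bounded amount in $FF_N$. The core technical point is a comparison estimate: if two trees $T,T'\in CV_N$ are at controlled $d_{CV_N}$-distance, then any candidate $g$ of $T$ has $\text{right}_\gamma(g)$ close to the right-time of some candidate of $T'$. This follows from Lemma \ref{bf}: past the right-time, lengths $\log\|g\|_{\gamma(s)}$ grow additively with the parameter on $\gamma$ (up to $K_0$), and White's formula bounds the ratio $\|g\|_{T'}/\|g\|_T$ by $e^{d_{CV_N}(T,T')}$, so a small change of basepoint in $CV_N$ yields only a small shift in the right-time function over the finite set of candidates.

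Next, I would combine this with the hyperbolicity of $FF_N$. Consider the coarse quadrilateral in $FF_N$ with vertices $\pi(H)$, $\pi(H')$, $\pi(\text{Pr}_\gamma(H))$, $\pi(\text{Pr}_\gamma(H'))$, whose sides are the uniform quasi-geodesic $\pi\circ\gamma$ between the latter two and the $\pi$-images of standard geodesics between the other pairs. If $d_{FF_N}(\pi(\text{Pr}_\gamma(H)),\pi(\text{Pr}_\gamma(H')))$ were large, the thin-triangle property would force $\pi\circ\delta$ to fellow-travel $\pi\circ\gamma$ over a long subsegment. Using Lemma \ref{BF-contr} applied in the reverse direction (long fellow-travel of projections forces the original trees in $CV_N$ to approach $\gamma$) together with the right-time comparison above, one then concludes that $\delta$ must come uniformly close to $\text{Im}(\gamma)$ in $CV_N$, contradicting $d_{CV_N}(H,\text{Im}(\gamma))\ge L$ when the hypothesized $FF_N$-distance is larger than a suitable $D_1$ depending only on $N$ and the hyperbolicity constant of $FF_N$.

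The main obstacle is the passage between the $CV_N$ and $FF_N$ geometries. The map $\pi$ is only coarsely Lipschitz in one direction, so a bound on distances in $FF_N$ does not automatically produce a bound in $CV_N$, and the projection $\text{Pr}_\gamma$ is defined purely in terms of the combinatorics of legality rather than as a nearest-point projection. The heart of Bestvina--Feighn's argument is to show that legality nevertheless plays the role of a nearest-point projection in $FF_N$: this is the delicate step, and any honest proof has to carry out the legality bookkeeping (how the legal structure of the axis of $g$ evolves along $\delta$ and $\gamma$) to convert hyperbolicity of $FF_N$ into the desired contraction. I would expect this legality analysis, rather than the hyperbolic geometry itself, to be the principal technical difficulty.
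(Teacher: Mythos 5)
This lemma is stated in the paper as a citation to Bestvina--Feighn \cite[Proposition 7.2]{BF12}; the paper gives no proof of its own, so there is nothing for your argument to be compared against within the paper. That said, your attempted reconstruction has two serious problems that make it unworkable as it stands.

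First, the argument is circular: in the key step you write that you would invoke ``Lemma \ref{BF-contr} applied in the reverse direction'' to conclude that a long fellow-travel in $FF_N$ forces $\delta$ close to $\mathrm{Im}(\gamma)$ in $CV_N$. That is precisely the lemma you are trying to prove, so the proposal assumes its own conclusion. (It is also not true that a fellow-travel statement in $FF_N$ follows ``in reverse'' from the contraction statement; the contraction property gives a bound on projected distances, not a lower bound on $CV_N$-distances from a bound on $FF_N$-distances, and $\pi$ has no coarse inverse.)

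Second, and more structurally, the logical dependence on Gromov hyperbolicity of $FF_N$ is backwards. In \cite{BF12} the contraction property of $\mathrm{Pr}_\gamma$ (their Proposition 7.2) is established \emph{before} and as a key ingredient \emph{in} the proof that $FF_N$ is hyperbolic; it is not deduced from hyperbolicity. The genuine proof works entirely inside $CV_N$ with the legality/illegality structure along greedy folding paths: one tracks how the illegal turns of a candidate's axis are resolved along $\gamma$, shows that the ``right'' time is stable under replacing $H$ by a tree $H'$ within distance $d_{CV_N}(H,\mathrm{Im}(\gamma))$, and then transfers the resulting bound through $\pi$. Your first paragraph gestures at the right ingredients (Lemma \ref{bf}, White's formula, candidate right-times), but the second paragraph abandons them in favor of a thin-quadrilateral argument in $FF_N$ that cannot even be set up without already knowing $FF_N$ is hyperbolic. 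A correct proof must stay in $CV_N$ and do the legality bookkeeping directly, as you anticipated in your final paragraph is the real difficulty.
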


The following lemma of Dowdall--Taylor, relates the Bestvina--Feighn projection and the closest-point projection in $FF_N$. Given an optimal greedy folding path $\gamma:I\to CV_N$, we denote by $\mathbf{n}_{\pi\circ\gamma}$ a closest-point projection map to the image of $\pi\circ\gamma$ in $FF_N$.

\begin{lemma}(Dowdall--Taylor \cite[Lemma 4.2]{DT14})\label{DT}
There exists $D_2>0$ such that $$d_{FF_N}(\pi(\text{Pr}_{\gamma}(H)),\mathbf{n}_{\pi\circ\gamma}(\pi(H)))\le D_2$$ for all optimal greedy folding paths $\gamma$ and all $H\in CV_N$.
\end{lemma}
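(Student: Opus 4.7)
The plan is to exploit the strong contraction property of Lemma \ref{BF-contr}, which mimics the defining property of closest-point projections in hyperbolic spaces. The general principle is that a map from $CV_N$ to a quasi-geodesic in $FF_N$ satisfying such a contraction property must coarsely agree with the actual closest-point projection. Since $\pi \circ \gamma$ is a quasi-geodesic in the Gromov hyperbolic space $FF_N$ by Bestvina--Feighn, this strategy has a chance.

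Fix $H \in CV_N$ and choose a $CV_N$-closest point $H^\ast \in \text{Im}(\gamma)$ to $H$ (up to bounded error). Applying Lemma \ref{BF-contr} with $H' = H^\ast$ (valid since trivially $d_{CV_N}(H, H^\ast) \le d_{CV_N}(H, \text{Im}(\gamma))$) yields
\[ d_{FF_N}(\pi(\text{Pr}_\gamma(H)), \pi(\text{Pr}_\gamma(H^\ast))) \le D_1. \]
A short argument using Lemma \ref{bf} applied to candidates of $H^\ast = \gamma(t^\ast)$ should then show that $\text{Pr}_\gamma(H^\ast) = \gamma(s^\ast)$ with $s^\ast$ close enough to $t^\ast$ that $\pi(\gamma(s^\ast))$ and $\pi(\gamma(t^\ast))$ are uniformly $FF_N$-close: indeed, candidates of $\gamma(t^\ast)$ have uniformly bounded length there, so by Lemma \ref{bf} and exponential growth considerations their Bestvina--Feighn right-times cannot be wildly displaced from $t^\ast$, and $\pi\circ\gamma$ is coarsely Lipschitz in its parameter on bounded intervals. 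This reduces the lemma to showing that $\pi(H^\ast)$ is within bounded $FF_N$-distance of $\mathbf{n}_{\pi\circ\gamma}(\pi(H))$.

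For that final step I would argue by contradiction in the hyperbolic space $FF_N$: if $\pi(H^\ast)$ were far from $\mathbf{n}_{\pi\circ\gamma}(\pi(H)) = \pi(\gamma(t_0))$, then by thin triangles the geodesic from $\pi(H)$ to $\pi(\gamma(t_0))$ would pass close to $\pi(H^\ast)$; one would then track a point of the folding path from $H$ to $\gamma(t_0)$ at small $FF_N$-distance from $\pi(H^\ast)$ and use Bestvina--Feighn contraction along $\gamma$ relating $\text{Pr}_\gamma(\gamma(t_0))$ to $\text{Pr}_\gamma(H)$ to derive a contradiction with $H^\ast$ being the $CV_N$-closest point. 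The main obstacle is precisely this last step: $\pi$ is only coarsely Lipschitz and need not preserve distance minimizers, so transferring the $CV_N$-minimization at $H^\ast$ into an $FF_N$-minimization at $\pi(H^\ast)$ demands a delicate combination of the hyperbolic geometry of $FF_N$ with fellow-traveling properties of folding paths.
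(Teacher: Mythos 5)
The paper does not actually prove this lemma: it is imported verbatim from Dowdall--Taylor's work \cite[Lemma 4.2]{DT14}, so there is no in-paper argument to compare against. Evaluating your proposal on its own merits, I see a genuine conceptual gap.

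The problematic move is parachuting in the $CV_N$-closest point $H^\ast$ on $\mathrm{Im}(\gamma)$ and asking that $\pi(H^\ast)$ coincide coarsely with $\mathbf{n}_{\pi\circ\gamma}(\pi(H))$. The Lipschitz metric on $CV_N$ is not Gromov hyperbolic, so $CV_N$-nearest-point projections to folding paths have none of the coarse rigidity one has in $FF_N$; and the map $\pi:CV_N\to FF_N$ collapses enormous amounts of geometry (it is very far from a quasi-isometry), so there is no general principle forcing $\pi$ to intertwine the $CV_N$-nearest-point projection to $\gamma$ with the $FF_N$-nearest-point projection to $\pi\circ\gamma$. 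In fact the location of $H^\ast$ on $\gamma$ is sensitive to exactly the fine metric data -- thinness, short loops, simplex coordinates -- that $\pi$ is designed to forget. Your contradiction argument also has the direction of the thin-triangle / nearest-point-projection argument reversed: if $\pi(\gamma(t_0))=\mathbf{n}_{\pi\circ\gamma}(\pi(H))$ and $\pi(H^\ast)$ is far from $\pi(\gamma(t_0))$ \emph{on} the quasi-geodesic $\pi\circ\gamma$, then it is the geodesic $[\pi(H),\pi(H^\ast)]$ that must pass near $\pi(\gamma(t_0))$, not the geodesic $[\pi(H),\pi(\gamma(t_0))]$ that passes near $\pi(H^\ast)$; the latter simply terminates at $\pi(\gamma(t_0))$ and never approaches $\pi(H^\ast)$.

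Your opening intuition -- that a map into a quasi-geodesic of a hyperbolic space satisfying a strong contraction property is coarsely the nearest-point projection -- is the right starting point (and does reflect the spirit of \cite{DT14}), but the way to implement it is to stay entirely inside $FF_N$ and exploit Lemma~\ref{BF-contr} iteratively (or via a Gromov-product estimate), showing directly that $\pi\circ\mathrm{Pr}_\gamma$ has the characterizing property of a coarse nearest-point projection to the quasi-geodesic $\pi\circ\gamma$. Introducing the $CV_N$-closest point as a middleman imports an object whose behavior is not controlled by the hypotheses, and the lemma you would then need -- that $\pi$ carries $CV_N$-nearest points on $\gamma$ to $FF_N$-nearest points on $\pi\circ\gamma$ -- is essentially equivalent in difficulty to the statement you are trying to prove, and probably false without further qualification. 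The remark about $\mathrm{Pr}_\gamma(\gamma(t^\ast))$ being uniformly close to $\gamma(t^\ast)$ is plausible but would also need a real argument (bounding how long a candidate of $\gamma(t^\ast)$ can remain illegal along the fold), and as written amounts to an assertion.
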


\paragraph*{Random walks on $\text{Out}(F_N)$.}

A subgroup $H\subseteq\text{Out}(F_N)$ is \emph{nonelementary} if $H$ is not virtually cyclic, and does not virtually fix the conjugacy class of any proper free factor of $F_N$. A probability measure on $\text{Out}(F_N)$ is \emph{nonelementary} if the subsemigroup generated by its support is a nonelementary subgroup of $\text{Out}(F_N)$. With the terminology from Section \ref{sec-hyp}, this is equivalent to nonelementarity with respect to the action on the free factor graph $FF_N$.

\begin{prop}(Namazi--Pettet--Reynolds \cite[Theorem 7.21]{NPR14})\label{NPR}
Let $\mu$ be a nonelementary probability measure on $\text{Out}(F_N)$, with finite first moment with respect to $d_{CV_N}$. Then for $\mathbb{P}$-a.e. sample path $\mathbf{\Phi}:=(\Phi_n)_{n\in\mathbb{N}}$ of the random walk on $(\text{Out}(F_N),\mu)$, and any $o\in CV_N$, the sequence $(\Phi_n^{-1}.o)_{n\in\mathbb{N}}$ converges to a point $\text{bnd}(\mathbf{\Phi})\in\mathcal{UE}$. The hitting measure $\nu^{\ast}$ defined by setting $$\nu^{\ast}(S):=\mathbb{P}(\text{bnd}(\mathbf{\Phi})\in S)$$ for all measurable subsets $S\subseteq \partial CV_N$ is nonatomic, and it is the unique $\check{\mu}$-stationary probability measure on $\partial CV_N$. 
\end{prop}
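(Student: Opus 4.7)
The plan is to mimic the Kaimanovich--Masur strategy for mapping class groups, using the $\text{Out}(F_N)$-action on the Gromov hyperbolic free factor graph $FF_N$ as the source of boundary convergence, and then lifting from $\partial_\infty FF_N$ back to $\mathcal{UE}\subseteq\partial CV_N$ by exploiting the identification of $\partial_\infty FF_N$ with the space of topological arational trees together with compactness of $\overline{CV_N}$. Concretely: since $\mu$ is nonelementary for the $FF_N$-action, Proposition \ref{MT} of Maher--Tiozzo applied to $(\text{Out}(F_N),\mu)$ acting on $FF_N$ yields a $\check\mu$-stationary probability measure on $\partial_\infty FF_N$ and shows that for $\mathbb{P}$-a.e. sample path $\mathbf{\Phi}$, the projections $\pi(\Phi_n^{-1}.o)$ converge to a boundary point $\xi(\mathbf{\Phi})\in\partial_\infty FF_N$. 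Via the Bestvina--Reynolds/Hamenstädt identification, $\xi(\mathbf{\Phi})$ corresponds to a unique topological arational tree class.

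Next, by compactness of $\overline{CV_N}$, every subsequence of $(\Phi_n^{-1}.o)_{n\in\mathbb{N}}$ has accumulation points in $\overline{CV_N}$. I would show that any such accumulation point $T$ must lie in the underlying topological class of $\xi(\mathbf{\Phi})$: indeed, any subconvergent sequence $S_{n_k}\to T$ has $\pi(S_{n_k})\to\xi(\mathbf{\Phi})$ in $FF_N\cup\partial_\infty FF_N$, and continuity of $\psi$ on $\mathcal{UE}$, together with the fact that all arational trees in a common topological class have the same image in $\partial_\infty FF_N$, forces $T$ to be arational with the prescribed topological support. The key remaining step is to upgrade ``same topological class'' to actual convergence in $\overline{CV_N}$: this requires showing that the typical limiting topological tree is both uniquely ergometric and dually uniquely ergodic, so that the projective class is determined by the topological class. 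I would carry this out by a two-step stationary-measure argument: any $\check\mu$-stationary measure $\nu^\ast$ on $\overline{CV_N}$ is supported on the preimage of $\xi$'s support under the map to $\partial_\infty FF_N$, and then a Furstenberg-type ``compare two independent random walks'' argument, combined with the simplex-of-length-measures analysis (e.g.\ in the spirit of Guirardel's work on length measures and \cite{Hor14-1}), pins down a single projective length measure and, dually, a single projective current on that topological tree.

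The main obstacle is precisely this last step: proving unique ergometricity and dual unique ergodicity of the random limit tree. Unlike the surface case, where Masur's criterion supplies unique ergodicity of measured foliations directly, in the $\text{Out}(F_N)$ setting one has two separate ``uniqueness'' requirements (on length measures and on currents) and the spaces involved are not projectively compact in a way that makes a single Furstenberg argument suffice. I would handle unique ergometricity following the approach of Namazi--Pettet--Reynolds: introduce the bi-infinite random walk $(\ldots,\Phi_{-1},\Phi_0,\Phi_1,\ldots)$ and use that $\check\mu$-stationarity of a measure on $\overline{CV_N}$ together with ergodicity of the shift forces the random limit to be extremal among stationary measures, hence, by convexity of the space of length measures on a fixed topological tree and equivariance, reduces it to a single projective class. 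Dual unique ergodicity is handled symmetrically on $\text{Curr}_N$, using the continuous intersection pairing and its vanishing locus.

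Once a.s.\ convergence to a point $\text{bnd}(\mathbf{\Phi})\in\mathcal{UE}$ is established, the remaining assertions are standard. Nonatomicity of the hitting measure $\nu^\ast$ follows because an atom would yield an $\text{Out}(F_N)$-invariant finite orbit of arational trees under the support semigroup of $\mu$, contradicting nonelementarity (the $\text{Out}(F_N)$-stabilizer of an arational tree is virtually cyclic and determines an $\text{Out}(F_N)$-invariant conjugacy class of proper free factor, by work of Bestvina--Feighn--Handel and Reynolds). Uniqueness of the $\check\mu$-stationary probability measure on $\partial CV_N$ follows by a classical argument: any such measure $\theta$ can be realized as the weak limit of $\check\mu^{\ast n}\ast\delta_o$, and the a.s.\ convergence $\Phi_n^{-1}.o\to\text{bnd}(\mathbf{\Phi})$ forces $\theta=\nu^\ast$.
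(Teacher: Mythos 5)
This statement is not proved in the paper at all: it is quoted as a citation to Namazi--Pettet--Reynolds \cite[Theorem 7.21]{NPR14}, and the paper uses it as a black box. So there is no ``paper's own proof'' to compare against; what can be assessed is whether your reconstruction is a plausible account of how such a result is established.

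Your outline does track the broad strategy one would and does use (Maher--Tiozzo on $FF_N$, lift to $\overline{CV_N}$ by compactness, pin down the limit via stationarity, then get nonatomicity and uniqueness by the usual arguments), but two steps are glossed in a way that hides the real work. First, the claim that every accumulation point $T$ of $(\Phi_n^{-1}.o)$ lies in the topological class of $\xi(\mathbf{\Phi})$ does not follow from ``continuity of $\psi$ on $\mathcal{UE}$'' --- you cannot invoke $\psi$ before you know $T$ is arational, and a priori it need not be. What is actually needed here is the Bestvina--Reynolds/Hamenst\"adt structural theorem that a sequence in $CV_N$ whose $\pi$-image escapes to a point of $\partial_\infty FF_N$ can only accumulate in $\overline{CV_N}$ on arational trees in the corresponding topological class; this is a genuine theorem, not a formal continuity statement. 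Second, and more seriously, the upgrade from ``single topological class'' to a single point of $\overline{CV_N}$ lying in $\mathcal{UE}$ is where essentially all of the content of the cited theorem lives. Unique ergometricity of the random limit and dual unique ergodicity are two separate nontrivial statements; a Furstenberg ``two independent walks'' argument together with convexity of the simplex of length measures is the right spirit but does not by itself close the gap, because the simplex of length measures on a fixed arational topological tree is in general a positive-dimensional compact convex set with no canonical extremal point, and one further needs the North--South-type dynamics and the measurable selection/martingale arguments that Namazi--Pettet--Reynolds build to show the stationary measure charges only the uniquely ergometric locus. The ``symmetric'' treatment of dual unique ergodicity on $\mathrm{Curr}_N$ likewise requires its own argument via the reversed walk and the intersection pairing $\langle\cdot,\cdot\rangle$, not a formal duality. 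The nonatomicity and uniqueness parts of your sketch are fine and standard once a.s.\ convergence to $\mathcal{UE}$ is in hand.
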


It follows from the description of $\partial_h^+CV_N$ as a quotient of Culler--Morgan's boundary that $\nu^{\ast}$ can also be viewed as the unique $\check{\mu}$-stationary probability measure on $\partial_h^+CV_N$.

\subsection{Progress and contraction for folding paths}

In this section, we will establish a contraction property for folding lines in outer space (Proposition \ref{contraction} below, which is a variation on \cite[Proposition 4.17]{DH15}). This will play the same role in our proof of the central limit theorem as Proposition \ref{contraction-mod} in the mapping class group case, though we only get a \emph{one-sided} version in the $\text{Out}(F_N)$ context.

Let $\kappa>0$ be a sufficiently large constant, such that all quasi-geodesic triangles in $FF_N$ whose sides are $\pi$-images of folding lines in outer space, are $\kappa$-thin (in particular $\kappa$ is assumed to satisfy the conclusion of Proposition \ref{crossing}, if $K$ is a constant such that $\pi$-images of folding lines are $(K,K)$-unparameterized quasigeodesics). Given $D>0$, a geodesic line $\gamma:\mathbb{R}\to CV_N$ and a subsegment $I=[a,b]\subseteq\mathbb{R}$, we say that $\gamma$ is \emph{$D$-bicontracting along $I$} if for all geodesic segments $\gamma':[a',b']\to CV_N$, if $\pi\circ\gamma'$ contains a subsegment which fellow travels $\pi\circ\gamma_{|I}$ up to distance $\kappa$, then there exists $t\in [a',b']$ such that $d_{CV_N}^{sym}(\gamma(a),\gamma'(t))\le D$. We say that $\gamma$ is \emph{$D$-right-contracting along $I$} if the above holds for all geodesic segments $\gamma'$ which are further assumed to be such that $\gamma'(b')$ belongs to the image of $\gamma$ and lies to the right of $\gamma_{|I}$. 

Given $B,C>0$, we say that a geodesic segment $\gamma:I=[a,b]\to CV_N$ is \emph{$(B,C)$-progressing} if $d_{CV_N}(\gamma(a),\gamma(b))\le B$ and $\pi\circ\gamma(I)$ has $d_{FF_N}$-diameter at least $C$. The following lemma is based on an observation due to Dowdall--Taylor \cite[Lemma 4.3]{DT14}. We include a proof for completeness.

\begin{lemma}(Dowdall--Taylor \cite[Lemma 4.3]{DT14})\label{thick}
There exists $C_0>0$ such that for all $B>0$, there exists $\epsilon>0$ such that for all $C>C_0$, if $\gamma:[a,b]\to CV_N$ is a $(B,C)$-progressing geodesic segment, then $\gamma(a)\in CV_N^{\epsilon}$.
\end{lemma}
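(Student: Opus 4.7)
The natural approach is to argue the contrapositive: I would show that if $\gamma(a)$ has very short systole, then the segment $\gamma$ cannot make more than a bounded amount of progress in $FF_N$, regardless of its length $B$ in $CV_N$. The key principle is that a short loop at one endpoint must remain short along the whole segment (by the Lipschitz bound on $d_{CV_N}$), and short loops force the projection to $FF_N$ to stay in a bounded region.

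First, I would use the White formula $d_{CV_N}(T,T')=\log\sup_{g}||g||_{T'}/||g||_T$ to get the \emph{length persistence} step: if $\gamma(a)\notin CV_N^{\epsilon}$, pick a nontrivial $g\in F_N$ with $\|g\|_{\gamma(a)}\le\epsilon$; then for every $t\in[a,b]$,
\[
\|g\|_{\gamma(t)}\le\|g\|_{\gamma(a)}\,e^{d_{CV_N}(\gamma(a),\gamma(t))}\le\epsilon\, e^{B}.
\]
So $g$ is uniformly short along the whole segment, provided $\epsilon$ is chosen small enough compared to $B$.

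Next, I would invoke the standard fact (this is where the main technical content sits, and where I would just cite the relevant statements in Bestvina--Feighn \cite{BF12} and Dowdall--Taylor \cite{DT14}) that there exists a universal threshold $\epsilon_{0}>0$ and a constant $D_{0}>0$ such that any two trees $T,T'\in CV_{N}$ sharing a common nontrivial element $g$ with $\|g\|_{T},\|g\|_{T'}\le\epsilon_{0}$ satisfy $d_{FF_N}(\pi(T),\pi(T'))\le D_0$. Intuitively, one can collapse the short edge(s) carrying $g$ to obtain a degenerate tree whose free-factor support is common to $T$ and $T'$, so $\pi(T)$ and $\pi(T')$ both lie within bounded $FF_N$-distance of the corresponding free factor vertex.

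Combining the two ingredients, choose $\epsilon:=\epsilon_0 e^{-B}$ and set $C_0:=D_0$. If $\gamma:[a,b]\to CV_N$ is $(B,C)$-progressing with $C>C_0$ and $\gamma(a)\notin CV_N^{\epsilon}$, then the short element $g$ satisfies $\|g\|_{\gamma(t)}\le\epsilon_0$ for all $t$, so $\operatorname{diam}_{FF_N}(\pi\circ\gamma)\le D_0=C_0<C$, contradicting the definition of $(B,C)$-progressing. Hence $\gamma(a)\in CV_N^{\epsilon}$, as desired. The only step requiring real care is the bounded-projection statement for trees with a common short element; otherwise the proof is a direct consequence of Lipschitz persistence together with this thin-part projection bound from \cite{BF12,DT14}.
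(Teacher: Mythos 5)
Your proof is correct and takes essentially the same approach as the paper: both propagate shortness of a loop along the $B$-bounded segment via White's Lipschitz formula (choosing $\epsilon$ of order $e^{-B}$), and both cite Dowdall--Taylor \cite[Lemma 4.2]{DT14} for the step linking persistence of a short loop to bounded displacement in $FF_N$. The only difference is organizational — the paper phrases the cited fact as ``moving $\ge 11$ in $FF_N$ forces the loop length to reach $1$, hence $d_{CV_N}>B$,'' whereas you state its contrapositive, that trees sharing a common short element have $\pi$-images within bounded $FF_N$-distance.
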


\begin{proof}
Let $\epsilon<\exp(-B)$, and assume by contradiction that $\gamma(a)\notin CV_N^{\epsilon}$: there exists $g\in F_N$ represented by a loop of length smaller than $\epsilon$ in the volume one representative of $\gamma(a)/F_N$. Let $t\in [a,b]$ be such that $d_{FF_N}(\pi\circ\gamma(a),\pi\circ\gamma(t))\ge 11$ (this exists as soon as $C_0$ is sufficiently large). Then $||g||_{\gamma(t)}\ge 1$ (see the argument in \cite[Lemma 4.2]{DT14} for the precise constant $11$). Hence $$B\ge d_{CV_N}(\gamma(a),\gamma(t))\ge\log\frac{1}{\epsilon}$$ (the first inequality follows from the fact that $\gamma$ is $(B,C)$-progressing, and the second follows from White's formula for the distance on $CV_N$). This is a contradiction.
\end{proof}

\begin{prop}\label{contraction}
There exists $C>0$ such that for all $B>0$, there exists $D>0$ such that the following holds.\\
Let $\gamma:\mathbb{R}_+\to CV_N$ be a geodesic ray. Let $I\subseteq\mathbb{R}$ be an interval such that $\gamma_{|I}$ is a $(B,C)$-progressing optimal greedy folding path. Then $\gamma$ is $D$-right-contracting along $I$.
\end{prop}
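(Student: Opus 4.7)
The plan is to combine the Bestvina--Feighn projection onto the folding subpath $\gamma_{|I}$ with the thick-base point provided by Lemma \ref{thick}, following the general strategy of \cite[Proposition 4.17]{DH15}. First, choose $C>C_0$ large enough that Lemma \ref{thick} yields, for the given $B$, an $\epsilon>0$ with $\gamma(a)\in CV_N^{\epsilon}$. This uniform injectivity-radius bound at $\gamma(a)$ is what will eventually let us upgrade $FF_N$-closeness into $CV_N$-closeness at the start of $\gamma_{|I}$.

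Let $\gamma':[a',b']\to CV_N$ be a geodesic segment whose $\pi$-image contains a subsegment $\pi\circ\gamma'_{|[c',d']}$ fellow-travelling $\pi\circ\gamma_{|I}$ to distance $\kappa$, with $\gamma'(b')=\gamma(s)$ for some $s\ge b$. The candidate for the point on $\gamma'$ close to $\gamma(a)$ is $\gamma'(c')$, since by fellow-travel $d_{FF_N}(\pi(\gamma'(c')),\pi(\gamma(a)))\le\kappa$. To bound the forward distance $d_{CV_N}(\gamma'(c'),\gamma(a))$, I would apply the Bestvina--Feighn projection $\operatorname{Pr}_{\gamma_{|I}}$ to $\gamma'(c')$. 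By Lemma \ref{DT}, $\pi(\operatorname{Pr}_{\gamma_{|I}}(\gamma'(c')))$ is within $D_2$ of the closest-point projection of $\pi(\gamma'(c'))$ onto $\pi\circ\gamma_{|I}$; by the fellow-travel property together with $\pi\circ\gamma_{|I}$ being a uniform unparameterized quasigeodesic in $FF_N$ of total $d_{CV_N}$-length at most $B$, this projection sits within bounded distance of $\gamma(a)$ along $\gamma_{|I}$, hence within a bounded $d_{CV_N}$-distance. Together with Lemma \ref{bf} applied to candidates of $\gamma'(c')$ (recalling that $\operatorname{Pr}_{\gamma_{|I}}(\gamma'(c'))=\gamma_{|I}(\sup_g \operatorname{right}_{\gamma_{|I}}(g))$ and that past $\operatorname{right}_{\gamma_{|I}}(g)$ the length $\|g\|$ evolves additively with the folding parameter), one obtains a uniform bound on $d_{CV_N}(\gamma'(c'),\operatorname{Pr}_{\gamma_{|I}}(\gamma'(c')))$, and hence on the forward distance $d_{CV_N}(\gamma'(c'),\gamma(a))$.

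The backward direction $d_{CV_N}(\gamma(a),\gamma'(c'))$ is where the endpoint condition $\gamma'(b')=\gamma(s)$ with $s\ge b$ is essential, and is why only a \emph{one-sided} statement is available. Using that $\gamma$ is geodesic one has $d_{CV_N}(\gamma(a),\gamma(s))=s-a$, whereas $d_{CV_N}(\gamma'(c'),\gamma(s))$ is the length of $\gamma'$ from $c'$ to $b'$. Plugging the forward control $d_{CV_N}(\gamma'(c'),\gamma(a))\le D'$ into the asymmetric triangle inequality $d_{CV_N}(\gamma(a),\gamma(s))\le d_{CV_N}(\gamma(a),\gamma'(c'))+d_{CV_N}(\gamma'(c'),\gamma(s))$ together with $d_{CV_N}(\gamma'(c'),\gamma(s))\le d_{CV_N}(\gamma'(c'),\gamma(a))+d_{CV_N}(\gamma(a),\gamma(s))$ (and remembering that $\gamma_{|I}$ is a folding path of length $\le B$ so that $d_{CV_N}(\gamma(b),\gamma(s))=s-b$), one gets an upper bound on $d_{CV_N}(\gamma(a),\gamma'(c'))$ in terms of the forward bound, the length of $\gamma_{|I}$, and the constants from Lemma \ref{bf} at the candidates of $\gamma(a)$ (which remain legal along $\gamma_{|I}$ by thickness, using the chosen $\epsilon$). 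Setting $D$ to be the maximum of the two bounds produced completes the proof.

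The main obstacle is the backward step: unlike the Teichmüller setting of Proposition \ref{contraction-mod}, here one cannot symmetrise freely and must exploit the geometric input that $\gamma'$ rejoins $\gamma$ \emph{after} the progressing subsegment, together with the additive control of stretch factors along the folding path (Lemma \ref{bf}) made available by working in the thick part at $\gamma(a)$. This is precisely the reason the resulting contraction property is one-sided.
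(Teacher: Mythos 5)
Your proposal has a genuine gap at the central step. You propose to take $\gamma'(c')$ (the start of the fellow-travelling window on $\gamma'$) as the point that is close to $\gamma(a)$, and to bound $d_{CV_N}(\gamma'(c'),\operatorname{Pr}_{\gamma_{|I}}(\gamma'(c')))$ directly from Lemma~\ref{bf} together with thickness at $\gamma(a)$. But there is no such bound to be had: the distance from an arbitrary point $U\in CV_N$ to the folding path $\gamma_{|I}$ can be arbitrarily large even when $\pi(U)$ is within $\kappa$ of $\pi\circ\gamma_{|I}$ in $FF_N$, since fibers of $\pi$ have infinite $d_{CV_N}$-diameter. Lemma~\ref{bf} only controls how lengths of curves vary \emph{along} the folding path; it says nothing about the distance from an off-path point to the path. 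This is precisely the difficulty that makes the proposition nontrivial, and it cannot be waved away by the ``additive control of stretch factors'' or by thickness at $\gamma(a)$.

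The paper's argument avoids this entirely by working with a different candidate point: setting $U=\gamma'(c')$ and $S_0$ the $d_{CV_N}$-closest point of $\gamma(I)$ to $U$, it takes $U'$ to be the point of $\gamma'$ lying exactly $d_{CV_N}(U,S_0)$ further along from $U$. Lemma~\ref{BF-contr} then forces $\operatorname{Pr}_{I}(U')$ to stay $FF_N$-close to $\operatorname{Pr}_{I}(U)$, and the progressing hypothesis localizes both $S_0$ and $\operatorname{Pr}_{I}(U')$ near $\gamma(a)$. The key computation then chooses a single candidate curve $g$ witnessing $d_{CV_N}(U',\operatorname{Pr}_{I}(U'))$, and uses the hypothesis $\gamma'(b')\in\gamma([b,+\infty))$ \emph{in the forward direction}: Lemma~\ref{bf} applied along $\gamma$ from $\operatorname{Pr}_{I}(U')$ to $S':=\gamma'(b')$ gives $d_{CV_N}(U',\operatorname{Pr}_{I}(U'))+d_{CV_N}(\operatorname{Pr}_{I}(U'),S')\le \log(\|g\|_{S'}/\|g\|_{U'})+K_0\le d_{CV_N}(U',S')+K_0$. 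Combined with the geodesic identity $d_{CV_N}(U,S')=d_{CV_N}(U,U')+d_{CV_N}(U',S')$ and $d_{CV_N}(U,U')=d_{CV_N}(U,S_0)$, one deduces $d_{CV_N}(U',S)\lesssim d_{CV_N}(S_0,S)\le MB$, and \emph{only then} is the thickness at $\gamma(a)$ invoked (via the \cite{AKB12} comparison $d^{sym}_{CV_N}(S,\cdot)\le M\,d_{CV_N}(\cdot,S)$) to upgrade this to a symmetric bound. So the endpoint condition powers the forward estimate and thickness the symmetrization; your proposal assigns these two inputs the opposite roles, and both of your resulting steps fail as written.
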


The proof of Proposition \ref{contraction} is a variation on the argument from \cite[Proposition 4.17]{DH15}: in \cite{DH15}, the folding path $\gamma$ was supposed to satisfy a stronger condition, but the conclusion was that $\gamma$ is $D$-bicontracting along $I$. 

\begin{proof}
The proof is illustrated on Figure \ref{fig}. We let $I:=[a,b]$. Let $C\ge 2\kappa+D_1+2D_2+C_0$, where $D_1,D_2,C_0>0$ are the constants from Lemmas \ref{BF-contr}, \ref{DT} and \ref{thick}, respectively. Let $B>0$. Assume that $\gamma_{|I}$ is $(B,C)$-progressing. Let $\gamma':[a',b']\to CV_N$ be a geodesic segment such that $\gamma'(b')\in\gamma([b,+\infty))$, and there exists $I'\subseteq\ [a',b']$ such that $\pi\circ\gamma'_{|I'}$ fellow travels $\pi\circ\gamma_{|I}$ up to distance $\kappa$. We aim at showing that there exists $t\in I'$ such that $d_{CV_N}^{sym}(\gamma(a),\gamma'(t))\le D$, where $D$ is a constant that only depends on $B$. 

Let $S:=\gamma(a)$. As $C\ge C_0$ and $\gamma_{|I}$ is $(B,C)$-progressing, we have $S\in CV_N^{\epsilon}$, where $\epsilon:=\epsilon(B)$ is the constant provided by Lemma \ref{thick}. Hence by \cite{AKB12}, there exists $M>0$ (only depending on $B$) such that 
\begin{equation}\label{akb}
d_{CV_N}^{sym}(S,y)\le M d_{CV_N}(y,S)
\end{equation}
for all $y\in CV_N$.

As $\pi\circ\gamma'_{|I'}$ fellow travels $\pi\circ\gamma_{|I}$ up to distance $\kappa$, there exists $U\in\gamma'(I')$ such that $d_{FF_N}(\pi(U),\pi(S))\le \kappa$. Therefore $d_{FF_N}(\mathbf{n}_{\pi\circ\gamma_{|I}}(\pi(U)),\pi(S))\le 2\kappa$, and by Lemma \ref{DT} we have 
\begin{equation}\label{eq+}
d_{FF_N}(\pi(\text{Pr}_{I}(U)),\pi(S))\le 2\kappa+D_2
\end{equation}
(here we are using $\text{Pr}_I$ as a shortcut for $\text{Pr}_{\gamma_{|I}}$). 

Let $U'\in\gamma'(I)$ be a point lying to the right of $U$ on the image of $\gamma'$, and let $S_0\in\gamma(I)$, be such that $$d_{CV_N}(U,U')=d_{CV_N}(U,\gamma(I))=d_{CV_N}(U,S_0).$$ Lemma \ref{BF-contr} shows that $d_{FF_N}(\pi(\text{Pr}_I(U)),\pi(\text{Pr}_I(S_0)))\le D_1$. By Lemma \ref{DT}, we also have $d_{FF_N}(\pi(S_0),\pi(\text{Pr}_I(S_0)))\le D_2$. Together with Equation~\eqref{eq+}, the triangle inequality then yields $d_{FF_N}(\pi(S_0),\pi(S))\le 2\kappa+D_1+2D_2$. Recall that $S,S_0\in\gamma(I)$, and $S_0$ lies to the right of $S$. As $C\ge 2\kappa+D_1+2D_2$ and $\gamma_{|I}$ is $(B,C)$-progressing, this implies that $d_{CV_N}(S,S_0)\le B$, and in view of Equation \eqref{akb} we obtain  
\begin{equation}\label{eqe0}
d_{CV_N}^{sym}(S,S_0)\le MB.
\end{equation}
Lemma \ref{BF-contr} also shows that $d_{FF_N}(\pi(\text{Pr}_I(U')),\pi(\text{Pr}_I(U)))\le D_1$. Together with Equation~\eqref{eq+}, the triangle inequality then yields $d_{FF_N}(\pi(\text{Pr}_I(U')),\pi(S))\le 2\kappa+D_1+D_2$. This implies as above that $d_{CV_N}(S,\text{Pr}_I(U'))\le B$, and hence
\begin{equation}\label{e1}
d_{CV_N}^{sym}(S,\text{Pr}_I(U'))\le MB
\end{equation} 
in view of Equation \eqref{akb}.
\\
\\
Let now $g\in \mathcal{P}_N$ be such that 
\begin{equation}\label{1}
d_{CV_N}(U',\text{Pr}_{I}(U'))=\log\frac{||g||_{\text{Pr}_{I}(U')}}{||g||_{U'}}.
\end{equation}
Let $S':=\gamma'(b')\in\gamma([b,+\infty))$. Notice that $\text{Pr}_I(U')$ is necessarily to the right of $\text{Pr}_{\gamma}(U')$ on the image of $\gamma$ (they may coincide), and $S'$ lies to the right of $\text{Pr}_I(U')$. Lemma \ref{bf}, applied to the optimal greedy folding path $\gamma$, shows that there exists $K_0>0$ (which only depends on $N$) such that 
\begin{equation}\label{2}
d_{CV_N}(\text{Pr}_{I}(U'),S')\le\log\frac{||g||_{S'}}{||g||_{\text{Pr}_{I}(U')}}+K_0.
\end{equation}
By adding Equations \eqref{1} and \eqref{2}, we get
\begin{equation}\label{eq3}
d_{CV_N}(U',\text{Pr}_{I}(U'))+d_{CV_N}(\text{Pr}_{I}(U'),S')-K_0\le\log\frac{||g||_{S'}}{||g||_{U'}}\le d_{CV_N}(U',S').
\end{equation}
Equation \eqref{e1} then shows that there exists $K>0$ (which only depends on $B$ and on the rank $N$ of the free group) such that 
\begin{equation}\label{e3}
d_{CV_N}(U',S)+d_{CV_N}(S,S')-K\le d_{CV_N}(U',S').
\end{equation} 
Therefore, the total green length on Figure \ref{fig} is equal, up to a bounded error, to $d_{CV_N}(U,S')$. This implies that $d_{CV_N}(U',S)$ is uniformly bounded. Precisely, we have
\begin{displaymath}
\begin{array}{rl}
d_{CV_N}(U',S)&\le d_{CV_N}(U',S')-d_{CV_N}(S,S')+K\\
&\le d_{CV_N}(U',S')-d_{CV_N}(U,S')+d_{CV_N}(U,S)+K\\
&=-d_{CV_N}(U,U')+d_{CV_N}(U,S)+K\\
&\le -d_{CV_N}(U,U')+d_{CV_N}(U,S_0)+d_{CV_N}(S_0,S)+K\\
&=d_{CV_N}(S_0,S)+K\\
&\le MB+K,
\end{array}
\end{displaymath}
and therefore $$d_{CV_N}^{sym}(U',S)\le M(MB+K)$$ as required, in view of Equation \eqref{akb}. 
\end{proof}

\begin{figure}
\begin{center}
\input{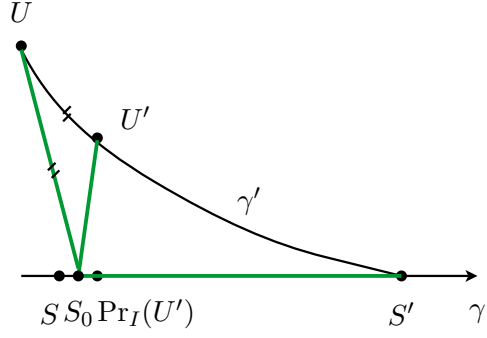}
\caption{The situation in the proof of Proposition \ref{contraction}.}
\label{fig}
\end{center}
\end{figure}

\subsection{A deviation principle in outer space}

Let $\mu$ be a probability measure on $\text{Out}(F_N)$ with finite first moment with respect to $d_{CV_N}$. We recall that the \emph{drift} of the random walk on $(\text{Out}(F_N),\mu)$ with respect to $d_{CV_N}$ is defined with the conventions of the present paper as being equal to the limit $$\lim_{n\to +\infty}\frac{1}{n}d_{CV_N}(\Phi_n.o,o)$$ for $\mathbb{P}$-a.e. sample path $(\Phi_n)_{n\in\mathbb{N}}$ of the left random walk on $(\text{Out}(F_N),\mu)$.

\begin{theo}(Horbez \cite[Corollary 5.4]{Hor14})\label{LLN}
Let $\mu$ be a nonelementary probability measure on $\text{Out}(F_N)$ with finite first moment with respect to $d_{CV_N}$, and let $\lambda$ be the drift of the random walk on $(\text{Out}(F_N),\mu)$ with respect to $d_{CV_N}$.\\ Then for all $x\in\partial_h^-CV_N$ and $\mathbb{P}$-a.e. sample path $(\Phi_n)_{n\in\mathbb{N}}$ of the random walk on $(\text{Out}(F_N),\mu)$, one has $$\lim_{n\to +\infty}\frac{1}{n}\beta^-(\Phi_n,x)=\lambda.$$ 
\end{theo}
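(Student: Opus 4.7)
The plan is to unwind $\beta^-$ using the explicit description of $\partial_h^- CV_N$ in terms of currents from \cite[Proposition 4.5]{Hor14}, and then to run a contraction-and-synchronization argument in the spirit of Section \ref{sec-tcl} that forces $\beta^-(\Phi_n, x)$ to be asymptotic to the random-walk displacement, whose growth rate is $\lambda$.

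Every $x \in \partial_h^- CV_N$ has the form $x = f_S$ for some finite $S \subseteq M_N$, so
$$\beta^-(\Phi_n, x) \;=\; f_S(\Phi_n^{-1}.o) \;=\; \log \sup_{\eta \in S} \langle \Phi_n^{-1}.o, \eta\rangle \;-\; \log\sup_{\eta \in S}\langle o,\eta\rangle,$$
with trees normalized to covolume one. Using $\text{Out}(F_N)$-equivariance of the intersection form, $\langle \Phi_n^{-1}.o, \eta\rangle$ rewrites as $\langle o, \Phi_n.\eta\rangle$ modulo a covolume-normalization factor whose logarithm I would show is $o(n)$ almost surely. On a rational current $\eta_g$ with $g \in \mathcal{P}_N$, this intersection is exactly $||\Phi_n(g)||_o$, so a Karlsson-type subadditive argument on $CV_N$ (the outer-space analogue of Theorem \ref{Karlsson}) gives $\frac{1}{n}\log ||\Phi_n(g)||_o \to \lambda$ almost surely. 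Density of rational primitive currents in $M_N$ together with the semi-continuity of $\langle o,\cdot\rangle$ extend this to every $\eta \in M_N$; taking the supremum over the finite set $S$ commutes with the $\frac{1}{n}\log$ growth rate, yielding the claim.

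A cleaner route that bypasses the delicate density step is to apply Birkhoff's ergodic theorem on the skew product $(\Omega \times \partial_h^- CV_N,\mathbb{P}\otimes\nu)$, where $\nu$ is any $\mu$-stationary probability measure on $\partial_h^- CV_N$ (which exists by compactness). Integrability of $\beta^-$ follows from the universal bound $|\beta^-(\Phi,x)| \le d_{CV_N}^{sym}(\Phi^{-1}.o,o)$ from Equation \eqref{bus} together with the finite first moment, so Birkhoff yields a.s.\ convergence of $\frac{1}{n}\beta^-(\Phi_n,x)$ to a constant for $\nu$-a.e.\ $x$. To identify this constant as $\lambda$ and to upgrade from $\nu$-a.e.\ $x$ to every $x$, I run a synchronization argument mirroring Proposition \ref{product-estimate-2-mod}: a generic sample path satisfies $\Phi_n^{-1}.o \to T_\infty \in \mathcal{UE}$ by Proposition \ref{NPR}; the folding ray from $o$ toward $T_\infty$ contains a $(D,\tau)$-contracting subsegment by Proposition \ref{contraction}, fellow-travelled in $FF_N$ by the geodesic from $x$ toward $T_\infty$; and a computation analogous to the one in the proof of Proposition \ref{product-estimate-2-mod} forces $\beta^-(\Phi_n,x) = d_{CV_N}(o,\Phi_n^{-1}.o) + O(1)$. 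Since $d_{CV_N}(o,\Phi_n^{-1}.o) = d_{CV_N}(\Phi_n.o,o)$ by the isometric $\text{Out}(F_N)$-action, Kingman gives $d_{CV_N}(o,\Phi_n^{-1}.o) \sim \lambda n$, identifying the a.s.\ constant as $\lambda$.

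The main obstacle is the asymmetry of $d_{CV_N}$ and its manifestation in the one-sided contraction of Proposition \ref{contraction}. To push the synchronization argument through, I would have to orient the fellow-travelling carefully so that $D$-right-contraction—rather than full bicontraction, which is not available—suffices, arguing that for $\nu$-a.e.\ $x$ the geodesic from $x$ to $T_\infty$ crosses a contracting subsegment of the folding ray on the correct side. This one-sidedness is exactly the difficulty the author flags at the opening of Section \ref{sec-out-1}, and is what distinguishes the $\text{Out}(F_N)$ case from the mapping class group argument of Section \ref{sec-tcl}.
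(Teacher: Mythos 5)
Both routes you sketch have a genuine gap, and neither matches the paper's actual argument, which is a short citation plus a re\-derivation.

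In Route~1, the density step does not work as written. You have almost-sure convergence $\frac1n\log\langle o,\Phi_n.\eta_g\rangle\to\lambda$ for each rational primitive current $\eta_g$, and you want the same for arbitrary $\eta\in M_N$. But an almost-sure limit for each element of a dense family does not pass to the closure: for a fixed sample path and a fixed $n$, the automorphism $\Phi_n$ can amplify the discrepancy between $\eta$ and a rational approximant $\eta_{g_k}$, and nothing you have invoked controls this amplification uniformly in $n$. Continuity of $\langle o,\cdot\rangle$ gives you the upper bound $\frac1n\log\langle o,\Phi_n.\eta\rangle\le\frac1n d_{CV_N}(o,\Phi_n^{-1}.o)+o(1)$ cheaply; the lower bound is the whole content of \cite[Corollary 5.4]{Hor14} and rests on a contraction/progress estimate along folding rays. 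The paper does not attempt a density shortcut: it observes that White's sup formula for $d_{CV_N}$ extends continuously from primitive conjugacy classes to all of $M_N$, so the Karlsson-style argument of \cite{Hor14} carries through verbatim with currents in $M_N$ in place of primitive elements. That is a re\-run of the proof, not an approximation argument. (Also, your worry about a covolume-normalization factor with $o(n)$ logarithm is a red herring: the $\text{Out}(F_N)$-action on $cv_N$ preserves covolume, so the equivariance $\langle\Phi_n^{-1}.o,\eta\rangle=\langle o,\Phi_n.\eta\rangle$ holds with no correction.)

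Route~2 has a more serious scope problem. The theorem is a ``for every $x\in\partial_h^-CV_N$, for a.e.\ sample path'' statement, whereas Birkhoff on $(\Omega\times\partial_h^-CV_N,\mathbb{P}\otimes\nu)$ gives only a ``for $\nu$-a.e.\ $x$'' statement. Your proposed upgrade via synchronization requires a standard geodesic (or folding line) ``from $x$ toward $T_\infty$,'' and this exists only when $x$ is realized by a tree. But $\partial_h^-CV_N$ is parametrized by finite subsets $S\subseteq M_N$ via $x=f_S$, and the only points realized by a tree are those in the image of $\mathcal{UE}$ under the dual-current map; a generic $f_S$ (and in particular any $f_S$ with $|S|>1$, or with $S$ a rational current) has no associated tree and no meaningful projection to $\partial_\infty FF_N$ to fellow-travel along. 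So Birkhoff plus synchronization would at best prove the statement for $x\in\mathcal{UE}$, which is strictly weaker than claimed. The paper deliberately proves Theorem~\ref{LLN} by the explicit current-side formula precisely so that it applies to every $x\in\partial_h^-CV_N$, and reserves the synchronization argument (Proposition~\ref{dual}) for a later step where restricting to $x\in\mathcal{UE}$ is harmless.
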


\begin{proof}
With the notations from Section \ref{sec-out}, we have $x=f_S$ for some finite subset $S\subseteq M_N$, and $$\beta^-(\Phi_n,x)=f_S(\Phi_n^{-1}.o)=\log\frac{\sup_S\langle o,\Phi_n\eta\rangle}{\sup_S\langle o,\eta\rangle}.$$ The conclusion of Theorem \ref{LLN} was established in \cite[Corollary 5.4]{Hor14} when $S$ consists of a single rational current, corresponding to a primitive conjugacy class in $F_N$. However, the proof from \cite{Hor14} extends in the same way to any singleton $S$, by noticing that the supremum in the formula giving the distance between two trees in $CV_N$ can be taken over all currents in $M_N$, if one uses the continuous extension of the length pairing between conjugacy classes and trees to $M_N\times \overline{cv_N}$. The conclusion then also holds for any finite subset $S\subseteq M_N$.
\end{proof}

We derive the following large deviation principle for the Busemann cocycle on the backward horoboundary of outer space, and for the function $\kappa_{CV_N}$ defined by letting $$\kappa_{CV_N}(\Phi):=d_{CV_N}(\Phi.o,o)$$ for all $\Phi\in\text{Out}(F_N)$.

\begin{prop}\label{ccl}
Let $\mu$ be a nonelementary probability measure on $\text{Out}(F_N)$ with finite second moment with respect to $d_{CV_N}$, and let $\lambda$ be the drift of the random walk on $(\text{Out}(F_N),\mu)$ with respect to $d_{CV_N}$. Then for every $\epsilon>0$, there exists a sequence $(C_n)_{n\in\mathbb{N}}\in l^1(\mathbb{N})$ such that for all $n\in\mathbb{N}$ and all $x\in\partial_h^-CV_N$, one has $$\mu^{\ast n}\left(\{\Phi\in\text{Out}(F_N)||\beta^-(\Phi,x)-n\lambda|\ge\epsilon n\}\right)\le C_n.$$ In particular, one has $$\mu^{\ast n}(\{\Phi\in\text{Out}(F_N)||\log||\Phi(g)||-n\lambda|\ge\epsilon n\})\le C_n$$ for all $n\in\mathbb{N}$ and all $g\in\mathcal{P}_N$, and $$\mu^{\ast n}(\{\Phi\in \text{Out}(F_N)||\kappa_{CV_N}(\Phi)-n\lambda|\ge\epsilon n\})\le C_n.$$
\end{prop}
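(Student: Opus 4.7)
The plan is to apply Benoist--Quint's large deviation principle for cocycles (Proposition \ref{bq}) to the Busemann cocycle $\beta^-$ on the backward horofunction compactification $\mathrm{cl}_h^-CV_N$, following the template of the proof of Proposition \ref{ccl-mod}. The space $\mathrm{cl}_h^-CV_N$ is compact by construction and metrizable since $(CV_N, d_{CV_N}^{\mathrm{sym}})$ is separable, so Proposition \ref{bq} does apply. To verify its integrability hypothesis, I use the bound $|\beta^-(\Phi, x)| \leq d_{CV_N}^{\mathrm{sym}}(\Phi.o, o)$ from equation (\ref{bus}), which can be refined using the inequality $\langle T, \eta \rangle \leq e^{d_{CV_N}(T', T)} \langle T', \eta \rangle$ (a direct consequence of White's formula for currents) to a bound controlled by $d_{CV_N}$, so that the second moment assumption on $\mu$ yields $\sigma_{\sup} \in L^2(\mu)$ with $\sigma := \beta^-$.

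For the mean value identity, I appeal to Horbez's law of large numbers, Theorem \ref{LLN}: for every $x \in \partial_h^-CV_N$ and $\mathbb{P}$-a.e.\ sample path, $\tfrac{1}{n}\beta^-(\Phi_n, x) \to \lambda$. By compactness there exists a $\mu$-stationary probability measure $\nu$ on $\mathrm{cl}_h^-CV_N$, which may be taken ergodic after ergodic decomposition. Using the cocycle decomposition
\[
\beta^-(\Phi_n, x) = \sum_{k=1}^{n} \beta^-\!\left(s_k,\, s_{k-1}\cdots s_1.x\right),
\]
Birkhoff's ergodic theorem applied to the skew-product over the path space gives that the normalized cocycle converges $\nu$-a.e.\ to $\int \beta^- d\mu\, d\nu$; comparing with the pointwise limit $\lambda$ from Theorem \ref{LLN} forces the identity $\int \beta^- d\mu\, d\nu = \lambda$ for every $\mu$-stationary measure $\nu$. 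Proposition \ref{bq} then supplies a sequence $(C_n) \in l^1(\mathbb{N})$ with $\mu^{\ast n}(\{|\beta^-(\Phi, x) - n\lambda| \geq \epsilon n\}) \leq C_n$ uniformly in $x \in \mathrm{cl}_h^-CV_N$, giving the first deviation estimate.

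For the length estimate, I specialize $x = f_{\{\eta_g\}} \in \partial_h^-CV_N$ for $g \in \mathcal{P}_N$; using the identification of the backward horoboundary recalled in Section \ref{sec-out}, a direct computation gives
\[
\beta^-(\Phi, f_{\{\eta_g\}}) \;=\; \log\frac{\langle \Phi^{-1}.o, \eta_g\rangle}{\langle o, \eta_g\rangle} \;=\; \log\frac{\|\Phi(g)\|}{\|g\|}.
\]
Feeding this into the first estimate produces the length estimate, with the fixed additive shift $\log\|g\|$ absorbed by replacing $\epsilon$ by a slightly smaller value and inflating $C_n$ by a summable factor. For the distance estimate, White's formula yields $\kappa_{CV_N}(\Phi) = d_{CV_N}(o, \Phi^{-1}.o) = \log\max_{g \in \mathrm{Cand}(o)} \|\Phi(g)\|/\|g\|$, where $\mathrm{Cand}(o)$ is a finite subset of $\mathcal{P}_N$; the event $\{|\kappa_{CV_N}(\Phi) - n\lambda| \geq \epsilon n\}$ is thus contained in the union over $g \in \mathrm{Cand}(o)$ of the corresponding length deviation events, and a union bound concludes. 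The main delicate step is the verification of the mean value identity $\int \beta^- d\mu\, d\nu = \lambda$: one has to reconcile the pointwise almost-sure limit from Theorem \ref{LLN} with the $L^2$-Birkhoff convergence for the unbounded cocycle $\beta^-$, and the asymmetry of $d_{CV_N}$ makes the integrability bookkeeping more delicate than in the symmetric hyperbolic case treated in Proposition \ref{ccc}.
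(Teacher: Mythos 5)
Your argument follows the paper's proof essentially verbatim: apply Benoist--Quint's Proposition \ref{bq} to $\beta^-$ on the backward horofunction compactification, derive the mean value identity from Theorem \ref{LLN} together with Birkhoff's ergodic theorem, get square-integrability of $\beta^-_{\sup}$ from Equation \eqref{bus} (together with the comparison of $d_{CV_N}$ and $d^{sym}_{CV_N}$ based at the thick point $o$, via \cite{AKB12}), then specialize $x=f_{\{\eta_g\}}$ for the length estimate and take a union bound over the finitely many candidates in $\mathrm{Cand}(o)$ for the $\kappa_{CV_N}$ estimate. One caveat: your computation correctly yields $\beta^-(\Phi,f_{\{\eta_g\}})=\log\bigl(\|\Phi(g)\|_o/\|g\|_o\bigr)$, so the deviation bound you actually obtain is for $\log(\|\Phi(g)\|/\|g\|)$; your trick of absorbing the $\log\|g\|$ shift by shrinking $\epsilon$ and inflating $C_n$ only works for each fixed $g$, not with a sequence $(C_n)$ uniform over all $g\in\mathcal P_N$ as the statement requires---compare Proposition \ref{ccl-mod}, where the ratio appears, which suggests this is a phrasing issue in the statement rather than a defect of your argument.
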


\begin{proof}
Theorem \ref{LLN}, together with Birkhoff's ergodic theorem, implies that $$\int_{\text{Out}(F_N)\times\partial_h^-CV_N}\beta^-(\Phi,x)d\mu(\Phi)d\nu(x)=\lambda$$ for every $\mu$-stationary probability measure $\nu$ on $\partial_h^-CV_N$. The first part of Proposition \ref{ccl} then follows from Proposition \ref{bq} (the fact that $\beta^-_{sup}\in L^2(G,\mu)$ follows from Equation~\eqref{bus} in Section \ref{sec-bus}). The second assertion is a specification of the first to the case where $x=f_S$, with $S$ consisting of a single rational current corresponding to the primitive element $g$. The last assertion follows from the second, applied to each of the finitely many candidates in $\text{Cand}(o)$.
\end{proof}

\subsection{Lifting estimates from $FF_N$ to $CV_N$}

\subsubsection{Deviation estimate for the Gromov product: Hypothesis \textbf{(H2)}}

We will now check Hypothesis \textbf{(H2)} from Theorem \ref{tcl-gen} for the Gromov product on $\partial_h^-CV_N\times\partial_h^+CV_N$. The proof follows the same outline as in the mapping class group case (Section \ref{sec-lift}).

\begin{prop}\label{product-estimate}
Let $\mu$ be a nonelementary probability measure on $\text{Out}(F_N)$ with finite second moment with respect to $d_{CV_N}$, and let $\nu^{\ast}$ be the unique $\check{\mu}$-stationary probability measure on $\partial_h^+CV_N$. Then there exist $\alpha>0$ and a sequence $(C_n)_{n\in\mathbb{N}}\in l^1(\mathbb{N})$ such that for all $x\in\mathcal{UE}\subseteq\partial_h^-CV_N$, one has $$\nu^{\ast}(\{y\in\partial_h^+CV_N|(x|y)_o\ge\alpha n\})\le C_n.$$
\end{prop}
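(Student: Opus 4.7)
The proof would follow the same outline as Proposition \ref{product-estimate-mod} in the mapping class group case, with the key difference that Proposition \ref{contraction} only provides one-sided (right) contraction rather than bicontraction. Our strategy is to lift a Gromov product deviation estimate from the free factor graph $FF_N$ to Culler--Vogtmann's outer space.

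First I would set up deviation estimates in $FF_N$. Since the coarsely Lipschitz projection $\pi:CV_N\to FF_N$ ensures that $\mu$ has finite second moment with respect to $d_{FF_N}$, and since the $\text{Out}(F_N)$-action on the hyperbolic space $FF_N$ is nonelementary, Lemma \ref{lemma-shadow} and Propositions \ref{ccc} and \ref{deviation} apply to give summable-tailed control on $\kappa_{FF_N}$, on sublinear tracking of $(\Phi_n^{-1}.o)$ in $FF_N$ by a quasi-geodesic ray $\tau_{\mathbf{\Phi},FF_N}$, and on Gromov products in $\partial_\infty FF_N$.

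Next I would prove analogues of Propositions \ref{estimate} and \ref{contraction-typical}: there exist constants $B,D,\beta>0$ and a summable sequence $(C_n)$ such that, with probability at least $1-C_n$, any standard optimal greedy folding ray $\tau_{\mathbf{\Phi}}$ from $o$ to $\text{bnd}(\mathbf{\Phi})$ contains a $(B,C)$-progressing subsegment $I=[S_a,S_b]$ (where $C$ is the constant from Proposition \ref{contraction}) whose $\pi$-image lies at $d_{FF_N}$-distance at least $\beta n$ from $o$ and fellow-travels a subsegment of $\tau_{\mathbf{\Phi},FF_N}$ up to distance $\kappa$. The argument is the same subdivision trick as in the proof of Proposition \ref{estimate}, where one slices the folding ray $\tau_{\mathbf{\Phi}}$ into pieces of $d_{CV_N}$-length $\le B$, and combines the $CV_N$-drift estimate (Proposition \ref{ccl}) with the $FF_N$-drift estimate from Step 1 to force the existence of a progressing sub-piece whose $\pi$-image lies in a middle portion of $\tau_{\mathbf{\Phi},FF_N}$. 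Proposition \ref{contraction} then makes $\tau_{\mathbf{\Phi}}$ $D$-right-contracting along such an $I$.

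Finally, fix $x\in\mathcal{UE}$; by nonatomicity of $\nu^\ast$ (Proposition \ref{NPR}) we may restrict to sample paths with $\text{bnd}(\mathbf{\Phi})\ne x$. Applying Proposition \ref{crossing} in $FF_N$ to the pair $(\psi(x),\psi(\text{bnd}(\mathbf{\Phi})))$, we may further assume that any $(K,K)$-quasi-geodesic from $\psi(x)$ to $\psi(\text{bnd}(\mathbf{\Phi}))$ fellow-travels $\pi(I)$ up to distance $\kappa$. Pick $S'$ on $\tau_{\mathbf{\Phi}}$ far to the right of $S_b$, and consider a standard folding segment $\gamma'$ from $x$ to $S'$, whose $\pi$-image is such a quasi-geodesic in $FF_N$ (with $\gamma'$ ending on $\tau_{\mathbf{\Phi}}$ to the right of $I$, so the hypotheses of right-contraction apply). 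Proposition \ref{contraction} then produces a point $z_0\in\gamma'$ with $d_{CV_N}^{sym}(z_0,S_a)\le D$. Using the formula $2(x|y)_o=-\inf_z(h_x^-(z)+h_y^+(z))$, together with the fact (recalled in Section \ref{sec-out}) that $\tau_{\mathbf{\Phi}}(t)\to \text{bnd}(\mathbf{\Phi})$ in $\partial_h^+CV_N$ and that folding lines limiting to $x$ converge to $f_{[\eta_x]}$ in $\partial_h^-CV_N$, one argues as in the computation ending the proof of Proposition \ref{product-estimate-mod} to obtain $(x|\text{bnd}(\mathbf{\Phi}))_o\le \alpha n+O(1)$ for some $\alpha$ comparable to the $CV_N$-drift. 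This gives the required summable bound on $\nu^\ast(\{y:(x|y)_o\ge\alpha n\})$.

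The main obstacle is precisely the asymmetry: Proposition \ref{contraction} only gives right-contraction, so one cannot directly compare a folding segment from $x$ to $\text{bnd}(\mathbf{\Phi})$ with $\tau_{\mathbf{\Phi}}$ as in the mapping class group case. We circumvent this by instead using a folding segment $\gamma'$ from $x$ to a point $S'$ on $\tau_{\mathbf{\Phi}}$ lying to the right of the progressing subsegment (relying on the fact that two folding rays limiting to the same uniquely ergometric tree fellow-travel in the forward direction, as follows from \cite{BR13}), so that the right endpoint hypothesis in Proposition \ref{contraction} is satisfied.
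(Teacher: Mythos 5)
Your proposal tracks the paper's argument closely: deviation estimates pulled back from $FF_N$, the subdivision trick to produce a $(B,C)$-progressing subsegment $I$ of $\tau_{\mathbf{\Phi}}$ at $d_{FF_N}$-distance $\gtrsim\beta n$ from $o$, right-contraction applied to geodesic segments terminating on $\tau_{\mathbf{\Phi}}$ to the right of $I$, and the final horofunction computation are all exactly as in the paper (Lemma \ref{lemma-progress}, Proposition \ref{prop-contr}, and the proof of Proposition \ref{product-estimate}). The one imprecision is your phrase ``a standard folding segment $\gamma'$ from $x$ to $S'$'': since $x\in\mathcal{UE}$ is a boundary point, the paper instead takes finite standard geodesic segments $\gamma_k$ from $x_k\to x$ (with $x_k\in CV_N$) to $\tau_{\mathbf{\Phi}}(k)$, applies right-contraction to each, and then uses Bestvina--Reynolds \cite[Theorem 6.6]{BR13} to let these accumulate to a bi-infinite optimal greedy folding line $\gamma$ from $x$ to $\text{bnd}(\mathbf{\Phi})$; the bi-infinite line is then what makes the Gromov-product infimum computable at a point $y$ on $\gamma$ that is $D$-close to $\tau_{\mathbf{\Phi}}(a)$.
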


We assume that the basepoint in $FF_N$ is the $\pi$-image of the basepoint in $CV_N$ (both are denoted by $o$). Recall from Proposition \ref{NPR} that for $\mathbb{P}$-a.e. sample path $\mathbf{\Phi}:=(\Phi_n)_{n\in\mathbb{N}}$ of the random walk on $(\text{Out}(F_N),\mu)$, the sequence $(\Phi_n^{-1}.o)_{n\in\mathbb{N}}$ converges to a point $\text{bnd}(\mathbf{\Phi})\in\mathcal{UE}$. We then let $\tau_{\mathbf{\Phi}}$ be a standard geodesic ray from $o$ to $\text{bnd}(\mathbf{\Phi})$, and for all $n\in\mathbb{N}$, we let $t_n(\mathbf{\Phi})\in\mathbb{R}_+$ be the infimum of all $t\in\mathbb{R}_+$ such that $$d_{CV_N}(\Phi_n^{-1}.o,\tau_{\mathbf{\Phi}}(t))=\inf_{t'\in\mathbb{R}_+}d_{CV_N}(\Phi_n^{-1}.o,\tau_{\mathbf{\Phi}}(t')).$$ Notice that the image of $\pi\circ\tau_{\mathbf{\Phi}}$ in $FF_N$ lies at bounded Hausdorff distance from any $(1,K)$-quasigeodesic ray from $o$ to $\text{bnd}_{FF_N}(\mathbf{\Phi})$. We start by establishing the following lemma.

\begin{lemma}\label{lemma-progress}
Let $\mu$ be a nonelementary probability measure on $\text{Out}(F_N)$ with finite second moment with respect to $d_{CV_N}$. Then there exist $K_1,K_2>0$ and a sequence $(C_n)_{n\in\mathbb{N}}\in l^1(\mathbb{N})$ such that $$\mathbb{P}[d_{CV_N}(o,\tau_{\mathbf{\Phi}}(t_n(\mathbf{\Phi})))\le K_1n \text{~~\emph{and}~~} d_{FF_N}(o,\pi\circ\tau_{\mathbf{\Phi}}(t_n(\mathbf{\Phi})))\ge K_2n]\ge 1-C_n.$$
\end{lemma}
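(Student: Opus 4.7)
Plan: The two inequalities will be established separately. For the upper bound, since $\tau_{\mathbf{\Phi}}$ is parameterized by $d_{CV_N}$-arclength from $o$, one has $d_{CV_N}(o,\tau_{\mathbf{\Phi}}(t_n(\mathbf{\Phi})))=t_n(\mathbf{\Phi})$, and since $\tau_{\mathbf{\Phi}}(0)=o$ lies on the ray, the minimality in the definition of $t_n(\mathbf{\Phi})$ forces $d_{CV_N}(\Phi_n^{-1}.o,\tau_{\mathbf{\Phi}}(t_n(\mathbf{\Phi})))\le d_{CV_N}(\Phi_n^{-1}.o,o)$. The triangle inequality then yields
\[
t_n(\mathbf{\Phi})\le d_{CV_N}(o,\Phi_n^{-1}.o)+d_{CV_N}(\Phi_n^{-1}.o,o)=\kappa_{CV_N}(\Phi_n)+d_{CV_N}(o,\Phi_n.o),
\]
using the isometric action of $\text{Out}(F_N)$. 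Choosing the basepoint $o$ in some thick part $CV_N^{\epsilon_0}$, the Algom-Kfir-Bestvina estimate (exactly as invoked in the proof of Proposition \ref{contraction}) bounds $d_{CV_N}(o,\Phi_n.o)$ by $M\kappa_{CV_N}(\Phi_n)$ for a constant $M$ depending only on $\epsilon_0$. Hence $t_n(\mathbf{\Phi})\le (1+M)\kappa_{CV_N}(\Phi_n)$, and Proposition \ref{ccl} provides a summable sequence bounding $\mathbb{P}[\kappa_{CV_N}(\Phi_n)>n(\lambda+\epsilon)]$; setting $K_1:=(1+M)(\lambda+\epsilon)$ produces the upper bound.

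For the lower bound I would work inside the free factor graph. Since $\pi$ is coarsely Lipschitz from $(CV_N,d_{CV_N})$ to $(FF_N,d_{FF_N})$, the finite second moment assumption on $\mu$ transfers to $d_{FF_N}$, so Propositions \ref{ccc} and \ref{deviation} both apply to the action on $FF_N$. Proposition \ref{ccc} gives $\kappa_{FF_N}(\Phi_n)\ge n(\lambda_{FF_N}-\epsilon)$ with high probability (where $\lambda_{FF_N}>0$ is the drift on $FF_N$), which together with the coarse equivariance of $\pi$ forces $d_{FF_N}(o,\pi(\Phi_n^{-1}.o))\ge n(\lambda_{FF_N}-\epsilon)-O(1)$. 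Proposition \ref{deviation} says that $\pi(\Phi_n^{-1}.o)$ sublinearly tracks a $(1,K)$-quasigeodesic ray $\tau_{\mathbf{\Phi},FF_N}$ from $o$ to $\psi(\text{bnd}(\mathbf{\Phi}))$, which by hyperbolicity of $FF_N$ lies at bounded Hausdorff distance from the unparameterized quasigeodesic $\pi\circ\tau_{\mathbf{\Phi}}$. Hence the closest-point projection $q$ of $\pi(\Phi_n^{-1}.o)$ onto $\pi\circ\tau_{\mathbf{\Phi}}$ satisfies $d_{FF_N}(\pi(\Phi_n^{-1}.o),q)\le \epsilon n+O(1)$ with high probability, which in turn yields $d_{FF_N}(o,q)\ge n(\lambda_{FF_N}-2\epsilon)-O(1)$.

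To complete the argument, I would compare $q$ with $\pi(\tau_{\mathbf{\Phi}}(t_n(\mathbf{\Phi})))$ through the Bestvina-Feighn projection $\mathrm{Pr}_{\tau_{\mathbf{\Phi}}}$. Applying Lemma \ref{BF-contr} to $H:=\Phi_n^{-1}.o$ and $H':=\tau_{\mathbf{\Phi}}(t_n(\mathbf{\Phi}))$, the hypothesis $d_{CV_N}(H,H')\le d_{CV_N}(H,\mathrm{Im}(\tau_{\mathbf{\Phi}}))$ holds with equality by the very definition of $t_n(\mathbf{\Phi})$, so $d_{FF_N}(\pi\circ\mathrm{Pr}_{\tau_{\mathbf{\Phi}}}(H),\pi\circ\mathrm{Pr}_{\tau_{\mathbf{\Phi}}}(H'))\le D_1$. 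Since $\pi(H')$ already lies on $\pi\circ\tau_{\mathbf{\Phi}}$, it is its own closest-point projection, so Lemma \ref{DT} applied to both $H$ and $H'$ gives $d_{FF_N}(q,\pi\circ\mathrm{Pr}_{\tau_{\mathbf{\Phi}}}(H))\le D_2$ and $d_{FF_N}(\pi\circ\mathrm{Pr}_{\tau_{\mathbf{\Phi}}}(H'),\pi(H'))\le D_2$. The triangle inequality then yields $d_{FF_N}(q,\pi(\tau_{\mathbf{\Phi}}(t_n(\mathbf{\Phi}))))\le D_1+2D_2$, whence $d_{FF_N}(o,\pi\circ\tau_{\mathbf{\Phi}}(t_n(\mathbf{\Phi})))\ge n(\lambda_{FF_N}-2\epsilon)-O(1)\ge K_2 n$ for a suitable constant $K_2>0$ (taking $\epsilon$ small) with high probability.

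The main obstacle is the asymmetry of $d_{CV_N}$: the triangle inequality on $t_n(\mathbf{\Phi})$ produces a backward distance $d_{CV_N}(o,\Phi_n.o)$ that Proposition \ref{ccl} does not directly control, forcing the detour through the Algom-Kfir-Bestvina comparison between forward and backward distances from a thick basepoint. The secondary technical step is the identification of the $CV_N$-closest point on the folding ray with its $FF_N$-closest-point projection up to bounded error, which crucially exploits the fact that the definition of $t_n(\mathbf{\Phi})$ realizes equality in the hypothesis of the Bestvina-Feighn contraction lemma.
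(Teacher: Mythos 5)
Your proof is correct and follows essentially the same approach as the paper's: the upper bound combines the closest-point property of $t_n(\mathbf{\Phi})$, the triangle inequality, the Algom-Kfir--Bestvina comparison of forward and backward distances from the thick basepoint, and the deviation estimate from Proposition \ref{ccl}; the lower bound uses Lemmas \ref{BF-contr} and \ref{DT} to identify $\pi(\tau_{\mathbf{\Phi}}(t_n(\mathbf{\Phi})))$ with the $FF_N$-closest-point projection of $\pi(\Phi_n^{-1}.o)$ up to bounded error, and Propositions \ref{ccc} and \ref{deviation} for linear progress in $FF_N$. The step where you feed $H'=\tau_{\mathbf{\Phi}}(t_n(\mathbf{\Phi}))$ itself into Lemma \ref{BF-contr}, noting that the hypothesis is met with equality by the definition of $t_n(\mathbf{\Phi})$, spells out a detail the paper leaves implicit.
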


\begin{proof}
Let $\lambda$ be the drift of the random walk on $(\text{Out}(F_N),\mu)$ with respect to $d_{CV_N}$. In view of Proposition \ref{ccl}, there exists a sequence $(C_n)_{n\in\mathbb{N}}\in l^1(\mathbb{N})$ such that for all $n\in\mathbb{N}$, there exists a measurable subspace $X_n$ of the path space with $\mathbb{P}(X_n)\ge 1-C_n$, and such that $d_{CV_N}(o,\Phi_n^{-1}.o)\le 2\lambda n$ for all $\mathbf{\Phi}:=(\Phi_k)_{k\in\mathbb{N}}\in X_n$. For all $\mathbf{\Phi}\in X_n$, since $o$ and $\Phi_n^{-1}.o$ both belong to the thick part of $CV_N$, we have $$d_{CV_N}(\Phi_n^{-1}.o,o)\le 2M\lambda n$$ for some constant $M>0$ only depending on the rank $N$ of the free group \cite{AKB12}. Since $\tau_{\mathbf{\Phi}}(t_n(\mathbf{\Phi}))$ is a closest-point projection of $\Phi_n^{-1}.o$ to the image of $\tau_{\mathbf{\Phi}}$, we then have $$d_{CV_N}(\Phi_n^{-1}.o,\tau_{\mathbf{\Phi}}(t_n(\mathbf{\Phi})))\le 2M\lambda n,$$ and the triangle inequality implies that $$d_{CV_N}(o,\tau_{\mathbf{\Phi}}(t_n(\mathbf{\Phi})))\le 2(M+1)\lambda n.$$ In view of Lemmas \ref{BF-contr} and \ref{DT}, there also exists $K>0$, only depending on the rank $N$ of the free group, such that $$d_{FF_N}(\pi\circ\tau_{\mathbf{\Phi}}(t_n(\mathbf{\Phi})),\mathbf{n}_{\pi\circ\tau_{\mathbf{\Phi}}}(\pi(\Phi_n^{-1}.o)))\le K.$$ On the other hand, Propositions \ref{ccc} and \ref{deviation} imply that we can find a constant $K_2>0$, a sequence $(C'_n)_{n\in\mathbb{N}}\in l^1(\mathbb{N})$, and for all $n\in\mathbb{N}$, a measurable subset $X'_n$ of the path space with $\mathbb{P}(X'_n)\ge 1-C'_n$, and such that $$d_{FF_N}(o,\mathbf{n}_{\pi\circ\tau_{\mathbf{\Phi}}}(\pi(\Phi_n^{-1}.o)))\ge K_2n$$ for all $\mathbf{\Phi}\in X'_n$. Then all sample paths $\mathbf{\Phi}\in X_n\cap X'_n$ satisfy the required estimates, so the lemma follows.
\end{proof}

\begin{prop}\label{prop-contr}
Let $\mu$ be a nonelementary probability measure on $\text{Out}(F_N)$, with finite second moment with respect to $d_{CV_N}$. Then there exist $K_1,D,\epsilon,\beta>0$ and a sequence $(C_n)_{n\in\mathbb{N}}\in l^1(\mathbb{N})$ such that with probability at least $1-C_n$, there exists a subsegment $I:=[a,b]\subseteq [0,K_1n]$ such that $\tau_{\mathbf{\Phi}}(a)\in CV_N^{\epsilon}$, and $\pi\circ\tau_{\mathbf{\Phi}}(I)$ lies at distance at least $\beta n$ from $o$ in $FF_N$, and $\tau_{\mathbf{\Phi}}$ is $D$-right-contracting along $I$.
\end{prop}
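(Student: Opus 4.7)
The plan is to combine the quantitative progress estimate of Lemma \ref{lemma-progress} with the contraction principle of Proposition \ref{contraction}, via a pigeonhole argument paralleling the strategy used for mapping class groups (Proposition \ref{estimate}). The intuition is that along $\tau_{\mathbf{\Phi}}([0, t_n(\mathbf{\Phi})])$, which has $CV_N$-length at most $K_1 n$, the $\pi$-image travels at least $K_2 n$ in $FF_N$; so cutting this segment into subsegments of bounded $CV_N$-length $B$, many of them must make definite progress in $FF_N$, and are therefore $(B, C)$-progressing for a suitable $C$.

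Concretely, first fix constants: take $C$ larger than both the constant of Proposition \ref{contraction} and the constant $C_0$ of Lemma \ref{thick}; let $K_1, K_2 > 0$ and $(C_n) \in l^1(\mathbb{N})$ be provided by Lemma \ref{lemma-progress}; choose $B > 2K_1 C / K_2$; let $D$ and $\epsilon$ be the constants provided by Proposition \ref{contraction} and Lemma \ref{thick} respectively for these $B$ and $C$; and set $\beta := K_2 / 8$. On the high-probability event supplied by Lemma \ref{lemma-progress}, define
\[
r_n := \sup\{t \le t_n(\mathbf{\Phi}) : d_{FF_N}(o, \pi \circ \tau_{\mathbf{\Phi}}(t)) < 2\beta n\},
\]
so that $d_{FF_N}(o, \pi \circ \tau_{\mathbf{\Phi}}(t)) \ge 2\beta n$ for every $t \in (r_n, t_n(\mathbf{\Phi})]$. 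Subdivide $[r_n, t_n(\mathbf{\Phi})]$ into $M \le K_1 n / B$ consecutive subsegments $[a_i, a_{i+1}]$ of $CV_N$-length at most $B$. The triangle inequality in $FF_N$ yields
\[
\sum_{i=0}^{M-1} d_{FF_N}(\pi \circ \tau_{\mathbf{\Phi}}(a_i), \pi \circ \tau_{\mathbf{\Phi}}(a_{i+1})) \;\ge\; K_2 n - 2\beta n - O(1) \;\ge\; K_2 n / 2
\]
for $n$ sufficiently large. By pigeonhole, some subsegment $I := [a_i, a_{i+1}]$ has endpoints mapping to $FF_N$-points at distance at least $K_2 B / (2 K_1) > C$, so $\tau_{\mathbf{\Phi}}|_I$ is $(B, C)$-progressing.

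It remains to verify the three conclusions of the proposition for this $I$. Thickness $\tau_{\mathbf{\Phi}}(a_i) \in CV_N^{\epsilon}$ follows from Lemma \ref{thick}. The fact that $\pi \circ \tau_{\mathbf{\Phi}}(I)$ stays at $d_{FF_N}$-distance at least $\beta n$ from $o$ follows from coarse Lipschitzness of $\pi$ applied to the segment of $CV_N$-length $\le B$, combined with $a_i \ge r_n$. For $n$ large enough, $r_n$ exceeds the bounded length of the initial simplex portion of the standard geodesic ray $\tau_{\mathbf{\Phi}}$, so $\tau_{\mathbf{\Phi}}|_I$ is an optimal greedy folding path, and Proposition \ref{contraction} then yields $D$-right-contraction of $\tau_{\mathbf{\Phi}}$ along $I$. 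The finitely many small values of $n$ for which any of the above asymptotic inequalities fail can be absorbed into the $l^1$ sequence $(C_n)$ by inflating its initial terms.

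The main technical point is the balancing of parameters: $B$ must be large enough for the pigeonhole contribution to exceed $C$, yet the resulting $(B, C)$-progressing subsegment must still have its $\pi$-image entirely within the ``far'' region past $r_n$. The asymmetry of $d_{CV_N}$ is what forces Proposition \ref{contraction} to be only \emph{right}-contracting rather than bicontracting as in the mapping class group case; however, this one-sided version is sufficient here, because $\tau_{\mathbf{\Phi}}$ itself moves to the right toward the boundary point $\mathrm{bnd}(\mathbf{\Phi})$.
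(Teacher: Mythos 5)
Your proposal is correct and follows essentially the same route as the paper: use Lemma \ref{lemma-progress} to get simultaneous $CV_N$-progress and $FF_N$-progress estimates along $\tau_{\mathbf{\Phi}}$, then a pigeonhole on bounded-$d_{CV_N}$-length subsegments to locate a $(B,C)$-progressing piece, and finally apply Lemma \ref{thick} and Proposition \ref{contraction} to it. The only differences are cosmetic (you define $r_n$ as the last time the $FF_N$-distance from $o$ is below $2\beta n$ and argue the pigeonhole directly, whereas the paper defines $t_n^0$ as the first time the $FF_N$-diameter reaches $K_2n/4$ and argues by contradiction, and the numerical thresholds $\beta$, $B$ differ slightly); you also make explicit the remark that for large $n$ the interval $I$ lies past the initial simplex portion of $\tau_{\mathbf{\Phi}}$, which is a point the paper leaves implicit.
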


\begin{proof}
The proof is similar to the proof of Proposition \ref{estimate}. Let $C$ be a constant that satisfies the conclusions of Lemma \ref{thick} and Proposition \ref{contraction}. Let $K_1,K_2>0$ be the constants provided by Lemma \ref{lemma-progress}, and let $B$ be a positive real number greater than $4CK_1/K_2$. By Lemma \ref{lemma-progress}, there exists a sequence $(C_n)_{n\in\mathbb{N}}\in l^1(\mathbb{N})$ such that for all $n\in\mathbb{N}$, with probability at least $1-C_n$, one has 
\begin{equation}\label{eqe1}
d_{CV_N}(o,\tau_{\mathbf{\Phi}}(t_n(\mathbf{\Phi})))\le K_1n
\end{equation}
and 
\begin{equation}\label{eqe2}
d_{FF_N}(o,\pi\circ\tau_{\mathbf{\Phi}}(t_n(\mathbf{\Phi})))\ge K_2n.
\end{equation}
We claim that when \eqref{eqe1} and \eqref{eqe2} hold, assuming in addition that $n\ge\frac{4C}{K_2}$, if we denote by $t_n^0(\mathbf{\Phi})>0$ the smallest real number such that $\pi\circ\tau_{\mathbf{\Phi}}([0,t_n^0(\mathbf{\Phi})])$ has $d_{FF_N}$-diameter at least $\frac{K_2n}{4}$, then the segment $\tau_{\mathbf{\Phi}}([t_n^0(\mathbf{\Phi}),t_n(\mathbf{\Phi})])$ contains a $(B,C)$-progressing subsegment. Proposition \ref{prop-contr} will follow from this claim (with $\beta:=\frac{K_2}{4}$), together with Lemma \ref{thick}, which states that progressing subsegments are thick, and Proposition \ref{contraction}, which states that progressing subsegments are right-contracting.

To prove the claim, we subdivide $\tau_{\mathbf{\Phi}}([t_n^0(\mathbf{\Phi}),t_n(\mathbf{\Phi})])$ into $\lceil \frac{K_1n}{B}\rceil$ subsegments of $d_{CV_N}$-length at most $B$. If none of these segments had a $\pi$-image of diameter at least $C$, then the image $\pi\circ\tau_{\mathbf{\Phi}}([t_n^0(\mathbf{\Phi}),t_n(\mathbf{\Phi})])$ would have $d_{FF_N}$-diameter at most $\left(\frac{K_1n}{B}+1\right)C$, which is smaller than $\frac{K_2n}{2}$ since $n\ge\frac{4C}{K_2}$. The image $\pi\circ\tau_{\mathbf{\Phi}}([0,t_n(\mathbf{\Phi})])$ would then have diameter at most $\frac{3K_2n}{4}$, which is a contradiction. The claim follows.
\end{proof}

\begin{proof}[Proof of Proposition \ref{product-estimate}]
The proof is similar to the proof of Proposition \ref{product-estimate-mod}. Let $K_1,D,\epsilon,\beta>0$ and $(C_n)_{n\in\mathbb{N}}\in l^1(\mathbb{N})$ be as in Proposition \ref{prop-contr}. Let $x\in\mathcal{UE}$. We will show that there exists $\alpha'>0$ such that $$\mathbb{P}\left[(x|\text{bnd}(\mathbf{\Phi}))_{o}\le \alpha' n\right]\ge 1-C_n$$ for all $n\in\mathbb{N}$. 

Let $n\in\mathbb{N}$. By Proposition \ref{prop-contr}, and since the hitting measure $\nu^{\ast}$ on $\partial_h^+CV_N$ is nonatomic, there exists a measurable subset $X_n$ of the path space, of measure at least $1-C_n$, such that for all $\mathbf{\Phi}\in X_n$, we have $\text{bnd}(\mathbf{\Phi})\in\mathcal{UE}\smallsetminus\{x\}$, and there exists a subsegment $I:=[a,b]\subseteq [0,K_1n]$, such that $\tau_{\mathbf{\Phi}}(a)\in CV_N^{\epsilon}$, and $\pi\circ\tau_{\mathbf{\Phi}}(I)$ lies at $d_{FF_N}$-distance at least $\beta n$ from $o$, and $\tau_{\mathbf{\Phi}}$ is $D$-right-contracting along $I$. In view of Proposition \ref{crossing} (and the existence of the map $\psi:\mathcal{UE}\to\partial_{\infty}FF_N$), we can also assume that for all $\mathbf{\Phi}\in X$, and all sequences $(x_k)_{k\in\mathbb{N}}\in CV_N^{\mathbb{N}}$ converging to $x$, any geodesic segment from $x_k$ to $\tau_{\mathbf{\Phi}}(k)$ with $k\in\mathbb{N}$ sufficiently large contains a subsegment whose $\pi$-image fellow travels $\pi\circ\tau_{\mathbf{\Phi}}(I)$ up to distance $\kappa$. 

From now on, we let $\mathbf{\Phi}\in X_n$. Let $(x_k)_{k\in\mathbb{N}}\in CV_N^{\mathbb{N}}$ be a sequence that converges to $x$. For all sufficiently large $k\in\mathbb{N}$, there exists a point $y_k\in CV_N$ on a standard geodesic segment $\gamma_k$ from $x_k$ to $\tau_{\mathbf{\Phi}}(k)$ satisfying $d_{CV_N}^{sym}(y_k,\tau_{\mathbf{\Phi}}(a))\le D$. The segments $\gamma_k$ then accumulate \cite[Theorem 6.6]{BR13} to an optimal greedy folding line $\gamma:\mathbb{R}\to CV_N$ from $x$ to $\text{bnd}(\mathbf{\Phi})$. There exists a point $y$ lying on the image of $\gamma$ (obtained as an accumulation point of the points $y_k$) such that $d_{CV_N}^{sym}(y,\tau_{\mathbf{\Phi}}(a))\le D$.  We have $$(x|\text{bnd}(\mathbf{\Phi}))_o=-\frac{1}{2}\inf_{z\in CV_N}\lim_{n\to +\infty}(h_{\gamma(-n)}(z)+h_{\gamma(n)}(z)),$$ with the notations from Section \ref{sec-horo}. It then follows from the triangle inequality that the infimum in the above formula is achieved at any point $z$ lying on the image of $\gamma$. In particular, one has $$(x|\text{bnd}(\mathbf{\Phi}))_o=-\frac{1}{2}(h_x(y)+h_{\text{bnd}(\mathbf{\Phi})}(y)).$$

Since $\tau_{\mathbf{\Phi}}(k)$ also converges to $\text{bnd}(\mathbf{\Phi})$ as $k$ goes to $+\infty$, we have (all limits are taken as $k$ goes to $+\infty$, and we use $d$ to denote $d_{CV_N}$, and $d^{sym}$ to denote $d_{CV_N}^{sym}$, for ease of notation):
\begin{displaymath}
\begin{array}{rl}
-2(x|\text{bnd}(\mathbf{\Phi}))_{o}&=\lim [d(\gamma(-k),y)+d(y,\tau_{\mathbf{\Phi}}(k))-d(\gamma(-k),o)-d(o,\tau_{\mathbf{\Phi}}(k))]\\
&\ge \lim [d(\gamma(-k),y)+(d(\tau_{\mathbf{\Phi}}(a),\tau_{\mathbf{\Phi}}(k))-D)-d(\gamma(-k),o)-d(o,\tau_{\mathbf{\Phi}}(k))]\\
&=\lim [d(\gamma(-k),y)-d(\gamma(-k),o)]-d(o,\tau_{\mathbf{\Phi}}(a))-D\\
&\ge -d(y,o)-d(o,\tau_{\mathbf{\Phi}}(a))-D\\
&\ge -2d^{sym}(o,\tau_{\mathbf{\Phi}}(a))-2D.
\end{array}
\end{displaymath}
\noindent Since $\tau_{\mathbf{\Phi}}(a)\in CV_N^{\epsilon}$, there exists \cite{AKB12} a constant $M>0$ such that $d_{CV_N}^{sym}(o,\tau_{\mathbf{\Phi}}(a))\le M d_{CV_N}(o,\tau_{\mathbf{\Phi}}(a))$. This implies that $$(x|\text{bnd}(\mathbf{\Phi}))_{o}\le MK_1 n+D$$ and concludes the proof of Proposition \ref{product-estimate}.
\end{proof}

\subsubsection{Mean value of $\beta^+$: Hypothesis \textbf{(H1)}}

We now establish Hypothesis \textbf{(H1)} from Theorem \ref{tcl-gen} for the cocycle $\beta^+$: this is the content of Corollary \ref{average} below.

\begin{prop}\label{product-estimate-2}
Let $\mu$ be a nonelementary probability measure on $\text{Out}(F_N)$ with finite first moment with respect to $d_{CV_N}$, and let $\nu$ be the $\mu$-stationary probability measure on $\partial_h^+CV_N$. Then there exists a measurable subset $Y\subseteq\partial_h^+CV_N$ with $\nu(Y)=1$, such that for all $y\in Y$ and all $\epsilon>0$, there exists $C>0$ such that $$\mathbb{P}\left[\sup_{n\in\mathbb{N}}|\beta^+(\Phi_n,y)-d_{CV_N}(\Phi_n^{-1}.o,o)|\le C\right]\ge 1-\epsilon.$$ 
\end{prop}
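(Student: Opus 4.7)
The plan is to mirror the proof of Proposition \ref{product-estimate-2-mod} from the mapping class group setting, with the main technical modifications being (i) the use of the one-sided right-contraction of Proposition \ref{contraction} in place of the bicontraction available in the Teichmüller setting, and (ii) the restriction to a $\nu$-generic subset $Y\subseteq\partial_h^+CV_N$. As in the mapping class group case, it is enough to show that for $\nu$-a.e. $y\in\partial_h^+CV_N$ and $\mathbb{P}$-a.e. sample path $\mathbf{\Phi}$, one has $\sup_{n\in\mathbb{N}}|\beta^+(\Phi_n,y)-\kappa_{CV_N}(\Phi_n)|<+\infty$; the stated uniform bound with constant $C$ depending on $y$ and $\epsilon$ then follows by countable additivity.

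First I would identify the generic set $Y$. Applying Proposition \ref{NPR} to the nonelementary measure $\check{\mu}$, the unique $\mu$-stationary probability measure on Culler--Morgan's boundary $\partial CV_N$ is nonatomic and concentrated on $\mathcal{UE}$; its pushforward to $\partial_h^+CV_N\cong\overline{CV_N}^{prim}$ is precisely $\nu$, and since the quotient map is injective on the locus of trees with dense orbits, we can take $Y$ to be the image of $\mathcal{UE}$, so that every $y\in Y$ lifts canonically to an arational tree $T_y\in\mathcal{UE}$.

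Next I would use a Borel--Cantelli argument based on Proposition \ref{prop-contr}, applied to the shifted walks: $\mathbb{P}$-a.s., the standard folding ray $\tau_{\mathbf{\Phi}}$ contains infinitely many $(B,C)$-progressing subsegments $(I_k)_{k\in\mathbb{N}}$ whose $\pi$-images in $FF_N$ escape every bounded ball. By Proposition \ref{contraction}, each $I_k$ is $D$-right-contracting, and by Lemma \ref{thick}, its left endpoint lies in some fixed thick part $CV_N^{\epsilon}$. For $y\in Y$ and $\mathbb{P}$-a.e. $\mathbf{\Phi}$, nonatomicity of $\nu^{\ast}$ yields $T_y\neq\text{bnd}(\mathbf{\Phi})$, so by the Bestvina--Reynolds construction recalled in Section \ref{sec-out} there exists an optimal greedy folding line $\gamma_y:\mathbb{R}\to CV_N$ with $\gamma_y(t)\to T_y$ as $t\to-\infty$ and $\gamma_y(t)\to\text{bnd}(\mathbf{\Phi})$ as $t\to+\infty$. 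Applying Proposition \ref{crossing} in the hyperbolic graph $FF_N$ to the triangle with vertices $o,T_y,\text{bnd}(\mathbf{\Phi})$, the $\pi$-image of $\gamma_y$ must fellow travel the $\pi$-image of some $I_k$ up to distance $\kappa$. The right-contraction of $I_k$, combined with the thickness of its left endpoint and the Algom-Kfir--Bestvina symmetrization \cite{AKB12}, then produces a single point $z$ lying on $I_k$ at bounded $d_{CV_N}^{sym}$-distance from $\gamma_y$. By sublinear tracking, for all sufficiently large $n$ the point $\Phi_n^{-1}.o$ lies at bounded symmetric distance from some point $\gamma_y(t_n)$ with $t_n\to+\infty$, so that both the initial segment of $\tau_{\mathbf{\Phi}}$ up to $\Phi_n^{-1}.o$ and the portion of $\gamma_y$ from $z$ forward to the vicinity of $\Phi_n^{-1}.o$ pass at bounded symmetric distance from the fixed point $z$. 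A triangle-inequality computation modeled on the proof of Proposition \ref{product-estimate-mod}, now applied to the forward horofunction $h_y^+$ using the approximating sequence $\gamma_y(-k)\to y$ in $\partial_h^+CV_N$, then shows that $|\beta^+(\Phi_n,y)-\kappa_{CV_N}(\Phi_n)|=|h_y^+(z)-d_{CV_N}(o,z)|$ up to a bounded additive error independent of $n$. Since $z$ depends only on $\mathbf{\Phi}$ and $y$, the right-hand side is bounded, which gives the desired almost-sure boundedness.

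The main obstacle is the asymmetry of $d_{CV_N}$: Proposition \ref{contraction} only yields right-contraction, so the symmetric proximity between $\gamma_y$ and $\tau_{\mathbf{\Phi}}$ needed for the cancellation above is not automatic. It is precisely to access this symmetrization that one needs the progressing subsegment to sit in the thick part (Lemma \ref{thick}), where forward and backward distances are bi-Lipschitz comparable via \cite{AKB12}. The same asymmetry also forces the restriction to the $\nu$-generic set $Y$ of arational boundary points, since only for such $y$ is the folding-line construction $\gamma_y$ and the hyperbolic crossing machinery in $FF_N$ available.
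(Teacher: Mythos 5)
Your proposal transplants the mapping-class-group argument (Proposition \ref{product-estimate-2-mod}) to the $\text{Out}(F_N)$ setting, placing the contracting subsegments $I_k$ on the ray $\tau_{\mathbf{\Phi}}$ from $o$ toward $\text{bnd}(\mathbf{\Phi})$. This is exactly the point where the asymmetry of $d_{CV_N}$ breaks the argument. In $\mathcal{T}(S)$ the contraction of Proposition \ref{contraction0} is two-sided, so one can locate $I$ on $\tau_{\mathbf{\Phi},\mathcal{T}}$ and conclude that a ray from $\Phi_n^{-1}.o$ toward $x$ passes near the left endpoint of $I$ even though that ray \emph{starts} (rather than ends) far to the right of $I$. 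In outer space, Proposition \ref{contraction} gives only \emph{right}-contraction: the auxiliary geodesic $\gamma'$ is required to \emph{end} on the folding line to the right of $I$. A geodesic from $\Phi_n^{-1}.o$ toward $T_y$ begins near $\text{bnd}(\mathbf{\Phi})$, i.e.\ to the right of $I_k$, and its terminal point $x_m$ is nowhere near $\tau_{\mathbf{\Phi}}$; likewise the folding line $\gamma_y$ from $T_y$ to $\text{bnd}(\mathbf{\Phi})$ does not terminate on $\tau_{\mathbf{\Phi}}$ at all. So the hypothesis of $D$-right-contraction is not satisfied by any of the paths you need it for, and you have no way to produce the point $z$ that both relevant geodesics pass near. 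In effect you would need left- or bi-contraction along $\tau_{\mathbf{\Phi}}$, which Proposition \ref{contraction} does not supply — this is precisely the "one-sided" caveat the paper flags when introducing Section \ref{sec-out-1}.

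There are two further problems. First, your invocation of "sublinear tracking" to conclude that $\Phi_n^{-1}.o$ lies at \emph{bounded} symmetric $d_{CV_N}$-distance from $\gamma_y$ is not supported: Proposition \ref{deviation} gives only sublinear tracking in the hyperbolic graph $FF_N$, not bounded distance to a folding line in $CV_N$, and $\gamma_y$ is a different line from $\tau_{\mathbf{\Phi}}$ anyway. Second, your set $Y$ (the image of $\mathcal{UE}$) is a full-measure set, but it carries none of the structure your argument needs; all the geometric input would still have to come from $\tau_{\mathbf{\Phi}}$, which is where the argument collapses.

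The paper's proof avoids all of this by putting the right-contracting subsegment $I$ on a standard ray $\tau$ from $o$ to $y$ (citing \cite[Proposition 4.25]{DH15} for the $\nu$-full-measure set $Y$ of boundary points $y$ for which such rays have infinitely many right-contracting subsegments). Then the only geodesics one needs to control are the segments from $\Phi_n^{-1}.o$ to $\tau(k)$: these \emph{do} end on $\tau$, to the right of $I$ once $k$ is large, so right-contraction applies directly and produces a point $z\in\tau$ near which they all pass. Since $z$ also lies on $\tau$, the term $d_{CV_N}(o,\tau(k))$ factors exactly through $z$, and one is left with $|d_{CV_N}(\Phi_n^{-1}.o,z)-d_{CV_N}(o,z)-d_{CV_N}(\Phi_n^{-1}.o,o)|$, bounded by $2d_{CV_N}^{sym}(o,z)$ via the (asymmetric) triangle inequality. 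That relocation of $I$ from $\tau_{\mathbf{\Phi}}$ to $\tau$ is the key idea you are missing.
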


\begin{proof}
Let $Y\subseteq\mathcal{UE}$ be a subset of $\partial_h^+CV_N$ of $\nu$-measure $1$ such that for all $y\in Y$, any standard geodesic ray from $o$ to $y$ contains infinitely many $D$-right-contracting subsegments (the existence of $Y$ was established in \cite[Proposition 4.25]{DH15}). It suffices to show that for all $y\in Y$ and $\mathbb{P}$-a.e. sample path $(\Phi_n)_{n\in\mathbb{N}}$ of the random walk on $(\text{Out}(F_N),\mu)$, one has $$\sup_{n\in\mathbb{N}}|\beta^+(\Phi_n,y)-d_{CV_N}(\Phi_n^{-1}.o,o)|<+\infty.$$ 

Let $y\in Y$. For $\mathbb{P}$-a.e. sample path $(\Phi_n)_{n\in\mathbb{N}}$ of the random walk on $(\text{Out}(F_N),\mu)$, there exists a $D$-right-contracting subsegment $I$ on a standard geodesic ray $\tau$ from $o$ to $y$, and $z\in I$, such that for all sufficiently large $n,k\in\mathbb{N}$, any geodesic ray from $\Phi_n^{-1}.o$ to $\tau(k)$ passes at $d_{CV_N}^{sym}$-distance at most $D$ from $z$. For all $n\in\mathbb{N}$, we have (we write $d$ instead of $d_{CV_N}$ for ease of notation): $$|\beta^+(\Phi_n,y)-d(\Phi_n^{-1}.o,o)|=\lim_{k\to +\infty}|d(\Phi_n^{-1}.o,\tau(k))-d(o,\tau(k))-d(\Phi_n^{-1}.o,o)|.$$ This is equal up to a bounded error to $|d_{CV_N}(\Phi_n^{-1}.o,z)-d_{CV_N}(o,z)-d_{CV_N}(\Phi_n^{-1}.o,o)|$, which is bounded above by $2d_{CV_N}^{sym}(o,z)$ by the triangle inequality.
\end{proof}

\begin{cor}\label{average}
Let $\mu$ be a nonelementary probability measure on $\text{Out}(F_N)$, with finite first moment with respect to $d_{CV_N}$. Let $\check{\lambda}$ be the drift of the random walk on $(\text{Out}(F_N),\check{\mu})$ with respect to $d_{CV_N}$, and let $\nu$ be the unique $\mu$-stationary probability measure on $\partial_h^+CV_N$.\\ Then $$\int_{\text{Out}(F_N)}\int_{\partial_h^+CV_N}\beta^+(g,y)d\mu(g)d\nu(y)=\check{\lambda}.$$  
\end{cor}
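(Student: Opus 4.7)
The plan is to reduce the identity to an almost-sure statement $\tfrac{1}{n}\beta^+(\Phi_n,y)\to\check{\lambda}$, and then conclude via Birkhoff's ergodic theorem applied to the skew-product over $\partial_h^+CV_N$.

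For the first step, I would invoke Proposition \ref{product-estimate-2}: for $\nu$-almost every $y\in\partial_h^+CV_N$, and every $\epsilon>0$, the difference $|\beta^+(\Phi_n,y)-d_{CV_N}(\Phi_n^{-1}.o,o)|$ remains bounded along the whole sample path with probability at least $1-\epsilon$. Writing these events for $\epsilon=1/k$ and taking the union shows that for $\nu$-a.e.\ $y$, this difference is bounded $\mathbb{P}$-almost surely. To control the other term, I would note that $d_{CV_N}(\Phi_n^{-1}.o,o)=d_{CV_N}^-(\Phi_n.o,o)$ and that the $\text{Out}(F_N)$-action on $(CV_N,d_{CV_N}^-)$ is still isometric. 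Kingman's subadditive ergodic theorem then provides $\mathbb{P}$-almost sure convergence of $\tfrac{1}{n}d_{CV_N}(\Phi_n^{-1}.o,o)$ to $\inf_n \tfrac{1}{n}\int d_{CV_N}(g^{-1}.o,o)\,d\mu^{*n}(g)$. Since $\Phi_n^{-1}$ and $\check{\Phi}_n$ have the same law at every fixed time $n$, this infimum is exactly $\check{\lambda}$ by definition of the drift. Combining the two observations gives $\lim_{n\to+\infty}\tfrac{1}{n}\beta^+(\Phi_n,y)=\check{\lambda}$ for $\nu$-a.e.\ $y$ and $\mathbb{P}$-a.e.\ sample path.

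To promote this to the integral identity, I would consider the skew-product transformation $T$ on $(\Omega\times\partial_h^+CV_N,\mu^{\otimes\mathbb{N}^*}\otimes\nu)$ defined by $T((s_i)_{i\ge 1},y)=((s_{i+1})_{i\ge 1},s_1y)$; the measure is $T$-invariant because $\nu$ is $\mu$-stationary. Setting $f((s_i)_i,y):=\beta^+(s_1,y)$, the cocycle identity yields the Birkhoff sum $\beta^+(\Phi_n,y)=\sum_{k=0}^{n-1}f\circ T^k$. Integrability of $f$ follows from Equation \eqref{bus} together with the first-moment hypothesis on $\mu$. Birkhoff's theorem then gives almost-sure convergence of the ergodic averages to the conditional expectation of $f$ with respect to the $T$-invariant $\sigma$-algebra, which by the previous step is almost everywhere equal to the constant $\check{\lambda}$. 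Integrating against $\mu^{\otimes\mathbb{N}^*}\otimes\nu$ produces $\int_{\text{Out}(F_N)}\int_{\partial_h^+CV_N}\beta^+(g,y)\,d\mu(g)\,d\nu(y)=\check{\lambda}$.

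The only genuine subtlety, compared with Corollary \ref{h1-mod} in the mapping class group setting, comes from the asymmetry of $d_{CV_N}$: one has to be careful that the almost-sure limit of $\tfrac{1}{n}d_{CV_N}(\Phi_n^{-1}.o,o)$ is the \emph{reflected} drift $\check{\lambda}$, and not the original drift $\lambda$. In the Teichmüller setting the two drifts agree because the metric is symmetric, so there the analogous computation reduces to a direct application of Birkhoff without needing to distinguish between $\mu$ and $\check{\mu}$.
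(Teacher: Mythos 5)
Your proof is correct and follows essentially the same route as the paper's: both use Kingman's subadditive ergodic theorem to identify the a.s.\ limit of $\tfrac{1}{n}d_{CV_N}(\Phi_n^{-1}.o,o)$ with the reflected drift $\check{\lambda}$, then Proposition \ref{product-estimate-2} to transfer this almost-sure convergence to the Busemann cocycle $\beta^+$, and finally Birkhoff's ergodic theorem on the skew-product over $\partial_h^+CV_N$ to obtain the integral identity. You simply unpack the Birkhoff step (and the law of $\Phi_n^{-1}$) more explicitly than the paper does, but the argument is the same.
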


\begin{proof}
We first notice that by Kingman's subadditive ergodic theorem, for $\mathbb{P}$-a.e. sample path $(\Phi_n)_{n\in\mathbb{N}}$ of the random walk on $(\text{Out}(F_N),\mu)$, the limit $$\lim_{n\to +\infty}\frac{1}{n}d_{CV_N}(\Phi_n^{-1}.o,o)$$ exists, and is equal to $$\inf_{n\in\mathbb{N}}\int_{\text{Out}(F_N)}d(\Phi^{-1}.o,o)d\mu^{\ast n}(\Phi),$$ which is nothing but the drift $\check{\lambda}$. Proposition \ref{product-estimate-2} therefore implies that for $\nu$-a.e. $y\in\partial_h^+CV_N$ and $\mathbb{P}$-a.e. sample path $(\Phi_n)_{n\in\mathbb{N}}$ of the random walk on $(\text{Out}(F_N),\mu)$, one has $$\lim_{n\to +\infty}\frac{1}{n}\beta^+(\Phi_n,y)=\check{\lambda}.$$ Corollary \ref{average} then follows from Birkhoff's ergodic theorem.
\end{proof}

\subsection{Relating $\beta^-$ and $\kappa_{CV_N}$ and the length cocycle}

We will relate the Busemann cocycle $\beta^-$, the function $\kappa_{CV_N}$ and the length cocycle, in order to reduce the proof of the central limit theorem (Theorem \ref{tcl-intro}) to proving a central limit theorem for $\beta^-$. The following proposition is a \emph{dual} version of Proposition \ref{product-estimate-2} for the cocycle $\beta^-$. 

\begin{prop}\label{dual}
Let $\mu$ be a nonelementary probability measure on $\text{Out}(F_N)$ with finite first moment with respect to $d_{CV_N}$. Then for all $x\in\mathcal{UE}$ and all $\epsilon>0$, there exists $C>0$ such that $$\mathbb{P}\left[\sup_{n\in\mathbb{N}}|\beta^-(\Phi_n,x)-\kappa(\Phi_n)|\le C\right]\ge 1-\epsilon.$$ 
\end{prop}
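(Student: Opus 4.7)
The plan is to follow the proof of Proposition~\ref{product-estimate-2} with the roles of the forward and backward horoboundaries exchanged. As in that proof, it suffices to show that for $x\in\mathcal{UE}$ and $\mathbb{P}$-a.e. sample path $\mathbf{\Phi}=(\Phi_n)_{n\in\mathbb{N}}$, one has $\sup_{n}|\beta^-(\Phi_n,x)-\kappa_{CV_N}(\Phi_n)|<+\infty$. For $\mathbb{P}$-a.e. sample path, $\Phi_n^{-1}.o\to\text{bnd}(\mathbf{\Phi})\in\mathcal{UE}$ by Proposition~\ref{NPR}, and non-atomicity of the hitting measure forces $\text{bnd}(\mathbf{\Phi})\neq x$; by \cite[Theorem~6.6 and Lemma~6.11]{BR13} one obtains an optimal greedy folding line $\gamma:\mathbb{R}\to CV_N$ with $\gamma(t)\to x$ as $t\to -\infty$ and $\gamma(t)\to\text{bnd}(\mathbf{\Phi})$ as $t\to +\infty$ in the appropriate compactifications.

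The key geometric step is to show that, almost surely, $\gamma$ contains a $D$-right-contracting subsegment through a point $w\in CV_N$ lying within bounded $d_{CV_N}^{sym}$-distance of $o$, with $w$ to the left along $\gamma$ of both $o$ and $\Phi_n^{-1}.o$ for all sufficiently large $n$. Right-contracting subsegments appear along $\gamma$ with positive density by an adaptation of \cite[Proposition~4.25]{DH15}: Proposition~\ref{prop-contr} applied to the forward ray $\tau_{\mathbf{\Phi}}$, combined with the argument from the proof of Proposition~\ref{product-estimate}, shows that $\gamma$ (obtained as an accumulation of standard segments from $x_k\to x$ to $\tau_{\mathbf{\Phi}}(k)$) passes near the left endpoint of a right-contracting segment on $\tau_{\mathbf{\Phi}}$. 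To ensure that some such $w$ lies near $o$, one selects a contracting subsegment of $\gamma$ whose $\pi$-image in $FF_N$ is closest to $\pi(o)$: hyperbolicity of $FF_N$ combined with non-atomicity of the hitting measure on $\partial_\infty FF_N$ bounds this $FF_N$-distance with probability at least $1-\epsilon$, and the thickness estimate of Lemma~\ref{thick} lifts the bound to $d_{CV_N}^{sym}$.

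With such $w$ chosen, set $z_k:=\gamma(-t_k)$ for $t_k\to +\infty$, so that $z_k\to x$ in $\partial_h^-CV_N$ by the description recalled in Section~\ref{sec-out}. The right-contracting property of $\gamma$ along the subsegment containing $w$ forces both the geodesic from $z_k$ to $o$ and the geodesic from $z_k$ to $\Phi_n^{-1}.o$ (for $n,k$ large) to pass within bounded $d_{CV_N}^{sym}$-distance of $w$. From
\[
\beta^-(\Phi_n,x)-\kappa_{CV_N}(\Phi_n)=\lim_{k\to +\infty}\bigl(d_{CV_N}(z_k,\Phi_n^{-1}.o)-d_{CV_N}(z_k,o)\bigr)-d_{CV_N}(o,\Phi_n^{-1}.o),
\]
the near-collinearity approximations $d_{CV_N}(z_k,\Phi_n^{-1}.o)\approx d_{CV_N}(z_k,w)+d_{CV_N}(w,\Phi_n^{-1}.o)$ and $d_{CV_N}(z_k,o)\approx d_{CV_N}(z_k,w)+d_{CV_N}(w,o)$ (both valid up to an additive constant) reduce the expression to $d_{CV_N}(w,\Phi_n^{-1}.o)-d_{CV_N}(w,o)-d_{CV_N}(o,\Phi_n^{-1}.o)$ up to bounded error, which by two applications of the asymmetric triangle inequality is bounded in absolute value by $d_{CV_N}(o,w)+d_{CV_N}(w,o)\le 2d_{CV_N}^{sym}(o,w)$. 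The main obstacle is the geometric step of the second paragraph: controlling the $d_{CV_N}^{sym}$-distance from $o$ to a contracting subsegment of $\gamma$ despite only having one-sided contraction available (Proposition~\ref{contraction}), which forces one to combine hyperbolicity of $FF_N$ with the thickness and contraction results of Section~\ref{sec-out-1} rather than invoke a symmetric contraction statement as in the mapping class group case.
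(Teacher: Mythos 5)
There is a genuine gap. You try to build the argument from the \emph{one-sided} notion of $D$-right-contracting on the folding line $\gamma$ from $x$ to $\text{bnd}(\mathbf{\Phi})$, but the definition of $D$-right-contracting (and Proposition~\ref{contraction}) requires the test geodesic segment $\gamma'$ to \emph{end on the image of $\gamma$}, to the right of the contracting subsegment. In your setup neither $o$ nor $\Phi_n^{-1}.o$ lies on the image of $\gamma$ (they only converge to endpoints of $\gamma$ in the compactification), so neither of the two near-collinearity approximations $d(z_k,\Phi_n^{-1}.o)\approx d(z_k,w)+d(w,\Phi_n^{-1}.o)$ and $d(z_k,o)\approx d(z_k,w)+d(w,o)$ is justified by right-contraction; the second one in particular has no source at all, which is the step you yourself flag as ``the main obstacle''. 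Your attempt to patch this with hyperbolicity of $FF_N$ plus the thickness bound is qualitatively different from what the contraction statements provide (a bound in $FF_N$ does not on its own force a standard geodesic in $CV_N$ to actually pass near a \emph{specific point} $w$; that is exactly what contraction is for, and you cannot invoke it here).

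The paper avoids the problem entirely by working with the standard geodesic ray $\tau_{\mathbf{\Phi}}$ from $o$ to $\text{bnd}(\mathbf{\Phi})$ rather than with $\gamma$, and by appealing to the \emph{stronger, two-sided} notion of $D$-bicontracting along $I$, whose defining condition imposes no constraint on where the endpoints of the test segment lie. The relevant input is that $\tau_{\mathbf{\Phi}}$ a.s. contains infinitely many $D$-bicontracting subsegments (this is \cite[Proposition 4.25]{DH15}, exactly the result used in the proof of Proposition~\ref{product-estimate-2}, which you read too narrowly as supplying only right-contracting segments). From a bicontracting $I\subseteq\tau_{\mathbf{\Phi}}$ one gets a single $z\in\tau_{\mathbf{\Phi}}(\mathbb{R}_+)$ such that, for all large $k,n$, both $[x_k,\Phi_n^{-1}.o]$ and $[o,\Phi_n^{-1}.o]$ pass within bounded $d^{sym}_{CV_N}$-distance of $z$; then $|\beta^-(\Phi_n,x)-\kappa(\Phi_n)|$ reduces, up to bounded error, to $|h_x^-(z)-d_{CV_N}(o,z)|$, which is a single finite number independent of $n$. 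Note also that the quantitative effort you spend trying to place $w$ within bounded $d^{sym}_{CV_N}$-distance of $o$ is unnecessary: once the statement is reduced to $\sup_n|\cdot|<+\infty$ almost surely, the bound is allowed to depend on the sample path, so the mere existence of some bicontracting subsegment somewhere along $\tau_{\mathbf{\Phi}}$ suffices.
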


\begin{proof}
It suffices to prove that for all $x\in\mathcal{UE}$ and $\mathbb{P}$-a.e. sample path $\mathbf{\Phi}:=(\Phi_n)_{n\in\mathbb{N}}$ of the random walk on $(\text{Out}(F_N),\mu)$, one has $$\sup_{n\in\mathbb{N}}|\beta^-(\Phi_n,x)-\kappa(\Phi_n)|<+\infty.$$ For $\mathbb{P}$-a.e. sample path $\mathbf{\Phi}$ of the random walk, the standard geodesic ray $\tau_{\mathbf{\Phi}}$ contains infinitely many $D$-bicontracting subsegments \cite[Proposition 4.25]{DH15}. Let $(x_k)_{k\in\mathbb{N}}\in CV_N^{\mathbb{N}}$ be a sequence that converges to $x$. We have $$|\beta^-(\Phi_n,x)-\kappa(\Phi_n)|=\lim_{k\to +\infty}|d_{CV_N}(x_k,\Phi_n^{-1}.o)-d_{CV_N}(x_k,o)-d_{CV_N}(o,\Phi_n^{-1}.o)|.$$ By the bicontraction property, there exists $z\in\tau_{\mathbf{\Phi}}(\mathbb{R}_+)$ such that for all sufficiently large $k,n\in\mathbb{N}$, any standard geodesic segment from $x_k$ to $\Phi_n^{-1}.o$ passes at bounded $d_{CV_N}^{sym}$-distance from $z$, and similarly any standard geodesic segment from $o$ to $\Phi_n^{-1}.o$ passes at bounded $d_{CV_N}^{sym}$-distance from $z$. Therefore $|\beta^-(\Phi_n,x)-\kappa(\Phi_n)|$ is equal up to a bounded error to $$\lim_{k\to +\infty}|d_{CV_N}(x_k,z)-d_{CV_N}(x_k,o)-d_{CV_N}(o,z)|=|h_x^-(z)-d_{CV_N}(o,z)|,$$ which concludes the proof of Proposition \ref{dual}. 
\end{proof}

The following proposition is a version of Proposition \ref{dual} above in the case where $x$ is a rational current associated to a primitive conjugacy class, instead of $x\in\mathcal{UE}$.

\begin{prop}\label{norm-busemann}
Let $\mu$ be a nonelementary probability measure on $\text{Out}(F_N)$, with finite first moment with respect to $d_{CV_N}$. Then for all $g\in \mathcal{P}_N$ and all $\epsilon>0$, there exists $C>0$ such that $$\mathbb{P}\left[\sup_{n\in\mathbb{N}}\left|\kappa(\Phi_n)-\log\frac{||\Phi_n(g)||_{o}}{||g||_o}\right|\le C\right]\ge 1-\epsilon.$$ 
\end{prop}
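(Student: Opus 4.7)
The first step is to rewrite $\log \frac{||\Phi_n(g)||_o}{||g||_o} = \log \frac{||g||_{\Phi_n^{-1}.o}}{||g||_o}$. White's formula for $\kappa(\Phi_n) = d_{CV_N}(o, \Phi_n^{-1}.o)$ gives the trivial upper bound $\log \frac{||g||_{\Phi_n^{-1}.o}}{||g||_o} \le \kappa(\Phi_n)$, so the real content is the matching lower bound
$$\kappa(\Phi_n) - \log \frac{||g||_{\Phi_n^{-1}.o}}{||g||_o} \le C,$$
uniform in $n$, to be established on a measurable subset of the path space of probability at least $1 - \epsilon$. The overall strategy mirrors the proof of Proposition \ref{dual}, with Lemma \ref{bf} providing growth control for $||g||$ along a folding ray in place of the horofunction formula used for $x \in \mathcal{UE}$.

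For $\mathbb{P}$-a.e. sample path $\mathbf{\Phi}$, Proposition \ref{NPR} gives $\Phi_n^{-1}.o \to T^+ := \text{bnd}(\mathbf{\Phi}) \in \mathcal{UE}$, and by \cite[Proposition 4.25]{DH15} any standard optimal greedy folding ray $\tau$ from $o$ to $T^+$ contains infinitely many $D$-bicontracting subsegments. Since $T^+$ is arational and $g$ is primitive, the legalization parameter $r_g := \text{right}_\tau(g)$ is finite, and Lemma \ref{bf} applied to $\tau$ gives, for all $t \ge r_g$,
$$\left| d_{CV_N}(\tau(r_g), \tau(t)) - \log \frac{||g||_{\tau(t)}}{||g||_{\tau(r_g)}} \right| \le K_0.$$
Fix a $D$-bicontracting parameter $s > r_g$ and set $z := \tau(s)$. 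The argument from Proposition \ref{dual} provides, for all sufficiently large $n$, a point $p_n$ on every standard geodesic $\sigma_n$ from $o$ to $\Phi_n^{-1}.o$ with $d_{CV_N}^{sym}(p_n, z) \le D$. Applying Lemma \ref{bf} to the folding subpath of $\sigma_n$ from $p_n$ to $\Phi_n^{-1}.o$ (using that $p_n$ is past $\text{right}_{\sigma_n}(g)$) yields
$$\left| d_{CV_N}(p_n, \Phi_n^{-1}.o) - \log \frac{||g||_{\Phi_n^{-1}.o}}{||g||_{p_n}} \right| \le K_0.$$
Combining this with the additivity $\kappa(\Phi_n) = d_{CV_N}(o, p_n) + d_{CV_N}(p_n, \Phi_n^{-1}.o)$ along $\sigma_n$ and the translation-length estimate $|\log ||g||_{p_n} - \log ||g||_z| \le D$ (coming from $d_{CV_N}^{sym}(p_n, z) \le D$ applied to White's formula in both directions) gives
$$\left| \kappa(\Phi_n) - \log \frac{||g||_{\Phi_n^{-1}.o}}{||g||_o} \right| \le \left| d_{CV_N}(o, z) - \log \frac{||g||_z}{||g||_o} \right| + O(D + K_0),$$
where the right-hand side is a random constant depending on $g$ and $\mathbf{\Phi}$ but not on $n$. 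One concludes, as at the end of the proof of Proposition \ref{dual}, by choosing $C$ large enough so that this random constant is at most $C$ on a path-space event of probability at least $1 - \epsilon$.

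The main technical obstacle is justifying the legalization comparison $p_n \ge \text{right}_{\sigma_n}(g)$, that is, that $g$'s axis in $\sigma_n(p_n)$ has overwhelmingly legal subsegments relative to the train-track structure induced by $\sigma_n$. The choice $s > r_g$ grants this property for $\tau(s) = z$ relative to $\tau$'s train-track structure, and one must transport it to $\sigma_n$ at $p_n$ using only the $d_{CV_N}^{sym}$-closeness of $p_n$ to $z$, which controls the metric trees but not a priori their train-track structures. An alternative route avoiding this subtle comparison would first establish bounded $d_{CV_N}^{sym}$-tracking of $\Phi_n^{-1}.o$ by $\tau$ (by combining bicontraction at a cofinal sequence of points on $\tau$ with the convergence $\Phi_n^{-1}.o \to T^+$), and then apply Lemma \ref{bf} directly on $\tau$; this reshuffles rather than removes the difficulty, but may be more tractable using properties of standard geodesic rays toward $\mathcal{UE}$.
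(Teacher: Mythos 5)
Your proposal follows essentially the same route as the paper's proof: choose a $D$-bicontracting subsegment of $\tau_{\mathbf{\Phi}}$ lying past the legalization point $\text{right}_{\tau_{\mathbf{\Phi}}}(g)$, use bicontraction to find a point $z_n$ on a standard geodesic $\beta_n$ from $o$ to $\Phi_n^{-1}.o$ at bounded $d_{CV_N}^{sym}$-distance from a fixed point $z'$ in that subsegment, apply Lemma \ref{bf} to $\beta_n$ at $z_n$, and conclude by observing that the $z_n$ all lie at essentially the same $d_{CV_N}^{sym}$-distance from $o$. The subtlety you flag---justifying that $z_n$ lies past $\text{right}_{\beta_n}(g)$ so that Lemma \ref{bf} is applicable---is a real one; the paper's proof is entirely silent on it, implicitly relying on the fact that the leftmost point of the bicontracting window has $FF_N$-projection far past the $FF_N$-projection of a rose where $g$ is short, which by the Dowdall--Taylor comparison (Lemma \ref{DT}) and Bestvina--Feighn's projection formalism forces $\text{right}_{\beta_n}(g)$ to occur before $z_n$ on $\beta_n$. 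Your ``alternative route'' via bounded $d_{CV_N}^{sym}$-tracking of $\Phi_n^{-1}.o$ by $\tau_{\mathbf{\Phi}}$ does not quite hold as stated (only the passage near bicontracting subsegments is controlled, not the endpoints $\Phi_n^{-1}.o$ themselves), as you half-anticipate. So: same proof as the paper, but you correctly surface a step that the paper leaves implicit.
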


\begin{proof}
It is enough to prove that for $\mathbb{P}$-a.e. sample path $\mathbf{\Phi}:=(\Phi_n)_{n\in\mathbb{N}}$ of the random walk on $(\text{Out}(F_N),\mu)$, we have $$\sup_{n\in\mathbb{N}}\left|\kappa(\Phi_n)-\log\frac{||\Phi_n(g)||_{o}}{||g||_o}\right|<+\infty.$$ By \cite[Proposition 4.25]{DH15}, for $\mathbb{P}$-a.e. sample path $\mathbf{\Phi}:=(\Phi_n)_{n\in\mathbb{N}}$ of the random walk, the standard geodesic ray $\tau_{\mathbf{\Phi}}$ contains infinitely many $D$-bicontracting subsegments. Let $g\in\mathcal{P}_N$, and let $I$ be a $D$-bicontracting subsegment of $\tau_{\mathbf{\Phi}}$ lying to the right of $\tau_{\mathbf{\Phi}}(\text{Pr}_{\tau_{\mathbf{\Phi}}}(g))$. 

For all $n\in\mathbb{N}$, let $\beta_n$ be a standard geodesic segment from $o$ to $\Phi_n^{-1}.o$. Then there exists $z'\in I$ such that for all sufficiently large $n\in\mathbb{N}$, the segment $\beta_n$ contains a point $z_n$ at bounded $d_{CV_N}^{sym}$-distance of $z'$. In addition, Lemma \ref{bf} applied to $\beta_n$ implies that for all $n\in\mathbb{N}$, we have $$\left|d_{CV_N}(z_n,\Phi_n^{-1}.o)-\log\frac{||\Phi_n(g)||_o}{||g||_{z_n}}\right|\le K.$$ Since all points $z_n$ lie at the same $d_{CV_N}^{sym}$-distance from $o$ (up to a bounded additive error), there exists $K'>0$ such that $$\left|d_{CV_N}(o,\Phi_n^{-1}.o)-\log\frac{||\Phi_n(g)||_o}{||g||_o}\right|\le K',$$ which is the desired inequality.   
\end{proof}

\subsection{Central limit theorem}

We now complete the proof of the central limit theorem for the variables $\log||\Phi(g)||$ with $g\in\mathcal{P}_N$.

\begin{theo}\label{tcl}
Let $\mu$ be a nonelementary probability measure on $\text{Out}(F_N)$ with finite second moment with respect to $d_{CV_N}$. Let $\lambda>0$ be the drift of the random walk on $(\text{Out}(F_N),\mu)$ with respect to $d_{CV_N}$. Then there exists a centered Gaussian law $N_{\mu}$ on $\mathbb{R}$ such that for every compactly supported continuous function $F$ on $\mathbb{R}$, and all primitive elements $g\in \mathcal{P}_N$, one has $$\lim_{n\to +\infty}\int_{\text{Out}(F_N)} F \left(\frac{\log ||\Phi(g)||-n\lambda}{\sqrt{n}}\right)d\mu^{\ast n}(\Phi) = \int_{\mathbb{R}}F(t)dN_{\mu}(t),$$ uniformly in $g$.
\end{theo}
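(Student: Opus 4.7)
The plan is to apply Theorem \ref{tcl-gen} to obtain a central limit theorem for the Busemann cocycle $\beta^-$ on the backward horoboundary of $CV_N$, and then to transfer this CLT step by step to the variables $\log ||\Phi_n(g)||$. The overall structure parallels the mapping class group case (Theorem \ref{tcl-mod}), but the transfer is more delicate because there is no deterministic uniform bound analogous to Corollary \ref{cor-key}: one must proceed through the intermediate quantity $\kappa_{CV_N}(\Phi_n) = d_{CV_N}(\Phi_n.o, o)$ using the probabilistic estimates of Propositions \ref{dual} and \ref{norm-busemann}.

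First I would invoke Theorem \ref{tcl-gen} with $Y^- = \mathcal{UE} \subseteq \partial_h^- CV_N$ and $Y^+ = \partial_h^+ CV_N$. The unique $\check{\mu}$-stationary probability measure $\nu^*$ on $Y^+$ is furnished by Proposition \ref{NPR}, and applying that proposition to $\check{\mu}$ in place of $\mu$ produces a $\mu$-ergodic $\mu$-stationary measure $\nu$ supported on $Y^-$. Hypothesis \textbf{(H1)} is exactly Corollary \ref{average} applied to $\check{\mu}$, and hypothesis \textbf{(H2)} is Proposition \ref{product-estimate}. Theorem \ref{tcl-gen} then yields a centered Gaussian law $N_\mu$ such that, for $\nu$-a.e. $x \in \mathcal{UE}$, the variable $(\beta^-(\Phi, x) - n\lambda)/\sqrt{n}$ converges in law to $N_\mu$ under $\mu^{\ast n}$.

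The next step is to transfer this convergence. Fixing any such $x$, Proposition \ref{dual} gives, for every $\epsilon > 0$, a constant $C$ with $\sup_n |\beta^-(\Phi_n, x) - \kappa_{CV_N}(\Phi_n)| \leq C$ holding with probability at least $1 - \epsilon$; the uniform continuity of any compactly supported test function $F$ then yields, by a Slutsky-type argument, a CLT for $\kappa_{CV_N}(\Phi_n)$. For each primitive $g \in \mathcal{P}_N$, Proposition \ref{norm-busemann} similarly gives $|\kappa_{CV_N}(\Phi_n) - \log(||\Phi_n(g)||/||g||_o)| \leq C$ with high probability, and since $\log ||g||_o / \sqrt{n} \to 0$ for fixed $g$, this transfers the CLT to $(\log ||\Phi_n(g)|| - n\lambda)/\sqrt{n}$ with the same limit $N_\mu$.

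The hard part will be the uniformity in $g$. In the mapping class group setting this was immediate from Corollary \ref{cor-key} and Proposition \ref{product-estimate-2-mod}, whose constants do not depend on the curve; here the constants produced by Propositions \ref{dual} and \ref{norm-busemann} are a priori allowed to depend on $x$ and $g$. I would address this by splitting the probability space into a good event on which the approximations hold with constants depending only on $\epsilon$ and on universal hyperbolicity data (leveraging the fact, from \cite{DH15}, that typical folding rays contain infinitely many bicontracting subsegments, whose locations are controlled quantitatively by the deviation estimates of Proposition \ref{ccl}), and a complementary small-measure event controlled by the deterministic bound $|\beta^-(\Phi, \cdot)| \leq d_{CV_N}^{sym}(\Phi.o, o)$ from \eqref{bus} together with the finite second moment of $\mu$. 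The compact support of $F$ should then absorb any residual $g$-dependent shift of order $\log ||g||_o / \sqrt{n}$ uniformly.
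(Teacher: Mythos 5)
Your proposal is precisely the paper's proof: Theorem \ref{tcl-gen} applied with $Y^- = \mathcal{UE}$ and $Y^+ = \partial_h^+ CV_N$, Hypothesis \textbf{(H1)} supplied by Corollary \ref{average} applied to $\check{\mu}$, Hypothesis \textbf{(H2)} by Proposition \ref{product-estimate}, and the transfer to $\log||\Phi(g)||$ via Propositions \ref{dual} and \ref{norm-busemann}. The uniformity-in-$g$ subtlety you raise is genuine and is in fact passed over silently in the paper's own one-line proof, which simply cites Propositions \ref{dual} and \ref{norm-busemann} even though their constants depend on $x$ and $g$; carrying out your sketched remedy would require quantitatively controlling the location $\text{right}_{\tau_{\mathbf{\Phi}}}(g)$ of the relevant bicontracting subsegment and the shift $\log||g||_o/\sqrt{n}$ over all $g\in\mathcal{P}_N$, which goes beyond what the paper records.
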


\begin{proof}
In view of Propositions \ref{dual} and \ref{norm-busemann}, it is enough to show that there exist $x\in\mathcal{UE}$ and a centered Gaussian law $N_{\mu}$ on $\mathbb{R}$ such that for every compactly supported continuous function $F$ on $\mathbb{R}$, one has $$\lim_{n\to +\infty}\int_{\text{Mod}(S)} F \left(\frac{\beta^-(\Phi,x)-n\lambda}{\sqrt{n}}\right)d\mu^{\ast n}(\Phi) = \int_{\mathbb{R}}F(t)dN_{\mu}(t).$$ This follows from Theorem \ref{tcl-gen} applied to $Y^-:=\mathcal{UE}$ and to $Y^+:=\partial_h^+CV_N$ (Hypothesis \textbf{(H1)} is checked in Corollary \ref{average}, applied to $\check{\mu}$, and Hypothesis \textbf{(H2)} is checked in Proposition \ref{product-estimate}).
\end{proof}

\bibliographystyle{amsplain}
\bibliography{bib-tcl}

\end{document}